\documentclass[10pt]{amsart}



\usepackage{latexsym,exscale,enumerate,amsfonts,amssymb, ulem, xparse, mathtools}
\usepackage{amsmath,amsthm,amsfonts,amssymb,amscd, stmaryrd,textcomp,mathscinet}
\usepackage{hyperref}
\usepackage{ulem}
\usepackage{bbold}
\usepackage{tocvsec2}

\usepackage{libertine}
\usepackage[usenames,dvipsnames]{xcolor}
\colorlet{green}{black!30!green} 

\addtolength{\hoffset}{-1.6cm}
\addtolength{\textwidth}{3cm}



%

\usepackage[all]{xy}
\SelectTips{cm}{}

\usepackage{graphicx}

\usepackage{tikz}
\usetikzlibrary{calc}
\usetikzlibrary{decorations.markings}
\usetikzlibrary{decorations.pathreplacing}
\usetikzlibrary{arrows,shapes,positioning}
\tikzstyle directed=[postaction={decorate,decoration={markings,
    mark=at position #1 with {\arrow{>}}}}]
\tikzstyle rdirected=[postaction={decorate,decoration={markings,
    mark=at position #1 with {\arrow{<}}}}]
\tikzset{anchorbase/.style={baseline={([yshift=-0.5ex]current bounding box.center)}}}

\tikzset{
    partial ellipse/.style args={#1:#2:#3}{
        insert path={+ (#1:#3) arc (#1:#2:#3)}
    }
}

\newcommand{\Sym}{\mathrm{Sym}}



\def\cal#1{\mathcal{#1}}%

\def\1{\mathbbm{1}}%


\theoremstyle{plain}
\newtheorem{theorem}{Theorem}
\newtheorem{theorem*}[theorem]{Theorem*}
\newtheorem{theorem**}[theorem]{Theorem*}
\numberwithin{theorem}{section}

\newtheorem{conjecture}[theorem]{Conjecture}
\newtheorem{corollary}[theorem]{Corollary}
\newtheorem{corollary*}[theorem]{Corollary*}
\newtheorem{corollary**}[theorem]{Corollary*}

\newtheorem{lemma}[theorem]{Lemma}
\newtheorem{lemma*}[theorem]{Lemma*}
\newtheorem{lemma**}[theorem]{Lemma*}

\newtheorem{notation*}[theorem]{Notation*}

\newtheorem{proposition}[theorem]{Proposition}
\newtheorem{proposition*}[theorem]{Proposition*}
\newtheorem{proposition**}[theorem]{Proposition*}

\theoremstyle{definition}
\newtheorem{definition}[theorem]{Definition}
\newtheorem{definition*}[theorem]{Definition*}
\newtheorem{definition**}[theorem]{Definition*}
\newtheorem{example}[theorem]{Example}
\newtheorem{question}[theorem]{Question}
\newtheorem{questions}[theorem]{Questions}

\theoremstyle{remark}
\newtheorem{remark}[theorem]{Remark}


\newcommand{\slnn}[1]{\mf{sl}_{#1}}

\newcommand{\glnn}[1]{\mf{gl}_{#1}}

\newcommand{\sh}{w}
\newcommand{\cat}[1]{\mathchoice
  {\ensuremath{\mbox{\bfseries {\upshape {#1}}}}}
  {\ensuremath{\mbox{\bfseries {\upshape {#1}}}}}
  {\scalebox{.7}{\ensuremath{\mbox{\bfseries {\upshape {#1}}}}}}
  {\scalebox{.5}{\ensuremath{\mbox{\bfseries {\upshape {#1}}}}}}%
  }

\newcommand{\Kar}{\operatorname{Kar}}

\newcommand{\Kh}{\cat{Kh}}
\newcommand{\Bl}{\cat{Bl}}
\newcommand{\APS}{\cat{APS}}


\newcommand{\Web}[1][]{#1\cat{Web}}

\newcommand{\Webq}[1][]{#1\cat{Web}_{q}}

\newcommand{\wrap}{D}

\newcommand{\A}{\cat{A}}

\newcommand{\AWeb}[1][]{#1\cat{A}\cat{Web}}
\newcommand{\AWebq}[1][]{#1\cat{A}\cat{Web}_{q}}

\newcommand{\essAWeb}[1][]{#1\AWeb^{\mathrm{ess}}}

\newcommand{\essbAWeb}[1][]{\overline{#1\AWeb}^{\mathrm{ess}}}

\newcommand{\Afoam}[1][]{#1\cat{A}\cat{Foam}}
\newcommand{\AFoam}[1][]{#1\cat{A}\cat{Foam}}

\def\mC{\cat{C}}
\def\basis{\mathrm{B}_T}
\def\basisJW{\mathrm{B}_{S}}
\def\basisstd{\mathrm{B}}

\newcommand{\T}{\cat{T}}
\newcommand{\TWebq}[1][]{#1\T\cat{Web}_{q}}
\newcommand{\TWebA}[1][]{#1\T\cat{Web}_{A}}
\newcommand{\Tfoam}[1][]{#1\T\cat{Foam}}
\newcommand{\essTfoam}[1][]{#1\Tfoam^{ess}}

\newcommand{\Ttanweb}{\T\cat{TanWeb}}

\newcommand{\Stwo}{\cat{S}^\mathbf{2}}
\newcommand{\Su}{\cat{S}}

\newcommand{\SWebq}[1][]{#1\Su\cat{Web}_{q}}
\newcommand{\SWebA}[1][]{#1\Su\cat{Web}_{A}}
\newcommand{\Sfoam}[1][]{#1\Su\cat{Foam}}

\newcommand{\Sfoamred}[1][]{#1\Sfoam^{\mathrm{red}}}
\newcommand{\Sfoamor}[1][]{#1\Sfoam^{\mathrm or}}
\newcommand{\SCob}{\Su\cat{Cob}}
\newcommand{\Slink}{\Su\cat{Link}}
\newcommand{\Slinko}{\Su\cat{Link}^{\circ}}
\newcommand{\Stanweb}{\Su\cat{TanWeb}}
\newcommand{\Stanwebo}{\Su\cat{TanWeb}^{\circ}}



\newcommand{\stwo}{*_{\wedge}}
\newcommand{\w}[1]{\wedge^{(#1)}}


\newcommand{\twoFoam}{\cat{Foam}}
\newcommand{\Vect}{\cat{Vect}}

\newcommand{\HC}{\operatorname{K}}




\hfuzz=6pc



\newcommand{\Hom}{{\rm Hom}}

\renewcommand{\to}{\rightarrow}

\newcommand{\id}{{\rm id}}

\newcommand{\End}{{\rm End}}






\def\mf{\mathfrak}
\def\shuffle{\,\raise 1pt\hbox{$\scriptscriptstyle\cup{\mskip
               -4mu}\cup$}\,}



\numberwithin{equation}{section}


%
%
\def\comm#1{}%
%


\def\emph#1{{\sl #1\/}}

%
%



\let\phi=\varphi
\let\theta=\vartheta
\let\epsilon=\varepsilon


\usepackage{bbm}
\def\C{{\mathbbm C}}
\def\N{{\mathbbm N}}
\def\R{{\mathbbm R}}
\def\Z{{\mathbbm Z}}
\def\Q{{\mathbbm Q}}

\newcommand{\Ss}{S}


\def\cal#1{\mathcal{#1}}%
\def\1{\mathbbm{1}}%
\def\nn{\notag}

\def\la{\langle}
\def\ra{\rangle}



\newcommand{\torus}[2]{
\draw [green, thick, directed=.25]  (0,0) to (#1,0);
\draw [green, thick, directed=.25] (0,#2)to (#1,#2);
\draw [red, thick,rdirected=.3, rdirected=.25] (#1,#2) to (#1,0);
\draw [red, thick, rdirected=.3, rdirected=.25] (0,#2) to (0,0);}

\newcommand{\torusfront}[3]{
\draw [green, thick, directed=.25] (0,0)to(#1,0) ;
\draw [red, thick, rdirected=.3, rdirected=.25] (#1+0.5*#2,#2) to (#1,0);
\draw [green, thick, directed=.25] (0,0+#3) to (#1,0+#3);
\draw [green, thick, directed=.25] (0+0.5*#2,#2+#3) to (#1+0.5*#2,#2+#3);
\draw [red, thick, rdirected=.3, rdirected=.25] (#1+0.5*#2,#2+#3) to (#1,0+#3);
\draw [red, thick, rdirected=.3, rdirected=.25] (0+0.5*#2,#2+#3) to (0,0+#3);
\draw (0,0) to (0,0+#3);
\draw(#1,0) to(#1,0+#3);
}

\newcommand{\torusback}[3]{
\draw [opacity=.5] (0.5*#2,#2)  to  (0.5*#2,#2+#3); 
\draw (#1+0.5*#2,#2) to (#1+0.5*#2,#2+#3);
\draw [green,opacity=.5, thick, directed=.25] (0+0.5*#2,#2)to (#1+0.5*#2,#2) ;
\draw [red,opacity=.5, thick, rdirected=.3, rdirected=.25] (0+0.5*#2,#2) to (0,0);
}

\newcommand{\annfrontd}[3]{
\draw (0,0)to(#1,0) ;
\draw [red, thick, rdirected=.3, rdirected=.25] (#1+0.5*#2,#2) to (#1,0);
\draw [dashed] (0,0+#3) to (#1,0+#3);
\draw [dashed] (0+0.5*#2,#2+#3) to (#1+0.5*#2,#2+#3);
\draw [dashed,red, thick, rdirected=.3, rdirected=.25] (#1+0.5*#2,#2+#3) to (#1,0+#3);
\draw [dashed, red, thick, rdirected=.3, rdirected=.25] (0+0.5*#2,#2+#3) to (0,0+#3);
\draw (0,0) to (0,0+#3);
\draw(#1,0) to(#1,0+#3);
}

\newcommand{\annback}[3]{
\draw [opacity=.5] (0.5*#2,#2)  to  (0.5*#2,#2+#3); 
\draw (#1+0.5*#2,#2) to (#1+0.5*#2,#2+#3);
\draw [opacity=.5] (0+0.5*#2,#2)to (#1+0.5*#2,#2) ;
\draw [red,opacity=.5, thick, rdirected=.3, rdirected=.25] (0+0.5*#2,#2) to (0,0);
}


\begin{document}
\allowdisplaybreaks

\title[Khovanov homology and categorification of skein modules]{Khovanov homology and categorification of skein modules}

\author{Hoel Queffelec}
 \address{IMAG\\ Univ. Montpellier\\ CNRS \\ Montpellier \\ France}
\email{hoel.queffelec@umontpellier.fr}

 \author{Paul Wedrich}
\address{Mathematical Sciences Institute\\ The Australian National University \\ Australia}
\email{p.wedrich@gmail.com}
\urladdr{paul.wedrich.at}

\begin{abstract}
For every oriented surface of finite type, we construct a functorial Khovanov homology for links in a thickening of the surface, which takes values in a categorification of the corresponding $\glnn{2}$ skein module. The latter is a mild refinement of the Kauffman bracket skein algebra, and its categorification is constructed using a category of $\glnn{2}$ foams that admits an interesting non-negative grading. We expect that the natural algebra structure on the $\glnn{2}$ skein module can be categorified by a tensor product that makes the surface link homology functor monoidal. We construct a candidate bifunctor on the target category and conjecture that it extends to a monoidal structure. This would give rise to a canonical basis of the associated $\glnn{2}$ skein algebra and verify an analogue of a positivity conjecture of Fock--Goncharov and Thurston. We provide evidence towards the monoidality conjecture by checking several instances of a categorified Frohman-Gelca formula for the skein algebra of the torus. Finally, we recover a variant of the Asaeda--Przytycki--Sikora surface link homologies and prove that surface embeddings give rise to spectral sequences between them.
\end{abstract}

\maketitle
\maxtocdepth{subsection}
\tableofcontents

\section{Introduction}
After Khovanov's categorification of the Jones polynomial \cite{Kh1}, one of the most intriguing problems in categorification and quantum topology has been to extend Khovanov homology to $3$-manifolds other than $\R^3$. Since there is already more than one way to extend the Jones polynomial to an invariant of $3$-manifolds, we can also see different approaches to extending Khovanov homology, each posing different technical challenges and uncovering interesting higher representation-theoretic structure. For example, an extension along the lines of the Witten--Reshetikhin--Turaev construction of 3-manifold invariants has to make sense of categorifying the quantum parameter $q$ at a root of unity, e.g. via (K)hopfological algebra \cite{Kh4}. Another extension in the form of a $(4+\epsilon)$-dimensional TQFT has been outlined by Morrison--Walker using the technology of disk-like $4$-categories~\cite{MW}.

The alternative approach that we follow in the present article seeks to categorify skein modules of $3$-manifolds as introduced by Conway, Przytycki~\cite{Prz1} and Turaev~\cite{Tur}. Skein modules are constructed as abelian groups spanned by links embedded in the $3$-manifold, modulo local relations determined by the Jones polynomial. They can be seen as quantizations of character varieties, see Bullock~\cite{Bul}, and have deep relations to quantum Teichm\"{u}ller theory and cluster algebras, see Bonahon--Wong~\cite{BW, BW1}, Thurston~\cite{Thu} and references therein. Since skein modules of $3$-manifolds can be disassembled along Heegaard splittings, the focus of this paper will be on categorifying the skein modules of thickened Heegaard surfaces.

\subsection{Link homology on surfaces}
Khovanov homology and related functorial invariants of links in $\R^3$ are usually defined and computed via link diagrams\footnote{A notable exception being Witten's approach \cite{Witten}.} and thus depend on a direction of projection, or alternatively, the identification of the ambient manifold $\R^3$ with a thickening of $\R^2$. Conversely, many features of the definition of Khovanov homology directly carry over to the case of other thickened surfaces, and related link invariants have been defined and studied by Asaeda--Przytycki--Sikora~\cite{APS}, Turaev--Turner ~\cite{TT}, Boerner~\cite{Boe} and others. 

The purpose of this article is to develop these constructions into honest functorial invariants of links in thickened surfaces and smooth cobordism between them, which take values in target categories that categorify skein modules, and which have desirable gluing properties that we expect to be useful in defining categorical 3-manifold invariants. Our first result is the following.

\begin{theorem}\label{thm:linkhom} Let $\Su$ be an oriented surface of finite type and let $\Slink$ denote the category with objects given by links embedded in $\Su \times [0,1]$ and morphisms given by oriented link cobordisms, properly embedded  in $\Su \times [0,1]^2$, up to isotopy relative to the boundary. Then there exists a functor 
\[\Su\Kh\colon \Slink \to \HC(\Sfoam),\]
 where the target is the bounded homotopy category of chain complexes in the additive $H_1(\Su)\times \Z$-graded category $\Sfoam$ of $\glnn{2}$ webs and foams in $\Su$, whose Grothendieck group is isomorphic to the $\glnn{2}$ skein module $\SWebq$ of $\Su$. Moreover, $\Su\Kh$ categorifies the evaluation of links from $\Slink$ in $\SWebq$.
\end{theorem}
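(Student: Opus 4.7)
The plan is to follow the Bar-Natan--Khovanov cube-of-resolutions strategy, now adapted to the surface setting where the target category $\Sfoam$ replaces the category of $2$-dimensional cobordisms. First I would fix a generic projection of a link $L \subset \Su \times [0,1]$ to $\Su$ to obtain a link diagram $D$ on $\Su$, and assign to each crossing the standard oriented resolution with a saddle cobordism connecting the two smoothings. Taking the tensor product of these local complexes inside $\Sfoam$ yields a chain complex $\Su\Kh(D) \in \HC(\Sfoam)$ whose underlying cube has vertices indexed by complete resolutions of $D$ (each an honest web in $\Su$) and edges given by saddle foams.

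The heart of the proof is invariance under the surface-version of Reidemeister moves. Because $\Sfoam$ is constructed from local foam relations modelled on the planar $\glnn{2}$ foam calculus, the usual Bar-Natan-style chain homotopies that realize invariance under R1, R2, R3 go through locally on any disk in $\Su$. I would verify that the requisite relations (delooping, neck cutting, sphere and dot relations, square switch) hold in $\Sfoam$ with the prescribed $H_1(\Su) \times \Z$-grading; this is straightforward when the relevant disks are contractible, and the point is that all Reidemeister moves are local, so noncontractible topology of $\Su$ never interferes. Functoriality on cobordisms is then established by decomposing an embedded surface $\Sigma \subset \Su \times [0,1]^2$ into elementary Morse moves (births, deaths, saddles, Reidemeister-isotopies), assigning the obvious elementary foams as chain maps, and checking the movie moves. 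Since all movie moves are local, this reduces to Bar-Natan's planar verification carried out in $\Sfoam$; sign/normalization issues to obtain strict functoriality (not merely projective) are handled as in Clark--Morrison--Walker or Blanchet.

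Once the functor $\Su\Kh$ is in place, the statement about the Grothendieck group is a separate computation about $\Sfoam$ itself: I would construct a surjection from the free $\Z[q,q^{\pm 1}]$-module on webs in $\Su$ onto $\mathrm{K}_0(\Sfoam)$ by sending a web to its class, and check that the $\glnn{2}$ skein relations are imposed by the foam relations (square removal gives the MOY/crossing resolution relation, and circle evaluation gives the loop value). Injectivity follows once one shows that $\Sfoam$ has a basis of essential webs indexed by the known basis of $\SWebq$, which is the place where the nontrivial topology of $\Su$ genuinely enters. Finally, categorification of the skein evaluation is obtained by tracking, for each resolution vertex, that its class in $\mathrm{K}_0$ reproduces the Kauffman bracket expansion of $D$.

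The main obstacle, and the place where the surface setting departs seriously from the planar case, will be step three: identifying $\mathrm{K}_0(\Sfoam)$ with $\SWebq$. One must control which webs in $\Su$ are indecomposable and independent in $\Sfoam$ once noncontractible loops and nonplanar trivalent graphs are allowed, and show that no unexpected relations among them are forced by foams with nontrivial topology in $\Su \times [0,1]^2$. The non-negative $H_1(\Su) \times \Z$-grading on $\Sfoam$ should be the key technical device here, as it gives enough rigidity to rule out extra relations, but verifying this rigorously, together with checking that this grading is well defined on foams, is where the bulk of the work lies.
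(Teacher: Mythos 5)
Your construction of the functor itself follows the paper's route exactly: cube of resolutions over a generic diagram, locality of the Reidemeister and Carter--Rieger--Saito movie moves reducing functoriality to Blanchet's planar case. One caution on the first step: in the Blanchet $\glnn{2}$ setting the two resolutions of a crossing are the identity web and the web with a $2$-labelled edge, joined by zip/unzip foams, not unoriented saddles between Kauffman smoothings; this is precisely what buys honest (rather than projective) functoriality, so the ``delooping/neck-cutting/sphere'' relations you invoke should be the foam-category analogues (circle, digon and square isomorphisms of webs, and Blanchet's local foam relations), not Bar-Natan's.

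The genuine gap is in your third step. You correctly identify $K_0(\Sfoam)\cong\SWebq$ as the crux and propose the non-negative grading as the key rigidity device, but that grading alone does not rule out unexpected relations in $K_0$: a relation in the Grothendieck group of an additive category comes from an isomorphism $\bigoplus_i q^{k_i}A_i\cong\bigoplus_j q^{l_j}B_j$ between direct sums of standard basis webs, and the non-negative grading only lets you reduce to the degree-zero components of such an isomorphism. To conclude that it forces a bijection of summands, the paper additionally (i) decomposes the identity foam on each standard basis web into primitive idempotents given by rotated Jones--Wenzl projectors, so that $\Kar(\Sfoam_0)$ becomes semisimple with simples that annihilate all turnbacks when $\Su\neq\T$; (ii) proves non-degeneracy of these foams via Blanchet's closed abstract foam evaluation (doubling a foam and evaluating to $\pm 2^{t}$), which is Proposition~\ref{prop:linindep}; and (iii) treats the torus separately with extremal weight projectors, because Jones--Wenzl foams do not kill the wrap-around turnbacks there. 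Your plan as written --- ``show $\Sfoam$ has a basis of essential webs'' --- supplies none of these mechanisms; note also that the non-negative grading is only available for $\Su\neq\Stwo$, so it cannot by itself carry a statement made for all oriented surfaces of finite type.
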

The main challenge in constructing the functors $\Su\Kh$ is in setting up and understanding an appropriate target category. We use Blanchet's foams~\cite{Blan} to construct the category $\Sfoam$ in Section~\ref{sec:foams} and the definition of the functors $\Su\Kh$ in Section~\ref{sec:BKh} then follows a well-known recipe involving cubes of resolutions built from local pieces associated to crossings, and maps induced by link cobordisms computed via movie presentations (analogues for higher rank Khovanov--Rozansky homologies can be constructed from results in Ehrig--Tubbenhauer--Wedrich~\cite{ETW}). The use of foams instead of, for example, Khovanov's or Bar-Natan's cobordisms \cite{Kh1, BN2} in this construction enables properly (i.e. not just projectively) functorial link invariants. The target $\HC(\Sfoam)$ categorifies the $\glnn{2}$ skein module $\SWebq$ of $\Su$, which is spanned by $\glnn{2}$ webs drawn on $\Su$ modulo certain relations. This skein module is closely related, but finer than the Kauffman bracket skein module in the sense that it has an $H_1(\Su)$-grading instead of just an $H_1(\Su,\Z/2\Z)$-grading. We will define and investigate these skein modules in Section~\ref{sec:gl2skein}. We also prove the following properties of $\Su\Kh$.

\begin{proposition}\label{prop:prop} The surface link homologies from Theorem~\ref{thm:linkhom} satisfy the following properties.
\begin{itemize}
\item The invariant $\Su\Kh(L)$ of an oriented link $L$, representing the homology class $[L]\in H_1(\Su\times [0,1])\cong H_1(\Su)$, is supported in the $H_1(\Su)$-degree $[L]$ part of $\HC(\Sfoam)$.
\item A link cobordism of Euler characteristic $k$ induces a homogeneous morphism of $\Z$-degree $-k$ in $\HC(\Sfoam)$, which only depends on the isotopy class of the cobordism.
\item $\Su\Kh$ intertwines the natural actions of $\mathrm{Diff^+(\Su)}$ on $\Slink$ and $\HC(\Sfoam)$.
\item The assignment $\Su \mapsto \Su\Kh$ is functorial under orientation preserving embeddings of surfaces.
\end{itemize}
\end{proposition}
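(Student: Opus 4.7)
The plan is to verify each of the four properties by inspecting the local construction of $\Su\Kh$ from Section~\ref{sec:BKh}, which assembles the chain complex and the cobordism maps out of local data in small disks on $\Su$: a cube of resolutions for every crossing, and elementary caps, cups, and saddles for every cobordism singularity. For the first property, the key observation is that at each crossing, the two Khovanov resolutions give rise to webs whose $H_1(\Su)$-classes agree with that of the original link, since the local modification takes place inside a disk and differs from the crossing by a $2$-chain supported in that disk. Thus every vertex of the cube of resolutions carries a web representing $[L]\in H_1(\Su)$, and since the foam differentials connect webs of equal $H_1(\Su)$-class, the complex $\Su\Kh(L)$ is supported entirely in degree $[L]$.

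For the second property, the elementary morphisms making up both the internal differential and the cobordism maps — caps, cups, and saddles — are assigned foams whose intrinsic $\Z$-degrees equal $-\chi$ of the underlying $2$-cobordism ($-1$ for caps and cups, $+1$ for saddles). A link cobordism of Euler characteristic $k$ decomposes via a movie presentation into such elementary pieces whose Euler characteristics sum to $k$, so the induced morphism in $\HC(\Sfoam)$ is homogeneous of $\Z$-degree $-k$. Isotopy invariance requires checking Reidemeister invariance of the chain homotopy type of $\Su\Kh(L)$ together with invariance of the induced cobordism maps under the Carter--Saito movie moves, following Blanchet~\cite{Blan} and the adaptations of Ehrig--Tubbenhauer--Wedrich~\cite{ETW}. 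This is the main obstacle: each move reduces to a finite local calculation in $\Sfoam$, identical to the corresponding calculation in the flat foam category on $\R^2$, but one must confirm that every relevant computation genuinely takes place within a disk so that no surface topology interferes, and must verify that the $H_1(\Su)$-grading is preserved at each step (which it is, because movie moves do not alter homology classes).

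For the third and fourth properties, both follow from the locality of the construction. An orientation-preserving diffeomorphism $\phi \in \mathrm{Diff}^+(\Su)$ transports links, cobordisms, webs, and foams, and since the cube of resolutions and cobordism maps are defined inside small neighborhoods of crossings and of elementary cobordism singularities, the construction commutes with $\phi$, giving $\Su\Kh(\phi L) = \phi_*\Su\Kh(L)$ on both objects and morphisms. Likewise, an orientation-preserving embedding $i\colon \Su \hookrightarrow \Su'$ pushes webs and foams on $\Su$ forward to webs and foams on $\Su'$, defining a graded functor between the corresponding foam categories that covers the induced map $i_*\colon H_1(\Su)\to H_1(\Su')$ on gradings; the same locality argument then yields the compatibility $\Su'\Kh \circ i_* = i_* \circ \Su\Kh$ of the surface link homology functors under embeddings.
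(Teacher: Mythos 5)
Your proposal is correct and follows essentially the same route as the paper, which disposes of Proposition~\ref{prop:prop} by observing that all four properties follow directly from the locality of the cube-of-resolutions construction together with Theorem~\ref{thm:functoriality}; in particular, your "main obstacle" (isotopy invariance of the cobordism maps) is exactly what the paper resolves via Lemma~\ref{lem:mm}, which reduces the movie moves to disks in $\Su$ and hence to Blanchet's computation for $\R^2$. Your degree count ($-\chi$ for each elementary piece, consistent with $\deg(F)=2d-\chi(c(F))$) and the observation that resolutions and foam differentials preserve the $H_1(\Su)$-class are the intended justifications.
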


While these properties suggest that the functors $\Su\Kh$ are very natural objects, a disadvantage is that their target categories $\HC(\Sfoam)$ are relatively large and unwieldy. In the search for more manageable algebraic link homology functors, we take a closer look at the foam categories $\Sfoam$ in Section~\ref{sec:foams}. An important observation is that their $\Z$-grading naturally splits into two, one of which is non-negative. 

\begin{theorem} \label{thm:nonneggrad} The category $\Sfoam$ is $H_1(\Su)\times \Z_{\geq 0}\times \Z$-graded, provided $\Su$ is not the sphere $\Stwo$.
\end{theorem}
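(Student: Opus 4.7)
My plan is to refine the existing $\Z$-grading on $\Sfoam$ defined in Section~\ref{sec:foams} by producing a non-negative summand. Recall that the $\Z$-degree of a foam $F \colon W_0 \to W_1$ is a formula combining the number of dots on $F$ (which always contribute non-negatively) with Euler characteristic terms for the facets and seams (which can have either sign). The strategy is to isolate the Euler characteristic contribution coming from topology of $F$ that is already present inside small balls of $\Su\times[0,1]^2$, and to split it off as a separate non-negative grading, leaving what remains as an arbitrary $\Z$-grading.

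Concretely, given a foam $F$ properly embedded in $\Su\times[0,1]^2$, I would choose a maximal disjoint family of simple closed curves on the facets of $F$ that are nullhomotopic in $\Su\times[0,1]^2$. Cutting along this family decomposes each facet into a \emph{trivial} part (bounded by nullhomotopic curves and hence capped off by discs) and an \emph{essential} part carrying topology reflecting that of $\Su$. Define $\deg_+(F)$ to be twice the number of dots on $F$ plus the Euler-characteristic contribution of the trivial part, normalized so that it is a direct summand of the total $\Z$-degree, and set $\deg_{\mathrm{rem}}(F):=\deg(F)-\deg_+(F)$.

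The verification then proceeds in three steps: well-definedness of $\deg_+$ under the choice of curve system and additivity under vertical and horizontal composition of foams, which follow from the locality of the construction and the compatibility of neck-cutting with gluing; invariance under the defining relations of $\Sfoam$ (dotted sphere, neck-cutting, Blanchet squeeze, etc.), for which one observes that every such relation is supported in a ball in $\Su\times[0,1]^2$ where all embedded curves are nullhomotopic, so the essential parts of the two sides agree and the trivial parts balance because the corresponding relations are degree-preserving in the planar foam calculus; and finally non-negativity, $\deg_+(F)\geq 0$.

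The main obstacle is the last step, and this is where the hypothesis $\Su\neq\Stwo$ enters decisively. On any non-spherical surface there exist essential simple closed curves in $\Su$, which serve as a topological obstruction bounding the Euler characteristic of the essential part of each facet from above; combined with the positive dot contribution this forces $\deg_+(F)\geq 0$. On $\Stwo$, by contrast, every embedded circle is nullhomotopic, so iterated neck-cutting freely produces dotless cups and caps, and one can construct foams of arbitrarily negative $\Z$-degree whose essential part is empty, so no candidate $\deg_+$ along these lines can remain non-negative.
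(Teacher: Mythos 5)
Your proposed grading $\deg_+$ is internally inconsistent, and the inconsistency sits exactly where the theorem lives. You define $\deg_+(F)$ as twice the number of dots plus the Euler-characteristic contribution of the \emph{trivial} part, but then argue non-negativity by bounding the Euler characteristic of the \emph{essential} part --- a quantity that does not enter your definition of $\deg_+$ at all. Worse, the trivial part is precisely the source of negative degree: in the formula $\deg(F)=2d-\chi(c(F))$ a dotless disc contributes $-1$ and a dotless sphere contributes $-2$, so any summand built from dots and capped-off pieces is the part that \emph{cannot} be made non-negative. The split has to go the other way: the unconstrained $\Z$-factor must absorb the discs and spheres (equivalently, the formal $q$-shifts produced by delooping inessential circles), and the non-negative factor is $2d-\chi$ of what remains. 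This is how the paper proceeds: it passes to the equivalent full subcategory $\Sfoamred$ on webs with no inessential $1$-labeled components, where disc facets are impossible because their boundary circles would be inessential curves in the source or target web, and shows the \emph{entire} degree $2d-\chi(c(F))$ is non-negative there (Corollary~\ref{cor:nonneggrading}); the $\Z_{\geq 0}\times\Z$ structure on $\Sfoam$ is then induced through the equivalence. This also sidesteps your delicate claims about invariance of a facet-by-facet decomposition under neck-cutting, which changes the essential/trivial decomposition of the underlying surface.

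Two further ingredients are missing. First, the phrase ``essential simple closed curves serve as a topological obstruction bounding the Euler characteristic from above'' is not an argument; the actual input is Lemma~\ref{lem:Eulerchar}: for $\Su\neq\Stwo$, a connected surface properly embedded in $\Su\times[0,1]$ is a disc, a sphere bounding a ball, or has $\chi\leq 0$, and non-orientable components have $\chi<0$. Proving this needs the classification of compact surfaces, the fact that embedded spheres in thickened non-spherical surfaces bound balls (Alexander's theorem plus a covering argument), and the non-embeddability of closed non-orientable surfaces in orientable $3$-manifolds that embed in $\R^3$. Second, undotted spheres are ``trivial'' in your sense and contribute $-2$ to the degree, so they must be shown to evaluate to zero; this requires freeing the sphere from its $2$-labeled interaction (as in the proof of Lemma~\ref{lem:closedeval}) before applying the local sphere relation. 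Without these steps, and with $\deg_+$ defined on the wrong summand, the proof does not go through.
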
 

As a consequence, there exists a truncation functor $\Sfoam\to \Sfoam_0$, which projects on the degree zero component of the non-negative grading.  The category $\Sfoam_0$ is related (when working over $\Z/2\Z$) to a quotient of Bar-Natan's dotted cobordism category \cite{BN2} over the surface $\Su$, which Boerner \cite{Boe} used to describe the Asaeda-Przytycki-Sikora invariants.  Our invariants $\Su\Kh$, in contrast, also make sense in the untruncated setting, and the action of the truncation functor uncovers interesting additional structure. Evidence for this is found in the case of the annulus $\Su=\A$, where an analogous truncation functor gives rise to a spectral sequence between the annular Khovanov homology of \cite{APS} and the ordinary Khovanov homology in the thickened plane $\R^2$, see Roberts~\cite{Roberts}. Moreover, the annular Khovanov homology has recently been shown by Grigsby--Licata--Wehrli~\cite{GLW} to carry an action by the exterior current algebra for $\slnn{2}$, which uses the positive degree parts with respect to the non-negative grading. In another direction, Beliakova--Putyra--Wehrli~\cite{BPW} have constructed a quantized annular Khovanov homology with an action of $U_q(\slnn{2})$. Extensions of both types of actions to the general surface case are intriguing open problems for further research.

In order to define an algebraic version of $\Su\Kh$, we use the following.

\begin{proposition}\label{prop:semisimple} The Karoubi envelope of the truncation $\Sfoam_0$ is semisimple for $\Su\neq\T,\Stwo$.
\end{proposition}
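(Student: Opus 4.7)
The plan is to exhibit a complete list of pairwise orthogonal simple objects in $\Kar(\Sfoam_0)$, each with trivial endomorphism ring. The rough picture is that the simples should be indexed by isotopy classes of essential multicurves on $\Su$, together with a $\pm$-decoration on each component, mirroring the structure seen in the annular case of Asaeda--Przytycki--Sikora and Boerner. The argument proceeds in three stages: first reduce all objects to essential multicurves, then compute morphism spaces between them, and finally pass to the Karoubi envelope using dot idempotents.

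For the first stage, I would use the standard $\glnn{2}$ web simplification relations (digon removal, square switch, theta-web evaluation), all of which are implemented by foams of non-negative degree zero, to show that every object of $\Sfoam_0$ becomes isomorphic to a disjoint union of essential simple closed curves on $\Su$. Inessential circles bound disks and reduce to scalar contributions via sphere evaluation. The assumption $\Su\neq\Stwo$ is needed to guarantee that this reduction yields a meaningful combinatorial classification, instead of collapsing everything onto an empty web whose residual dot relations destroy semisimplicity.

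The crucial step is the second one, showing that for two essential multicurves $C_1, C_2$ on $\Su$, the space $\Hom_{\Sfoam_0}(C_1,C_2)$ vanishes unless $C_1$ and $C_2$ are isotopic. For this I would invoke the basis theorem for morphism spaces in $\Sfoam$ developed in Section~\ref{sec:foams}: every foam decomposes as a sum of standard local pieces (cups, caps, saddles, dotted cylinders), and the non-negative grading assigns strictly positive degree to every cup, cap, and saddle. Hence the degree zero part of the Hom space is spanned by dotted cylinders over a multicurve simultaneously isotopic to $C_1$ and $C_2$, forcing the desired vanishing. The exclusion of $\T$ enters at precisely this point: on the torus, distinct isotopy classes of essential curves sharing the same $H_1$-class are linked by degree zero "Frohman--Gelca type" cobordisms that prevent the cylinder classification from isolating isotopy classes.

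For the third stage, the endomorphism ring of an essential multicurve $C = C_1\sqcup\cdots\sqcup C_n$ in $\Sfoam_0$ is an $n$-fold tensor product of the one-component cylinder algebra, which is a rank two commutative algebra generated by a dot $x$ subject to a quadratic relation coming from the $\glnn{2}$ foam evaluation. Over a base ring where the discriminant is invertible, this algebra splits as a product of two ground rings via orthogonal idempotents $e_\pm$. Passing to the Karoubi envelope therefore decomposes $C$ into $2^n$ simple summands, and the vanishing in step 2 makes them pairwise orthogonal, establishing semisimplicity. The hard part is step 2: pinning down the degree zero morphism spaces rigorously via the foam basis theorem, and in particular understanding precisely why the torus admits additional degree zero cobordisms that obstruct the clean isotopy classification -- an obstruction ultimately tied to the quantum torus structure of the Frohman--Gelca skein algebra that is categorified in the torus instances discussed elsewhere in the paper.
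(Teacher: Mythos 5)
There is a genuine gap, concentrated in your stages 2 and 3. In stage 3 you propose to split each essential circle into two simples via idempotents $e_\pm$ built from a dot satisfying a quadratic relation with invertible discriminant. But in this paper the dot satisfies $x^2=0$ (relation \eqref{twodotsheet}), so the cylinder algebra is local rather than a product of two fields; and in any case a dotted cylinder has $q$-degree $2$, so after truncating to $\Sfoam_0$ the dot is zero and a \emph{single} essential circle is already simple with $\End\cong\Q$. The objects that actually need to be split are $n$ parallel copies of the same essential curve: their degree-zero endomorphism algebra is \emph{not} generated by dots but by rotation (turnback) foams, i.e.\ annuli with both boundary circles on the bottom or on the top of $\Su\times[0,1]$. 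These have Euler characteristic $0$, hence degree $0$, so your stage-2 claim that every non-cylinder local piece has strictly positive degree fails precisely where it matters. The paper handles this by identifying the degree-zero endomorphisms of $c^{\sqcup n}$ with the $\glnn{2}$ web category at $q=1$ via rotation foams and splitting the identity using Jones--Wenzl projectors (Section~\ref{sec:JWFoam}); the resulting simples are indexed by integer laminations together with a first homology class (the $2$-labeled multi-curve $\w{x}$, which your reduction in stage 1 discards), not by $\pm$-decorated multicurves, and the decomposition multiplicities match the triangular basis change from $\Su\basisstd$ to $\Su\basisJW$ rather than giving $2^n$ summands.

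Your diagnoses of the excluded surfaces are also off. The sphere is excluded because $\Stwo\times[0,1]$ contains an incompressible boundary-parallel sphere, so $\Stwo\cat{Foam}$ has endomorphisms of negative degree and the non-negative grading (hence the truncation $\Sfoam_0$ itself) breaks down --- not because everything collapses onto the empty web. The torus is excluded not because distinct isotopy classes in the same $H_1$-class are linked by degree-zero cobordisms (on $\T$ an essential simple closed curve is determined up to isotopy by its homology class, so such pairs do not exist), but because the complement of an essential curve on $\T$ is again an annulus: turnback annuli can wrap around the torus the other way, are not supported in a neighborhood of the lamination, and are therefore not annihilated by Jones--Wenzl foams. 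This is why the paper replaces them by extremal weight projectors in $\essTfoam$ in Section~\ref{sec:toric}. To repair your argument you would need to (i) keep track of the $2$-labeled data $\w{x}$, (ii) replace the dot idempotents by Jones--Wenzl foams and prove they annihilate turnbacks (Lemma~\ref{lem:turnbacks} and Proposition~\ref{prop:idempotent}), and (iii) justify non-degeneracy of the surviving degree-zero morphisms via the incompressible-surface argument of Proposition~\ref{prop:linindep}.
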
 
Moreover, the simple objects are indexed by pairs of an integer lamination and a first homology class on $\Su$, or alternatively by a standard basis element of the $\glnn{2}$ skein module $\SWebq$, which we define in Section~\ref{sec:gl2skein}. 

In Section~\ref{sec:alginv1}, we define an algebraic link homology $\Su \Kh^\prime$ by composing the functor $\Su\Kh$ with functors induced by the degree zero truncation and the embedding in the Karoubi envelope, and then a representable functor. 

\begin{theorem}\label{thm:alglinkhomology} There exists a surface link homology functor $\Su\Kh^\prime$ with values in the category of vector spaces graded by $\{\text{integer laminations on }\Su\}\times H_1(\Su)\times \Z\times \Z$.
\end{theorem}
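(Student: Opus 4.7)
The plan is to construct $\Su\Kh'$ as a composition of four functors, exactly as indicated in the paragraph preceding the theorem statement.

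First, start with $\Su\Kh \colon \Slink \to \HC(\Sfoam)$ from Theorem~\ref{thm:linkhom}. By Theorem~\ref{thm:nonneggrad}, $\Sfoam$ carries an $H_1(\Su) \times \Z_{\geq 0} \times \Z$-grading, so there is a degree zero truncation functor $\tau_0 \colon \Sfoam \to \Sfoam_0$ that kills all morphisms of positive degree in the $\Z_{\geq 0}$ factor. Applying $\tau_0$ term-wise to complexes yields $\HC(\tau_0) \colon \HC(\Sfoam) \to \HC(\Sfoam_0)$, which preserves the remaining $H_1(\Su) \times \Z$-grading. Composing with $\Su\Kh$ produces a functor whose values are bounded complexes in $\Sfoam_0$.

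Next, pass to the Karoubi envelope via the canonical embedding $\iota \colon \Sfoam_0 \hookrightarrow \Kar(\Sfoam_0)$ and its induced functor on bounded homotopy categories. By Proposition~\ref{prop:semisimple}, $\Kar(\Sfoam_0)$ is semisimple (for $\Su \neq \T, \Stwo$), and its simple objects $S_{\ell,h}$ are indexed by pairs $(\ell, h)$ with $\ell$ an integer lamination and $h \in H_1(\Su)$; the $H_1(\Su)$-grading of $\Sfoam$ matches up with the $h$-index. For each such simple $S_{\ell,h}$, apply the representable functor $\Hom_{\Kar(\Sfoam_0)}(S_{\ell,h}, -)$ componentwise to the complex and take cohomology. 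Declaring the $(\ell,h)$-labelled summand to sit in the corresponding multigraded piece, and remembering the internal $\Z$-grading from $\Sfoam$ together with the cohomological $\Z$-grading from $\HC$, yields the four-fold graded vector space claimed in the theorem. Functoriality of $\Su\Kh'$ is inherited from the functoriality of each of the four steps: $\Su\Kh$, $\tau_0$, $\iota$, and each representable functor.

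The main obstacle I anticipate is verifying that the semisimplicity of $\Kar(\Sfoam_0)$ passes in a controlled way to complexes. Concretely, one must check that every bounded complex in $\Kar(\Sfoam_0)$ decomposes as a direct sum (up to homotopy equivalence) indexed by the set of simple objects, so that applying $\Hom(S_{\ell,h}, -)$ and then taking cohomology is naturally isomorphic to extracting the $(\ell,h)$-isotypic cohomology of the original complex, and that this isomorphism is functorial in morphisms of $\Slink$. In particular one has to ensure the isotypic decomposition is compatible with the $H_1(\Su)$-grading (so that only laminations of the ``correct'' homology class contribute to a link in a fixed homology class) and with the internal $\Z$-grading. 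These are largely formal consequences of semisimplicity in the graded setting, but they need to be stated carefully before the four-fold grading on $\Su\Kh'$ can be read off. The exceptional cases $\Su \in \{\T, \Stwo\}$, where Proposition~\ref{prop:semisimple} fails, would require a separate treatment, e.g.\ by passing to a suitable semisimple quotient or block decomposition of $\Kar(\Sfoam_0)$; in the present theorem these can simply be excluded or handled \emph{ad hoc}.
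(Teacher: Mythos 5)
Your construction coincides with the paper's: $\Su\Kh'$ is defined there as the composite of $\Su\Kh$, the degree-zero truncation to $\HC(\Sfoam_0)$, the passage to $\HC(\Kar(\Sfoam_0))$, the representable functor $\bigoplus_{F\in\Su\basisJW^F}\bigoplus_{k}\Hom_{\Kar(\Sfoam_0)}(F,q^k-)$ on the simple (Jones--Wenzl) objects, and homology, with the four-fold grading read off exactly as you describe from the fact that the simples are indexed by an integer lamination and a first homology class. The technical points you flag are handled the same way in the paper (the case $\Su=\T$ is explicitly excluded there and deferred to the toric section), so the proposal is correct and follows essentially the paper's route.
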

The functor $\Su\Kh^\prime$ factors through $\Su\Kh$ by construction, and both functors categorify the evaluation of links in the skein module $\SWebq$. In Section \ref{sec:alginv1} we also sketch the construction of an alternative algebraic surface link homology $\Su\APS$, which agrees with the Asaeda--Przytycki--Sikora invariants when defined over $\Z/2\Z$, but which depends (when defined over $\Q$ or $\Z$) on the conjectural functoriality of Khovanov homology under foams.

\begin{theorem}[Assuming Conjecture~\ref{conj:functoriality} or with $\Z/2\Z$-coefficients] \label{thm:ss} There exists a surface link homology functor $\Su\APS$ with values in the category of vector spaces graded by $\Z\{\text{essential simple closed curves on }\Su\}\times H_1(\Su)\times \Z\times \Z$. Moreover, if $\phi \colon \Su \to \Su^\prime$ is an embedding of surfaces and $L$ is a link in $\Su\times[0,1]$, then there exists a spectral sequence:
\[\Su\APS(L)\rightsquigarrow \Su^\prime\APS(\phi(L)).\] 
\end{theorem}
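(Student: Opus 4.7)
The plan is to construct $\Su\APS$ via truncation of $\Su\Kh$ and then obtain the spectral sequence from a natural filtration on the target $\Sfoam(\Su^\prime)$-complex induced by the embedding.

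First, I would define $\Su\APS$ by post-composing $\Su\Kh$ from Theorem~\ref{thm:linkhom} with the truncation functor $\HC(\Sfoam) \to \HC(\Sfoam_0)$ induced by the non-negative grading of Theorem~\ref{thm:nonneggrad}, and then with representable-type functors decomposing along formal $\Z$-linear combinations of essential simple closed curves on $\Su$, landing in the claimed category of multiply graded vector spaces. Over $\Z/2\Z$, agreement with the Asaeda--Przytycki--Sikora invariants follows by identifying $\Sfoam_0$ with the appropriate quotient of Bar-Natan's dotted cobordism category used in Boerner's formulation of APS. Over $\Z$ or $\Q$, the truncation yields chain-level maps computed via foams rather than only link cobordisms, so functoriality of $\Su\APS$ under link cobordisms relies on Conjecture~\ref{conj:functoriality}.

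For the spectral sequence, I would use that the embedding $\phi$ induces a pushforward $\phi_{\ast} \colon \Sfoam(\Su) \to \Sfoam(\Su^\prime)$ by Proposition~\ref{prop:prop}, intertwining the Khovanov functors so that $\Su^\prime\Kh(\phi(L)) \cong \phi_{\ast} \Su\Kh(L)$. The crucial geometric point is that a simple closed curve on $\Su$ may become inessential in $\Su^\prime$ but never the converse. Correspondingly, cap/cup foams terminating essential-in-$\Su$ circles that bound disks in $\Su^\prime$ exist in $\Sfoam(\Su^\prime)$ but not in $\Sfoam(\Su)$, and they carry strictly positive $\Z_{\geq 0}$-degree. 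Equivalently, $\phi_{\ast}$ is non-decreasing on the $\Z_{\geq 0}$-grading.

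The spectral sequence then arises from the decreasing filtration $F^{\bullet}$ of $\Su^\prime\Kh(\phi(L))$ by the $\Z_{\geq 0}$-degree of $\Sfoam(\Su^\prime)$, which is bounded and hence convergent. The $E_0$-differential retains only the degree-preserving morphisms, which are precisely those coming from $\phi_{\ast}\Sfoam(\Su)$; passing to cohomology and applying the APS truncation identifies the $E_1$-page with $\Su\APS(L)$. The higher differentials successively incorporate the new foams introduced by the loss of essentiality, and the $E_\infty$-page, after APS truncation, computes $\Su^\prime\APS(\phi(L))$. The principal obstacle will be the rigorous identification of the $E_1$-page with $\Su\APS(L)$, which hinges on a sharp version of monotonicity: a foam in $\Sfoam(\Su^\prime)$ whose endpoint webs come from $\Sfoam(\Su)$ is $\Z_{\geq 0}$-degree-preserving if and only if it factors through $\phi_{\ast}\Sfoam(\Su)$. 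Proving this requires a careful analysis of the behavior of the non-negative grading under essential-to-inessential degenerations of curves and a verification that the filtration and its successive associated gradeds are compatible with the APS-truncation.
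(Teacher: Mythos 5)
Your overall strategy is in the right spirit---both the paper and you exploit the facts that curves essential in $\Su$ may become inessential in $\Su'$ and that the ``new'' phenomena carry positive essential $q$-degree---but there are two genuine gaps.

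First, your definition of $\Su\APS$ is not the paper's and, as sketched, does not produce the claimed $\Z\{\text{essential simple closed curves}\}$-grading. Truncating to $\Sfoam_0$ and then applying ``representable-type functors'' is essentially the construction of the \emph{other} algebraic invariant $\Su\Kh'$ (graded by integer laminations). To get $\Su\APS$ the paper must apply Blanchet's abstract TQFT $\Hom_{\twoFoam}(\emptyset,-)$ to the resolutions, and this forces it to confront two problems you do not address: not every foam in $\Sfoam$ is a Blanchet foam (hence the restriction to the orientable subcategory $\Sfoamor$, Proposition~\ref{prop:Blanchet}), and many webs on $\Su$ do not abstractly bound at all, which is fixed by twisting each block by superposition with a correcting $2$-labeled multi-curve $\wedge^{W_b}$. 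The $\cal{L}_{\Su}$-grading is then read off from the essential boundary circles of the dotted/undotted disks spanning the resulting state spaces. This twisting by $\stwo$-operations is precisely where Conjecture~\ref{conj:functoriality} enters (not, as you say, because chain maps are ``computed via foams'').

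Second, your filtration is placed on the wrong object and uses the wrong grading. The essential $q$-degree is a grading on \emph{morphisms} of $\Sfoam(\Su')$; it does not filter the chain groups of $\Su'\Kh(\phi(L))$, so ``the decreasing filtration of $\Su'\Kh(\phi(L))$ by the $\Z_{\geq 0}$-degree'' is not defined as stated. Your monotonicity claim is also off: $\phi_*$ can strictly \emph{decrease} essential $q$-degree (a dotted annulus over a curve that becomes inessential drops from degree $2$ to degree $0$ after delooping), and the ``sharp'' characterization you call the principal obstacle (degree-preserving iff factoring through $\phi_*\Sfoam(\Su)$) is false and also unnecessary, since the differentials of $\Su'\Kh(\phi(L))$ are by construction images of those of $\Su\Kh(L)$. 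The paper instead filters the \emph{TQFT state spaces} by the $\Su$-weight (the signed count of essential disks), using Lemma~\ref{lem:weightfilt} (a foam of essential $q$-degree $d$ lowers the $\Su$-weight by exactly $d$) together with Lemma~\ref{lem:essemb} (essential $q$-degree zero is preserved by $\phi$) and a compatibility square (Lemma~\ref{lem:comsq}) identifying the associated graded with the degree-zero truncation. Without transporting the filtration to the state spaces and replacing your degree by the $\Su$-weight, the identification of the $E_1$-page with $\Su\APS(L)$ does not go through.
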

These spectral sequences generalize the well-known spectral sequence between the annular and the ordinary Khovanov homology, see Roberts~\cite{Roberts} and further Grigsby--Licata--Wehrli~\cite{GLW} and Hubbard--Saltz \cite{HubbardSaltz}.

\subsection{Skein algebra categorification}
\label{sec:skeinalgcat}
The key feature that makes skein modules of thickened (Heegaard) surfaces useful for the description of skein modules of $3$-manifolds is that they inherit a natural algebra structure from the operation of gluing two copies of the thickened surface along the thickening direction.\footnote{This algebra structure was used in the pioneering work of Przytycki~\cite{Prz1} and Turaev~\cite{Tur}, and it has since been studied in numerous publications, see e.g. Przytycki--Sikora~\cite{PS2} and references therein.} Further $2$- and $3$-handle attachments to the thickened surface then correspond to taking quotients by certain ideals with respect to this multiplication. This gives presentations of the skein modules of all closed orientable $3$-manifolds. Finding a categorification of the skein algebra multiplication is thus a natural problem, which is open except in simple cases.

\begin{example} Let $\Su$ be either $\R^2$ or $\A$, then the Khovanov functor $\Su\Kh$ is monoidal and the tensor product on the target category decategorifies to the skein algebra multiplication in $\SWebq$.
\end{example}
These cases are very special since the monoidal structure on $\Slink$ can be understood as disjoint union and it is well-known that the (annular) Khovanov homology of a disjoint union of links is isomorphic to the tensor product of the invariants of the component links. Moreover, in these special cases, the Khovanov functors are actually braided and pivotal.

For other surfaces we conjecture that the functors from Theorem~\ref{thm:linkhom} can also be made monoidal, possibly after proceeding to a slightly different target category. 

\begin{conjecture}\label{conj:mon} For every oriented surface of finite type $\Su$, there exists a monoidal functor
\[\Su\cat{MKh}=\Su \cat{M}\circ \Su\Kh \colon \Slink \to \HC(\Sfoam)\to \Su\mC\]
which categorifies the evaluation of links in the $\glnn{2}$ skein module of $\Su$. More precisely, we require that the monoidal target dg category $\Su\mC$ is $H_1(\Su)\times \Z\times \Z$-graded (the second $\Z$-factor corresponds to the homological grading, the first one to powers of $q$) and carries an action of $\mathrm{Diff}^+(\Su)$ that is intertwined by $\Su\cat{MKh}$ with the natural action on $\Slink$. Further, there is an isomorphism $K_0(\Su\mC) \cong \SWebq$ of $H_1(\Su)$-graded unital $\Z[q^{\pm 1}]$-algebras and $\mathrm{Diff}^+(\Su)$-representations that sends the class $[\Su\cat{MKh}(L)]$ of a link $L$ to its skein algebra evaluation. This would give a categorification of the skein algebra $\SWebq$ in the sense of \cite[Definition 2]{QW}. 
\end{conjecture}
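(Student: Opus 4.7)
The plan is to construct the monoidal target dg category $\Su\mC$ as a refinement (or dg enhancement) of $\HC(\Sfoam)$ equipped with the candidate bifunctor described at the end of Section \ref{sec:skeinalgcat}, and then verify the monoidal axioms together with compatibility with $\Su\Kh$ piece by piece. I would begin by making the stacking operation precise on objects: given webs $W_1,W_2$ drawn in $\Su$, their product $W_1 \otimes W_2$ is the web in $\Su$ obtained by placing $W_1$ at height $1/4$ and $W_2$ at height $3/4$ inside $\Su\times[0,1]$, reprojecting under the canonical identification $\Su\times[0,2]\cong \Su\times[0,1]$, and resolving resulting crossings via the $\glnn{2}$ skein. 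On decategorification this recovers the skein-algebra multiplication by construction, and the $H_1(\Su)$-grading is respected because stacking is additive on homology classes.

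The next step is to extend the bifunctor to foams by vertical juxtaposition of cobordisms in the thickening direction, and to verify: (i) compatibility with all local foam relations in $\Sfoam$; (ii) construction of associator and unitor isomorphisms, lifted from the canonical rescaling isotopies $\Su\times[0,1] \sqcup_\Su \Su\times[0,1] \cong \Su\times[0,1]$ to invertible foams; (iii) the pentagon and triangle axioms, which should reduce to the fact that the corresponding rescaling isotopies of cobordisms are themselves isotopic rel boundary; (iv) descent to $\HC(\Sfoam)$, i.e. showing that if one first forms the cube of resolutions of a stacked link diagram and if one instead tensors the two individual cubes, the resulting chain complexes are related by a canonical homotopy equivalence. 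Step (iv) is essentially the statement that the cube-of-resolutions construction is monoidal with respect to disjoint union followed by projection, which is standard on $\R^2$ but needs to be upgraded to account for surface topology.

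The main obstacle, and the reason the statement is posed as a conjecture, is the interplay between stacking and the non-local features of the surface skein module. When two webs are stacked, the resolution of overlapping strands produces closed components and bigons that, locally in a disk, simplify to scalars via the usual Kauffman bracket relations, but globally on $\Su$ may represent nontrivial $H_1(\Su)$-graded skein elements. One therefore has to show that the bifunctor extends coherently across such globally nontrivial reductions, that it commutes (up to canonical natural isomorphism) with the $\mathrm{Diff}^+(\Su)$-action from Proposition \ref{prop:prop}, and that it is homotopy invariant under movie moves applied to cobordisms placed in either factor. Isolating the local content of this compatibility from the global content — and showing that the local content is governed by the dotted Blanchet foam relations already present in $\Sfoam$ — is where the technical difficulty lies.

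I would verify the proposal first in the cases where the conclusion is known, namely $\Su=\R^2$ and $\Su=\A$, to confirm that the candidate bifunctor reproduces the standard monoidal structures there. The categorified Frohman--Gelca calculations for the torus mentioned in the abstract would then serve both as a consistency check and as a testing ground for the descent-to-chain-complexes step (iv). Finally, the surface-embedding functoriality from Proposition \ref{prop:prop} should allow one to promote local verifications (performed in neighborhoods diffeomorphic to disks or annuli) to the general surface case, provided one can simultaneously track the interaction of the tensor product with the $H_1(\Su)$-grading across such embeddings.
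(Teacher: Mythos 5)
This statement is Conjecture~\ref{conj:mon} in the paper: it is posed as an open conjecture, not proved, and your proposal does not close the gaps that make it one. The most serious gap is in your step (iv). The paper does construct the stacking bifunctor $*\colon \Sfoam\times\Sfoam\to\HC(\Sfoam)$ (Proposition~\ref{prop:bifunctor}), but this bifunctor assigns to a pair of foams a chain map that is well defined only \emph{up to homotopy}. To extend a tensor product from the additive category $\Sfoam$ to chain complexes over it, one must use the differentials of the two factors as components of the differential of the product complex; if those components are only specified up to homotopy, the product complex is determined only up to non-canonical isomorphism, which is not a bifunctor on $\HC(\Sfoam)$, let alone a monoidal structure. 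A ``canonical homotopy equivalence'' between the two cube-of-resolutions constructions, as you propose, does not resolve this: the issue is not comparing two candidate complexes but the fact that neither candidate has a well-defined differential in the first place. This is exactly why the paper argues that $\HC(\Sfoam)$ is not the right target and proposes instead $\Kar(\Sfoam_0)_{\mathrm{dg}}$, which is semisimple for $\Su\neq\T,\Stwo$ (Proposition~\ref{prop:semisimple}), so that every complex retracts onto a minimal complex with zero differentials and homotopic chain maps between minimal complexes are actually equal. Whether the bifunctor descends to a genuine monoidal structure there is Question~\ref{q:monoidal}, which remains open.

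A second, independent gap: even the bifunctor on $\Sfoam\times\Sfoam$ requires evaluating the Khovanov construction on foams embedded in $\Su\times[0,1]^2$ regarded as morphisms of tangled webs, and the well-definedness of that assignment up to isotopy is precisely the unproven Functoriality Conjecture~\ref{conj:functoriality}; all of Section~\ref{sec:superposition} is conditional on it. Your identification of the ``main obstacle'' as the global $H_1(\Su)$-graded simplification of closed components is not where the difficulty lies --- that part is handled by Lemma~\ref{lem:neckcut} and its categorified versions. The genuinely hard points are (a) functoriality under foams, (b) rigidifying the homotopy ambiguity of the product differential, and (c) exhibiting coherent associators on the rigidified target; your proposal addresses none of these beyond the cases $\R^2$ and $\A$ where the tensor product is disjoint union and the problem does not arise.
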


Our main reason for expecting that $\HC(\Sfoam)$ is not an ideal target category for a monoidal Khovanov functor is that it seems impossible to directly define the putative tensor product. To illustrate this, note that the Khovanov functor sends crossingless links in $\Su$ essentially to themselves, considered as a chain complex of a single web, concentrated in homological degree zero. The tensor product of such objects of $\HC(\Sfoam)$ should, thus, be computed, by lifting these elements against the Khovanov functor to actual links, where they can be superposed in $\Slink$, and the tensor product is then given by the image of the superposition under the Khovanov functor. Under an additional assumption on the functoriality of the Khovanov functors under foams, this process actually extends to a bifunctor $\Sfoam\times \Sfoam \to \HC(\Sfoam)$, see Proposition~\ref{prop:bifunctor}. However, this bifunctor sends pairs of foams to chain maps, which are well-defined only up to homotopy. It is then unclear how to extend this bifunctor to chain complexes in $\Sfoam$, or to the homotopy category. Indeed, the differentials of the chain complexes in the two arguments would specify components of the differential of the product chain complex only up to homotopy. The product would, at best, be determined only up to isomorphism, which is not sufficient. 

This problem suggests to replace $\HC(\Sfoam)$ by a target category $\Su\mC$ with more rigid morphism spaces, in which homotopic chain maps are actually forced to become equal. A natural candidate for such a replacement is given by the functor $\HC(\Sfoam)\to \HC(\Kar(\Sfoam_0))$ induced by the degree zero truncation and the embedding of a category into its Karoubi envelope. By Proposition~\ref{prop:semisimple} $\Kar(\Sfoam_0)$ is semisimple for $\Su\neq \T,\Stwo$, which implies that any chain complex over this category retracts onto an essentially unique minimal complex with zero differentials, and homotopic chain maps between such complexes are necessarily equal. The category of such minimal complexes can be considered as $\Kar(\Sfoam_0)$ equipped with an additional homological grading, or equivalently, as a dg category $\Kar(\Sfoam_0)_{dg}=\bigoplus_{k\in \Z} t^k\Kar(\Sfoam_0)$ with trivial differential. We define the $\delta$-grading on objects in $\Kar(\Sfoam_0)_{dg}$ as the difference of the $q$-grading and the homological grading. It seems possible that $\Kar(\Sfoam_0)_{dg}$ could serve as a target category for a monoidal link homology functor as in Conjecture~\ref{conj:mon}.

\begin{questions}\label{q:monoidal} Does the bifunctor $*\colon \Sfoam\times \Sfoam \to \HC(\Sfoam)$ induce a monoidal structure on $\Kar(\Sfoam_0)_{dg}$ for $\Su\neq \T,\Stwo$? If so, does the tensor product $*$ respect the parity of the $\delta$-grading in the sense that $\deg_{\delta}(A* B) = \deg_{\delta}(A)+\deg_{\delta}(B)$ in $\Z/2\Z$?
\end{questions}

The relevance of the second question will soon become clear.

\subsection{Positivity in skein algebras}
A central motivation for Conjecture~\ref{conj:mon} and an important hint about the structure of the putative monoidal target category $\Su\mC$ come from positivity phenomena in skein algebras. More specifically, a conjecture of Fock--Goncharov \cite[Conjecture 12.4]{FoG}, reformulated by Thurston \cite[Conjecture 4.20]{Thu}, claims that a particular basis $\Su\basis$ of the $\glnn{2}$ skein algebra $\SWebq$, which we define in Section~\ref{sec:gl2skein}, has positive structure constants.

\begin{conjecture}\label{conj:positivity} For every oriented surface $\Su$ of finite type, the products of elements of $\Su\basis$ in the skein algebra $\SWebq$ are $\N[-q^{\pm 1}]$-linear combinations of elements of $\Su\basis$. 
\end{conjecture}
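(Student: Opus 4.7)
The plan is to deduce Conjecture~\ref{conj:positivity} from the categorification program of Section~\ref{sec:skeinalgcat}, exploiting the general principle that in a Krull--Schmidt monoidal category the decomposition of a tensor product of indecomposable objects automatically produces non-negative structure constants in $K_0$.

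Concretely, I would assume Conjecture~\ref{conj:mon} together with a positive answer to both parts of Question~\ref{q:monoidal}, so that $\Kar(\Sfoam_0)_{dg}$ becomes a monoidal dg category whose $K_0$ is $\SWebq$ and whose tensor product $*$ preserves the parity of the $\delta$-grading. Since Proposition~\ref{prop:semisimple} guarantees semisimplicity of $\Kar(\Sfoam_0)$ for $\Su \neq \T, \Stwo$, and the parametrisation of Section~\ref{sec:gl2skein} puts the simple objects in bijection with $\Su\basis$ up to grading shifts, each $b_\lambda \in \Su\basis$ lifts to a simple object $S_\lambda$, and one obtains a decomposition
\[
S_\lambda * S_\mu \;\cong\; \bigoplus_\nu M^\nu_{\lambda \mu} \otimes S_\nu
\]
for graded $\Z$-modules $M^\nu_{\lambda \mu}$. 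Passing to $K_0$ and setting $t = -1$ for the homological grading yields a priori integer-coefficient Laurent polynomial structure constants. The $\delta$-parity preservation, combined with $\delta = q - t$, then forces each bidegree $(a,b)$ appearing in $M^\nu_{\lambda \mu}$ to satisfy $a \equiv b \pmod 2$, after normalising $S_\lambda$ so that its $\delta$-parity is controlled by its $H_1(\Su)$-degree modulo $2$. Consequently each monomial contribution $q^a t^b$ specialises to $(-q)^a$, and the structure constants lie in $\N[-q^{\pm 1}]$.

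The main obstacle is, unsurprisingly, the simultaneous resolution of Conjecture~\ref{conj:mon} and Question~\ref{q:monoidal}. This requires promoting the bifunctor $*$ of Proposition~\ref{prop:bifunctor} to a strictly associative monoidal structure on $\Kar(\Sfoam_0)_{dg}$, which in turn demands that the superposition homotopies become canonical after truncating to the non-negative degree zero part and passing to the Karoubi envelope. I would approach this incrementally, testing associativity and parity on elementary products of webs using the movie calculus of Section~\ref{sec:BKh}, and leveraging the Frohman--Gelca type computations in the torus case alluded to in the introduction as a proving ground for the general argument.

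The excluded surfaces $\Stwo$ and $\T$ must be handled separately: for the sphere the $\glnn{2}$ skein algebra is trivially computed and its basis has visibly positive structure constants, while for the torus, positivity follows from an explicit inspection of the classical Frohman--Gelca product formula adapted to the $\glnn{2}$ refinement developed in Section~\ref{sec:gl2skein}.
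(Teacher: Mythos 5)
The statement you are asked to prove is stated in the paper as a \emph{conjecture} (the $\glnn{2}$ analogue of Fock--Goncharov and Thurston's positivity conjecture), and the paper does not prove it. What the paper does prove is (i) that it is equivalent to the Kauffman-bracket version via the forgetful map of Section~\ref{sec:pos}, and (ii) the torus case, which follows from the manifestly positive Frohman--Gelca formula of Theorem~\ref{thm:FG} (proved in Appendix~A). Your proposal is not a proof either: it is a conditional reduction to Conjecture~\ref{conj:mon} and an affirmative answer to Question~\ref{q:monoidal}, both of which are open and are, if anything, harder than the positivity statement itself. This reduction is exactly the motivational observation the authors make in Section~\ref{sec:skeinalgcat}, so as a strategy it is faithful to the paper's viewpoint, but you should not present it as a proof.

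There is also a concrete error in the reduction as you set it up. The simple objects of $\Kar(\Sfoam_0)$ are the Jones--Wenzl basis foams, and by Lemma~\ref{lem:JWdecat} their classes in $K_0$ are the elements of $\Su\basisJW$, \emph{not} of $\Su\basis$. So a parity-preserving monoidal structure on $\Kar(\Sfoam_0)_{dg}$ would yield positivity of $\Su\basisJW$ (Question~\ref{quest:JWpositivity}), which is a different and possibly stronger statement --- and one that is known to \emph{fail} for the torus, which is precisely why the paper replaces the Jones--Wenzl foams by extremal weight projector foams in $\Kar(\essTfoam_0)$ there and leaves the putative target $\Su\mC$ of Conjecture~\ref{conj:mon} unspecified in general. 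Your identification of the simples with $\Su\basis$ therefore needs either a different target category whose simples decategorify to the $T$-basis, or a separate positive triangular change of basis argument from $\Su\basisJW$ to $\Su\basis$, which does not by itself transport positivity of structure constants. The sphere and torus cases you handle separately are fine: the sphere's skein algebra is $\Z[q^{\pm1}]$ and the torus case is Theorem~\ref{thm:FG}.
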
 

A key observation is that the positivity property expressed in Conjecture~\ref{conj:positivity} for a surface $\Su$ would be a natural consequence of successful categorification of the skein algebra $\SWebq$ as in Conjecture~\ref{conj:mon}, provided that the tensor product respects the parity of the $\delta$-grading, i.e. the difference of the $q$-grading and the homological grading on $\Su\mC$. A positive basis could then arise as the decategorification of the set of isomorphism classes of indecomposable objects in $\Su\mC$. Conversely, this observation feeds the expectation that the putative target category $\Su\mC$ should contain precisely such indecomposable objects, which might be useful for constructing $\Su\mC$.  

The elements of the basis $\Su\basis$ of $\SWebq$ can be understood as skein algebra products of Chebyshev polynomials\footnote{More accurately, since we consider $\glnn{2}$ rather than $\slnn{2}$ skein algebras, we should replace Chebyshev polynomials by power-sum symmetric polynomials in two variables. We intentionally blur this distinction in the introduction.} of the first kind $T_n$, evaluated on essential simple closed curves on $\Su$. The $T_n\in\Z[X]$ are defined by the recursion $T_{n+1}=X T_n - T_{n-1}$ for $n\geq 3$ with initial conditions $T_2=X^2-2$,  $T_1=X$ and $T_0=1$. We have argued in \cite{QW,QW2} that $T_n(c)$ for a simple closed curve $c$ should be interpreted as the curve $c$ colored by the extremal weight spaces $\Sym^n_{\mathrm{ext}}(V)$ in the $U_q(\glnn{2})$-representation $\Sym^n(V)$, where $V$ denotes the vector representation. The more familiar colorings by $\Sym^n(V)$, which feature in the definition of the colored Jones polynomials, can be realized by using Chebyshev polynomials of the second kind $S_n$ instead. These satisfy the same recurrence, but with initial conditions $S_2=X^2-1$, $S_1=X$ and $S_0=1$. In general, any basis for the $\Z$-module $\Z[X]$ will give rise to an associated basis of $\SWebq$. In the following table, we collect the three bases that we will use in this article.
\begin{center}
\begin{tabular}{c|c|c|c}
 $\SWebq$ basis & $\Z[X]$ basis & color & categorification strategy  \\
\hline
 $\Su\basisstd$ & $X^n$ & $V^{\otimes n}$ & standard Khovanov homology \\
 \hline
 $\Su\basisJW$ & $S_n$ & $\Sym^n(V)$ & colored Khovanov homology \\
  \hline
 $\Su\basis$ & $T_n$ & $\Sym^n_{\mathrm{ext}}(V)$ & toric colored Khovanov homology \\
\end{tabular}
\end{center}
The basis changes from $\Su\basisstd$ to $\Su\basisJW$ and further onward to $\Su\basis$ are triangular and positive, which follows from the analogous result for polynomial bases. In the table, we have also indicated the strategies that can be used to categorify the skein module basis elements. To warm up, we will describe this process in the case of the basis $\Su\basisJW$, whose elements can be considered as symmetrically colored, crossingless links in $\Su$ whose components are pairwise non-isotopic (actually, in the skein module $\SWebq$, such elements also appear with additional $2$-labeled edges, but we ignore this feature for now). In \cite{Kh7}, Khovanov has proposed several ways of making sense of symmetrically colored links in the framework of Khovanov homology. These proposals extend to the case of links in thickened surfaces, and for crossingless link diagrams and ground ring $\Q$ it is easy to see that the different proposals agree. 

Let $c$ be an oriented simple closed curve $\Su$, which we may also consider as an object in $\Sfoam$. The $\Sym^n(V)$-colored version of $c$ is defined as an object of $\Kar(\Sfoam)$, i.e. an idempotent morphism in $\Sfoam$. To describe this idempotent, consider the multi-curve $c^n$ consisting of $n$ parallel copies of $c$. By virtue of the functoriality of Khovanov homology, there exists a homomorphism from the braid group $B_n$ to $\End_{\Sfoam}(c^n)$, which takes braids to cobordisms that braid the components of $c^n$ around each other. A simple computation using the Reidemeister II chain maps (see Section~\ref{sec:Reidemeister}) shows that this homomorphism induces a homomorphism $S_n \to \End_{\Sfoam}(c^n)$. The idempotent representing $\Sym^n(V)$-colored version of $c$ is now defined to be the image of the symmetric Young symmetrizer in $\C S_n$. Alternatively, a linear combination of foams representing this idempotent can also be described as rotation foam generated by the $\glnn{2}$ version of the $n$-th Jones--Wenzl projector. This uses the fact that rotation foams satisfy the same relations as their web sections, interpreted as morphisms in a web category at $q=1$, see Queffelec--Rose~\cite{QR2} and Queffelec--Rose--Sartori~\cite{QRS}.

\begin{example}\label{exa:JWfoam} We illustrate the idempotent representing a $\Sym^2(V)$-colored curve, first as the cobordism version of the second Young symmetrizer $\frac{1+s}{2}\in \C S_2$, and then as a linear combination of rotation foams generated by the second Jones--Wenzl projector for $\glnn{2}$. 

  \[
\begin{tikzpicture}[anchorbase,scale=.2]
\draw[thick] (0,0) ellipse (4 and 1);
\node at (1,0) {\tiny $\Sym^2$};
\end{tikzpicture}
=\mathrm{im}\left(\frac{1}{2}\,
\begin{tikzpicture}[fill opacity=.2,anchorbase,scale=.2]
\draw[thick] (0,0) ellipse (4 and 1);
\draw[thick] (0,0) ellipse (3 and 0.75);
\draw[thick] (0,3) ellipse (4 and 1);
\draw[thick] (0,3) ellipse (3 and .75);
\draw (-4,0) to (-4,3);
\draw (-3,0) to (-3,3);
\draw (4,0) to (4,3);
\draw (3,0) to (3,3);
\end{tikzpicture} 
+
\frac{1}{2}\,
\begin{tikzpicture}[fill opacity=.2,anchorbase,scale=.2]
\draw (-4,0) to (-3,3);
\draw[white, line width=.1cm] (-3,0) to (-4,3);
\draw (-3,0) to (-4,3);
\draw (4,0) to (3,3);
\draw[white, line width=.1cm] (3,0) to (4,3);
\draw (3,0) to (4,3);
\draw[thick] (0,0) ellipse (4 and 1);
\draw[thick] (0,0) ellipse (3 and 0.75);
\draw[thick] (0,3) ellipse (4 and 1);
\draw[thick] (0,3) ellipse (3 and .75);
\end{tikzpicture} 
\right)
=\mathrm{im}\left(
\begin{tikzpicture}[fill opacity=.2,anchorbase,scale=.2]
\draw[thick] (0,0) ellipse (4 and 1);
\draw[thick] (0,0) ellipse (3 and 0.75);
\draw[very thick, red] (-4,0) to (-4,3);
\draw[very thick, red] (-3,0) to (-3,3);
\draw (4,0) to (4,3);
\draw (3,0) to (3,3);
\draw[thick] (0,3) ellipse (4 and 1);
\draw[thick] (0,3) ellipse (3 and .75);
\end{tikzpicture} 
\textcolor[rgb]{1.00,0.00,0.00}\, {-\frac{1}{2}}\,
\begin{tikzpicture}[fill opacity=.2,anchorbase,scale=.2]
\draw[thick] (0,0) ellipse (4 and 1);
\draw[thick] (0,0) ellipse (3 and 0.75);
\draw[very thick, red] (-4,0) to [out=90,in=225] (-3.5,1) to (-3.5,2) to [out=135,in=270] (-4,3);
\draw[very thick, red] (-3,0) to [out=90,in=315] (-3.5,1) to (-3.5,2) to [out=45,in=270] (-3,3);
\draw (3,0) to [out=90,in=225] (3.5,1) to (3.5,2) to [out=135,in=270] (3,3);
\draw (4,0) to [out=90,in=315] (3.5,1) to (3.5,2) to [out=45,in=270] (4,3);
\draw[thick] (0,3) ellipse (4 and 1);
\draw[thick] (0,3) ellipse (3 and .75);
\end{tikzpicture} 
\right)
\]
\end{example}

Using rotation foams generated by higher order Jones--Wenzl projectors for $\glnn{2}$, we construct categorifications of all elements of the basis $\Su\basisJW$, and it is easy to see that the image of every object of $\Sfoam$ under the embedding in $\Kar(\Sfoam)$ is isomorphic to a direct sum of shifts of such categorified elements of $\Su\basisJW$. Moreover, if $\Su\neq \T$, these objects of $\Kar(\Sfoam)$ have no non-trivial endomorphisms or morphisms to other objects in degree zero, which implies Proposition~\ref{prop:semisimple}, i.e. that the category $\Kar(\Sfoam_0)$ is semisimple, and that it decategorifies to $\SWebq$. This suggests that a tensor product on $\Kar(\Sfoam_0)_{dg}$ that is compatible with a monoidal surface link homology functor $\Su\cat{MKh}$ as in Conjecture~\ref{conj:mon} and that respects the parity of the $\delta$-degree would require an affirmative answer to the following question. 

\begin{question} \label{quest:JWpositivity}
Is the basis $\Su\basisJW$ positive, i.e. does it have structure constants in $\N[-q^{\pm 1}]$, for $\Su \neq \T$?
\end{question}
Thurston conjectures that this is true in the analogous framework of unquantized Kauffman bracket skein algebras whenever $\Su$ has free fundamental group, i.e. at least one puncture or boundary component, \cite[Conjecture 4.19]{Thu}. However, to the best of our knowledge, no counterexample is known among the closed orientable surfaces of genus greater than one. In any case, the basis $\Su\basisJW$ passes the basic test that it admits a positive change of basis to $\Su\basis$, which is a necessary condition for positivity according to L\^e \cite[Theorem 1.2]{Le}. 

It is easy to see that for the torus the basis $\T\basisJW$ is not positive. Frohman and Gelca~\cite{FG} showed that one should instead consider the finer basis $\T\basis$ modeled on the Chebyshev polynomials of the first kind, which are harder to categorify. Fortunately, the morphism spaces in the category $\Tfoam$ are controlled by $\Ss^1$-equivariant foams, i.e. foams obtained by rotating an annular $\glnn{2}$ web along a slope in $\T$. This allows us to use rotation foams generated by the extremal weight projectors, which we have introduced in \cite{QW,QW2}, to categorify the basis elements in $\T\basis$. We describe this process and related peculiarities of the torus case in the next section.

\subsection{Toric link homology and a categorified Frohman--Gelca formula}
Associated with every foam $F$ in a thickened surface $\Su\times [0,1]$ is a properly embedded compact surface $c(F)\subset \Su\times [0,1]$, not necessarily connected or orientable. The $\Z$-degree of $F$ as a morphism in $\Sfoam$ is given by $2 d-\chi(c(F))$ where $d$ is the number of certain decorations, called dots, on $F$. 

As far as surface link homologies $\Su\Kh$ and foam categories $\Sfoam$ are concerned, the cases $\Su=\T$ and $\Su=\Stwo$ are special because their thickenings contain boundary-parallel incompressible closed surfaces of non-negative Euler characteristic. As a consequence, the endomorphism algebra of the empty web $\emptyset$ is of infinite rank in degree zero in $\Tfoam$, and it even contains negative-degree endomorphisms in $\Stwo\cat{Foam}$.

In order to define a more manageable foam category for the torus, we define a quotient $\essTfoam$ of $\Tfoam$, in which foams $F$ with boundary-parallel tori in $c(F)$ are set to zero (while compressible tori still evaluate, as usual, to $\pm 2$). The category $\essTfoam$ has full \textit{slope subcategories}, see Definition~\ref{def:slopesubcat}, in which all morphisms are linear combinations of rotation foams generated by affine $\glnn{2}$ webs, where rotation is performed along a chosen slope on the torus. Each of these slope subcategories is isomorphic to a quotient $\essbAWeb$ of the category of affine $\glnn{2}$ webs at $q=1$. The defining quotient map takes essential circles in the annulus to zero and precisely corresponds to the quotient map from $\Tfoam$ to $\essTfoam$ that kills the boundary-parallel essential torus.

We introduced the affine web category $\essbAWeb$ in \cite{QW2} (denoted $2\essbAWeb$ there, to set it apart from its cousins for $\glnn{N}$) and proved that it gives rise to a presentation of the representation category of the Cartan subalgebra $\mathfrak{h}\subset \glnn{2}$. In particular, $\essbAWeb$ contains idempotent morphisms which correspond to the projections of the extremal weight spaces in the $\glnn{2}$-representation $\Sym^n(V)$. We call them extremal weight projectors and show that they categorify the Chebyshev polynomials of the first kind in the same sense as the Jones--Wenzl projectors categorify the Chebyshev polynomials of the second kind.\footnote{More accurately, the $\glnn{2}$ extremal weight projectors categorify power-sum symmetric polynomials in two variables, while the $\glnn{2}$ Jones--Wenzl projectors categorify complete symmetric polynomials. Chebyshev polynomials appear for $\slnn{2}$ instead of $\glnn{2}$.}

\begin{example} The second Jones--Wenzl projector and the second extremal weight projector for $\glnn{2}$ in comparison:
\[
\begin{tikzpicture}[anchorbase, scale=.3]
\draw (0,0) to (3,0);
\draw (0,3) to (3,3);
\draw[dashed] (0,0) to (0,3);
\draw[dashed] (3,0) to (3,3);
\draw[very thick, ->] (1,0) to (1,3);
\draw[very thick, ->] (2,0) to (2,3);
\end{tikzpicture}
\;- \frac{1}{2}\;
\begin{tikzpicture}[anchorbase, scale=.3]
\draw (0,0) to (3,0);
\draw (0,3) to (3,3);
\draw[dashed] (0,0) to (0,3);
\draw[dashed] (3,0) to (3,3);
\draw[very thick] (1,0) to [out=90,in=225](1.5,1);
\draw[very thick] (2,0) to [out=90,in=315] (1.5,1);
\draw[double] (1.5,1) to (1.5,2);
\draw[very thick,->] (1.5,2) to [out=135,in=270](1,2.9) to (1,3);
\draw[very thick,->] (1.5,2) to [out=45,in=270](2,2.9) to (2,3);
\end{tikzpicture}
\qquad, \qquad
 \begin{tikzpicture}[anchorbase, scale=.3]
\draw (0,0) circle (1);
\draw (0,0) circle (3);
\draw [very thick,->] (.8,.6) to (2.4,1.8);
\draw [very thick,->] (-.8,.6) to (-2.4,1.8);
\node at (0,-1) {$*$};
\node at (0,-3) {$*$};
\draw [dashed] (0,-1) to (0,-3);
\end{tikzpicture}
-\frac{1}{2} \;
 \begin{tikzpicture}[anchorbase, scale=.3]
\draw (0,0) circle (1);
\draw (0,0) circle (3);
\draw [very thick] (.8,.6) to [out=45, in=315] (0,1.5);
\draw [very thick] (-.8,.6) to [out=135,in=225] (0,1.5);
\draw [double] (0,1.5) to (0,2.25);
\draw [very thick,->] (0,2.25) to [out=45,in=225](2.16,1.62) to(2.4,1.8);
\draw [very thick,->] (0,2.25) to [out=135,in=315](-2.16,1.62) to(-2.4,1.8);
\node at (0,-1) {$*$};
\node at (0,-3) {$*$};
\draw [dashed] (0,-1) to (0,-3);
\end{tikzpicture}
-\frac{1}{2} \;
 \begin{tikzpicture}[anchorbase, scale=.3]
\draw (0,0) circle (1);
\draw (0,0) circle (3);
\draw [very thick] (.8,.6) to [out=45, in=90] (1.5,0) to [out=270,in=0] (0,-1.5);
\draw [very thick] (-.8,.6) to [out=135,, in=90] (-1.5,0) to [out=270,in=180] (0,-1.5);
\draw[double] (.2,-1.5) to (.2,-2.25);
\draw [very thick,->] (0,-2.25) to [out=0,in=270] (2.25,0) to [out=90,in=225](2.16,1.62) to(2.4,1.8);
\draw [very thick,->] (0,-2.25) to [out=180,in=270] (-2.25,0) to [out=90,in=315](-2.16,1.62) to(-2.4,1.8);
\node at (0,-1) {$*$};
\node at (0,-3) {$*$};
\draw [dashed] (0,-1) to (0,-3);
\end{tikzpicture}\]
\end{example}

In Section~\ref{sec:toric} we use the $\glnn{2}$ extremal weight projectors in $\essbAWeb$ and rotation foams generated by them to prove the following theorem. 

\begin{theorem} The category $\essTfoam$ contains idempotent morphisms, which generate a skeleton of the semisimple Karoubi envelope $\Kar(\essTfoam_0)$, and which then decategorify to the elements of the positive basis $\T\basis$ of $\TWebq$. 
\end{theorem}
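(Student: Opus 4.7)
The plan is to produce, for each Frohman--Gelca basis element $b \in \T\basis$, an explicit idempotent $e_b$ in $\essTfoam$ built from rotation foams generated by extremal weight projectors of \cite{QW2}, and then to verify that the family $\{e_b\}_{b \in \T\basis}$ is a complete set of pairwise orthogonal indecomposable idempotents whose classes in $K_0$ recover $\T\basis$.

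For the construction, recall that each $b \in \T\basis$ is a product $\prod_i T_{n_i}(c_i)$ of Chebyshev polynomials of the first kind on pairwise non-isotopic essential simple closed curves $c_i \subset \T$. When all $c_i$ share a common slope $s$, their disjoint union lives in the slope subcategory of $\essTfoam$, which by Definition~\ref{def:slopesubcat} and the identification recalled in the paper is isomorphic to $\essbAWeb$ at $q=1$ via rotation along $s$. I would take $e_b$ to be the rotation foam generated by the tensor product of extremal weight projectors $P^{\mathrm{ext}}_{n_1} \otimes \cdots \otimes P^{\mathrm{ext}}_{n_k}$ in $\essbAWeb$. For a general $b$ whose curves have multiple distinct slopes, I would first present the multicurve as a concrete web in $\T$ after resolving its transverse intersections via the $\glnn{2}$ skein relation, and then assemble $e_b$ from the slope-wise projectors along each slope subcategory.

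The decategorification step follows from the main result of \cite{QW2}: the extremal weight projector $P^{\mathrm{ext}}_n$ categorifies the $n$-th power-sum symmetric polynomial in two variables, which at the $\glnn{2}$-skein level is exactly $T_n$ evaluated on the underlying circle. Since rotation foams satisfy the $q=1$ web relations (cf.\ \cite{QR2,QRS} and the discussion of Example~\ref{exa:JWfoam}), the class $[e_b] \in K_0(\Kar(\essTfoam_0))$ corresponds under the isomorphism of Proposition~\ref{prop:semisimple} to $b \in \TWebq$. Idempotency of $e_b$ is inherited from that of the $P^{\mathrm{ext}}_{n_i}$'s and the fact that rotation is an algebra homomorphism within a single slope subcategory.

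The main obstacle is the orthogonality and indecomposability: verifying $\End_{\essTfoam_0}(e_b) = \Z$ and $\Hom_{\essTfoam_0}(e_b,e_{b'}) = 0$ for $b\neq b'$. This is precisely where the quotient $\essTfoam$ — in which boundary-parallel tori are killed — is indispensable, since in $\Tfoam$ such tori would contribute infinitely many spurious degree-zero scalars. I would reduce degree-zero foams between two such projector-decorated webs to their slope-wise pieces by neck-cutting along small annuli transverse to the rotation direction, then invoke the turn-back killing property of extremal weight projectors from \cite{QW2} to annihilate all contributions except those arising from closed foam components. The remaining closed components must be boundary-parallel tori in $\T \times [0,1]$, hence zero in $\essTfoam$ by definition, leaving only the identity scalar. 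Completeness, i.e.\ that every object of $\Kar(\essTfoam_0)$ is a direct sum of shifted $e_b$'s, then follows because $\T\basis$ spans $\TWebq \cong K_0(\Kar(\essTfoam_0))$ and the $[e_b]$ already account for every basis class.
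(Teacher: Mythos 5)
Your overall strategy coincides with the paper's: rotation foams generated by the extremal weight projectors $T_d$ along each slope, identification of the (parallel) slope subcategories with $\essbAWeb$, and the quotient killing $1$-labeled boundary-parallel tori. However, the orthogonality/indecomposability step has a genuine hole. After neck-cutting and turnback-killing, the surviving degree-zero endomorphisms of a projector-decorated web are \emph{not} only the identity plus closed components: the wrap endomorphism $\wrap$ (the rotation foam of the affine web that winds once around the annulus) is an incompressible vertical annulus of essential $q$-degree zero, it is not a boundary-parallel torus, and it is nonzero in $\essTfoam$ by the relative-homology argument of Proposition~\ref{prop:linindep}. Consequently $\End(e_b)$ is a Laurent polynomial ring $\C[\wrap^{\pm 1}]$ (and $\C[c_2^{\pm 1}]$ for the empty web, since only $1$-labeled tori are killed in the quotient), as recorded in Lemma~\ref{lem:KarTmor}. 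Your claim that one is left ``only with the identity scalar'' is therefore false as stated, and semisimplicity fails in that unrefined category. The paper repairs this by introducing an additional winding grading via a basepoint on $\T$, adjoining formal winding shifts of objects and restricting to winding-homogeneous morphisms; only then do the endomorphism rings become one-dimensional and Proposition~\ref{prop:Tss} go through (note also that the winding shifts $\sh^a\w{r,s}$ of purely $2$-labeled objects are \emph{not} all isomorphic, which is why both $\w{r,s}$ and $\sh\w{r,s}$ appear among the simples).

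Two smaller points. First, on the torus two essential simple closed curves of distinct slopes always intersect, so an integer lamination is supported on a single slope and your ``multiple distinct slopes'' case for the $1$-labeled curves is vacuous; what actually requires care is the superposition of the projector with the $2$-labeled multicurve $\w{r,s}$, which generically has a different slope. Saying you will ``resolve the transverse intersections via the skein relation'' does not by itself show that the resulting morphism is still idempotent and still kills turnbacks; this is the content of the explicit local models and Proposition~\ref{prop:idempotent} (or, alternatively, of the $\stwo$-auto-equivalences, which rest on Conjecture~\ref{conj:functoriality}). Second, orthogonality between projectors of \emph{different} slopes is not obtained by neck-cutting alone: one needs the factorization of slope-changing degree-zero foams through purely $2$-labeled webs (Lemma~\ref{lem:betweenslopes}), after which Proposition~\ref{prop:webdecomp} finishes the argument.
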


We conjecture that a homologically graded version $\Kar(\essTfoam_0)_{\mathrm{dg}}$ of this category is a suitable target category for a monoidal toric Khovanov homology functor. Assuming functoriality of Khovanov homology under foams, we construct a bifunctor $*$ on this category and conjecture that it extends to a monoidal structure that is compatible with the toric link homology. We provide evidence for this conjecture by evaluating the bifunctor on certain categorified basis elements. Assuming the previous conjectures about functoriality of Khovanov homology and the extension to a monoidal structure, these computations also prove that the products of categorified Frohman--Gelca basis elements decompose along the $\glnn{2}$ version of the Frohman--Gelca product-to-sum formula from \cite{FG}:  
\[(m,n)_T * (r,s)_T=  (m+r,n+s)_T + (m-r,n-s)_T*\w{r,s} \]
in $\TWebq$, which we prove in Appendix~\ref{sec:FGfla}.
\begin{conjecture}
\label{conj_catFG} The bifunctor $*$ extends to a monoidal structure on $\Kar(\essTfoam_0)_{\mathrm{dg}}$, giving a categorification of the skein algebra $\TWebq$ as in Conjecture~\ref{conj:mon}. Moreover, the categorified Frohman--Gelca basis elements tensor as follows:
\begin{equation}\label{eqn:catFGintro} (m,n)^F_T * (r,s)^F_T\cong  (m+r,n+s)^F_T \oplus (m-r,n-s)^F_T*\w{r,s} 
\end{equation}
\end{conjecture}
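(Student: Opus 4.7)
The plan is to proceed in three stages: first upgrade the bifunctor $*$ of Proposition~\ref{prop:bifunctor} to a monoidal structure on $\Kar(\essTfoam_0)_{\mathrm{dg}}$, then compute the tensor product of two categorified Frohman--Gelca basis elements by an explicit cube-of-resolutions calculation, and finally identify the summands on the right hand side of \eqref{eqn:catFGintro} via the absorption properties of the toric extremal weight projectors.

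For the first stage, I would invoke the toric analogue of Proposition~\ref{prop:semisimple}, i.e.\ the semisimplicity of $\Kar(\essTfoam_0)$, which is precisely the reason for quotienting out boundary-parallel tori when passing from $\Tfoam$ to $\essTfoam$. Semisimplicity forces every bounded chain complex in $\Kar(\essTfoam_0)$ to be homotopy equivalent to an essentially unique minimal complex with zero differential, i.e.\ an object of $\Kar(\essTfoam_0)_{\mathrm{dg}}$, and it forces homotopic chain maps between minimal complexes to coincide. The only-up-to-homotopy output of $*$ from Proposition~\ref{prop:bifunctor} therefore lifts uniquely to an honest bifunctor on the dg category. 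Associativity and unit coherence should then reduce to the corresponding strict coherences for superposition of links in $\T \times [0,1]$, transported through the Khovanov functor under the standing assumption of Conjecture~\ref{conj:functoriality}; the $\delta$-parity statement should then be a direct check on the resolution maps, since these are of pure $\delta$-parity by construction.

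For the second stage, the categorified basis element $(m,n)^F_T$ is, by the construction sketched in Section~\ref{sec:skeinalgcat}, a minimal complex concentrated in homological degree zero whose single term is the image of an idempotent rotation foam generated by the extremal weight projector $p^{\mathrm{ext}}_{\gcd(m,n)} \in \essbAWeb$ rotated along the $(m,n)$-slope. To compute $(m,n)^F_T * (r,s)^F_T$ it therefore suffices to superpose these two rotation foams in $\T \times [0,1]$ and pass to the minimal model. Superposition of a slope $(m,n)$ multi-curve with a slope $(r,s)$ multi-curve produces $|ms-nr|$ crossings, whose cube of resolutions contains two distinguished vertices: the oriented smoothing, which recovers a multi-curve of slope $(m+r,n+s)$, and the disoriented smoothing, which produces a multi-curve of slope $(m-r,n-s)$ together with a family of $\w{r,s}$-decorated edges. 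The intermediate vertices should collapse after sandwiching with the outer extremal weight projectors, because the projectors annihilate any rotation foam that factors through a $2$-labeled edge on a matching-slope strand, which is the toric extension of the Jones--Wenzl absorption identity established in our earlier work on $\essbAWeb$.

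For the third stage, absorbing the projectors along the merged $(m+r,n+s)$-slope collapses the first vertex to $(m+r,n+s)^F_T$, while an analogous absorption on the second vertex produces $(m-r,n-s)^F_T * \w{r,s}$. The hard part will be showing that these two contributions appear as orthogonal isotypic components in $\Kar(\essTfoam_0)$ rather than as a nontrivial extension, i.e.\ that the relevant $\Hom$-spaces between them in $\essTfoam_0$ vanish; this should follow from a slope argument, since the underlying essential multi-curves represent different classes in the lamination label of the simple objects of $\Kar(\essTfoam_0)$. As a consistency check, taking Grothendieck classes should recover the $\glnn{2}$ Frohman--Gelca identity proved in Appendix~\ref{sec:FGfla}, and semisimplicity then lifts this decategorified equality uniquely to the claimed direct-sum decomposition \eqref{eqn:catFGintro}.
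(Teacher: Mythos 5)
You are attempting to prove a statement that the paper itself only states as a conjecture: the paper never establishes that $*$ extends to a monoidal structure on $\Kar(\essTfoam_0)_{\mathrm{dg}}$, and it derives \eqref{eqn:catFGintro} only \emph{conditionally} on that monoidality, via the induction scheme of Appendix~\ref{sec:FGfla} together with the base cases verified in Section~\ref{sec:examples} (see Remark~\ref{rem:catproof}). Your Stage 1 does not close the actual gap. Semisimplicity of $\Kar(\essTfoam_0)$ removes the ``homotopic chain maps'' ambiguity on minimal complexes, and this is precisely why the paper proposes this target; but it does not by itself yield a bifunctor on the dg category, let alone coherent associators. Already to define the product of two simples one needs $(F_1*\id_{W_2})\circ(\id_{W_1}*F_2)$ to be idempotent up to homotopy, which presupposes an interchange law for $*$; and transporting the strict associators of $\star$ on $\Ttanweb$ through the Khovanov functor requires knowing that $\T\Kh$ of an iterated superposition can be computed in stages, which is exactly the locality argument that Lemma~\ref{lem:essstar} carries out only for the very special morphisms $T_{\pm 1}*F$ and which is open in the generality you need. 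Asserting that coherence ``should reduce'' to the strict monoidality of $\star$ restates the conjecture rather than proving it.

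Stages 2 and 3 contain a second gap. The claim that all intermediate vertices of the $|ms-nr|$-crossing cube ``collapse'' because the extremal weight projectors ``annihilate any rotation foam that factors through a $2$-labeled edge on a matching-slope strand'' is unsupported: the intermediate resolutions are webs of mixed slope, the components of the sandwiching idempotent acting on them are not rotation foams in either slope, and the turnback-annihilation property of $T_d$ lives inside a single slope subcategory. The paper's smallest nontrivial instance, $(2,0)^F_T*(0,1)^F_T$, shows what is actually required: the idempotent does not kill the middle chain groups termwise, but only after a delicate cancellation between the components of $u_1*\id$ and $u_2*\id$ involving fork-slide foams and the double-wrap relation \eqref{eqn:doublewrap}; nothing in your proposal supplies this computation for general $(m,n)$ and $(r,s)$. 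Finally, your fallback of recovering \eqref{eqn:catFGintro} from the decategorified formula by lifting through $K_0$ fails in a homologically graded category: a summand of the form $X\oplus tX$ has vanishing class in $K_0$, so the graded Euler characteristic does not determine the object up to isomorphism unless one already has a $\delta$-parity/no-cancellation statement, which is itself part of what is being conjectured (Question~\ref{q:monoidal}). The viable route, conditional on monoidality, is the paper's: verify the intersection-number $0$ and $1$ base cases directly and then run the Appendix~\ref{sec:FGfla} induction using associativity, distributivity and Krull--Schmidt cancellation.
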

If the first part of Conjecture~\ref{conj_catFG} holds, then the \textit{categorified Frohman--Gelca formula} \eqref{eqn:catFGintro} can be proved by induction from a small number of base cases, see Remark~\ref{rem:catproof}. In Section~\ref{sec:examples} we check these base cases and then provide evidence towards Conjecture~\ref{conj_catFG} by verifying an additional non-trivial case of \eqref{eqn:catFGintro}.

\settocdepth{section}
\subsection*{Acknowledgements}  We would like to thank Anna Beliakova, Francis Bonahon, C\'edric Bonnaf\'e, Ben Cooper, Eugene Gorsky, Mikhail Khovanov, Thang L\^{e}, Tony Licata, Gregor Masbaum, Scott Morrison, Alexandre Nicolas, Jozef Przytycki, Jake Rasmussen, Peter Samuelson, Dylan Thurston, Daniel Tubbenhauer and Emmanuel Wagner for interesting discussions.

\subsection*{Funding}
The work of H.~Q. was partially supported by a PEPS Jeunes Chercheuses et Jeunes Chercheurs, the ANR Quantact and by the CNRS-MSI partnership ``LIA AnGe''. The work of P.~W. was supported by the Leverhulme Trust [Research Grant RP2013-K-017] and the Australian Research Council Discovery Projects ``Braid groups and higher representation theory'' and ``Low dimensional categories'' [DP140103821, DP160103479].

\settocdepth{subsection}
\section{The \texorpdfstring{$\glnn{2}$}{gl(2)} skein algebras} 
\label{sec:gl2skein}

We will use the skein theory of $\glnn{2}$ webs, which are generated in the planar algebra sense by the oriented trivalent merge and split vertices between single ($1$-labeled) and double ($2$-labeled) edges, and which satisfy the following $\Z[q^{\pm1}]$-linear web relations:

\begin{gather}
\label{eqn:circles}
\begin{tikzpicture}[fill opacity=.2,anchorbase,scale=.3]
\draw[very thick, directed=.55] (1,0) to [out=0,in=270] (2,1) to [out=90,in=0] (1,2)to [out=180,in=90] (0,1)to [out=270,in=180] (1,0);
\end{tikzpicture} 
\quad=\quad
(q+ q^{-1}) \emptyset
\quad=\quad 
\begin{tikzpicture}[fill opacity=.2,anchorbase,scale=.3]
\draw[very thick, rdirected=.55] (1,0) to [out=0,in=270] (2,1) to [out=90,in=0] (1,2)to [out=180,in=90] (0,1)to [out=270,in=180] (1,0);
\end{tikzpicture}
\quad,\quad
\begin{tikzpicture}[fill opacity=.2,anchorbase,scale=.3]
\draw[double, directed=.55] (1,0) to [out=0,in=270] (2,1) to [out=90,in=0] (1,2)to [out=180,in=90] (0,1)to [out=270,in=180] (1,0);
\end{tikzpicture} 
\quad=\quad
\emptyset
\quad=\quad \begin{tikzpicture}[fill opacity=.2,anchorbase,scale=.3]
\draw[double, rdirected=.55] (1,0) to [out=0,in=270] (2,1) to [out=90,in=0] (1,2)to [out=180,in=90] (0,1)to [out=270,in=180] (1,0);
\end{tikzpicture}
\\
\label{eqn:digons}
\begin{tikzpicture}[anchorbase, scale=.5]
\draw [double] (.5,0) -- (.5,.3);
\draw [very thick] (.5,.3) .. controls (.4,.35) and (0,.6) .. (0,1) .. controls (0,1.4) and (.4,1.65) .. (.5,1.7);
\draw [very thick] (.5,.3) .. controls (.6,.35) and (1,.6) .. (1,1) .. controls (1,1.4) and (.6,1.65) .. (.5,1.7);
\draw [double, ->] (.5,1.7) -- (.5,2);
\end{tikzpicture}
\quad= \quad
(q+q^{-1})\;
\begin{tikzpicture}[anchorbase, scale=.5]
\draw [double,->] (.5,0) -- (.5,2);
\end{tikzpicture}
\quad,\quad
\begin{tikzpicture}[anchorbase, scale=.5]
\draw [very thick] (.5,0) -- (.5,.3);
\draw [very thick] (.5,.3) .. controls (.4,.35) and (0,.6) .. (0,1) .. controls (0,1.4) and (.4,1.65) .. (.5,1.7);
\draw [double, directed=0.55] (.5,.3) .. controls (.6,.35) and (1,.6) .. (1,1) .. controls (1,1.4) and (.6,1.65) .. (.5,1.7);
\draw [very thick, ->] (.5,1.7) -- (.5,2);
\end{tikzpicture}
\quad= \quad
\begin{tikzpicture}[anchorbase, scale=.5]
\draw [very thick,->] (.5,0) -- (.5,2);
\end{tikzpicture}
\quad= \quad
\begin{tikzpicture}[anchorbase, scale=.5]
\draw [very thick] (.5,0) -- (.5,.3);
\draw [double, directed=0.55] (.5,.3) .. controls (.4,.35) and (0,.6) .. (0,1) .. controls (0,1.4) and (.4,1.65) .. (.5,1.7);
\draw [very thick] (.5,.3) .. controls (.6,.35) and (1,.6) .. (1,1) .. controls (1,1.4) and (.6,1.65) .. (.5,1.7);
\draw [very thick, ->] (.5,1.7) -- (.5,2);
\end{tikzpicture}
\\
\label{eqn:squares}
\begin{tikzpicture}[anchorbase,scale=.5]
\draw [double] (0,0) -- (0,0.5);
\draw [very thick] (1,0) -- (1,.7);
\draw [very thick] (0,0.5) -- (1,.7);
\draw [double] (1,.7) -- (1,1.3);
\draw [very thick] (0,.5) -- (0,1.5);
\draw [very thick] (1,1.3) -- (0,1.5);
\draw [double,->] (0,1.5) -- (0,2);
\draw [very thick, ->] (1,1.3) -- (1,2);
\end{tikzpicture}
\quad = \quad
\begin{tikzpicture}[anchorbase,scale=.5]
\draw [double,->] (0,0) -- (0,2);
\draw [very thick,->] (1,0) -- (1,2);
\end{tikzpicture}
\quad,\quad
\begin{tikzpicture}[anchorbase,scale=.5]
\draw [double] (1,0) -- (1,0.5);
\draw [very thick] (0,0) -- (0,.7);
\draw [very thick] (1,0.5) -- (0,.7);
\draw [double] (0,.7) -- (0,1.3);
\draw [very thick] (1,.5) -- (1,1.5);
\draw [very thick] (0,1.3) -- (1,1.5);
\draw [double,->] (1,1.5) -- (1,2);
\draw [very thick, ->] (0,1.3) -- (0,2);
\end{tikzpicture}
\quad = \quad
\begin{tikzpicture}[anchorbase,scale=.5]
\draw [double,->] (1,0) -- (1,2);
\draw [very thick,->] (0,0) -- (0,2);
\end{tikzpicture}
\quad , \quad
\begin{tikzpicture}[anchorbase,scale=.5]
\draw [double,->] (0,0) to  (0,2);
\draw [double,->] (1,2) to (1,0);
\end{tikzpicture}
\quad =\quad
\begin{tikzpicture}[anchorbase,scale=.5]
\draw [double,->] (0,0) to (0,.5) to [out=90,in=90] (1,.5) to (1,0);
\draw [double,->] (1,2) to (1,1.5) to [out=270,in=270] (0,1.5) to (0,2);
\end{tikzpicture}
\quad , \quad
\begin{tikzpicture}[anchorbase,scale=.5]
\draw [double,<-] (0,0) to  (0,2);
\draw [double,<-] (1,2) to (1,0);
\end{tikzpicture}
\quad =\quad
\begin{tikzpicture}[anchorbase,scale=.5]
\draw [double,<-] (0,0) to (0,.5) to [out=90,in=90] (1,.5) to (1,0);
\draw [double,<-] (1,2) to (1,1.5) to [out=270,in=270] (0,1.5) to (0,2);
\end{tikzpicture}
\end{gather}

This skein theory encodes the pivotal tensor category of representations of $U_q(\glnn{2})$ generated by the vector representation and its exterior square, see e.g. \cite{CKM,QS2,TVW}. It is closely related to the Temperley--Lieb skein theory, which describes a corresponding category of representations of $U_q(\slnn{2})$, in which this exterior square is isomorphic to the trivial representation. For a $\glnn{2}$ web $W$ (in a disk or some other surface ) we denote by $c(W)$ the unoriented multi-curve obtained by erasing all doubled edges and forgetting orientations in $W$. It is easy to see that $c$ induces a map from the $\glnn{2}$ skein theory to the Temperley--Lieb skein theory, since the images of the $\glnn{2}$ web relations hold in the latter.

In fact, $\glnn{2}$ webs satisfy generalizations of the $1$-labeled circle relation in \eqref{eqn:circles}, which we recall from~\cite[Lemma 64]{QW2}.

\begin{lemma}
\label{lem:neckcut}
Let $W$ be a $\glnn{2}$ web (in a disk or some other surface) and suppose that $c(W)$ contains a circle $c$ which bounds a disk $\cat{D}$ in the complement of $c(W)$. Then $W = (q+q^{-1})V$, where $V$ is a web that agrees with $W$ outside a neighborhood of the disk $\cat{D}$ and with underlying curve $c(V)$ obtained by removing the circle in question from $c(W)$.
\end{lemma}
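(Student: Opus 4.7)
The plan is to induct on the number of vertices of $W$ lying on the circle $c$, reducing the local complexity inside $\cat{D}$ via the web relations \eqref{eqn:circles}--\eqref{eqn:squares}. First I would record the following structural observation: since $\cat{D}$ is disjoint from $c(W)$, the web $W$ contains no $1$-labeled edges inside $\cat{D}$, and since every $\glnn{2}$ merge or split vertex is incident to a $1$-edge, there are also no vertices of $W$ inside $\cat{D}$. Therefore $W \cap \cat{D}$ consists only of $2$-labeled edges, which form either closed loops---each of which evaluates to the empty web by the second relation in \eqref{eqn:circles}---or arcs joining pairs of \emph{inward} vertices on $c$, i.e.\ those vertices whose attached $2$-edge enters $\cat{D}$. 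After discarding the closed $2$-loops at no scalar cost, these arcs assemble into a planar matching $\mu$ of the inward vertices on $c$.

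The base case of the induction is when $c$ carries no vertices: then $c$ is a smooth $1$-labeled circle bounding an empty disk and the first relation of \eqref{eqn:circles} gives $W = (q+q^{-1})V$ at once. For the inductive step I would choose an innermost matched pair $(v_1, v_2) \in \mu$, so that the arc $\alpha$ of $c$ between $v_1$ and $v_2$, together with their matching $2$-arc $\gamma \subset \cat{D}$, cuts off a subdisk $\cat{D}'$ that contains no other arc of $\mu$. By innermost-ness, $\alpha$ has no further inward vertices, though it may contain some outward vertices $w_1, \dots, w_k$. If $k = 0$, the bigon $\alpha \cup \gamma$ is exactly the digon on the left-hand side of the second relation of \eqref{eqn:digons}, and that relation collapses it to a single $1$-edge without any scalar factor, reducing the vertex count by two and letting the inductive hypothesis close the argument.

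The main obstacle is the subcase $k \geq 1$. My strategy here is to use the square relations \eqref{eqn:squares} to slide the matching arc $\gamma$ past each outward $2$-edge at $w_j$ in turn: after isotoping $\gamma$ to lie close to $\alpha$, each local region near $w_j$ exhibits a parallel $1$-$2$ pair to which a square relation applies, re-routing $\gamma$ around $w_j$ so that the outward vertex effectively migrates to the complement of $\cat{D}'$. Iterating this $k$ times returns us to the $k = 0$ subcase. Throughout, each manipulation takes place inside a fixed neighborhood of $\cat{D}$, so the final web agrees with $W$ outside that neighborhood and has underlying curve $c(W) \setminus c$, which identifies it as the $V$ of the statement; the only scalar that ever appears is the $(q+q^{-1})$ produced by the base case of the induction.
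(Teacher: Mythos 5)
Your overall skeleton --- discard closed $2$-labeled circles in $\cat{D}$, induct on an innermost $2$-labeled arc $\gamma$, and collapse the resulting digon --- is the same as the paper's, and your structural observations, the base case, and the $k=0$ subcase are all fine. The gap is the $k\geq 1$ subcase. The local picture near an outward vertex $w_j$ consists of the $2$-arc $\gamma$ passing by with no vertex on it, the $1$-strand $c$ carrying the single trivalent vertex $w_j$, and a $2$-edge leaving on the far side of $c$. This matches neither side of any relation in \eqref{eqn:squares}: the first two square relations trade a four-vertex zigzag for a pair of plain parallel strands, and the last two are $2$-labeled cup--cap relations. None of them ``re-routes $\gamma$ around $w_j$'' or slides the attachment point $w_j$ along $c$ past an endpoint of $\gamma$; indeed it is unclear what the re-routing would produce geometrically, since $\gamma$ cannot leave $\cat{D}$ without crossing the embedded circle $c$, and $w_j$ cannot sit on a $2$-labeled segment. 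Even granting the move, your induction measure (number of vertices on $c$) would not decrease, since a ``migrated'' $w_j$ still lies on $c$.

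The step that is actually needed --- and the one the paper uses --- eliminates the external $2$-edges instead of moving $\gamma$. Because at every trivalent vertex the two $1$-edges of $c$ are either both incoming or both outgoing, merges and splits alternate around $c$; since the endpoints of the $2$-arc $\gamma$ are one merge and one split, the number $k$ of outward vertices on $\alpha$ is even, and consecutive ones carry oppositely oriented $2$-edges. An adjacent such pair can be reconnected by the $2$-labeled cup--cap relation in \eqref{eqn:squares}, producing a short external $2$-arc that forms a clean digon with $c$ and is then erased by \eqref{eqn:digons}; iterating clears $\alpha$ and reduces you to your $k=0$ case. Your write-up is missing both this mechanism and the parity/orientation argument that makes the pairing possible.
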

We recall the proof from \cite[Lemma 64]{QW2}.
\begin{proof} We only consider $W$ in a neighborhood of the disk $\cat{D}$ bounded by $c$. We will find a sequence of web relations which reduce the interaction of $2$-labeled edges with $c$ until $c$ can be removed via a circle relation in~\eqref{eqn:circles}.  There are three types of interaction of $c$ with $2$-labeled edges to consider in sequence:
\begin{enumerate}
\item Any $2$-labeled circle contained in $\cat{D}$ can be removed using one of the relations in \eqref{eqn:circles}, starting with an innermost one.
\item Suppose there exists a $2$-labeled edge in the interior of $\cat{D}$ with boundary on $c$. We take an innermost such edge, i.e. one which encloses a region in the disk with no other $2$-labeled edges in the interior. Such an intersection edge can be removed via the digon relations in \eqref{eqn:digons}, provided there are no $2$-labeled edges hitting the boundary of $\cat{D}$ from the outside in the relevant region. Otherwise, jump to (3) to remove external edges first. Note that they always come in pairs for orientation reasons.
\item There is a pair of $2$-labeled edges, hitting $c$ from the outside $\cat{D}$, which are adjacent in the sense that an arc along $c$ connects them without hitting other $2$-labeled edges. Then one application of the saddle relations in \eqref{eqn:squares} creates a $2$-labeled edge connecting two points on $c$ from the outside (see the right side of Figure~\ref{fig:compdiskint}), which can be removed as in (2). 
\end{enumerate}
This algorithm relates $W$ to a web that contains $c$ as an oriented $1$-labeled circle that can be removed via \eqref{eqn:circles}.
\end{proof}

\begin{figure}[ht]
\begin{tikzpicture}[fill opacity=.2,anchorbase]
\draw[very thick] (1,0) to [out=0,in=270](2,1) to [out=90,in=0] (1,2) to [out=180,in=90] (0,1) to [out=270,in=180] (1,0);
\draw[double,directed=.55] (1,.5) to [out=0,in=270](1.5,1) to [out=90,in=0] (1,1.5) to [out=180,in=90] (.5,1) to [out=270,in=180] (1,.5);
\end{tikzpicture}
\quad , \quad
\begin{tikzpicture}[fill opacity=.2,anchorbase]
\draw[very thick,directed=.08,rdirected=.18,directed=.32,directed=.45,rdirected=.57,directed=.70,rdirected=.88] (1,0) to [out=0,in=270](2,1) to [out=90,in=0] (1,2) to [out=180,in=90] (0,1) to [out=270,in=180] (1,0);
\draw[double,directed=.55] (0,1) to [out=0,in=90](1,0);
\draw[double,directed=.55] (1,2) to [out=270,in=180](2,1);
\draw[double,rdirected=.55] (0.29,1.71) to (1.71,0.29);
\draw[double,directed=.55] (1.8,1.6) to (2.16,1.92);
\draw[double,rdirected=.55] (1.6,1.8) to (1.92,2.16);
\end{tikzpicture}
\quad , \quad
\begin{tikzpicture}[fill opacity=.2,anchorbase]
\draw[very thick] (1,0) to [out=0,in=270](2,1) to [out=90,in=0] (1,2) to [out=180,in=90] (0,1) to [out=270,in=180] (1,0);
\draw[double,directed=.55] (1.71,1.71) to [out=45,in=180](2.5,2) ;
\draw[double,rdirected=.55] (1.71,0.29) to [out=315,in=180](2.5,0) ;
\end{tikzpicture}
$\to $\;
\begin{tikzpicture}[fill opacity=.2,anchorbase]
\draw[very thick] (1,0) to [out=0,in=270](2,1) to [out=90,in=0] (1,2) to [out=180,in=90] (0,1) to [out=270,in=180] (1,0);
\draw[double,directed=.55] (1.71,1.71) to [out=45,in=90] (2.25,1) to [out=270,in=315](1.71,0.29);
\draw[double,rdirected=.55] (3,2) to [out=180,in=90] (2.5,1) to [out=270,in=180](3,0);
\end{tikzpicture}
\caption{Types of interaction of $2$-labeled edges with a $1$-labeled circle bounding a disk: internal circles, internal and external edges.
\label{fig:compdiskint}}
\end{figure}
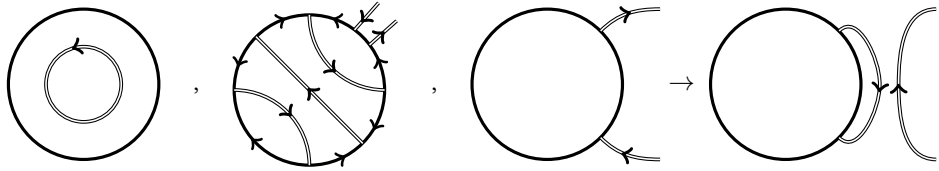

In the following, $\Su$ denotes an oriented surface of finite type, i.e. the result of removing a finite number of points from a compact oriented $2$-manifold. We usually assume that $\Su$ is connected and identify it with the quotient space obtained from gluing the edges of a punctured planar polygon in a suitable way.

\begin{definition}\label{def:skein} We let $\SWebq$ denote the quotient of the free $\Z[q^{\pm 1}]$-module spanned by isotopy classes of $\glnn{2}$ webs embedded in $\Su$ by the ideal generated by the local relations~\ref{eqn:circles}, \ref{eqn:digons} and \ref{eqn:squares}, which we interpret as being supported in disks in $\Su$.

Link diagrams and tangled web diagrams drawn on $\Su$ give elements of $\SWebq$ by resolving crossings:

\begin{gather}
\begin{tikzpicture}[anchorbase, scale=.5]
\draw [very thick, ->] (2,1) to [out=180,in=0] (0,0);
 \draw [white,line width=.15cm] (2,0) to [out=180,in=0] (0,1) ;
\draw [very thick, ->] (2,0) to [out=180,in=0] (0,1);
\end{tikzpicture}
 \;\;:= \;\;
\begin{tikzpicture}[anchorbase, scale=.5]
\draw [very thick, ->] (2,1) to (0,1);
\draw [very thick, ->] (2,0) to (0,0);
\end{tikzpicture}
\;\;-\;\;
 q\;
\begin{tikzpicture}[anchorbase, scale=.5]
\draw [very thick] (2,0) to[out=180,in=315] (1.3,.5);
\draw [very thick] (2,1) to[out=180,in=45] (1.3,.5);
\draw [double] (1.3,.5) -- (.7,.5);
\draw [very thick, ->] (.7,.5) to[out=135,in=0]  (0,1);
\draw [very thick, ->] (.7,.5) to[out=225,in=0] (0,0);
\end{tikzpicture}
\quad,\quad
\begin{tikzpicture}[anchorbase, scale=.5]
\draw [very thick, ->] (2,0) to [out=180,in=0] (0,1);
 \draw [white,line width=.15cm] (2,1) to [out=180,in=0] (0,0) ;
\draw [very thick, ->] (2,1) to [out=180,in=0] (0,0);
\end{tikzpicture}
\;\;:=\;\;
\begin{tikzpicture}[anchorbase, scale=.5]
\draw [very thick, ->] (2,1) to (0,1);
\draw [very thick, ->] (2,0) to (0,0);
\end{tikzpicture}
\;\;-\;\;
q^{-1}   \;
\begin{tikzpicture}[anchorbase, scale=.5]
\draw [very thick] (2,0) to[out=180,in=315] (1.3,.5);
\draw [very thick] (2,1) to[out=180,in=45] (1.3,.5);
\draw [double] (1.3,.5) -- (.7,.5);
\draw [very thick, ->] (.7,.5) to[out=135,in=0]  (0,1);
\draw [very thick, ->] (.7,.5) to[out=225,in=0] (0,0);
\end{tikzpicture}
\nonumber
\\
\label{eq:crossing}
\begin{tikzpicture}[anchorbase, scale=.5]
\draw [very thick, ->] (2,1) to [out=180,in=0] (0,0);
 \draw [white,line width=.15cm] (2,0) to [out=180,in=0] (0,1) ;
\draw [double, ->] (2,0) to [out=180,in=0] (0,1);
\end{tikzpicture}
\;\;:= \;\;-q \;\;\,
\begin{tikzpicture}[anchorbase, scale=.5]
\draw [double] (2,0) -- (1.4,0);
\draw [very thick, ->] (1.4,0) -- (0,0);
\draw [very thick] (2,1) -- (.6,1);
\draw [double, ->] (0.6,1) -- (0,1);
\draw [very thick] (.6,1) -- (1.4,0);
\end{tikzpicture} 
\quad,\quad
\begin{tikzpicture}[anchorbase, scale=.5]
\draw [double, ->] (2,1) to [out=180,in=0] (0,0);
 \draw [white,line width=.15cm] (2,0) to [out=180,in=0] (0,1) ;
\draw [very thick, ->] (2,0) to [out=180,in=0] (0,1);
\end{tikzpicture}
\;\;:= \;\;-q  \;\;\,
\begin{tikzpicture}[anchorbase, scale=.5]
\draw [double] (2,1) -- (1.4,1);
\draw [very thick, ->] (1.4,1) -- (0,1);
\draw [very thick] (2,0) -- (.6,0);
\draw [double, ->] (0.6,0) -- (0,0);
\draw [very thick] (.6,0) -- (1.4,1);
\end{tikzpicture} 
\quad,\quad
\begin{tikzpicture}[anchorbase, scale=.5]
\draw [double, ->] (2,1) to [out=180,in=0] (0,0);
 \draw [white,line width=.15cm] (2,0) to [out=180,in=0] (0,1) ;
\draw [double, ->] (2,0) to [out=180,in=0] (0,1);
\end{tikzpicture}
\;\;:= \;\;q^{2}  \;\;\,
\begin{tikzpicture}[anchorbase, scale=.5]
\draw [double, ->] (2,0) -- (0,0);
\draw [double, ->] (2,1) -- (0,1);
\end{tikzpicture}
\\ \nonumber
\begin{tikzpicture}[anchorbase, scale=.5]
\draw [very thick, ->] (2,0) to [out=180,in=0] (0,1);
 \draw [white,line width=.15cm] (2,1) to [out=180,in=0] (0,0) ;
\draw [double, ->] (2,1) to [out=180,in=0] (0,0);
\end{tikzpicture}
\;\;:= \;\;
-q^{-1}   \;
\begin{tikzpicture}[anchorbase, scale=.5]
\draw [double] (2,1) -- (1.4,1);
\draw [very thick, ->] (1.4,1) -- (0,1);
\draw [very thick] (2,0) -- (.6,0);
\draw [double, ->] (0.6,0) -- (0,0);
\draw [very thick] (.6,0) -- (1.4,1);
\end{tikzpicture} 
\quad,\quad
\begin{tikzpicture}[anchorbase, scale=.5]
\draw [double, ->] (2,0) to [out=180,in=0] (0,1);
 \draw [white,line width=.15cm] (2,1) to [out=180,in=0] (0,0) ;
\draw [very thick, ->] (2,1) to [out=180,in=0] (0,0);
\end{tikzpicture}
\;\;:= \;\;
- q^{-1} \;
\begin{tikzpicture}[anchorbase, scale=.5]
\draw [double] (2,0) -- (1.4,0);
\draw [very thick, ->] (1.4,0) -- (0,0);
\draw [very thick] (2,1) -- (.6,1);
\draw [double, ->] (0.6,1) -- (0,1);
\draw [very thick] (.6,1) -- (1.4,0);
\end{tikzpicture} 
\quad,\quad
\begin{tikzpicture}[anchorbase, scale=.5]
\draw [double, ->] (2,0) to [out=180,in=0] (0,1);
 \draw [white,line width=.15cm] (2,1) to [out=180,in=0] (0,0) ;
\draw [double, ->] (2,1) to [out=180,in=0] (0,0);
\end{tikzpicture}
\;\;:= \;\; q^{-2} \;
\begin{tikzpicture}[anchorbase, scale=.5]
\draw [double, ->] (2,0) -- (0,0);
\draw [double, ->] (2,1) -- (0,1);
\end{tikzpicture} 
\end{gather}
Tangles web diagrams that differ by Reidemeister type II and III or fork slide moves have the same evaluation in $\SWebq$. In particular, $\SWebq$ carries a well-defined $\Z[q^{\pm 1}]$-linear associative multiplication $*$, which is defined on webs $W_1$ and $W_2$ as the element $W_1*W_2$ obtained by superposing $W_2$ with $W_1$ and resolving all crossings. We call $(\SWebq,*)$ the $\glnn{2}$ skein algebra of $\Su$.
\end{definition}

\begin{remark} Alternatively, $\SWebq$ can be defined as the quotient of the free $\Z[q^{\pm 1}]$-module spanned by isotopy classes of framed, tangled $\glnn{2}$ webs embedded in $\Su\times [0,1]$, by the relations \eqref{eqn:circles}, \eqref{eqn:digons}, \eqref{eqn:squares} and \eqref{eq:crossing}. One can then also choose to either impose these relations in every $3$-ball embedded in $\Su\times [0,1]$, where the shown web diagrams arise by projection onto a specified equatorial plane with an appropriate blackboard framing, or just in balls of the form $\cat{D}\times [0,1]$ with $\cat{D}\hookrightarrow \Su$. The results are naturally isomorphic, and we will switch freely between these descriptions.
\end{remark}

\begin{example}
\label{exa:1001} We compute the product of two simple closed curves in the toric skein algebra $\TWebq$.
\begin{equation*}
\begin{tikzpicture}[anchorbase]
\draw[very thick, directed=.75] (0,.8) to (2.4,.8);
\torus{2.4}{1.6}
\end{tikzpicture}
\;*\;
\begin{tikzpicture}[anchorbase]
\draw[very thick, directed=.75] (1.2,0) to (1.2,1.6);
\torus{2.4}{1.6}
\end{tikzpicture}
\;:=\;
\begin{tikzpicture}[anchorbase]
\draw[very thick, directed=.75] (1.2,0) to (1.2,1.6);
\draw[white, line width=.15cm] (2.4,.8) to (0,.8);
\draw[very thick, directed=.75] (0,.8) to (2.4,.8);
\torus{2.4}{1.6}
\end{tikzpicture}
=
\begin{tikzpicture}[anchorbase]
\draw[very thick, directed=.55] (1.2,0) to [out=90,in=180] (2.4,.8);
\draw[very thick, directed=.55] (0,.8) to [out=0,in=270] (1.2,1.6);
\torus{2.4}{1.6}
\end{tikzpicture} 
 - q \;\;
\begin{tikzpicture}[anchorbase]
\draw[very thick, directed=.55] (1.2,0) to [out=90,in=270] (0.9,0.6);
\draw[double] (0.9,0.6) to (1.5,1);
\draw[very thick, directed=.55] (1.5,1) to [out=0,in=180] (2.4,.8);
\draw[very thick, directed=.55] (0,.8) to [out=0,in=180] (.9,0.6);
\draw[very thick, directed=.55] (1.5,1) to [out=90,in=270] (1.2,1.6);
\torus{2.4}{1.6}
\end{tikzpicture}
\end{equation*}
\end{example}

In the following, we associate simple topological invariants to webs in $\Su$.

\begin{definition}\label{def:homologyclass} Let $W$ be a web on $\Su$. The class $[W]\in H_1(\Su)$ is defined as the homology class of the multi-curve $\overline{W}$ obtained from $W$ by replacing each $2$-labeled edge by two parallel $1$-labeled edges and smoothing out trivalent vertices in the process.
\end{definition}

Note that the homology class of a web as defined above is invariant under all $\glnn{2}$ web relations.

\begin{lemma} $\SWebq$ is a $H_1(\Su)$-graded algebra.
\end{lemma}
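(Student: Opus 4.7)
The plan is to verify two things: first, that the assignment $W \mapsto [W] \in H_1(\Su)$ descends to a well-defined grading on the module $\SWebq$; and second, that the skein product $*$ respects this grading additively.

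For the first point, I would check that $[W]$ is invariant under all the local web relations \eqref{eqn:circles}, \eqref{eqn:digons}, \eqref{eqn:squares}. The invariant $[W]$ depends only on the multi-curve $\overline{W}$ obtained by doubling $2$-labeled edges, smoothed at trivalent vertices. Since the relations \eqref{eqn:circles}--\eqref{eqn:squares} are imposed in disks, the webs on both sides of each relation differ only inside a disk, so their $\overline{(-)}$ images differ only inside a disk as well and therefore represent the same class in $H_1(\Su)$. Concretely: the $1$-labeled circle in \eqref{eqn:circles} bounds a disk so is null-homologous; the doubled circle becomes two parallel oppositely oriented circles after doubling, again null-homologous in the disk; the digons \eqref{eqn:digons} and square relations \eqref{eqn:squares} likewise produce local multi-curves cobounding in the disk. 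This shows $[\cdot]$ is a well-defined homomorphism from the free module to $H_1(\Su)$, so $\SWebq$ decomposes as a direct sum $\bigoplus_{\alpha \in H_1(\Su)}\SWebq^{(\alpha)}$.

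Next I need that $[W_1 * W_2] = [W_1] + [W_2]$. By Definition~\ref{def:skein}, the product is obtained by superposing $W_2$ on top of $W_1$ and then resolving crossings using \eqref{eq:crossing}. The superposition itself clearly satisfies $\overline{W_1 \sqcup W_2} = \overline{W_1} \sqcup \overline{W_2}$, so its class is $[W_1]+[W_2]$. It therefore suffices to check that each crossing resolution in \eqref{eq:crossing} preserves the homology class of the corresponding multi-curve $\overline{(-)}$. For the $1/1$ crossings, the two summands both consist of two through-strands, which after smoothing the trivalent vertices adjacent to the $2$-labeled edge yield exactly the same pair of local arcs (up to a null-homologous correction inside a disk). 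The mixed $1/2$ and $2/2$ crossings are treated analogously: doubling the $2$-labeled edges and smoothing shows each resolution produces multi-curves in the disk with the same boundary behavior, hence equal in $H_1$.

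Combining these two observations, the product descends to a map $\SWebq^{(\alpha)} \otimes \SWebq^{(\beta)} \to \SWebq^{(\alpha+\beta)}$, proving the claim. The only non-trivial point is the local verification for the resolutions involving $2$-labeled edges, but since everything happens in disks and $[\cdot]$ only sees the class in $H_1(\Su)$, this reduces to a finite bookkeeping check.
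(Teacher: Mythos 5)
Your proof is correct and follows essentially the same route as the paper: the paper's (much terser) proof simply notes that the homology class of a web is invariant under the local web relations and that the crossing resolution rules \eqref{eq:crossing} preserve the class, which is exactly what you verify in detail.
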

\begin{proof} Let $c\in H_1(\Su)$, then the component of $\SWebq$ of degree $c$ is spanned by those webs with homology class $c$. It is also clear from the crossing resolution rules \eqref{eq:crossing} that the grading is respected by the multiplication, i.e. $[W_1*W_2]=[W_1]+[W_2]$.
\end{proof}

\begin{lemma} \label{lem:2labsub} The subalgebra of $2$-labeled webs in $\SWebq$ is spanned by a collection of $2$-labeled multi-curves $\w{x}$ parametrized by $x\in H_1(\Su)$, with elements satisfying $[\w{x}]=2x$ and $\w{0}=\emptyset$. The skein algebra multiplication is given by
\[\w{x} * \w{y} = q^{2 x\cdot y} \w{x+y} \] where $x\cdot y$ denotes the intersection pairing. In particular, any element of the form $\w{x}$ is invertible in $\SWebq$ with inverse given by $\w{-x}$.
\end{lemma}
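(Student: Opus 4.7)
The plan is first to observe that a purely 2-labeled web on $\Su$ has no trivalent vertices (those arise only at merges and splits between 1- and 2-labeled edges), so it is simply a disjoint union of oriented closed curves on $\Su$ carrying the 2-label on every component. The only web relations that can modify such a configuration are the 2-labeled circle relation from \eqref{eqn:circles} and the two antiparallel saddle relations on the right of \eqref{eqn:squares}. For each class $x \in H_1(\Su)$, I will fix once and for all a representative oriented multi-curve $\gamma_x \subset \Su$ (with $\gamma_0 = \emptyset$) and define $\w{x}$ as the corresponding 2-labeled web. The identity $[\w{x}] = 2x$ is then immediate from Definition~\ref{def:homologyclass}, since doubling a 2-labeled edge into two parallel 1-labeled edges doubles the represented homology class.

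The heart of the argument is a spanning statement: every 2-labeled web $W$ equals $\w{x}$ in $\SWebq$, where $x$ is determined by $[W]=2x$. My approach is to invoke the classical topological fact that two oriented multi-curves on an oriented surface are homologous if and only if they are connected by a finite sequence of (i) ambient isotopies, (ii) additions and removals of null-homotopic disk-bounding components, and (iii) antiparallel surgeries. Each of these moves is realized by one of the allowed 2-labeled web relations, with (ii) handled by iterated innermost-disk applications of the 2-labeled circle relation of \eqref{eqn:circles} and (iii) directly by the antiparallel saddle relations in \eqref{eqn:squares}. I expect this spanning result to be the main obstacle: its rigorous proof requires decomposing a given oriented bordism between two multi-curve representatives via a Morse function into elementary $0$-, $1$- and $2$-handles and checking that each handle attachment corresponds to one of the allowed moves.

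With the spanning result in hand, the multiplication formula reduces to a transversality computation. Arrange the representatives of $\w{x}$ and $\w{y}$ to meet transversely on $\Su$ when superposed, so that their lift to $\Su\times[0,1]$ exhibits a finite collection of 2-labeled crossings. Applying the $2$--$2$ crossing resolutions from \eqref{eq:crossing}, each positive crossing yields a factor $q^{+2}$ and each negative crossing a factor $q^{-2}$, so the total exponent of $q$ equals twice the algebraic intersection number, namely $2(x\cdot y)$. The remaining uncrossed configuration is an oriented 2-labeled multi-curve of underlying homology class $x+y$, which by the spanning step equals $\w{x+y}$, giving the claimed formula.

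Invertibility is then immediate: since the intersection form on $H_1$ of an oriented surface is antisymmetric and $H_1(\Su)$ is torsion-free for $\Su$ of finite type, one has $x\cdot(-x)=0$, so $\w{x}*\w{-x}=q^{0}\w{0}=\emptyset$, the multiplicative unit of $\SWebq$.
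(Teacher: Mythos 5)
Your proof is correct. The paper actually states Lemma~\ref{lem:2labsub} without proof, so there is no argument of the authors to compare against; what you write supplies the missing details in the expected way. Two points are worth making explicit. The ``classical topological fact'' you invoke is exactly the statement that a null-homologous link in the oriented $3$-manifold $\Su\times[0,1]$ bounds a properly embedded oriented (Seifert) surface; putting the height function of that surface into Morse position yields precisely your moves (i)--(iii), with births and deaths occurring in small disks so that the $2$-labeled circle relation of \eqref{eqn:circles} applies, and with every saddle automatically of antiparallel type by orientability, matching the last two relations of \eqref{eqn:squares}. Since all three relations carry coefficient $1$, this gives the equality $W=\w{x}$ on the nose, which is stronger than spanning. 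Your crossing count also matches the paper's conventions: the $2$--$2$ resolutions in \eqref{eq:crossing} assign $q^{\pm 2}$ to positive and negative crossings, and summing over a transverse superposition gives $q^{2\,x\cdot y}$ with the sign convention $(m,n)\cdot(r,s)=ms-nr$ used later in Corollary~\ref{cor:2labsub}. One small caveat about your opening reduction: a priori, relations in $\SWebq$ involving $1$-labeled edges could also induce identities among purely $2$-labeled webs, so it is not quite accurate to say the circle and antiparallel saddle relations are the only ones that can act on such a configuration; this is harmless here, because for the spanning statement you only need to exhibit equalities, not classify them all, and linear independence of the $\w{x}$ is not asserted in this lemma (it is subsumed in Proposition~\ref{prop:stdbasis}).
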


Another topological invariant of webs in $\Su$ is their underlying integer lamination.

\begin{definition}
\label{def:intlam}
An integer lamination on $\Su$ is an unordered collection $\{(C_i,n_i)\}$ (possibly empty) where the $n_i$ are positive integers and the $C_i$ are disjointly embedded and pairwise non-isotopic, unoriented simple closed curves in $\Su$. We further require that the $C_i$ are essential in the sense that they do not bound disks in $\Su$. Curves that bound disks are called inessential.

Given a web $W$ on $\Su$ and its underlying unoriented multi-curve $c(W)$ we let $l(W)$ denote the integer lamination obtained from $c(W)$ by erasing all inessential curves and then recording one copy $C_i$ of each isotopy class of remaining essential simple closed curves together with the number $n_i$ of its parallel copies. A web $W$ is called inessential if $l(W)=\emptyset$.
\end{definition}

Note that the integer laminations associated to webs are invariant under all web relations. 

In the following, we denote by $\cal{L}(\Su)$ a set of representatives of equivalence classes of integer laminations on $\Su$ up to isotopy, or equivalently, for the set of crossingless links in $\Su$ without inessential components, including the empty link. For the laminations in the set $\cal{L}(\Su)$ we choose (arbitrarily, but once and for all) an orientation on each $C_i$, which we also put on its parallel copies.

\begin{proposition}\label{prop:stdbasis} $\SWebq$ is a free $\Z[q^{\pm 1}]$-module and a basis is given by 
\[\Su\basisstd:=\{L*\w{x}| L\in \cal{L}(\Su), x\in H_1(\Su) \}\] 
\end{proposition}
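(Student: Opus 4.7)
The plan is to establish separately that $\Su\basisstd$ spans $\SWebq$ over $\Z[q^{\pm 1}]$ and that its elements are $\Z[q^{\pm 1}]$-linearly independent.

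\textbf{Spanning.} Given a $\glnn{2}$ web $W$ on $\Su$, I will reduce it modulo the relations \eqref{eqn:circles}--\eqref{eqn:squares} to a $\Z[q^{\pm 1}]$-linear combination of elements of $\Su\basisstd$. After a small isotopy, all trivalent vertices of $W$ lie in a disjoint collection of embedded open disks in $\Su$. Within each such disk $D$, the standard disk reduction for $\glnn{2}$ webs (following Cautis--Kamnitzer--Morrison \cite{CKM}) based on \eqref{eqn:digons} and \eqref{eqn:squares} rewrites $W\cap D$ as a $\Z[q^{\pm 1}]$-linear combination of planar webs with no trivalent vertices. Applying this in every disk, $W$ becomes a $\Z[q^{\pm 1}]$-linear combination of webs each consisting of a disjoint union of embedded $1$-labeled and $2$-labeled closed multi-curves on $\Su$. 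I then remove inessential $1$-labeled components via Lemma~\ref{lem:neckcut} at a cost of scalar factors $q+q^{-1}$, and collect the $2$-labeled multi-curve as $q^a\w{x}$ for some $a\in\Z$, $x\in H_1(\Su)$ via Lemma~\ref{lem:2labsub}. The remaining essential $1$-labeled multi-curve is isotoped to match some $L\in\cal{L}(\Su)$; when a component carries the "wrong" orientation, one can restore the fixed orientation by introducing a parallel $2$-labeled loop and applying a digon simplification from \eqref{eqn:digons}, trading $-C$ for $(+C)*\w{-[C]}$ up to a $q$-power---a diagrammatic manifestation of the $\glnn{2}$ duality $V^*\cong V\otimes(\det V)^{-1}$.

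\textbf{Linear independence.} I use the $H_1(\Su)$-grading from Definition~\ref{def:homologyclass} combined with Przytycki's theorem \cite{Prz1} for the Kauffman bracket skein algebra. First, I construct an algebra homomorphism $\pi$ from $\SWebq$ to a Kauffman bracket-type skein algebra $\cat{K}(\Su)$ of $\Su$ (with sign conventions chosen so that a trivial loop evaluates to $q+q^{-1}$), defined by forgetting orientations on $1$-labeled edges and sending $2$-labeled closed loops to $1$; verifying that $\pi$ is well-defined on \eqref{eqn:circles}--\eqref{eqn:squares} and on the crossing formulas \eqref{eq:crossing} is a direct check. Under $\pi$, a basis element $L*\w{x}$ maps to the class of $L$ in $\cat{K}(\Su)$. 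Applying $\pi$ to a putative linear relation among the $L*\w{x}$, split into $H_1(\Su)$-homogeneous pieces of degree $d$, yields $\sum_L\bigl(\sum_{x:[L]+2x=d}c_{L,x}\bigr)L=0$ in $\cat{K}(\Su)$. Since $H_1(\Su)$ is finitely generated free abelian and hence $2$-torsion-free, for each $L$ there is at most one $x$ satisfying $[L]+2x=d$, so the inner sum has at most one term; Przytycki's linear independence of the $L$'s in $\cat{K}(\Su)$ then forces every $c_{L,x}$ to vanish.

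\textbf{Main obstacle.} The main difficulty lies in the spanning step, and more specifically in the orientation-reversal argument for $1$-labeled essential curves, which requires deducing a diagrammatic version of the $\glnn{2}$ duality purely from the web relations \eqref{eqn:digons}--\eqref{eqn:squares} while keeping careful track of $q$-powers. A secondary technical point is verifying, with consistent sign conventions, that the homomorphism $\pi$ to a Kauffman bracket-type skein algebra is well-defined on all the relations and crossing formulas of $\SWebq$.
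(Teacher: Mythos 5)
Your linear independence argument is essentially sound and coincides with one of the two routes the paper itself suggests: the paper reduces independence to non-torsion of each basis element (since distinct elements of $\Su\basisstd$ are separated by the pair consisting of first homology class and underlying integer lamination, both invariant under the web relations), and then invokes either a Diamond Lemma argument or the forgetful map to the Kauffman bracket skein algebra via Lemmas~\ref{lem:gl22sl2-1} and~\ref{lem:gl22sl2-2}. Your use of the $H_1(\Su)$-grading together with $2$-torsion-freeness of $H_1(\Su)$ to ensure at most one $x$ per lamination in each degree is a valid substitute for the paper's lamination invariant. (Minor point: for independence you only need $\pi$ to be a $\Z[q^{\pm1}]$-module map, so only the relations \eqref{eqn:circles}--\eqref{eqn:squares} need checking; compatibility with the crossing formulas is the content of the rescaling in Section~\ref{sec:pos} and is not required here.)

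The spanning step, however, has a genuine gap. You isotope all trivalent vertices into disjoint disks and claim that inside each disk the web can be rewritten as a combination of vertex-free diagrams. This is false for a disk with boundary: a disk containing a single merge vertex (boundary data $(1,1,2^*)$) admits no vertex-free rewriting, and more generally the invariant space attached to a boundary sequence is usually not spanned by crossingless, vertex-free diagrams (e.g.\ the ``H''-web in $\Hom(V\otimes V,V\otimes V)$ is linearly independent from the identity). The vertices of a surface web cancel only in digon or square configurations, and on a surface these need not be contractible --- the paper explicitly manipulates ``non-local digons'' in Section~\ref{sec:JWFoam} for exactly this reason --- so there is in general no choice of disjoint disks whose contents reduce. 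The paper's Lemma~\ref{lem:equivobjectsdecat} avoids local reduction entirely: after removing inessential components of $c(W)$ via Lemma~\ref{lem:neckcut}, one observes that along each circle of $c(W)$ the edge orientations flip at every vertex, so the $2$-labeled edges together with the wrongly-oriented $1$-labeled edges assemble into a closed $2$-labeled multi-curve $W_2$, and $W$ is then recognized globally as a power of $-q$ times the superposition product $\vec{l}(W)*W_2$, i.e.\ a single multiple of one standard basis element. To repair your argument you would need to replace the disk-by-disk reduction with such a global identification (or at least prove that the vertices can always be paired into digons/squares realizable by the relations, which is essentially the same content).
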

We call this the standard basis. 

\begin{proof} The proof of the fact that this set spans $\SWebq$ is given in the next lemma. Provided that all elements of $\Su\basisstd$ are neither $\Z[q^{\pm 1}]$-torsion nor zero, they are linearly independent since each element is uniquely characterized by its homology class and underlying integer lamination. The absence of torsion and non-triviality can be proven directly using a Diamond Lemma argument, or deduced via Lemmas~\ref{lem:gl22sl2-1} and \ref{lem:gl22sl2-2} from the analogous results for the Kauffman bracket skein algebra, see e.g. \cite[Theorem 3.1]{Prz2} or \cite[Proposition 3.8]{Thu}.
\end{proof}

\begin{lemma} \label{lem:equivobjectsdecat} Every web $W$ in $\SWebq$ is equal to a $\N[-q^{\pm 1}]$-multiple of an element of $\Su\basisstd$.
\end{lemma}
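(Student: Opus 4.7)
The key observation is that both the integer lamination $l(W)$ (Definition~\ref{def:intlam}) and the homology class $[W]$ (Definition~\ref{def:homologyclass}) are manifestly invariant under all the web relations \eqref{eqn:circles}--\eqref{eqn:squares}, as can be checked case-by-case. Hence they uniquely pin down a standard basis element $L*\w{x}$ with $L = l(W)$ and $x \in H_1(\Su)$ determined by the equation $[W] = [L] + 2x$ (using Lemma~\ref{lem:2labsub} for the $2$-labeled subalgebra). It therefore suffices to exhibit an explicit reduction $W \leadsto c \cdot L*\w{x}$ through a sequence of web moves for which the accumulated scalar $c$ lies in $\N[-q^{\pm 1}]$.

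I would carry out the reduction in three stages, each decreasing a natural complexity measure of $W$. \emph{Stage 1 (inessential reduction).} Iterate Lemma~\ref{lem:neckcut} to remove every $1$-labeled circle in $c(W)$ bounding a disk in its complement, each such removal contributing a single factor of $(q+q^{-1})$; and apply the $2$-labeled case of \eqref{eqn:circles} to kill all disk-bounding $2$-labeled circles at no cost. \emph{Stage 2 (local simplification).} Apply the digon relations \eqref{eqn:digons} and square relations \eqref{eqn:squares} to simplify any remaining compressible trivalent-vertex structure, picking up further $(q+q^{-1})$ factors from the $2$-labeled digon case and using the others as equalities. After Stages 1 and 2, the underlying multicurve is a disjoint union of essential simple closed curves realizing $l(W)$ with the correct multiplicities, and the remaining $2$-labeled content is isotoped into a disjoint $2$-labeled multicurve.

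\emph{Stage 3 (orientation and amalgamation).} Reorient each $1$-labeled essential curve to agree with the fixed orientation in $\cal{L}(\Su)$ by exploiting the pivotal identity $V \cong V\otimes\bigwedge^2 V$: a reverse $1$-labeled strand can be traded for a standard $1$-labeled strand together with a parallel $2$-labeled strand via insertion of a trivalent vertex pair. Finally, amalgamate all $2$-labeled components into a single $\w{x}$ using Lemma~\ref{lem:2labsub}, which introduces only factors $q^{2y_i \cdot y_j}$, i.e.\ even powers of $q$. The resulting class $x$ is forced to match the predetermined one by invariance of $[W]$.

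The main obstacle is verifying that the accumulated coefficient $c$ actually lies in $\N[-q^{\pm 1}]$ rather than just in $\Z[q^{\pm 1}]$: the $(q+q^{-1})$ factors from Stages 1 and 2 and the even-power factors $q^{2y_i\cdot y_j}$ from Stage 3 are accounted for directly, but the pivotal reorientation in Stage 3 may contribute additional $q$-factors whose signs have to be tracked carefully. A cleaner route, which I would prefer to use to sidestep this bookkeeping, is to transfer positivity from the well-known analogous statement for the Kauffman bracket skein algebra (see \cite[Theorem 3.1]{Prz2} or \cite[Proposition 3.8]{Thu}) via the $\glnn{2}$-to-$\slnn{2}$ comparison provided by Lemmas~\ref{lem:gl22sl2-1} and~\ref{lem:gl22sl2-2}; the $\glnn{2}$ reduction produces only factors that either already appear in the $\slnn{2}$ computation or arise from the $2$-labeled amalgamation above, both of which are positive.
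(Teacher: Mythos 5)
Your direct three-stage reduction is essentially the paper's own proof: remove inessential circles via Lemma~\ref{lem:neckcut}, identify the remaining $1$-labeled part with an oriented lamination $\vec{l}(W)$, and factor $W=\vec{l}(W)*W_2$ where $W_2$ is the $2$-labeled multi-curve obtained by concatenating the $2$-labeled edges with the wrongly-oriented $1$-labeled edges (your Stage~3 reorientation), then clean up $W_2$ by \eqref{eqn:circles}. The sign bookkeeping you worry about resolves itself without the Kauffman-bracket detour: by \eqref{eq:crossing} each $1$--$2$ crossing contributes exactly $-q^{\pm1}=(-q)^{\pm1}$ and each $2$--$2$ crossing contributes $q^{\pm2}=(-q)^{\pm2}$, so the reorientation yields a single monomial in $(-q)$; the paper reserves Lemmas~\ref{lem:gl22sl2-1} and~\ref{lem:gl22sl2-2} for the linear-independence half of Proposition~\ref{prop:stdbasis}, not for this spanning statement.
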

\begin{proof} First of all, by Lemma~\ref{lem:neckcut} we may assume that $c(W)$ has no inessential components. Then $c(W)=l(W)$ is equivalent to precisely one lamination $\vec{l}(W)$ in $\cal{\Su}$, which furthermore comes with a prescribed orientation. Next we want to write $W=\vec{l}(W)*W_2$ for some $2$-labeled multi-curve $W_2$. Indeed, up to a power of $-q$, the underlying oriented multi-curve of $W_2$ can be taken to be the concatenation of all $2$-labeled edges in $W$ and all those $1$-labeled edges in $W$ which carry the opposite orientation than the one prescribed by $\vec{l}$. Finally, by relation \eqref{eqn:circles} we may assume that $W_2$ has no inessential components.
\end{proof}

The products of elements in $\Su\basisstd$ are usually quite complicated, with a few exceptions as follows.

\begin{lemma} Let $X$ be an oriented $1$-labeled essential simple closed curve with $[X]=x\in H_1(\Su)$ and denote by $-X$ the same curve with the opposite orientation. Then we have: 
\begin{align}
\label{eq:reorient}(X,n)*\w{-nx}&=(-X,n)\\ 
(X,n)*\w{y} &= q^{2n x\cdot y} \w{y} * (X,n)\\
(X,n)* (X,m) &= (X,m+n)
\end{align}
\end{lemma}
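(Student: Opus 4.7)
Plan: I would prove the three identities in the order (3), (2), (1), each invoking a progressively more substantive web relation.

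Identity (3), $(X,n)*(X,m)=(X,m+n)$, is immediate from the definition of $*$: since both factors consist of parallel copies of the same oriented essential simple closed curve $X$, they can be stacked at distinct heights in $\Su\times[0,1]$ and isotoped to $n+m$ pairwise disjoint parallel tracks, so no crossings appear in the projection to $\Su$ and the resulting web is the basis element $(X,m+n)\in\Su\basisstd$.

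Identity (2), $(X,n)*\w{y}=q^{2nx\cdot y}\w{y}*(X,n)$, expresses the central-up-to-scalar behaviour of $\w{y}$. The two sides are represented by the same projected underlying web on $\Su$, differing only in the over/under choices at each transverse intersection between a $1$-labelled $X$-strand and a $2$-labelled strand of $\w{y}$. The crossing rules \eqref{eq:crossing} show that swapping the over/under choice at a single such intersection multiplies the associated resolution by $q^{2\epsilon}$, where $\epsilon=\pm1$ is the local algebraic intersection sign. The underlying curve of $\w{y}$ represents the class $y\in H_1(\Su)$ (the factor of $2$ in $[\w{y}]=2y$ being accounted for by the double-labelling convention of Definition~\ref{def:homologyclass}), so the total signed intersection count of $(X,n)$ with $\w{y}$ equals $n\cdot x\cdot y$, yielding the claimed factor $q^{2nx\cdot y}$.

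For identity (1), $(X,n)*\w{-nx}=(-X,n)$, I would first reduce to the case $n=1$. By Lemma~\ref{lem:2labsub} together with $(-x)\cdot(-x)=0$ we have $\w{-nx}=\w{-x}^{*n}$, and identity (2) applied with $y=-x$ (and $x\cdot x=0$) shows that each factor of $\w{-x}$ commutes strictly with $X$. Combined with identity (3), this yields
\[
(X,n)*\w{-nx}=(X*\w{-x})^{*n},
\]
so it suffices to prove the base case $X*\w{-x}=(-X,1)$. For this I would represent $\w{-x}$ by a single $2$-labelled copy of $-X$ placed parallel to $X$, so that $X*\w{-x}$ becomes a pair of parallel strands in a thin annular neighbourhood of $X$ — one $1$-labelled and oriented as $X$, the other $2$-labelled and oriented as $-X$. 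The assertion that this pair equals a single $1$-labelled strand oriented as $-X$ is the web-theoretic realisation of the representation-theoretic isomorphism $V\otimes(\bigwedge^{2}V)^{*}\cong V^{*}$ for $U_q(\glnn{2})$, and it can be established by combining the digon relations \eqref{eqn:digons}, the $2$-labelled duality relations \eqref{eqn:squares} and the circle evaluations \eqref{eqn:circles} inside the annular neighbourhood.

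The main obstacle is the last step of identity (1): making sure that the chain of local moves produces $(-X,1)$ with coefficient exactly $1$, and not some nontrivial power of $\pm q$. A clean way to bypass a direct bookkeeping argument is to appeal to Lemma~\ref{lem:equivobjectsdecat}, which guarantees a priori that $X*\w{-x}$ is a $(-q)^{k}$-multiple of the basis element $(-X,1)$ for some integer $k$, and then to pin down $k=0$ by a single consistency check — for example by specialising at $q=1$, where the statement reduces to a purely topological identity in the Kauffman-bracket skein module under the projection $c$ from the $\glnn{2}$ calculus to Temperley--Lieb introduced before Lemma~\ref{lem:neckcut}.
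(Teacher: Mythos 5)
The paper states this lemma without proof, so there is no authorial argument to compare against; your proposal is a reasonable reconstruction, and identities (3) and (2) are handled exactly as one would expect (for (2), the key point you correctly isolate is that swapping over/under at a $1$--$2$ crossing changes the coefficient $-q\leftrightarrow -q^{-1}$ in \eqref{eq:crossing}, with the direction of the swap governed by the local orientation sign, so the exponents sum to twice the \emph{algebraic} intersection number, consistent with Lemma~\ref{lem:2labsub}). Your reduction of (1) to the case $n=1$ via $\w{-nx}=\w{-x}^{*n}$ and strict commutation (both using $x\cdot x=0$) is also fine.

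The one soft spot is your fallback for pinning down the scalar in $X*\w{-x}=(-q)^k(-X)$: specialising at $q=1$ before projecting only yields $(-1)^k=1$, i.e.\ $k$ even, not $k=0$. Two clean repairs: (a) do not specialise --- apply the projection $c$ to the Temperley--Lieb skein theory at generic $q$; since the parallel presentation of $X*\w{-x}$ has no crossings, $c$ introduces no scalars, and $\underline{X}=(-q)^k\underline{X}$ with $\underline{X}$ a free basis element forces $(-q)^k=1$. Or (b) just carry out the local computation you gesture at, which already produces coefficient $1$ on the nose: the merge web $\alpha\colon(1,2^*)\to(1^*)$ and the split web $\beta\colon(1^*)\to(1,2^*)$ satisfy $\alpha\circ\beta=\id_{1^*}$ by the digon relation in \eqref{eqn:digons} and $\beta\circ\alpha=\id_{(1,2^*)}$ by the square relation in \eqref{eqn:squares}, with no scalars; inserting $\beta\circ\alpha=\id$ into the annular closure of the parallel pair $(X,\w{-x})$ and sliding $\beta$ around the annulus to the other side of $\alpha$ turns it into the closure of $\alpha\circ\beta=\id_{1^*}$, which is exactly $-X$. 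Option (b) makes the appeal to Lemma~\ref{lem:equivobjectsdecat} unnecessary.
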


In the following, we describe two alternative bases. Let $\{(C_i,n_i)\}$ be an element of $\cal{L}(\Su)$, which we can interpret as a link on $\Su$ and thus also as a web. More precisely, we identify $\{(C_i,n_i)\}$ with the skein algebra product $\prod_i C_i^{n_i}$. Then we define $\{(C_i,n_i)\}_T$ to be the $\Z$-linear combination of webs obtained as the skein algebra product $\prod_i T_{n_i}(C_i)$ where $T_1(C_i)=C_i$, $T_2(C_i)=C_i^2-2\w{[C_i]}$ and all higher order operations are recursively defined as $T_{m+1}(C_i)=T_m(C_i)*C_i - T_{m-1}(C_i)*\w{[C_i]}$. Similarly, $\{(C_i,n_i)\}_{\mathrm{S}}$ is defined as the skein algebra product $\prod_i S_{n_i}(C_i)$ where $S_1(C_i)=C_i$, $S_2(C_i)=C_i^2-\w{[C_i]}$ and higher order operations are defined by the same recursion as the $T_{i}$.

\begin{definition} \label{def:cheb} We consider the bases
\[\Su\basis:=\{L_T*\w{x}| L\in \cal{L}(\Su), x\in H_1(\Su) \}\] 
and
\[\Su\basisJW:=\{L_{\mathrm{S}}*\w{x}| L\in \cal{L}(\Su), x\in H_1(\Su) \}.\] 
\end{definition}
It is clear that $\Su\basis$ and $\Su\basisJW$ are bases, because they are triangularly equivalent to $\Su\basisstd$. In fact, the basis changes from $\Su\basis$ to $\Su\basisJW$ and further to $\Su\basisstd$ are positive: every element of $\Su\basisstd$ is a $\N$-linear combination of elements of $\Su\basisJW$, which are further $\N$-linear combination of elements of $\Su\basis$. This follows from the fact that these bases are modeled on the bases of the symmetric polynomial ring $\Z[q^{\pm 1}][x_1,x_2]^{\cal{S}_2}\cong \Z[q^{\pm 1}][x_1+x_2, x_1x_2]$ given by $(x_1+x_2)^a(x_1x_2)^b$, $(x_1^a+x_1^{a-1}x_2\cdots x_1x_2^{a-1}+x_2^a)(x_1x_2)^b$, and $(x_1^a+x_2^a)(x_1x_2)^b$ for $a,b\geq 0$.

\subsection{Positivity conjectures}
\label{sec:pos}
The main appeal of $\Su\basis$ is that it seems to have \textit{positive} structure constants in $\N[-q^{\pm 1}]$, see Conjecture~\ref{conj:positivity}. We now argue that this conjecture is equivalent to an analogous conjecture of Fock--Goncharov \cite[Conjecture 12.4]{FoG} and Thurston \cite[Conjecture 4.20]{Thu} for the Kauffman bracket skein algebra $\mathrm{Sk}_A(\Su)$. The latter is defined as the quotient of the free $\Z[A^{\pm 1}]$-module spanned by unoriented framed links in $\Su\times [0,1]$ modulo the following skein relations:
\begin{equation}
\begin{tikzpicture}[anchorbase, scale=.5]
\draw [very thick] (2,1) to [out=180,in=0] (0,0);
 \draw [white,line width=.15cm] (2,0) to [out=180,in=0] (0,1) ;
\draw [very thick] (2,0) to [out=180,in=0] (0,1);
\end{tikzpicture}
\;= \; A\;\;
\begin{tikzpicture}[anchorbase, scale=.5]
\draw [very thick] (2,1) to (0,1);
\draw [very thick] (2,0) to [out=180,in=0] (0,0);
\end{tikzpicture}
\;
+ \; A^{-1}\;
\begin{tikzpicture}[anchorbase, scale=.5]
\draw [very thick] (2,1) to [out=180,in=180] (2,0);
\draw [very thick] (0,0) to [out=0,in=0] (0,1);
\end{tikzpicture}
\quad,\quad
\begin{tikzpicture}[anchorbase, scale=.5]
\draw [very thick] (0,0) to [out=90,in=180] (.5,.5) to [out=0,in=90] (1,0)to [out=270,in=0] (.5,-.5) to [out=180,in=270] (0,0);
\end{tikzpicture} 
\;= \; -A^2 - A^{-2}
\end{equation}
In order to relate this to the $\glnn{2}$ skein algebra $\SWebq$, we have to extend scalars in the latter to $\Z[A^{\pm 1}]$ by letting $q$ act as $-A^{-2}$. We call the result $\SWebA$ and see that the skein relation for the trivial $1$-labeled circle agrees with the one in $\mathrm{Sk}_A(\Su)$, but we also have to adjust the skein relations involving crossings. For example, in $\SWebA$ we have the relation: 
\[
\begin{tikzpicture}[anchorbase, scale=.5]
\draw [very thick, ->] (2,1) to [out=180,in=0] (0,0);
 \draw [white,line width=.15cm] (2,0) to [out=180,in=0] (0,1) ;
\draw [very thick, ->] (2,0) to [out=180,in=0] (0,1);
\end{tikzpicture}
 \;\;= \;\;
\begin{tikzpicture}[anchorbase, scale=.5]
\draw [very thick, ->] (2,1) to (0,1);
\draw [very thick, ->] (2,0) to (0,0);
\end{tikzpicture}
\;\; +\;\;
 A^{-2} \;
\begin{tikzpicture}[anchorbase, scale=.5]
\draw [very thick] (2,0) to[out=180,in=315] (1.3,.5);
\draw [very thick] (2,1) to[out=180,in=45] (1.3,.5);
\draw [double] (1.3,.5) -- (.7,.5);
\draw [very thick, ->] (.7,.5) to[out=135,in=0]  (0,1);
\draw [very thick, ->] (.7,.5) to[out=225,in=0] (0,0);
\end{tikzpicture}\]
Instead, we consider the skein algebra $\SWebA^\prime$ defined just as $\SWebA$, but with positive crossings between $i$- and $j$-labeled strands rescaled by $A^{ij}$ (and negative crossings by the inverse scalar), i.e.:

\begin{gather}
\begin{tikzpicture}[anchorbase, scale=.5]
\draw [very thick, ->] (2,1) to [out=180,in=0] (0,0);
 \draw [white,line width=.15cm] (2,0) to [out=180,in=0] (0,1) ;
\draw [very thick, ->] (2,0) to [out=180,in=0] (0,1);
\end{tikzpicture}
 \;\;= \;\;
 A\; 
\begin{tikzpicture}[anchorbase, scale=.5]
\draw [very thick, ->] (2,1) to (0,1);
\draw [very thick, ->] (2,0) to (0,0);
\end{tikzpicture}
\;\; +\;\;
 A^{-1} \;
\begin{tikzpicture}[anchorbase, scale=.5]
\draw [very thick] (2,0) to[out=180,in=315] (1.3,.5);
\draw [very thick] (2,1) to[out=180,in=45] (1.3,.5);
\draw [double] (1.3,.5) -- (.7,.5);
\draw [very thick, ->] (.7,.5) to[out=135,in=0]  (0,1);
\draw [very thick, ->] (.7,.5) to[out=225,in=0] (0,0);
\end{tikzpicture}
\quad,\quad
\begin{tikzpicture}[anchorbase, scale=.5]
\draw [very thick, ->] (2,0) to [out=180,in=0] (0,1);
 \draw [white,line width=.15cm] (2,1) to [out=180,in=0] (0,0) ;
\draw [very thick, ->] (2,1) to [out=180,in=0] (0,0);
\end{tikzpicture}
\;\;=\;\;
A  \;
\begin{tikzpicture}[anchorbase, scale=.5]
\draw [very thick] (2,0) to[out=180,in=315] (1.3,.5);
\draw [very thick] (2,1) to[out=180,in=45] (1.3,.5);
\draw [double] (1.3,.5) -- (.7,.5);
\draw [very thick, ->] (.7,.5) to[out=135,in=0]  (0,1);
\draw [very thick, ->] (.7,.5) to[out=225,in=0] (0,0);
\end{tikzpicture}
\;\;+\;\;
A^{-1}
\begin{tikzpicture}[anchorbase, scale=.5]
\draw [very thick, ->] (2,1) to (0,1);
\draw [very thick, ->] (2,0) to (0,0);
\end{tikzpicture}\nonumber
\\
\label{eq:thickcrossing}
\begin{tikzpicture}[anchorbase, scale=.5]
\draw [very thick, ->] (2,1) to [out=180,in=0] (0,0);
 \draw [white,line width=.15cm] (2,0) to [out=180,in=0] (0,1) ;
\draw [double, ->] (2,0) to [out=180,in=0] (0,1);
\end{tikzpicture}
\;\;=\;\;
\begin{tikzpicture}[anchorbase, scale=.5]
\draw [double] (2,0) -- (1.4,0);
\draw [very thick, ->] (1.4,0) -- (0,0);
\draw [very thick] (2,1) -- (.6,1);
\draw [double, ->] (0.6,1) -- (0,1);
\draw [very thick] (.6,1) -- (1.4,0);
\end{tikzpicture} 
\quad,\quad
\begin{tikzpicture}[anchorbase, scale=.5]
\draw [double, ->] (2,1) to [out=180,in=0] (0,0);
 \draw [white,line width=.15cm] (2,0) to [out=180,in=0] (0,1) ;
\draw [very thick, ->] (2,0) to [out=180,in=0] (0,1);
\end{tikzpicture}
\;\;=\;\;
\begin{tikzpicture}[anchorbase, scale=.5]
\draw [double] (2,1) -- (1.4,1);
\draw [very thick, ->] (1.4,1) -- (0,1);
\draw [very thick] (2,0) -- (.6,0);
\draw [double, ->] (0.6,0) -- (0,0);
\draw [very thick] (.6,0) -- (1.4,1);
\end{tikzpicture} 
\quad,\quad
\begin{tikzpicture}[anchorbase, scale=.5]
\draw [double, ->] (2,1) to [out=180,in=0] (0,0);
 \draw [white,line width=.15cm] (2,0) to [out=180,in=0] (0,1) ;
\draw [double, ->] (2,0) to [out=180,in=0] (0,1);
\end{tikzpicture}
\;\;=\;\;
\begin{tikzpicture}[anchorbase, scale=.5]
\draw [double, ->] (2,0) -- (0,0);
\draw [double, ->] (2,1) -- (0,1);
\end{tikzpicture}
\\ \nonumber
\begin{tikzpicture}[anchorbase, scale=.5]
\draw [very thick, ->] (2,0) to [out=180,in=0] (0,1);
 \draw [white,line width=.15cm] (2,1) to [out=180,in=0] (0,0) ;
\draw [double, ->] (2,1) to [out=180,in=0] (0,0);
\end{tikzpicture}
\;\;=\;\;
\begin{tikzpicture}[anchorbase, scale=.5]
\draw [double] (2,1) -- (1.4,1);
\draw [very thick, ->] (1.4,1) -- (0,1);
\draw [very thick] (2,0) -- (.6,0);
\draw [double, ->] (0.6,0) -- (0,0);
\draw [very thick] (.6,0) -- (1.4,1);
\end{tikzpicture} 
\quad,\quad
\begin{tikzpicture}[anchorbase, scale=.5]
\draw [double, ->] (2,0) to [out=180,in=0] (0,1);
 \draw [white,line width=.15cm] (2,1) to [out=180,in=0] (0,0) ;
\draw [very thick, ->] (2,1) to [out=180,in=0] (0,0);
\end{tikzpicture}
\;\;=\;\;
\begin{tikzpicture}[anchorbase, scale=.5]
\draw [double] (2,0) -- (1.4,0);
\draw [very thick, ->] (1.4,0) -- (0,0);
\draw [very thick] (2,1) -- (.6,1);
\draw [double, ->] (0.6,1) -- (0,1);
\draw [very thick] (.6,1) -- (1.4,0);
\end{tikzpicture} 
\quad,\quad
\begin{tikzpicture}[anchorbase, scale=.5]
\draw [double, ->] (2,0) to [out=180,in=0] (0,1);
 \draw [white,line width=.15cm] (2,1) to [out=180,in=0] (0,0) ;
\draw [double, ->] (2,1) to [out=180,in=0] (0,0);
\end{tikzpicture}
\;\;=\;\;
\begin{tikzpicture}[anchorbase, scale=.5]
\draw [double, ->] (2,0) -- (0,0);
\draw [double, ->] (2,1) -- (0,1);
\end{tikzpicture} 
\end{gather}

\begin{lemma}\label{lem:gl22sl2-1}
$\SWebA$ is isomorphic to $\SWebA^\prime$ as a $\Z[A^{\pm 1}]$-module via the map that is the identity on the elements of $\Su\basis$ (or equivalently $\Su\basisstd$). Moreover, the pull-back $\bigstar$ of the multiplication on $\SWebA^\prime$ to $\SWebA$ is related to the usual multiplication $*$ as follows:
\[W_1\bigstar W_2 = A^{[W_1]\cdot [W_2]} W_1 * W_2\]
Here $W_1$ and $W_2$ denote webs, and $[W_1]\cdot [W_2]$ is the intersection pairing of their first homology classes. 
\end{lemma}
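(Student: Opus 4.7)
The plan is to separate the module-theoretic content from the multiplicative content of the lemma. As modules, $\SWebA$ and $\SWebA'$ are defined identically, as the quotient of the free $\Z[A^{\pm 1}]$-module on isotopy classes of $\glnn{2}$ webs in $\Su$ by the relations \eqref{eqn:circles}--\eqref{eqn:squares}, none of which involve crossings. The identity on webs therefore automatically descends to the claimed $\Z[A^{\pm 1}]$-module isomorphism (in particular it is the identity on $\Su\basisstd$), and all the content sits in comparing the two multiplications.

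For webs $W_1, W_2$ both products are computed by superposing $W_2$ over $W_1$ in the thickening of $\Su$ and resolving every crossing, using the $\SWebA'$ rules \eqref{eq:thickcrossing} in the first case and the $\SWebA$ rules \eqref{eq:crossing} (specialised at $q=-A^{-2}$) in the second. I would begin with a finite case check verifying that for each label pair $i,j\in\{1,2\}$ and each sign $\epsilon\in\{\pm 1\}$, the local $\SWebA'$ resolution of an $(i,j,\epsilon)$-crossing equals $A^{\epsilon i j}$ times the corresponding $\SWebA$ resolution; this is a direct comparison of the two lists of crossing formulas. Applying this rescaling at every crossing of the superposition diagram simultaneously gives
\begin{equation*}
W_1 \bigstar W_2 \;=\; A^{\sum_c \epsilon_c i_c j_c}\; W_1 * W_2,
\end{equation*}
where the sum ranges over intersection points $c$ of edges of $W_1$ and $W_2$ with labels $i_c,j_c$ and with sign $\epsilon_c$ determined by the orientations of the edges and by the surface orientation (using that $W_2$ lies above $W_1$).

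It then remains to identify the exponent with the homological intersection pairing $[W_1]\cdot[W_2]$. By Definition~\ref{def:homologyclass}, $[W_k]$ is represented by the multi-curve $\overline{W_k}$ obtained from $W_k$ by replacing each $2$-labeled edge by two parallel $1$-labeled copies. Under this doubling, a transverse intersection of an $i$-labeled edge of $W_1$ and a $j$-labeled edge of $W_2$ at a single point $c$ produces exactly $i\cdot j$ transverse intersections of $\overline{W_1}$ with $\overline{W_2}$ near $c$, all of the same sign $\epsilon_c$, and hence contributes $\epsilon_c i_c j_c$ to $[W_1]\cdot[W_2]$. Summing yields $\sum_c \epsilon_c i_c j_c = [W_1]\cdot[W_2]$, which gives the stated formula. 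The main point to keep track of is the doubling of $2$-labeled edges --- this is precisely what generates the factor $ij$ in the local rescaling law, matching the intersection pairing computed on the doubled curves. Once the formula is in hand, the associativity of $\bigstar$ (and hence the consistency of $\SWebA'$ as an algebra) follows formally from the associativity of $*$ and the bilinearity of the intersection pairing.
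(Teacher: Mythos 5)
Your proof is correct, and since the paper states Lemma~\ref{lem:gl22sl2-1} without proof, your argument supplies exactly the intended justification: the module claim is immediate because the defining relations \eqref{eqn:circles}--\eqref{eqn:squares} involve no crossings, the local comparison of \eqref{eq:crossing} at $q=-A^{-2}$ with \eqref{eq:thickcrossing} gives the factor $A^{\epsilon_c i_c j_c}$ per crossing, and the doubling of $2$-labeled edges in Definition~\ref{def:homologyclass} identifies $\sum_c \epsilon_c i_c j_c$ with $[W_1]\cdot[W_2]$. The only point worth making explicit is the sign convention matching positive crossings (with $W_2$ superposed over $W_1$) to $+1$ local intersections of $\overline{W_1}$ with $\overline{W_2}$, which you implicitly fix correctly.
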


\begin{lemma}\label{lem:gl22sl2-2}
The map $\mathrm{forget}$ that forgets orientations and $2$-labeled edges in webs induces an $\Z[A^{\pm 1}]$-algebra epimorphism from $\SWebA^\prime$ to $\mathrm{Sk}_A(\Su)$.
\end{lemma}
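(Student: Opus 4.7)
The plan is to define $\mathrm{forget}$ at the level of free $\Z[A^{\pm 1}]$-modules spanned by tangled oriented $\glnn{2}$ web diagrams in $\Su$ by sending such a web $W$ to the framed unoriented multi-curve $c(W)$ (as introduced at the start of Section~\ref{sec:gl2skein}), obtained by erasing all $2$-labeled edges together with their adjacent trivalent endpoint vertices, which get contracted to a canonical planar cap/cup pairing of the surrounding $1$-edges, and by forgetting orientations. This image lies in $\mathrm{Sk}_A(\Su)$ via the inherited blackboard framing. I then need to verify in turn: (i) the defining relations of $\SWebA^\prime$ descend, (ii) multiplication is preserved, and (iii) the map is surjective. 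Surjectivity is immediate, as any unoriented framed link lifts to a $1$-labeled oriented web with arbitrarily chosen orientation.

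For (i), consider first the planar web relations \eqref{eqn:circles}, \eqref{eqn:digons}, \eqref{eqn:squares}, which $\SWebA^\prime$ inherits from $\SWebA$. Under the scalar change $q=-A^{-2}$ one has $q+q^{-1}=-A^{2}-A^{-2}$, so the $1$-labeled circle equation maps to the Kauffman bracket circle evaluation while the $2$-labeled circle equation becomes the trivial identity $\emptyset=\emptyset$. The $2$-labeled digon in \eqref{eqn:digons} collapses under $\mathrm{forget}$ to a single unknotted $1$-circle (the two $1$-edges of the bigon are joined by the cap/cup contractions of the two trivalent vertices), once again recovering the Kauffman circle relation. The mixed $1$-$1$-$2$ digons and both square relations reduce to trivial identities between isotopic collections of $1$-arcs.

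Next, for the crossing relations \eqref{eq:thickcrossing}: the $1$-$1$ positive crossing reads $X=A\cdot I + A^{-1}\cdot H$, where $H$ is the H-smoothing formed by two trivalent vertices joined by a $2$-edge. Under $\mathrm{forget}$ the $2$-edge together with its two trivalent endpoints contracts to the horizontal smoothing (a cap below and a cup above), so the equation becomes precisely the Kauffman bracket crossing relation. The $1$-$2$ and $2$-$2$ crossing relations all reduce to trivial identities between isotopic configurations of $1$-arcs (or of the empty tangle) because the $2$-edges disappear on both sides. This establishes (i), and multiplicativity (ii) follows because both products are defined by superposition followed by crossing resolution, $\mathrm{forget}$ trivially commutes with superposition, and compatibility with crossing resolution has just been established.

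The main obstacle is the bookkeeping showing that the rescaling factor $A^{ij}$ defining $\SWebA^\prime$ (in contrast to $\SWebA$) precisely cleans up the coefficients in each crossing resolution to match the Kauffman bracket after $\mathrm{forget}$. For the $1$-$1$ positive crossing in $\SWebA$ one has $I - q H = I + A^{-2} H$, which the rescaling by $A^{1\cdot 1}=A$ turns into $A\cdot I + A^{-1}\cdot H$, matching Kauffman; for the $1$-$2$ and $2$-$2$ positive crossings the rescalings by $A^{1\cdot 2}=A^{2}$ and $A^{2\cdot 2}=A^{4}$ cancel the $-q$ and $q^{2}$ prefactors of \eqref{eq:crossing}, yielding the unadorned relations of \eqref{eq:thickcrossing} whose two sides coincide after $\mathrm{forget}$. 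A secondary subtlety is the well-definedness of the trivalent-vertex contraction, which holds because each $2$-edge is a proper planar arc with two well-defined endpoints in $\Su$, determining a canonical cap/cup pairing of the four surrounding $1$-edges at its two endpoints.
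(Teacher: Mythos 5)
Your proof is correct, and it is exactly the direct verification the paper intends (the lemma is stated there without proof). The key bookkeeping checks out: with $q=-A^{-2}$ the rescalings $A^{ij}$ turn the resolutions \eqref{eq:crossing} into \eqref{eq:thickcrossing}, whose images under $\mathrm{forget}$ are the Kauffman bracket relations (for $1$-$1$ crossings) or tautologies (for crossings involving $2$-labeled strands), the planar relations \eqref{eqn:circles}--\eqref{eqn:squares} descend as you describe, and surjectivity is immediate from lifting an unoriented framed link to an arbitrarily oriented $1$-labeled web.
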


The following proposition expresses the equivalence of Conjecture~\ref{conj:positivity} and the conjecture of Fock--Goncharov \cite[Conjecture 12.4]{FoG} as presented by Thurston \cite[Conjecture 4.20]{Thu}. Indeed, the sets $\mathrm{forget}(\Su\basis)$, $\mathrm{forget}(\Su\basisJW)$, and $\mathrm{forget}(\Su\basisstd)$ coincide with Thurston's \textit{bracelets}, \textit{bands} and \textit{bangles bases} of $\mathrm{Sk}_A(\Su)$. This follows directly from Definition~\ref{def:cheb} and \cite[Propositions 4.4 and 4.8]{Thu}.

\begin{proposition} The multiplication of elements in $\Su\basis$ is positive in $-q$ in $\SWebq$ if and only if the multiplication of elements of $\mathrm{forget}(\Su\basis)$ is positive in $A$ in $\mathrm{Sk}_A(\Su)$.
\end{proposition}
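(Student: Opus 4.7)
The plan is to identify the structure constants of $\Su\basis$ in $\SWebq$ with those of the bracelets basis $\mathrm{forget}(\Su\basis)$ in $\mathrm{Sk}_A(\Su)$ up to an invertible monomial in $A$, using Lemmas~\ref{lem:gl22sl2-1} and \ref{lem:gl22sl2-2}.

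First, I would reduce to pure bracelet products $L_{1,T} * L_{2,T}$ with $L_1, L_2 \in \cal{L}(\Su)$. For general basis elements $L_{i,T} * \w{x_i}$, regrouping via the commutation relation $\w{x} * W = q^{2x \cdot [W]}\, W * \w{x}$ (extending \eqref{eq:reorient} and Lemma~\ref{lem:2labsub}) produces only even powers of $q$, and such factors $q^{2k} = (-q)^{2k}$ manifestly lie in $\N[-q^{\pm 1}]$. The analogous manipulation in $\mathrm{Sk}_A(\Su)$, where $\mathrm{forget}(\w{x}) = 1$, contributes only a monomial in $A$. Hence the full positivity statement is equivalent to its restriction to pure bracelet products.

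Next, I would expand $L_{1,T} * L_{2,T}$ in $\Su\basis$. Since $\SWebq$ is $H_1(\Su)$-graded and each $L_{i,T}$ is homogeneous of degree $[L_{i,T}]$, any contributing basis element $L'_T * \w{x'}$ must satisfy $[L'_T] + 2x' = [L_{1,T}] + [L_{2,T}]$. Because $H_1(\Su)$ is torsion-free, this uniquely determines $x' = x_{L'}$ whenever $[L_{1,T}] + [L_{2,T}] - [L'_T]$ is divisible by $2$, and no $L'$ contributes otherwise. Thus
\[L_{1,T} * L_{2,T} \;=\; \sum_{L' \in \cal{L}(\Su)} c_{L'}(-q)\, L'_T * \w{x_{L'}}\]
for uniquely determined $c_{L'}(-q) \in \Z[q^{\pm 1}]$.

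Finally, I would push this identity through the forget map. Using Lemma~\ref{lem:gl22sl2-1} to convert each $*$ into $\bigstar$ (introducing a factor $A^{[L_{1,T}] \cdot [L_{2,T}]}$ on the left and factors $A^{-2[L'_T] \cdot x_{L'}}$ on the right), then applying the algebra epimorphism $\mathrm{forget}\colon \SWebA^\prime \to \mathrm{Sk}_A(\Su)$ from Lemma~\ref{lem:gl22sl2-2} (noting $\mathrm{forget}(\w{x}) = 1$), and comparing coefficients in the bracelets basis yields
\[d_{L'}(A) \;=\; A^{\alpha(L_1, L_2, L')}\, c_{L'}(-A^{-2})\]
for an explicit integer $\alpha(L_1, L_2, L') = [L_{1,T}] \cdot [L_{2,T}] - 2[L'_T] \cdot x_{L'}$, where the $d_{L'}(A) \in \Z[A^{\pm 1}]$ are the bracelets-basis structure constants. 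Since $c_{L'}(-A^{-2}) \in \Z[A^{\pm 2}]$, multiplication by the monomial $A^{\alpha}$ merely shifts exponents without altering signs: the coefficient of $A^j$ in $d_{L'}(A)$ equals the coefficient of $(-q)^{(\alpha - j)/2}$ in $c_{L'}(-q)$ when $\alpha - j$ is even, and is zero otherwise. Therefore $c_{L'}(-q) \in \N[-q^{\pm 1}]$ if and only if $d_{L'}(A) \in \N[A^{\pm 1}]$, proving the equivalence.

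The only mild obstacle is the uniqueness of $x_{L'}$, which requires $H_1(\Su)$ to be torsion-free (automatic here); beyond that, the proof is pure bookkeeping of the rescalings prescribed by Lemmas~\ref{lem:gl22sl2-1} and \ref{lem:gl22sl2-2}.
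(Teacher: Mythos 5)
Your proof is correct and follows essentially the same route as the paper's: pass from $*$ to $\bigstar$ via Lemma~\ref{lem:gl22sl2-1} (so positivity in $-q=A^{-2}$ becomes positivity in $A$ up to monomial factors), push through the epimorphism of Lemma~\ref{lem:gl22sl2-2}, and use the fact that the $H_1(\Su)$-grading forces distinct basis elements in the expansion to have distinct images under $\mathrm{forget}$, so the structure constants match up to monomials in $A$. Your preliminary reduction to pure bracelet products and the explicit exponent bookkeeping are harmless additions; the paper phrases the same content more briefly as two separate implications.
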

\begin{proof} First of all, by Lemma~\ref{lem:gl22sl2-1} and the formula $(-q)=A^{-2}$, positivity in $-q$ in $\SWebq$ is equivalent to positivity in $A$ in $\SWebA^\prime$. 

Given two elements $x,y$ of $\mathrm{forget}(\Su\basis)$, we lift them to elements $\overline{x}, \overline{y}$ of $\Su\basis$ in $\SWebA^\prime$, which are automatically $H_1(\Su)$-homogeneous. Assuming that $\overline{x}* \overline{y}$ has an expansion in terms of elements of $\Su\basis$ with coefficients in $\N[A^{\pm 1}]$, we use Lemma~\ref{lem:gl22sl2-2} to deduce that $x*y=\mathrm{forget}(\overline{x}* \overline{y})$, which is thus manifestly positive as well.  

Conversely, suppose that $X$ and $Y$ are elements of $\Su\basis$ with an expansion $X*Y=\sum \alpha_i Z_i$, where $Z_i$ are distinct elements of $\Su\basis$ with $[Z_i]=[X]+[Y]$. Then, again by Lemma~\ref{lem:gl22sl2-2}, $\mathrm{forget}(X)*\mathrm{forget}(Y)=\sum \alpha_i \mathrm{forget}(Z_i)$. Now note that distinct elements of $\Su\basis$ of a fixed first homology class are sent to distinct elements of $\mathrm{forget}(\Su\basis)$. Thus, positivity in $\mathrm{Sk}_A(\Su)$ implies $\alpha_i\in \N[A^{\pm 1}]$, which finishes the proof.
\end{proof}

\subsection{The skein algebra of the torus}
\label{sec:skeinalgtorus}
In this section we study $\TWebq$, the $\glnn{2}$ skein algebra of the torus and its basis $\T\basis$, which satisfies a suitably modified Frohman-Gelca formula that is manifestly positive. We also show that $\TWebq$ is isomorphic to the symmetric subalgebra of a suitable quantum torus. 

We choose, once and for all, two oriented simple closed curves $\lambda$ and $\mu$ that give generators for the first homology of $\T$. The ordered pair $(\lambda$,$\mu)$ determines an orientation of $\T$ and fixes a convention for illustrating $\T$:
\[\T \quad =\quad  \begin{tikzpicture}[anchorbase]
\torus{2.4}{1.6};
\node [green] at (1.2,.2) {$\lambda$};
\node [red] at (0.2,.8) {$\mu$};
\end{tikzpicture}\]
We say that an oriented simple closed curve on $\T$ is an $(a,b)$-curve if it represents the homology class $a[\lambda] + b [\mu]$ for $a,b\in \Z$ and $(a,b)\neq (0,0)$. These curves are essential and satisfy $\gcd(a,b)=1$. More generally, $(k a,k b)$-(multi)curves for $k\geq 1$ are defined to be $k$-fold oriented parallels of $(a,b)$-curves. Here we reserve the notation $(0,0)$ for the empty curve. 

We choose such oriented multi-curves $(m,n)$ as representatives of integer laminations on $\T$:
\[\cal{L}(\T):=\{(m,n)| m,n\in \Z, m>0 \text{ or } n\geq m=0\}\]

The slope of a simple closed curve on $\T$ in the homology class $m [\mu] + n [\lambda]$, is defined to be $m/n \in \overline{\Q}=\Q\cup \{\infty\}$. Inessential curves can be considered to have any element of $\overline{\Q}$ as slope. A multi-curve has slope $m/n$ if each of its components has this slope. In the following, we also assign slopes to webs.

\begin{definition}\label{def:slope} Let $W$ be a web on $\T$. We say $W$ has slope $m/n$ if the underlying $1$-labeled multi-curve $c(W)$ has this slope. If every component of $c(W)$ is inessential we call $W$ inessential and otherwise essential.
\end{definition}

Note that the slope as defined above is invariant under all $\glnn{2}$ web relations. A sufficient (though not necessary) condition for a web $W$ to have slope $m/n$ is that $W$ is supported in a tubular (or rather, annular) neighborhood of an $(m,n)$-curve.

\begin{corollary} \label{cor:2labsub} The subalgebra of $2$-labeled webs in $\TWebq$ is spanned by the multi-curves $\w{m,n}$ which satisfy $[\w{m,n}]=(2m,2n)$ in  $H_1(\T)\cong \Z^2$. The skein algebra multiplication is given by:
\[\w{m,n} * \w{r,s} = q^{2(ms-nr)} \w{m+r,n+s} \]
In particular, the elements $\w{m,n}$ are invertible in $\TWebq$.
\end{corollary}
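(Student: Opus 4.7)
The plan is to derive the corollary directly from Lemma~\ref{lem:2labsub} by specializing to $\Su = \T$ and computing the intersection pairing on $H_1(\T)$ in the chosen basis. First I would parametrize the $2$-labeled multi-curves. By Lemma~\ref{lem:2labsub}, the $2$-labeled subalgebra of $\SWebq$ is spanned by elements $\w{x}$ indexed by $x \in H_1(\Su)$. Specializing $H_1(\T) \cong \Z^2$ via the basis $([\lambda],[\mu])$ and writing $x = m[\lambda] + n[\mu]$, we relabel these generators as $\w{m,n}$. The assertion $[\w{m,n}] = (2m,2n)$ is then immediate from Definition~\ref{def:homologyclass}: doubling a $2$-labeled $(m,n)$-multi-curve into two parallel $1$-labeled copies (and smoothing at trivalent vertices, of which there are none for pure $2$-labeled multi-curves) produces an oriented multi-curve of class $2m[\lambda] + 2n[\mu]$.

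Next I would compute the intersection pairing $x \cdot y$ on $H_1(\T)$ for $x = (m,n)$ and $y = (r,s)$. The chosen orientation on $\T$ induced by the ordered pair $(\lambda,\mu)$ fixes $[\lambda]\cdot[\mu] = 1$, and the pairing is antisymmetric with trivial self-pairings, so
\[
x \cdot y \;=\; mr([\lambda]\cdot[\lambda]) + ms([\lambda]\cdot[\mu]) + nr([\mu]\cdot[\lambda]) + ns([\mu]\cdot[\mu]) \;=\; ms - nr.
\]
Substituting into the formula $\w{x} * \w{y} = q^{2x\cdot y}\w{x+y}$ of Lemma~\ref{lem:2labsub} yields the desired product rule
\[
\w{m,n} * \w{r,s} \;=\; q^{2(ms-nr)}\,\w{m+r,\,n+s}.
\]

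Finally, invertibility is read off from the multiplication formula: since $(m,n)\cdot(-m,-n) = 0$, we obtain
\[
\w{m,n} * \w{-m,-n} \;=\; q^{0}\,\w{0,0} \;=\; \emptyset,
\]
using the normalization $\w{0} = \emptyset$ from Lemma~\ref{lem:2labsub}, so $\w{-m,-n}$ is a two-sided inverse. There is no real obstacle here; the only substantive point is bookkeeping with the sign convention for the intersection pairing, which is determined by the fixed ordering $(\lambda,\mu)$ and hence dictates the exponent $2(ms-nr)$ rather than its negative.
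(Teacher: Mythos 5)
Your proof is correct and matches the paper's approach exactly: the paper simply states that the corollary is a special case of Lemma~\ref{lem:2labsub}, and you have supplied the straightforward specialization of the intersection pairing to $H_1(\T)\cong\Z^2$ that the paper leaves implicit.
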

\begin{proof}
This is a special case of Lemma~\ref{lem:2labsub}.
\end{proof}

We now reformulate the two bases from the previous section in the special case of the torus. The standard basis and the $T$-basis for $\TWebq$ are given by:
\begin{align*} \T\basisstd &:=\{(m,n)*\w{r,s} | m,n,r,s\in \Z, m>0 \text{ or } n\geq m=0   \}\\
\T\basis &:=\{(m,n)_T*\w{r,s}| m,n,r,s\in \Z, m>0 \text{ or } n>m=0\}\cup \{\w{r,s}| r,s\in \Z\}
\end{align*}
where $(m,n)_T$ is defined by induction on $\gcd(m,n)$ as follows: 
\[(m,n)_T:= \begin{cases}
(m,n)&\text{ if } \gcd(m,n)=1
\\
(m,n)-2\w{m/2,n/2}&\text{ if } \gcd(m,n)=2
\\
(m-a,n-b)_T*(a,b)- (m-2a,n-2b)_T * \w{a,b} &\text{ if } \gcd(m,n)=d \geq 3  \\
&\text{ and } (m,n)=(da,db)
\end{cases}\]
Note that the definition of $(m,n)_T$ makes sense for any $m,n\in\Z$, also for those pairs that do not play a role in $\T\basis$. We also set $(0,0)_T:=2$. The following result is a $\glnn{2}$-version of a theorem due to Frohman and Gelca \cite{FG}.

\begin{theorem} \label{thm:FG} The $\Z[q^{\pm 1}]$-algebra $\TWebq$ is isomorphic to the abstract $\Z[q^{\pm 1}]$-algebra with generators $(m,n)_T$ and $\w{m,n}$ with $(m,n)\in \Z^2$, subject to the following relations:
 \begin{align}
 \label{eq:FG}
 (m,n)_T*(r,s)_T &= (m+r,n+s)_T + (m-r,n-s)_T*\w{r,s}
 \\
 \nonumber
 (m,n)_T*\w{r,s} &= q^{2(ms-nr)} \w{r,s} *(m,n)_T
 \\
 \nonumber
 (m,n)_T*\w{-m,-n} &=  (-m,-n)_T
  \\
 \nonumber
 \w{m,n} * \w{r,s} &= q^{2(ms-nr)} \w{m+r,n+s}
 \\
 \nonumber
 (0,0)_T=2, &\quad \w{0,0}=1
 \end{align}
 We will refer to the multiplication rule \eqref{eq:FG} as the Frohman--Gelca formula.
\end{theorem}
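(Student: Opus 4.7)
The plan is to show that the natural map $\Phi\colon A\to \TWebq$ from the abstract algebra $A$ with the stated presentation, sending each generator to its namesake web, is an isomorphism. First I would check that every defining relation holds in $\TWebq$. The relation $\w{m,n}*\w{r,s}=q^{2(ms-nr)}\w{m+r,n+s}$ together with the normalizations $\w{0,0}=1$ and $(0,0)_T=2$ are immediate from Corollary~\ref{cor:2labsub} and the conventions fixed in Section~\ref{sec:skeinalgtorus}. The orientation-reversal identity $(m,n)_T*\w{-m,-n}=(-m,-n)_T$ and the $q$-commutation $(m,n)_T*\w{r,s}=q^{2(ms-nr)}\w{r,s}*(m,n)_T$ then reduce, via the Chebyshev-type recursion defining $(m,n)_T$ (Definition~\ref{def:cheb}), to the elementary identities for primitive multicurves supplied by Lemma~\ref{lem:2labsub} and \eqref{eq:reorient}, combined with the $\w{}$-multiplication rule; both follow by a short induction on $\gcd(m,n)$.

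The heart of the proof, and the main technical obstacle, is the Frohman--Gelca formula \eqref{eq:FG}. I would establish it by a direct geometric computation on $\T$: first in the primitive case $\gcd(m,n)=\gcd(r,s)=1$, where $(m,n)_T$ and $(r,s)_T$ are represented by transversally intersecting simple closed curves meeting in exactly $|ms-nr|$ points (after isotopy), by resolving all crossings using the $\glnn{2}$ rules \eqref{eq:crossing} and then removing the resulting inessential components via Lemma~\ref{lem:neckcut}, collecting the surviving terms as $(m+r,n+s)$ and $(m-r,n-s)*\w{r,s}$. The general case would then follow by a double induction on $\gcd(m,n)$ and $\gcd(r,s)$ from the recursive definition of $(m,n)_T$, the cross terms conspiring to vanish precisely because of the Chebyshev-style three-term recursion. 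This computation is deferred to Appendix~\ref{sec:FGfla}.

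With the relations in hand, $\Phi$ is a well-defined algebra homomorphism, and surjectivity is immediate since its image contains the spanning set $\T\basis$. For injectivity I would introduce a normal form in $A$ consisting of the elements $(m,n)_T*\w{r,s}$ with either $(m,n)=(0,0)$ or $m>0$ or $0=m<n$, and show that every word in the generators of $A$ reduces to a $\Z[q^{\pm 1}]$-linear combination of such normal forms: the $q$-commutation moves every $\w{}$-factor to the right and the $\w{}$-multiplication rule combines them into a single $\w{R,S}$; iterated application of the Frohman--Gelca formula reduces any product $(m_1,n_1)_T*\cdots*(m_k,n_k)_T$ to a $\Z[q^{\pm 1}]$-combination of single Chebyshev terms; and orientation-reversal brings each $(m,n)$ into the canonical range. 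Under $\Phi$ the normal forms are sent bijectively to the $\Z[q^{\pm 1}]$-basis $\T\basis$ of $\TWebq$ (Proposition~\ref{prop:stdbasis} and Definition~\ref{def:cheb}), so they are $\Z[q^{\pm 1}]$-linearly independent in $A$, and $\Phi$ is an isomorphism.
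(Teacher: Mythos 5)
Your overall architecture (check the relations in $\TWebq$, get surjectivity from the spanning set, get injectivity by matching normal forms in the presented algebra with the basis $\T\basis$) is sound and matches what the paper does implicitly; the easy relations do indeed reduce to Corollary~\ref{cor:2labsub}, Lemma~\ref{lem:2labsub} and \eqref{eq:reorient} via the recursion defining $(m,n)_T$. The problem is the step you call the heart of the proof. You propose to establish \eqref{eq:FG} in the primitive case $\gcd(m,n)=\gcd(r,s)=1$ by ``a direct geometric computation'': resolve all $k=|ms-nr|$ crossings, discard inessential components, and observe that the surviving terms collect into $(m+r,n+s)_T+(m-r,n-s)_T*\w{r,s}$. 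For $k\geq 3$ this is not a proof but a restatement of the claim: you get $2^k$ resolved webs, the elements $(m+r,n+s)_T$ and $(m-r,n-s)_T$ are themselves recursively defined linear combinations of standard basis webs whenever the relevant $\gcd$ exceeds $1$, and no mechanism is offered for organizing the $2^k$ terms into exactly those two combinations. Example~\ref{exa:2101} shows how delicate this already is for $k=2$.

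The paper's Appendix~\ref{sec:FGfla} avoids this entirely: it inducts on the geometric intersection number $|ms-nr|$ (not on the gcd's), with base cases $0$ (handled via the identification of a single-slope subalgebra with $\Z[q^{\pm 1}][x_1,x_2]^{\cal{S}_2}$) and $1$. The induction step in the primitive$\times$primitive case uses the arithmetic lemma producing $u+w=m$, $v+z=n$ with $|uz-vw|=1$ and $u,w$ strictly smaller than $m$, and then expands the triple product $(u,v)*(w,z)*(0,1)$ in two ways using associativity of the skein multiplication; the non-primitive case similarly expands $(a,b)*(m-a,n-b)_T*(r,s)_T$ in two ways. This ``compute a triple product twice'' trick is precisely what replaces the intractable direct resolution, and it is the missing idea in your proposal. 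Your double induction on $\gcd(m,n)$ and $\gcd(r,s)$ for the general case is in the right spirit (it resembles Section~\ref{sec:case2}), but it bottoms out at the primitive case, which is exactly where your argument has no content. To repair the proof you would need to replace the ``direct computation'' by an induction on $|ms-nr|$ together with the associativity argument, i.e.\ essentially reproduce the Frohman--Gelca scheme.
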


\begin{remark} In \cite{MS} Morton--Samuelson give presentations for the $\glnn{N}$ and HOMFLY-PT skein algebras of the torus. In the case of $\glnn{2}$, their generators agree with our basis elements $(m,n)_T$. However, in choosing a different scaling for crossings, they avoid the use of $2$-labeled basis elements $\w{a,b}$, which comes at the cost of passing from multiplication rules as in \eqref{eq:FG} to commutator identities.  
\end{remark}

\begin{example}\label{exa:2101}  We expand $(2,1)*(0,1)$ as follows:
\begin{equation*}
\xy
(-10,0)*{
\begin{tikzpicture}[anchorbase]
\draw[very thick, directed=.9] (1.2,0) to (1.2,1.6);
\draw[white, line width=.15cm] (0,0) to (2.4,.8);
\draw[white, line width=.15cm] (0,.8)  to (2.4,1.6);
\draw[very thick, directed=.75] (0,0) to (2.4,.8);
\draw[very thick, directed=.75] (0,.8)  to (2.4,1.6);
\torus{2.4}{1.6}
\end{tikzpicture}
};
(10,0)*{=
};
(95,0)*{
 + q^2\;\;
\begin{tikzpicture}[anchorbase]
\draw[very thick, directed=.55] (0,0) [out=30,in=180] to (1.2,.3);
\draw[very thick, directed=.55] (0,.8)to [out=30,in=180] (.9,1);
\draw[very thick] (1.2,1.3)  to [out=90,in=270] (1.2,1.6);
\draw[very thick] (1.2,0) [out=90,in=270] to (1.2,.3);
\draw[very thick, directed=.55] (1.5,.6) [out=0,in=210] to (2.4,.8);
\draw[double] (1.2,.3) to (1.5,.6) ;
\draw[double]  (.9,1) to (1.2,1.3) ;
\draw[very thick, directed=.55] (1.5,.6) to (.9,1);
\draw[very thick, directed=.55] (1.2,1.3) [out=0,in=210] to (2.4,1.6);
\torus{2.4}{1.6}
\end{tikzpicture}
};
(60,10)*{
 -q \;\;
\begin{tikzpicture}[anchorbase]
\draw[very thick, directed=.55] (0,0) [out=30,in=180] to (1.2,.3);
\draw[very thick, directed=.55] (0,.8) [out=30,in=270] to (1.2,1.6);
\draw[very thick] (1.2,0) [out=90,in=270] to (1.2,.3);
\draw[very thick, directed=.55] (1.5,.6) [out=0,in=210] to (2.4,.8);
\draw[double] (1.2,.3) to (1.5,.6) ;
\draw[very thick, directed=.55] (1.5,.6) to [out=90,in=210](2.4,1.6);
\torus{2.4}{1.6}
\end{tikzpicture}
};
(60,-10)*{
 -q \;\;
\begin{tikzpicture}[anchorbase]
\draw[very thick, directed=.55] (0,0) [out=30,in=270] to (.9,1);
\draw[very thick, directed=.55] (0,.8)to [out=30,in=180] (.9,1);
\draw[very thick] (1.2,1.3)  to [out=90,in=270] (1.2,1.6);
\draw[very thick, directed=.55] (1.2,0) [out=90,in=210] to (2.4,.8);
\draw[very thick, directed=.55] (1.2,1.3) [out=0,in=210] to (2.4,1.6);
\draw[double]  (.9,1) to (1.2,1.3) ;
\torus{2.4}{1.6}
\end{tikzpicture}
};
(30,0)*{
\begin{tikzpicture}[anchorbase]
\draw[very thick, directed=.55] (0,0) [out=30,in=210] to (2.4,1.6);
\draw[very thick, directed=.55] (0,.8) [out=30,in=270] to (1.2,1.6);
\draw[very thick, directed=.55] (1.2,0) [out=90,in=210] to (2.4,.8);
\torus{2.4}{1.6}
\end{tikzpicture}
};
\endxy
\end{equation*}  After collapsing the digons in the two middle webs, we get the following expression.
\begin{equation*}
\xy
(10,0)*{=
};
(30,0)*{
\begin{tikzpicture}[anchorbase]
\draw[very thick, directed=.55] (0,0.53) to (1.6,1.6);
\draw[very thick, directed=.55] (0,1.06) to (.8,1.6);
\draw[very thick, directed=.55] (.8,0) to (2.4,1.06);
\draw[very thick, directed=.55] (1.6,0) to (2.4,0.53);
\torus{2.4}{1.6}
\end{tikzpicture}
};
(78,10)*{
- \;\;
\begin{tikzpicture}[anchorbase]
\draw[double, directed=.55] (0,0.8) to (1.2,1.6);
\draw[double, directed=.55] (1.2,0) to (2.4,.8);
\torus{2.4}{1.6}
\end{tikzpicture}
-q^2  \;\;
\begin{tikzpicture}[anchorbase]
\draw[double, directed=.55] (0,0.8) to (1.2,1.6);
\draw[double, directed=.55] (1.2,0) to (2.4,.8);
\torus{2.4}{1.6}
\end{tikzpicture}
};
(78,-10)*{
-\;\;
\begin{tikzpicture}[anchorbase]
\draw[double, directed=.55] (0,0.8) to (1.2,1.6);
\draw[double, directed=.55] (1.2,0) to (2.4,.8);
\torus{2.4}{1.6}
\end{tikzpicture}
-q^2 \;\;
\begin{tikzpicture}[anchorbase]
\draw[double, directed=.55] (0,0.8) to (1.2,1.6);
\draw[double, directed=.55] (1.2,0) to (2.4,.8);
\torus{2.4}{1.6}
\end{tikzpicture}
};
(129,0)*{
 +q^2\;\;
\begin{tikzpicture}[anchorbase]
\draw[very thick, directed=.55] (0,1) to (.9,1);
\draw[very thick, directed=.55] (0,.6) to (.9,.6);
\draw[very thick, directed=.55] (1.5,1) to (2.4,1);
\draw[very thick, directed=.55] (1.5,.6) to (2.4,.6);
\draw[double] (.9,.6) to (1.5,1) ;
\draw[double]  (.9,1) to[out=45,in=270] (1.2,1.6) ;
\draw[double]  (1.2,0) to[out=90,in=225] (1.5,.6) ;
\draw[very thick, directed=.55] (1.5,1) to (.9,1);
\draw[very thick, directed=.55] (1.5,.6) to (.9,.6);
\torus{2.4}{1.6}
\end{tikzpicture}
};
\endxy
\end{equation*}

The left three webs sum to $(2,2)_T$, while the right three webs give $(2,0)_T*\w{0,1}$.
\end{example} 


Frohman--Gelca \cite{FG} showed that the Kauffman bracket skein algebra of the torus is isomorphic to the invariants in the quantum torus $\Z[A^{\pm 1}]\langle X^{\pm 1}, Y^{\pm 1}\rangle / \langle Y X = A^2 X Y\rangle$ under the involution that inverts $X$ and $Y$. The $\glnn{2}$ skein algebra of the torus has an analogous characterization that we describe next.

Consider the algebra $\cal{A}$ over $\Z[q^{\pm 1}]$ generated by invertible generators $X_1$, $X_2$, $Y_1$, $Y_2$, subject to the $q$-commutation relations:
\begin{enumerate}
\item $X_1 X_2=X_2 X_1$, $Y_1 Y_2=Y_2 Y_1$
\item $Y_1 X_1= X_1 Y_1$, $Y_2 X_2= X_2 Y_2$
\item $Y_1 X_2=q^{-2} X_2 Y_1$, $Y_2 X_1=q^{-2} X_1 Y_2$
\end{enumerate}
This algebra is graded by $H_1(\T)\cong \Z^2$ by letting $X_i$ have degree $(1,0)$ and $Y_i$ to have degree $(0,1)$. $S_2$ acts on $\cal{A}$, with the transposition switching $X_1\leftrightarrow X_2$ and $Y_1 \leftrightarrow Y_2$.  

\begin{proposition} There is an isomorphism $\Phi\colon \TWebq \to \cal{A}^{S_2}$ sending
\[
(m,n)_T  \mapsto (X_1^m Y_1^n + X_2^m Y_2^n)\quad ,\quad
\w{r,s}  \mapsto q^{-2 rs} X_1^r X_2^r Y_1^s Y_2^s
\]
\end{proposition}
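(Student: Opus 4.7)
The strategy is to exploit the presentation of $\TWebq$ provided by Theorem~\ref{thm:FG}. Since the generators $(m,n)_T$ and $\w{r,s}$ of $\TWebq$ are subject to five families of relations, it suffices to verify that $\Phi$ respects each of them, and then to check bijectivity by tracking bases.

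The first step is to check that $\Phi$ is a well-defined algebra homomorphism. One fixes, once and for all, the monomial normal form $X_1^{a} X_2^{b} Y_1^{c} Y_2^{d}$ in $\cal A$ (pushing all $X$'s to the left and $Y$'s to the right), so that every product in $\cal A$ can be reduced using only the basic commutations $Y_i X_i = X_i Y_i$ and $Y_i X_j = q^{-2} X_j Y_i$ for $i\neq j$. In this normal form, each of the five defining relations of Theorem~\ref{thm:FG} reduces to a direct calculation: for the Frohman--Gelca relation, one expands $(X_1^m Y_1^n + X_2^m Y_2^n)(X_1^r Y_1^s + X_2^r Y_2^s)$ in normal form; the two ``diagonal'' terms produce $(m+r,n+s)_T$, while the two ``off-diagonal'' terms each acquire a factor $q^{-2nr}$ and recombine to give $(m-r,n-s)_T * \w{r,s}$ after a symmetric calculation of the right-hand side. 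The remaining relations (the $q$-commutation of $(m,n)_T$ with $\w{r,s}$, the identity $(m,n)_T*\w{-m,-n} = (-m,-n)_T$, the multiplication of $\w{r,s}$'s, and the normalizations $(0,0)_T = 2$, $\w{0,0} = 1$) are all immediate from the same rewriting.

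For bijectivity, one observes that $\cal A$ has the obvious $\Z[q^{\pm 1}]$-basis $\{X_1^{a_1} X_2^{a_2} Y_1^{c_1} Y_2^{c_2}\}$, so $\cal A^{S_2}$ has a $\Z[q^{\pm 1}]$-basis given by the $S_2$-orbit sums of these monomials, indexed by unordered pairs $\{(a_1,c_1),(a_2,c_2)\}$ in $\Z^2$. The plan is then to show that $\Phi$ sends each element of the basis $\T\basis$ bijectively (up to an invertible scalar $q^{k}$) to one such orbit sum. Concretely, for $(m,n)_T*\w{r,s}$ with $m > 0$ one computes in normal form that
\[
\Phi\bigl((m,n)_T*\w{r,s}\bigr) \;=\; q^{-2rs-2nr}\bigl(X_1^{m+r} X_2^{r} Y_1^{n+s} Y_2^{s} + X_1^{r} X_2^{m+r} Y_1^{s} Y_2^{n+s}\bigr),
\]
which, by the substitution $(a_1,c_1,a_2,c_2)=(m+r,n+s,r,s)$, ranges precisely over all orbit sums indexed by pairs with distinct first components; the case $m=0$, $n>0$ covers pairs with equal first components but distinct second components; and the elements $\w{r,s}$ cover the diagonal pairs. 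Since $\T\basis$ is a $\Z[q^{\pm 1}]$-basis of $\TWebq$ by Proposition~\ref{prop:stdbasis} and the triangularity of the change of basis described in Definition~\ref{def:cheb}, this identifies $\Phi$ with a bijection between bases up to units, hence $\Phi$ is an isomorphism of $\Z[q^{\pm 1}]$-algebras.

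The only real obstacle is the bookkeeping in the Frohman--Gelca verification: one must confirm that the $q$-powers arising from commuting $Y_i$'s past $X_j$'s on the two sides of the relation agree on both the ``sum'' and ``difference'' summands. Once that identity is checked, all other relations are routine, and the decomposition of $\cal A^{S_2}$ into symmetrized monomials makes the basis comparison transparent.
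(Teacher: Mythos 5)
Your proposal is correct and follows essentially the same route as the paper: well-definedness is checked against the presentation of $\TWebq$ from Theorem~\ref{thm:FG}, and bijectivity is established by matching the symmetrized-monomial basis of $\cal{A}^{S_2}$ with the basis $\T\basis$ up to powers of $q$ (the paper runs this matching from orbit sums to basis elements, you run it in the opposite direction, which is the same argument). No gaps.
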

\begin{proof}
It is easy to check that the images of the relations from \eqref{eq:FG} hold in the quantum torus. 
This means $\Phi$ is well-defined. The quantum torus $\cal{A}$ has a $\Z[q^{\pm 1}]$-basis given by monomials $X_1^a X_2^b Y_1^c Y_2^d$ for $a,b,c,d\in \Z$. The invariant part $\cal{A}^{S_2}$ has a $\Z[q^{\pm 1}]$-basis given by monomials 
\[X_1^a X_2^a Y_1^c Y_2^c =\Phi(q^{2a c} \w{a,c})\] and symmetrizations 
\begin{align*} 
X_1^a X_2^b Y_1^c Y_2^d + X_2^a X_1^b Y_2^c Y_1^d &= q^{-2 b (c-d)} (X_1^{a-b}Y_1^{c-d} + X_2^{a-b} Y_2^{c-d})X_1^b X_2^b Y_1^d Y_2^d \\
&= \Phi(q^{-2b(c-2d)} (a-b,c-d)_T*\w{b,d})\end{align*}
 for $a\neq b$ or $c\neq d$. This shows that $\Phi$ is injective and surjective.
\end{proof} 
Recall from Section~\ref{sec:pos} the skein algebra $\TWebA^\prime$, which results from twisting the multiplication in $\TWebq$ by a half-integer power of $-q$ depending on the intersection form on the first homology. This skein algebra is isomorphic to the $\cal{S}_2$-invariants in the quantum torus $\cal{A}^\prime$ whose $q$-commutation relations are more symmetric:
\begin{enumerate}
\item $X_1 X_2=X_2 X_1$, $Y_1 Y_2=Y_2 Y_1$
\item $Y_1 X_1= q X_1 Y_1$, $Y_2 X_2= q X_2 Y_2$
\item $Y_1 X_2=q^{-1} X_2 Y_1$, $Y_2 X_1=q^{-1} X_1 Y_2$
\end{enumerate} 
From the twisted versions $\SWebA^\prime \cong \cal{A}^{\prime \cal{S}_2}$, the Kauffman bracket skein algebra $\mathrm{Sk}_A(\T)$ can be obtained by erasing $2$-labeled edges, and the quantum torus of Frohman--Gelca is obtained by specializing $X_1\mapsto X $, $X_2\mapsto X^{-1}$, $Y_1\mapsto Y $ and $Y_2\mapsto Y^{-1}$ in $\cal{A}^\prime$.

\section{Foam categories on surfaces}
\label{sec:foams}
After Khovanov's categorification of the Jones polynomial \cite{Kh1}, a categorification of the skein module of $\R^2$ was constructed by Bar-Natan \cite{BN2}. The resulting category has objects corresponding to unoriented curves, but it also has an additional layer of morphisms given by cobordisms between these curves, modulo certain relations. Khovanov homology gives a projective functor from the category of links and link cobordisms up to isotopy to this categorified skein module \cite{Kh2,BN2}. The inherent sign defect in this theory was later on solved by Clark-Morrison-Walker \cite{CMW}, Caprau \cite{Cap} and Blanchet \cite{Blan}, by means of refined categories of cobordisms.

Just as in the case of Bar-Natan cobordisms \cite{APS, Boe}, these categories extend to more general thickened surfaces. We will use Blanchet's version of cobordisms, called foams, to construct properly functorial surface link homologies and categorifications of the $\glnn{2}$ surface skein modules. It is interesting to note that foams were first used by Khovanov in the context of $\slnn{3}$ link homology~\cite{Kh5}, and then extended by Khovanov--Rozansky~\cite{KhR3} and Mackaay--Sto\v{s}i\'{c}--Vaz~\cite{MSV} to $\slnn{N}$ (or rather $\glnn{N}$) link homologies for larger rank. Here, we only consider foams for $\glnn{2}$, but we will comment on $\glnn{N}$ link homologies in Section~\ref{sec:BKh}.

\subsection{Foams in thickened surfaces}
As before, $\Su$ denotes a connected, oriented surface of finite type.
\begin{definition} \label{def:Tfoam}
We define the foam category  $\Sfoam$ to be the graded category with:
\begin{itemize}
\item objects, (direct sums of q-shifted) webs embedded in $\Su$. We highlight the fact that here, webs are not considered up to any relation.
\item morphisms, (matrices of $\Q$-linear combinations of) $\glnn{2}$ foams properly embedded in $\Su\times [0,1]$.
\end{itemize}
\end{definition}

Foams for $\glnn{2}$ are embedded CW-complexes assembled from $1$- and $2$-labeled compact oriented surfaces, called \textit{facets}. These facets are glued along their boundary such that precisely two boundary components of $1$-labeled facets are identified with a single boundary component of a $2$-labeled facet. Around such \textit{seams}, foams have the shape of the letter {\it Y} times an interval or a circle and we require that the orientations of the seam induced by the orientations on the $1$-labeled facets agree with each other and disagree with the orientation induced by the $2$-labeled facet. Facets with label $1$ are furthermore allowed to carry dots. We consider foams modulo isotopy relative to the boundary and Blanchet's local foam relations~\cite{Blan}, which we illustrate in the following with $1$-labeled facets shaded red and $2$-labeled facets shaded yellow:

\begin{equation}\label{sl2closedfoam}
\begin{tikzpicture} [fill opacity=0.2, scale=.65,anchorbase]
	\filldraw [fill=red] (0,0) circle (1);
	\draw (-1,0) .. controls (-1,-.4) and (1,-.4) .. (1,0);
	\draw[dashed] (-1,0) .. controls (-1,.4) and (1,.4) .. (1,0);
\end{tikzpicture}
\quad = \quad 0 \qquad , \qquad
\begin{tikzpicture} [fill opacity=0.2, scale=.65,anchorbase]
	\filldraw [fill=red] (0,0) circle (1);
	\draw (-1,0) .. controls (-1,-.4) and (1,-.4) .. (1,0);
	\draw[dashed] (-1,0) .. controls (-1,.4) and (1,.4) .. (1,0);
	\node [opacity=1]  at (0,0.6) {$\bullet$};
\end{tikzpicture}
\quad = \quad 1
\end{equation}
\begin{equation}\label{sl2neckcutting}
\begin{tikzpicture} [fill opacity=0.2,  decoration={markings, mark=at position 0.6 with {\arrow{>}};  }, scale=.6,anchorbase]
	\draw [fill=red] (0,4) ellipse (1 and 0.5);
	\path [fill=red] (0,0) ellipse (1 and 0.5);
	\draw (1,0) .. controls (1,-.66) and (-1,-.66) .. (-1,0);
	\draw[dashed] (1,0) .. controls (1,.66) and (-1,.66) .. (-1,0);
	\draw (1,4) -- (1,0);
	\draw (-1,4) -- (-1,0);
	\path[fill=red, opacity=.3] (-1,4) .. controls (-1,3.34) and (1,3.34) .. (1,4) --
		(1,0) .. controls (1,.66) and (-1,.66) .. (-1,0) -- cycle;
\end{tikzpicture}
\quad = \quad
\begin{tikzpicture} [fill opacity=0.2,  decoration={markings, mark=at position 0.6 with {\arrow{>}};  }, scale=.6,anchorbase]
	\draw [fill=red] (0,4) ellipse (1 and 0.5);
	\draw (-1,4) .. controls (-1,2) and (1,2) .. (1,4);
	\path [fill=red, opacity=.3] (1,4) .. controls (1,3.34) and (-1,3.34) .. (-1,4) --
		(-1,4) .. controls (-1,2) and (1,2) .. (1,4);
	\path [fill=red] (0,0) ellipse (1 and 0.5);
	\draw (1,0) .. controls (1,-.66) and (-1,-.66) .. (-1,0);
	\draw[dashed] (1,0) .. controls (1,.66) and (-1,.66) .. (-1,0);
	\draw (-1,0) .. controls (-1,2) and (1,2) .. (1,0);
	\path [fill=red, opacity=.3] (1,0) .. controls (1,.66) and (-1,.66) .. (-1,0) --
		(-1,0) .. controls (-1,2) and (1,2) .. (1,0);
		\node[opacity=1] at (0,1) {$\bullet$};
\end{tikzpicture}
\quad + \quad
\begin{tikzpicture} [fill opacity=0.2,  decoration={markings, mark=at position 0.6 with {\arrow{>}};  }, scale=.6,anchorbase]
	\draw [fill=red] (0,4) ellipse (1 and 0.5);
	\draw (-1,4) .. controls (-1,2) and (1,2) .. (1,4);
	\path [fill=red, opacity=.3] (1,4) .. controls (1,3.34) and (-1,3.34) .. (-1,4) --
		(-1,4) .. controls (-1,2) and (1,2) .. (1,4);
	\node[opacity=1] at (0,3) {$\bullet$};
	\path [fill=red] (0,0) ellipse (1 and 0.5);
	\draw (1,0) .. controls (1,-.66) and (-1,-.66) .. (-1,0);
	\draw[dashed] (1,0) .. controls (1,.66) and (-1,.66) .. (-1,0);
	\draw (-1,0) .. controls (-1,2) and (1,2) .. (1,0);
	\path [fill=red, opacity=.3] (1,0) .. controls (1,.66) and (-1,.66) .. (-1,0) --
		(-1,0) .. controls (-1,2) and (1,2) .. (1,0);
\end{tikzpicture}
\quad .
\end{equation}
The neck-cutting relation \eqref{sl2neckcutting} gives the formula:
\begin{equation} \label{sl2handle}
2 \quad
\begin{tikzpicture} [fill opacity=0.2,decoration={markings, mark=at position 0.6 with {\arrow{>}}; }, scale=.65,anchorbase]
	\draw [fill=red] (1,1) -- (-1,2) -- (-1,-1) -- (1,-2) -- cycle;
	\node [opacity=1] at (0,0) {$\bullet$};
\end{tikzpicture}
\quad = \quad
\begin{tikzpicture} [fill opacity=0.2,decoration={markings, mark=at position 0.6 with {\arrow{>}}; }, scale=.65, anchorbase]
	\path [fill=red] (1,1) -- (-1,2) -- (-1,-1) -- (1,-2) -- cycle;
	\draw (-1,.58) -- (-1,2) -- (1,1) -- (1,-2) -- (-1,-1) -- (-1,-.32);
	\draw [fill=red] (0,.75) to [out=225, in=0] (-2,.75) to [out=180, in=90] (-3,0) to [out=270, in=180] (-2,-.5) to [out=0, in=135] (0,-.5);
	\filldraw [fill=white, opacity=1] (-2.5,.15) arc (-120:-60:1) -- (-1.65,.1) arc (70:110:1);
\end{tikzpicture}
 \quad
\end{equation}

\begin{equation} \label{sl2neckcutting_enh_2lab}
\begin{tikzpicture} [scale=0.6,fill opacity=0.2,  decoration={markings, mark=at position 0.6 with {\arrow{>}};  },anchorbase]
	\draw [fill=yellow , fill opacity=0.3] (0,4) ellipse (1 and 0.5);
	\path [fill=yellow , fill opacity=0.3] (0,0) ellipse (1 and 0.5);
	\draw (1,0) .. controls (1,-.66) and (-1,-.66) .. (-1,0);
	\draw[dashed] (1,0) .. controls (1,.66) and (-1,.66) .. (-1,0);
	\draw (1,4) -- (1,0);
	\draw (-1,4) -- (-1,0);
	\path[fill=yellow, opacity=.3] (-1,4) .. controls (-1,3.34) and (1,3.34) .. (1,4) --
		(1,0) .. controls (1,.66) and (-1,.66) .. (-1,0) -- cycle;
\end{tikzpicture}
\quad = \quad - \quad
\begin{tikzpicture} [scale=0.6,fill opacity=0.2,  decoration={markings, mark=at position 0.6 with {\arrow{>}};  },anchorbase]
	\draw [fill=yellow , fill opacity=0.3] (0,4) ellipse (1 and 0.5);
	\draw (-1,4) .. controls (-1,2) and (1,2) .. (1,4);
	\path [fill=yellow, opacity=.3] (1,4) .. controls (1,3.34) and (-1,3.34) .. (-1,4) --
		(-1,4) .. controls (-1,2) and (1,2) .. (1,4);
	\path [fill=yellow , fill opacity=0.3] (0,0) ellipse (1 and 0.5);
	\draw (1,0) .. controls (1,-.66) and (-1,-.66) .. (-1,0);
	\draw[dashed] (1,0) .. controls (1,.66) and (-1,.66) .. (-1,0);
	\draw (-1,0) .. controls (-1,2) and (1,2) .. (1,0);
	\path [fill=yellow, opacity=.3] (1,0) .. controls (1,.66) and (-1,.66) .. (-1,0) --
		(-1,0) .. controls (-1,2) and (1,2) .. (1,0);
\end{tikzpicture}
\end{equation}

\begin{equation}\label{2labeledsphere}
\begin{tikzpicture} [fill opacity=0.3, scale=.65,anchorbase]
	\draw [fill=yellow , fill opacity=0.3] (0,0) circle (1);
	\draw (-1,0) .. controls (-1,-.4) and (1,-.4) .. (1,0);
	\draw[dashed] (-1,0) .. controls (-1,.4) and (1,.4) .. (1,0);
\end{tikzpicture}
\quad =\quad -1
\end{equation}

\begin{equation}\label{sl2thetafoam_enh}
\begin{tikzpicture} [fill opacity=0.2,decoration={markings, mark=at position 0.6 with {\arrow{>}}; }, scale=.65,anchorbase]
	\filldraw [fill=red] (0,0) circle (1);
	\path [fill=yellow , fill opacity=0.3] (0,0) ellipse (1 and 0.3); 	
	\draw [very thick, red, postaction={decorate}] (-1,0) .. controls (-1,-.4) and (1,-.4) .. (1,0);
	\draw[very thick, red, dashed] (-1,0) .. controls (-1,.4) and (1,.4) .. (1,0);
	\node [opacity=1]  at (0,0.7) {$\bullet$};
	\node [opacity=1]  at (0,-0.7) {$\bullet$};
	\node [opacity=1] at (-0.3,0.7) {$\alpha$};
	\node [opacity=1] at (-0.3,-0.7) {$\beta$};
\end{tikzpicture}
\quad = \quad
\left\{
	\begin{array}{rl}
	1 & \text{ if } (\alpha, \beta) = (1,0) \\
	-1 & \text{ if } (\alpha, \beta) = (0,1) \\
	0 & \text{ if } (\alpha, \beta) = (0,0) \text{ or } (1,1)
	\end{array}
\right.
\end{equation}


\begin{equation} \label{sl2Fig5Blanchet_1}
\begin{tikzpicture} [scale=.6,fill opacity=0.2,decoration={markings, mark=at position 0.5 with {\arrow{>}}; },anchorbase]
	\draw[very thick, postaction={decorate}] (.75,0) -- (2,0);
	\draw[very thick, postaction={decorate}] (-2,0) -- (-.75,0);
	\draw[very thick, postaction={decorate}] (.75,0) .. controls (.5,-.5) and (-.5,-.5) .. (-.75,0);
	\draw[dashed, double, postaction={decorate}] (-.75,0) .. controls (-.5,.5) and (.5,.5) .. (.75,0);
	\draw (-2,0) -- (-2,4);
	\draw (2,0) -- (2,4);
	\path [fill=red] (-2,4) -- (-.75,4) -- (-.75,0) -- (-2,0) -- cycle;
	\path [fill=red] (2,4) -- (.75,4) -- (.75,0) -- (2,0) -- cycle;
	\path [fill=yellow , fill opacity=0.3] (-.75,4) .. controls (-.5,4.5) and (.5,4.5) .. (.75,4) --
		(.75,0) .. controls (.5,.5) and (-.5,.5) .. (-.75,0);
	\path [fill=red] (-.75,4) .. controls (-.5,3.5) and (.5,3.5) .. (.75,4) --
		(.75,0) .. controls (.5,-.5) and (-.5,-.5) .. (-.75,0);
	\draw [very thick, red, postaction={decorate}] (-.75,4) -- (-.75,0);
	\draw [very thick, red, postaction={decorate}] (.75, 0) -- (.75,4);
	\draw[very thick, postaction={decorate}] (.75,4) -- (2,4);
	\draw[very thick, postaction={decorate}] (-2,4) -- (-.75,4);
	\draw[very thick, postaction={decorate}] (.75,4) .. controls (.5,3.5) and (-.5,3.5) .. (-.75,4);
	\draw[double, postaction={decorate}] (-.75,4) .. controls (-.5,4.5) and (.5,4.5) .. (.75,4);
\end{tikzpicture}
\quad = \quad - \quad
\begin{tikzpicture} [scale=.6,fill opacity=0.2,decoration={markings, mark=at position 0.5 with {\arrow{>}}; },anchorbase]
	\draw[very thick, postaction={decorate}] (.75,0) -- (2,0);
	\draw[very thick, postaction={decorate}] (-2,0) -- (-.75,0);
	\draw[very thick, postaction={decorate}] (.75,0) .. controls (.5,-.5) and (-.5,-.5) .. (-.75,0);
	\draw[dashed, double, postaction={decorate}] (-.75,0) .. controls (-.5,.5) and (.5,.5) .. (.75,0);
	\draw (-2,0) -- (-2,4);
	\draw (2,0) -- (2,4);
	\path [fill=red] (-2,4) -- (-.75,4) -- (-.75,0) -- (-2,0) -- cycle;
	\path [fill=red] (2,4) -- (.75,4) -- (.75,0) -- (2,0) -- cycle;
	\path [fill=red] (-.75,4) .. controls (-.75,2) and (.75,2) .. (.75,4) --
			(.75, 0) .. controls (.75,2) and (-.75,2) .. (-.75,0);
	\path [fill=yellow , fill opacity=0.3] (-.75,4) .. controls (-.5,4.5) and (.5,4.5) .. (.75,4) --
			(.75,4) .. controls (.75,2) and (-.75,2) .. (-.75,4);
	\path [fill=red] (-.75,4) .. controls (-.5,3.5) and (.5,3.5) .. (.75,4) --
			(.75,4) .. controls (.75,2) and (-.75,2) .. (-.75,4);
	\path [fill=red] (-.75, 0) .. controls (-.75,2) and (.75,2) .. (.75,0) --
			(.75,0) .. controls (.5,-.5) and (-.5,-.5) .. (-.75,0);
	\path [fill=yellow , fill opacity=0.3] (-.75, 0) .. controls (-.75,2) and (.75,2) .. (.75,0) --
			(.75,0) .. controls (.5,.5) and (-.5,.5) .. (-.75,0);
	\draw [very thick, red, postaction={decorate}] (-.75,4) .. controls (-.75,2) and (.75,2) .. (.75,4);
	\draw [very thick, red, postaction={decorate}] (.75, 0) .. controls (.75,2) and (-.75,2) .. (-.75,0);
	\draw[very thick, postaction={decorate}] (.75,4) -- (2,4);
	\draw[very thick, postaction={decorate}] (-2,4) -- (-.75,4);
	\draw[very thick, postaction={decorate}] (.75,4) .. controls (.5,3.5) and (-.5,3.5) .. (-.75,4);
	\draw[double, postaction={decorate}] (-.75,4) .. controls (-.5,4.5) and (.5,4.5) .. (.75,4);
\end{tikzpicture}
\end{equation}

\begin{equation} \label{sl2Fig5Blanchet_2}
\begin{tikzpicture} [scale=.6,fill opacity=0.2,decoration={markings, mark=at position 0.5 with {\arrow{>}}; },anchorbase]
	\draw[very thick, postaction={decorate}] (2,0) -- (.75,0);
	\draw[very thick, postaction={decorate}] (-.75,0) -- (-2,0);
	\draw[very thick, postaction={decorate}] (-.75,0) .. controls (-.5,-.5) and (.5,-.5) .. (.75,0);
	\draw[dashed, double, postaction={decorate}] (.75,0) .. controls (.5,.5) and (-.5,.5) .. (-.75,0);
	\draw (-2,0) -- (-2,4);
	\draw (2,0) -- (2,4);
	\path [fill=red] (-2,4) -- (-.75,4) -- (-.75,0) -- (-2,0) -- cycle;
	\path [fill=red] (2,4) -- (.75,4) -- (.75,0) -- (2,0) -- cycle;
	\path [fill=yellow , fill opacity=0.3] (-.75,4) .. controls (-.5,4.5) and (.5,4.5) .. (.75,4) --
		(.75,0) .. controls (.5,.5) and (-.5,.5) .. (-.75,0);
	\path [fill=red] (-.75,4) .. controls (-.5,3.5) and (.5,3.5) .. (.75,4) --
		(.75,0) .. controls (.5,-.5) and (-.5,-.5) .. (-.75,0);
	\draw [very thick, red, postaction={decorate}] (-.75,0) -- (-.75,4);
	\draw [very thick, red, postaction={decorate}] (.75, 4) -- (.75,0);
	\draw[very thick, postaction={decorate}] (2,4) -- (.75,4);
	\draw[very thick, postaction={decorate}] (-.75,4) -- (-2,4);
	\draw[very thick, postaction={decorate}] (-.75,4) .. controls (-.5,3.5) and (.5,3.5) .. (.75,4);
	\draw[double, postaction={decorate}] (.75,4) .. controls (.5,4.5) and (-.5,4.5) .. (-.75,4);
\end{tikzpicture}
\quad = \quad
\begin{tikzpicture} [scale=.6,fill opacity=0.2,decoration={markings, mark=at position 0.5 with {\arrow{>}}; },anchorbase]
	\draw[very thick, postaction={decorate}] (2,0) -- (.75,0);
	\draw[very thick, postaction={decorate}] (-.75,0) -- (-2,0);
	\draw[very thick, postaction={decorate}] (-.75,0) .. controls (-.5,-.5) and (.5,-.5) .. (.75,0);
	\draw[dashed, double, postaction={decorate}] (.75,0) .. controls (.5,.5) and (-.5,.5) .. (-.75,0);
	\draw (-2,0) -- (-2,4);
	\draw (2,0) -- (2,4);
	\path [fill=red] (-2,4) -- (-.75,4) -- (-.75,0) -- (-2,0) -- cycle;
	\path [fill=red] (2,4) -- (.75,4) -- (.75,0) -- (2,0) -- cycle;
	\path [fill=red] (-.75,4) .. controls (-.75,2) and (.75,2) .. (.75,4) --
			(.75, 0) .. controls (.75,2) and (-.75,2) .. (-.75,0);
	\path [fill=yellow , fill opacity=0.3] (-.75,4) .. controls (-.5,4.5) and (.5,4.5) .. (.75,4) --
			(.75,4) .. controls (.75,2) and (-.75,2) .. (-.75,4);
	\path [fill=red] (-.75,4) .. controls (-.5,3.5) and (.5,3.5) .. (.75,4) --
			(.75,4) .. controls (.75,2) and (-.75,2) .. (-.75,4);
	\path [fill=red] (-.75, 0) .. controls (-.75,2) and (.75,2) .. (.75,0) --
			(.75,0) .. controls (.5,-.5) and (-.5,-.5) .. (-.75,0);
	\path [fill=yellow , fill opacity=0.3] (-.75, 0) .. controls (-.75,2) and (.75,2) .. (.75,0) --
			(.75,0) .. controls (.5,.5) and (-.5,.5) .. (-.75,0);
	\draw [very thick, red, postaction={decorate}] (.75,4) .. controls (.75,2) and (-.75,2) .. (-.75,4);
	\draw [very thick, red, postaction={decorate}] (-.75, 0) .. controls (-.75,2) and (.75,2) .. (.75,0);
	\draw[very thick, postaction={decorate}] (2,4) -- (.75,4);
	\draw[very thick, postaction={decorate}] (-.75,4) -- (-2,4);
	\draw[very thick, postaction={decorate}] (-.75,4) .. controls (-.5,3.5) and (.5,3.5) .. (.75,4);
	\draw[double, postaction={decorate}] (.75,4) .. controls (.5,4.5) and (-.5,4.5) .. (-.75,4);
\end{tikzpicture}
\end{equation}

\begin{equation} \label{sl2NH_enh}
\begin{tikzpicture} [scale=.6,fill opacity=0.2,decoration={markings, mark=at position 0.5 with {\arrow{>}}; },anchorbase]
	\draw[double, postaction={decorate}] (.75,0) -- (2,0);
	\draw[double, postaction={decorate}] (-2,0) -- (-.75,0);
	\draw[very thick, postaction={decorate}] (-.75,0) .. controls (-.5,-.5) and (.5,-.5) .. (.75,0);
	\draw[very thick, dashed, postaction={decorate}] (-.75,0) .. controls (-.5,.5) and (.5,.5) .. (.75,0);
	\draw (-2,0) -- (-2,4);
	\draw (2,0) -- (2,4);
	\path [fill=yellow, opacity=.3] (-2,4) -- (-.75,4) -- (-.75,0) -- (-2,0) -- cycle;
	\path [fill=yellow, opacity=.3] (2,4) -- (.75,4) -- (.75,0) -- (2,0) -- cycle;
	\path [fill=red] (-.75,4) .. controls (-.5,4.5) and (.5,4.5) .. (.75,4) --
		(.75,0) .. controls (.5,.5) and (-.5,.5) .. (-.75,0);
	\path [fill=red] (-.75,4) .. controls (-.5,3.5) and (.5,3.5) .. (.75,4) --
		(.75,0) .. controls (.5,-.5) and (-.5,-.5) .. (-.75,0);
	\draw [very thick, red, postaction={decorate}] (-.75,0) -- (-.75,4);
	\draw [very thick, red, postaction={decorate}] (.75, 4) -- (.75,0);
	\draw[double, postaction={decorate}] (.75,4) -- (2,4);
	\draw[double, postaction={decorate}] (-2,4) -- (-.75,4);
	\draw[very thick, postaction={decorate}] (-.75,4) .. controls (-.5,3.5) and (.5,3.5) .. (.75,4);
	\draw[very thick, postaction={decorate}] (-.75,4) .. controls (-.5,4.5) and (.5,4.5) .. (.75,4);
\end{tikzpicture}
\quad = \quad
\begin{tikzpicture} [scale=.6,fill opacity=0.2,decoration={markings, mark=at position 0.5 with {\arrow{>}}; },anchorbase]
	\draw[double, postaction={decorate}] (.75,0) -- (2,0);
	\draw[double, postaction={decorate}] (-2,0) -- (-.75,0);
	\draw[very thick, postaction={decorate}] (-.75,0) .. controls (-.5,-.5) and (.5,-.5) .. (.75,0);
	\draw[very thick, dashed, postaction={decorate}] (-.75,0) .. controls (-.5,.5) and (.5,.5) .. (.75,0);
	\draw (-2,0) -- (-2,4);
	\draw (2,0) -- (2,4);
	\path [fill=yellow,opacity=.3] (-2,4) -- (-.75,4) -- (-.75,0) -- (-2,0) -- cycle;
	\path [fill=yellow, opacity=.3] (2,4) -- (.75,4) -- (.75,0) -- (2,0) -- cycle;
	\path [fill=yellow, opacity=.3] (-.75,4) .. controls (-.75,2) and (.75,2) .. (.75,4) --
			(.75, 0) .. controls (.75,2) and (-.75,2) .. (-.75,0);
	\path [fill=red] (-.75,4) .. controls (-.5,4.5) and (.5,4.5) .. (.75,4) --
			(.75,4) .. controls (.75,2) and (-.75,2) .. (-.75,4);
	\path [fill=red] (-.75,4) .. controls (-.5,3.5) and (.5,3.5) .. (.75,4) --
			(.75,4) .. controls (.75,2) and (-.75,2) .. (-.75,4);
	\path [fill=red] (-.75, 0) .. controls (-.75,2) and (.75,2) .. (.75,0) --
			(.75,0) .. controls (.5,-.5) and (-.5,-.5) .. (-.75,0);
	\path [fill=red] (-.75, 0) .. controls (-.75,2) and (.75,2) .. (.75,0) --
			(.75,0) .. controls (.5,.5) and (-.5,.5) .. (-.75,0);
	\draw [very thick, red, postaction={decorate}] (.75,4) .. controls (.75,2) and (-.75,2) .. (-.75,4);
	\draw [very thick, red, postaction={decorate}] (-.75, 0) .. controls (-.75,2) and (.75,2) .. (.75,0);
	\draw[double, postaction={decorate}] (.75,4) -- (2,4);
	\draw[double, postaction={decorate}] (-2,4) -- (-.75,4);
	\draw[very thick, postaction={decorate}] (-.75,4) .. controls (-.5,3.5) and (.5,3.5) .. (.75,4);
	\draw[very thick, postaction={decorate}] (-.75,4) .. controls (-.5,4.5) and (.5,4.5) .. (.75,4);
	\node [opacity=1] at (0,4) {$\bullet$};
\end{tikzpicture}
\quad - \quad
\begin{tikzpicture} [scale=.6,fill opacity=0.2,decoration={markings, mark=at position 0.5 with {\arrow{>}}; },anchorbase]
	\draw[double, postaction={decorate}] (.75,0) -- (2,0);
	\draw[double, postaction={decorate}] (-2,0) -- (-.75,0);
	\draw[very thick, postaction={decorate}] (-.75,0) .. controls (-.5,-.5) and (.5,-.5) .. (.75,0);
	\draw[very thick, dashed, postaction={decorate}] (-.75,0) .. controls (-.5,.5) and (.5,.5) .. (.75,0);
	\draw (-2,0) -- (-2,4);
	\draw (2,0) -- (2,4);
	\path [fill=yellow,opacity=.3] (-2,4) -- (-.75,4) -- (-.75,0) -- (-2,0) -- cycle;
	\path [fill=yellow, opacity=.3] (2,4) -- (.75,4) -- (.75,0) -- (2,0) -- cycle;
	\path [fill=yellow, opacity=.3] (-.75,4) .. controls (-.75,2) and (.75,2) .. (.75,4) --
			(.75, 0) .. controls (.75,2) and (-.75,2) .. (-.75,0);
	\path [fill=red] (-.75,4) .. controls (-.5,4.5) and (.5,4.5) .. (.75,4) --
			(.75,4) .. controls (.75,2) and (-.75,2) .. (-.75,4);
	\path [fill=red] (-.75,4) .. controls (-.5,3.5) and (.5,3.5) .. (.75,4) --
			(.75,4) .. controls (.75,2) and (-.75,2) .. (-.75,4);
	\path [fill=red] (-.75, 0) .. controls (-.75,2) and (.75,2) .. (.75,0) --
			(.75,0) .. controls (.5,-.5) and (-.5,-.5) .. (-.75,0);
	\path [fill=red] (-.75, 0) .. controls (-.75,2) and (.75,2) .. (.75,0) --
			(.75,0) .. controls (.5,.5) and (-.5,.5) .. (-.75,0);
	\draw [very thick, red, postaction={decorate}] (.75,4) .. controls (.75,2) and (-.75,2) .. (-.75,4);
	\draw [very thick, red, postaction={decorate}] (-.75, 0) .. controls (-.75,2) and (.75,2) .. (.75,0);
	\draw[double, postaction={decorate}] (.75,4) -- (2,4);
	\draw[double, postaction={decorate}] (-2,4) -- (-.75,4);
	\draw[very thick, postaction={decorate}] (-.75,4) .. controls (-.5,3.5) and (.5,3.5) .. (.75,4);
	\draw[very thick, postaction={decorate}] (-.75,4) .. controls (-.5,4.5) and (.5,4.5) .. (.75,4);
	\node [opacity=1] at (0,0) {$\bullet$};
\end{tikzpicture}
\end{equation}

We furthermore impose that no facet can carry more than one dot:
\begin{equation}\label{twodotsheet}
\begin{tikzpicture}[fill opacity=.2, scale=.8, anchorbase]
\filldraw [fill=red] (-1,-1) rectangle (1,1);
\node [opacity=1] at (0,-.15) {$\bullet$};
\node [opacity=1] at (0,.15) {$\bullet$};
\end{tikzpicture}
\quad = 0
\end{equation}

This in particular implies that non-degree-zero bubbles and blisters are sent to zero, and the following dot-sliding relation:

\begin{equation} \label{sl2DotSliding}
\xy
(0,0)*{
\begin{tikzpicture} [scale=.8,fill opacity=0.2,  decoration={markings,
                        mark=at position 0.5 with {\arrow{>}};    }, scale=.75]               
	\draw [very thick, red] (0,-2) -- (0,0);
	\draw [double] (-1.5,-2) -- (0,-2);
	\draw [very thick] (0,-2) -- (1.8,-1);
	\draw [very thick] (0,-2) -- (1.5,-3);
	\filldraw [fill=red] (0,0) -- (1.8,1) -- (1.8,-1) -- (0,-2) -- cycle;
	\path[fill=yellow, fill opacity=0.3] (-1.5,0) -- (0,0) -- (0,-2) -- (-1.5,-2) -- cycle;
	\draw (-1.5,0) -- (-1.5,-2);
	\filldraw [fill=red] (0,0) -- (1.5,-1) -- (1.5,-3) -- (0,-2) -- cycle;
	\draw [double] (-1.5,0) -- (0,0);
	\draw [very thick] (0,0) -- (1.8,1);
	\draw [very thick] (0,0) -- (1.5,-1);
	\node [opacity=1] at (1,-0.3) {$\bullet$};
\end{tikzpicture}};
\endxy \quad = \quad - \quad
\xy
(0,0)*{
\begin{tikzpicture} [scale=.8,fill opacity=0.2,  decoration={markings,
                        mark=at position 0.5 with {\arrow{>}};    }, scale=.75]                                      	
	\draw [very thick, red] (0,-2) -- (0,0);
	\draw [double] (-1.5,-2) -- (0,-2);
	\draw [very thick] (0,-2) -- (1.8,-1);
	\draw [very thick] (0,-2) -- (1.5,-3);
	\filldraw [fill=red] (0,0) -- (1.8,1) -- (1.8,-1) -- (0,-2) -- cycle;
	\path[fill=yellow, fill opacity=0.3] (-1.5,0) -- (0,0) -- (0,-2) -- (-1.5,-2) -- cycle;
	\draw (-1.5,0) -- (-1.5,-2);
	\filldraw [fill=red] (0,0) -- (1.5,-1) -- (1.5,-3) -- (0,-2) -- cycle;
	\draw [double] (-1.5,0) -- (0,0);
	\draw [very thick] (0,0) -- (1.8,1);
	\draw [very thick] (0,0) -- (1.5,-1);
	\node [opacity=1] at (1.1,-1.8) {$\bullet$};
\end{tikzpicture}};
\endxy \quad .
\end{equation}

We have imported the figures in \eqref{sl2closedfoam}--\eqref{sl2NH_enh} from Lauda--Queffelec--Rose~\cite{LQR}, who prove that Blanchet's foam relations also arise in certain Schur quotient of categorified quantum groups of type A. In fact, the Schur quotient for categorified quantum $\glnn{\infty}$ has the structure of a (weak) $3$-category, and $\Sfoam$ can be considered as the \textit{integral} of this $3$-category over the $2$-manifold $\Su$. The relations shown here are also compatible with the more recent construction of foam categories by Robert--Wagner\cite{RWa}.

\begin{remark}
Relation~\eqref{twodotsheet} can be deformed to yield a foam-based construction of Lee's deformed Khovanov homology \cite{Lee} or an equivariant link homology~\cite{Kh8}. Deformations of this type have been studied in greater generality in \cite{RW}.
\end{remark}

\begin{remark} $\Sfoam$ carries a natural action of the orientation preserving diffeomorphism group of $\Su$.
\end{remark}

The foam categories $\Sfoam$ are furthermore graded. Let $F$ be a foam, $d$ its number of dots, and $c(F)$ the underlying $1$-labeled surface, which is obtained by deleting all $2$-labeled facets. Then the \emph{$q$-degree} of $F$ is given by $\deg(F):=2d-\chi(c(F))$. If $F\colon q^k W_1 \to q^l W_2$, then we require that $\deg(F)=l-k$.

\begin{remark}
In \cite{Blan}, Blanchet introduces a \emph{trivalent category}, a category of abstract closed webs and foams. These \emph{Blanchet foams} are not considered as embedded in any particular manifold, but they are required to satisfy stricter gluing constraints than our foams. In Blanchet foams, the two boundary components glued along each seam are required to come from two distinct facets and part of the data of a foam is an ordering of the two $1$-labeled facets at each seam.
Here, we give an example of a foam $\Tfoam$, which does not qualify as a Blanchet foam:
\[\begin{tikzpicture}[fill opacity=.2,anchorbase,xscale=.7, yscale=0.7]
\torusback{3}{2}{1.5}
\fill [fill=yellow] (1.5,2.25) to [out=270,in=200] (1.9,1.8) to [out=20,in=270] (2.5,2.75) to (1.5,2.25) ; 
\draw [fill=red] (2.5,2.75) to [out=0,in=180] (3.5,2.5) to (3.5,1)to [out=180,in=70](1.5,0) to (1.5,1.5) to [out=70,in=250] (1.5,2.25) to [out=270,in=200] (1.9,1.8) to [out=20,in=270] (2.5,2.75);
\draw [fill=red] (2.5,2.75) to [out=70,in=250](2.5,3.5) to (2.5,2) to [out=250,in=0] (.5,1) to  (.5,2.5) to [out=0,in=180] (1.5,2.25) to [out=270,in=200] (1.9,1.8) to [out=20,in=270] (2.5,2.75) ; 
\draw [very thick, red, directed=.55] (1.5,2.25) to [out=270,in=200] (1.9,1.8) to [out=20,in=270] (2.5,2.75);
\draw[very thick, directed=.5] (.5,1) to [out=0,in=250](2.5,2);
\draw[very thick, directed=.5] (1.5,0) to [out=70,in=180](3.5,1);
\draw[very thick, directed=.5] (.5,2.5) to [out=0,in=180] (1.5,2.25);
\draw[very thick, directed=.5] (1.5,1.5) to [out=70,in=250] (1.5,2.25);
\draw[double] (1.5,2.25) to (2.5,2.75);
\draw[very thick, directed=.55] (2.5,2.75) to [out=0,in=180](3.5,2.5);
\draw[very thick, directed=.55] (2.5,2.75) to [out=70,in=250](2.5,3.5);
\torusfront{3}{2}{1.5}
\end{tikzpicture}
\]

Note that for a foam in $\Sfoam$, one can specify a local cyclic ordering of the facets along a seam by using the right-hand rule and the orientation of the seam. 
This precisely fails to produce an abstract foam in Blanchet's sense if the two pieces of facets that locally meet on the seam actually belong to the same facet. We will prove in Proposition \ref{prop:Blanchet} that this does not happen in a certain subcategory of orientable foams in $\Sfoam$.

Blanchet then goes on to define a TQFT from his trivalent category to the category of graded abelian groups. An alternative way of encoding this TQFT is to linearize the morphism spaces in the Blanchet foam category and take the quotient by the relations in the kernel of the TQFT. We call the resulting abstract linear foam category $\twoFoam$. Blanchet's TQFT can then be recovered as the representable functor $\Hom_{\twoFoam}(\emptyset, -)$. The embedded foam relations in Definition~\ref{def:Tfoam} arise in the morphism spaces of $\twoFoam$ by embedding the foams in a 3-manifold and cutting out the relevant 3-balls. 
\end{remark}

The following local relations hold in the category $\Sfoam$ and will be used throughout the paper to simplify webs and foams.

\begin{lemma}\label{lem:webisos}
There are isomorphisms between webs in $\Sfoam$ which differ only in a disk as shown:
\begin{gather}
\label{eq:circles}
\begin{tikzpicture}[fill opacity=.2,anchorbase,scale=.3]
\draw[very thick, directed=.55] (1,0) to [out=0,in=270] (2,1) to [out=90,in=0] (1,2)to [out=180,in=90] (0,1)to [out=270,in=180] (1,0);
\end{tikzpicture} 
\quad\cong\quad
q\;\emptyset \oplus q^{-1} \emptyset
\quad\cong\quad 
\begin{tikzpicture}[fill opacity=.2,anchorbase,scale=.3]
\draw[very thick, rdirected=.55] (1,0) to [out=0,in=270] (2,1) to [out=90,in=0] (1,2)to [out=180,in=90] (0,1)to [out=270,in=180] (1,0);
\end{tikzpicture}
\quad,\quad
\begin{tikzpicture}[fill opacity=.2,anchorbase,scale=.3]
\draw[double, directed=.55] (1,0) to [out=0,in=270] (2,1) to [out=90,in=0] (1,2)to [out=180,in=90] (0,1)to [out=270,in=180] (1,0);
\end{tikzpicture} 
\quad\cong\quad
\emptyset 
\quad\cong\quad \begin{tikzpicture}[fill opacity=.2,anchorbase,scale=.3]
\draw[double, rdirected=.55] (1,0) to [out=0,in=270] (2,1) to [out=90,in=0] (1,2)to [out=180,in=90] (0,1)to [out=270,in=180] (1,0);
\end{tikzpicture}
\\
\label{eg:digons}
\begin{tikzpicture}[anchorbase, scale=.5]
\draw [double] (.5,0) -- (.5,.3);
\draw [very thick] (.5,.3) .. controls (.4,.35) and (0,.6) .. (0,1) .. controls (0,1.4) and (.4,1.65) .. (.5,1.7);
\draw [very thick] (.5,.3) .. controls (.6,.35) and (1,.6) .. (1,1) .. controls (1,1.4) and (.6,1.65) .. (.5,1.7);
\draw [double, ->] (.5,1.7) -- (.5,2);
\end{tikzpicture}
\quad\cong \quad
[2]\;
\begin{tikzpicture}[anchorbase, scale=.5]
\draw [double,->] (.5,0) -- (.5,2);
\end{tikzpicture}
\quad,\quad
\begin{tikzpicture}[anchorbase, scale=.5]
\draw [very thick] (.5,0) -- (.5,.3);
\draw [very thick] (.5,.3) .. controls (.4,.35) and (0,.6) .. (0,1) .. controls (0,1.4) and (.4,1.65) .. (.5,1.7);
\draw [double, directed=0.55] (.5,.3) .. controls (.6,.35) and (1,.6) .. (1,1) .. controls (1,1.4) and (.6,1.65) .. (.5,1.7);
\draw [very thick, ->] (.5,1.7) -- (.5,2);
\end{tikzpicture}
\quad\cong \quad
\begin{tikzpicture}[anchorbase, scale=.5]
\draw [very thick,->] (.5,0) -- (.5,2);
\end{tikzpicture}
\quad\cong \quad
\begin{tikzpicture}[anchorbase, scale=.5]
\draw [very thick] (.5,0) -- (.5,.3);
\draw [double, directed=0.55] (.5,.3) .. controls (.4,.35) and (0,.6) .. (0,1) .. controls (0,1.4) and (.4,1.65) .. (.5,1.7);
\draw [very thick] (.5,.3) .. controls (.6,.35) and (1,.6) .. (1,1) .. controls (1,1.4) and (.6,1.65) .. (.5,1.7);
\draw [very thick, ->] (.5,1.7) -- (.5,2);
\end{tikzpicture}
\\
\label{eq:squares}
\begin{tikzpicture}[anchorbase,scale=.5]
\draw [double] (0,0) -- (0,0.5);
\draw [very thick] (1,0) -- (1,.7);
\draw [very thick] (0,0.5) -- (1,.7);
\draw [double] (1,.7) -- (1,1.3);
\draw [very thick] (0,.5) -- (0,1.5);
\draw [very thick] (1,1.3) -- (0,1.5);
\draw [double,->] (0,1.5) -- (0,2);
\draw [very thick, ->] (1,1.3) -- (1,2);
\end{tikzpicture}
\quad \cong \quad
\begin{tikzpicture}[anchorbase,scale=.5]
\draw [double,->] (0,0) -- (0,2);
\draw [very thick,->] (1,0) -- (1,2);
\end{tikzpicture}
\quad,\quad
\begin{tikzpicture}[anchorbase,scale=.5]
\draw [double] (1,0) -- (1,0.5);
\draw [very thick] (0,0) -- (0,.7);
\draw [very thick] (1,0.5) -- (0,.7);
\draw [double] (0,.7) -- (0,1.3);
\draw [very thick] (1,.5) -- (1,1.5);
\draw [very thick] (0,1.3) -- (1,1.5);
\draw [double,->] (1,1.5) -- (1,2);
\draw [very thick, ->] (0,1.3) -- (0,2);
\end{tikzpicture}
\quad \cong \quad
\begin{tikzpicture}[anchorbase,scale=.5]
\draw [double,->] (1,0) -- (1,2);
\draw [very thick,->] (0,0) -- (0,2);
\end{tikzpicture}
\quad , \quad
\begin{tikzpicture}[anchorbase,scale=.5]
\draw [double,->] (0,0) to  (0,2);
\draw [double,->] (1,2) to (1,0);
\end{tikzpicture}
\quad \cong \quad
\begin{tikzpicture}[anchorbase,scale=.5]
\draw [double,->] (0,0) to (0,.5) to [out=90,in=90] (1,.5) to (1,0);
\draw [double,->] (1,2) to (1,1.5) to [out=270,in=270] (0,1.5) to (0,2);
\end{tikzpicture}
\end{gather}
\end{lemma}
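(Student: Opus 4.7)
The plan is to exhibit, for each claimed isomorphism, an explicit pair of mutually inverse foam morphisms supported in a small disk $\cat{D} \times [0,1]$ in $\Su \times [0,1]$, and to verify the two composite identities by applying the local foam relations \eqref{sl2closedfoam}--\eqref{sl2DotSliding}. Since all configurations are confined to a disk, and the restriction of $\Sfoam$ to foams supported in a disk coincides with the $\glnn{2}$ foam category over $\R^2$, the required computations are purely planar and reduce to the standard manipulations from Blanchet~\cite{Blan}.

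For the $1$-labeled circle bounding a disk $\cat{D}' \subset \cat{D}$, I would use the undotted and dotted disk caps $c_0, c_1$ filling $\cat{D}'$ in $\Su \times [0,1]$, together with the corresponding cups $c_0^\vee, c_1^\vee$. A direct degree count using $\deg(F)=2d-\chi(c(F))$ shows that $c_0, c_0^\vee$ have intrinsic $q$-degree $-1$ while $c_1, c_1^\vee$ have intrinsic $q$-degree $+1$. Thus the column $(c_1^\vee, c_0^\vee)^T$ defines a degree-zero morphism from the circle to $q\emptyset \oplus q^{-1}\emptyset$, and the row $(c_0, c_1)$ defines the inverse candidate. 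The composite $q\emptyset \oplus q^{-1}\emptyset \to q\emptyset \oplus q^{-1}\emptyset$ has entries $c_i^\vee \circ c_j$ given by spheres carrying $i+j$ dots, which evaluate to the Kronecker delta $\delta_{i+j,1}$ by \eqref{sl2closedfoam} and \eqref{twodotsheet}, producing the identity matrix. The opposite composite is the identity cylinder on the circle, which is precisely the content of the neck-cutting relation \eqref{sl2neckcutting}. The $2$-labeled circle case is analogous but uses a single pair of $2$-labeled disk caps with a sign adjustment to account for \eqref{2labeledsphere}, with \eqref{sl2neckcutting_enh_2lab} replacing ordinary neck-cutting.

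For the digon isomorphisms in \eqref{eg:digons}, the maps are the Y-shaped zip and unzip foams that collapse the $1$-labeled digon boundary into a $2$-labeled edge and vice versa. For the first digon (two $1$-labeled edges connecting $2$-labeled edges), the relevant composites split via neck-cutting along the $2$-labeled middle facet \eqref{sl2neckcutting_enh_2lab} into theta-shaped foams whose evaluations are fixed by \eqref{sl2thetafoam_enh}; assembling these into a $2\times 1$ column and a $1 \times 2$ row with appropriate $q$-shifts $q\emptyset \oplus q^{-1}\emptyset$ gives the factor $[2]=q+q^{-1}$ in the decategorified relation. For the two mixed-label digons the same construction produces a single zip-unzip pair, whose composite in one direction is manifestly the identity and in the other direction reduces via \eqref{sl2Fig5Blanchet_1}--\eqref{sl2Fig5Blanchet_2} to the identity on the straight edge.

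For the square isomorphisms in \eqref{eq:squares}, the maps are the standard saddle foams rewiring between the two local web configurations; the two composites are verified to equal the respective identities using precisely the Blanchet square relations \eqref{sl2Fig5Blanchet_1} and \eqref{sl2Fig5Blanchet_2}, while the $2$-labeled saddle case follows from \eqref{sl2neckcutting_enh_2lab} applied to the $2$-labeled tube produced by the composite. The main obstacle is purely bookkeeping: tracking signs consistently in the Blanchet convention and confirming that our embedded foams faithfully inherit every evaluation used in the planar computation. Since each required identity already appears in \eqref{sl2closedfoam}--\eqref{sl2DotSliding}, no genuinely new input is needed beyond care with conventions.
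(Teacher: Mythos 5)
Your proposal is correct and is exactly the standard delooping/digon-removal/square-removal argument the authors intend — the paper's own ``proof'' of this lemma is literally ``Left to the reader.'' The only place needing slightly more explicit care is the placement of the signs forced by \eqref{2labeledsphere}, \eqref{sl2neckcutting_enh_2lab} and \eqref{sl2Fig5Blanchet_1} into one member of each candidate inverse pair, which you correctly identify as pure bookkeeping.
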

\begin{proof}
Left to the reader.
\end{proof}

We will need a more general neck-cutting relation for identity foams on $1$-labeled circles that have arbitrary interaction with $2$-labeled edges.

\begin{lemma}
\label{lem:neckcut2}
Let $F$ be a foam in $\Sfoam$ whose underlying $1$-labeled surface $c(F)$ has a compression disk $D$ inside $\Su\times [0,1]$, surgery along which produces a surface $S$. Then $F$ can be expressed as a linear combination of foams with underlying surface $S$, each of which carries one more dot than $F$. 
\end{lemma}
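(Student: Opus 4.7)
The plan is to reduce the statement to the standard neck-cutting relation \eqref{sl2neckcutting} by first isolating $D$ from the $2$-labeled facets of $F$, in the same spirit as the proof of Lemma~\ref{lem:neckcut}. First, I will perturb $D$ slightly (pushing $\partial D$ off the seams on $c(F)$) so that $\partial D$ lies in the interior of the $1$-labeled facets and the interior of $D$ is transverse to all $2$-labeled facets. Since the $2$-labeled facets meet $c(F)$ only along seams, the intersection $D \cap (\text{2-labeled facets})$ will then consist solely of a disjoint union of circles in the interior of $D$, with no arc components to worry about.

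Next, I will remove these intersection circles one at a time. For each intersection circle $c \subset D \cap F_2$, where $F_2$ is a $2$-labeled facet, I take a thin annular tubular neighborhood $A$ of $c$ inside $F_2$ that crosses $D$ transversely along $c$ and apply the $2$-labeled neck-cutting relation \eqref{sl2neckcutting_enh_2lab} locally inside $A$. This replaces the tube $A$ by two small disjoint caps pushed to either side of $D$ at the cost of a factor $-1$. Since the caps can be chosen arbitrarily close to $A$, they remain disjoint from $D$ and from the rest of the foam, so this operation strictly decreases the number of intersection circles. Performing this reduction for each circle transforms $F$ into $(-1)^{k} \tilde F$, where $k$ is the initial number of intersection circles and $\tilde F$ is a foam with the same underlying $1$-labeled surface $c(F)$ whose $2$-labeled part is now entirely disjoint from $D$; any incidentally produced closed $2$-labeled spheres are then evaluated to $-1$ via \eqref{2labeledsphere} and absorbed into the coefficient.

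With $D$ disjoint from the entire $2$-labeled part of $\tilde F$, I can take a tubular neighborhood of $D$ that is a $3$-ball meeting $\tilde F$ only in a $1$-labeled annulus around $\partial D$ together with $D$ itself as a standard compression disk. This is precisely the local model of the left-hand side of \eqref{sl2neckcutting}, so applying that relation expresses $\tilde F$, and hence $F$, as a signed sum of two foams obtained by compressing $c(F)$ along $D$. By construction, each of the resulting foams has underlying $1$-labeled surface equal to the surgery $S$ and carries exactly one additional dot on one of the two newly created cap disks, which is the desired conclusion.

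The main obstacle will be verifying carefully that the $2$-labeled neck-cutting step around each intersection circle can be localized in a ball small enough that the two introduced caps do not create unintended intersections with $D$ or with other facets, and tracking the accumulated signs from the iterated applications of \eqref{sl2neckcutting_enh_2lab} together with any auxiliary closed-surface evaluations via \eqref{2labeledsphere}. Both issues are local and combinatorial, and follow from the same innermost-disk/transversality arguments used in Lemma~\ref{lem:neckcut}.
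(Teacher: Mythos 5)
There is a genuine gap at the very first step. You claim that after a small perturbation $\partial D$ avoids the seams of $F$, so that $D$ meets the $2$-labeled facets only in interior circles. But $\partial D$ and the seams are both curves on the surface $c(F)$, and they can have nonzero geometric intersection number there; in that case no isotopy of $D$ (whose boundary must remain on $c(F)$ for $D$ to stay a compression disk) makes them disjoint. This is not a hypothetical corner case: in the paper's own applications of this lemma the compression disk is built precisely so that its boundary crosses a seam an odd number of times (see the turnback construction in Lemma~\ref{lem:turnbacks}, and the compressions used in Lemma~\ref{lem:betweenslopes}). Whenever $\partial D$ meets a seam, the corresponding $2$-labeled facet intersects $D$ in \emph{arcs} with endpoints on $\partial D$, and your argument says nothing about how to remove these. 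Without removing them, a bicollar of $D$ is not a ball in which the foam looks like the left-hand side of \eqref{sl2neckcutting}, so the final step does not apply.

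The paper's proof closes exactly this gap: after arranging transversality, it runs the three-case algorithm of Lemma~\ref{lem:neckcut} at the foam level, using the digon and square isomorphisms of Lemma~\ref{lem:webisos} (the foam lifts of \eqref{eqn:digons} and \eqref{eqn:squares}) to peel off, one innermost piece at a time, both the circle components \emph{and} the arc components of $D\cap(\text{$2$-labeled facets})$, at the price of replacing $F$ by a linear combination of foams. Only once $D$ meets the resulting foams in the single $1$-labeled circle $\partial D$ is \eqref{sl2neckcutting} applied. Your treatment of the interior circles via $2$-labeled neck-cutting \eqref{sl2neckcutting_enh_2lab} (done innermost-first so that the relevant annulus-plus-disk sits in a ball) is a perfectly acceptable variant for that part, but you must add the arc-removal step --- modelled on cases (2) and (3) of Lemma~\ref{lem:neckcut} --- before the proof is complete.
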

\begin{proof} The compression disk $D$ may be assumed to be transverse to $F$. The proof then follows the same strategy as the one of Lemma \ref{lem:neckcut} to reduce the interaction of the foam with $D$ by locally modifying the foam via the web isomorphisms from Lemma~\ref{lem:webisos}. Once $D$ intersects the foams in the resulting linear combination only in a $1$-labeled circle, the neck-cutting relation \eqref{sl2neckcutting} can be applied to produce foams with underlying surface $S$ and the desired number of dots.
\end{proof}

\begin{corollary}\label{cor:neckcut}
Let $W$ be a web in $\Sfoam$, whose underlying curve $c(W)$ contains a circle which bounds a disk in $\Su\setminus c(W)$. Then $W \cong q V \oplus q^{-1} V$, where $V$ is a web that agrees with $W$ outside a neighborhood of the disk and with underlying curve obtained by removing the circle in question from $c(W)$.
\end{corollary}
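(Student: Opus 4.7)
The plan is to categorify the proof of Lemma~\ref{lem:neckcut} by replacing each scalar web relation used there with the corresponding web isomorphism from Lemma~\ref{lem:webisos}. I work locally in a neighborhood of the disk $\cat{D}$ bounded by $c$ and induct on the complexity of the interactions of $c$ with $2$-labeled edges and circles, exactly as in the three-step algorithm of Lemma~\ref{lem:neckcut}.

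Concretely, I iterate the following three steps: (1) remove any $2$-labeled circle sitting in $\cat{D}$ via the $2$-labeled circle isomorphism $\cong \emptyset$ of \eqref{eq:circles}, starting with an innermost one; (2) for each innermost $2$-labeled edge interior to $\cat{D}$ with both endpoints on $c$, apply the mixed digon isomorphism of \eqref{eg:digons} whose LHS is a digon with one $1$-labeled and one $2$-labeled side meeting $c$ at two trivalent vertices, collapsing this configuration to a straight $1$-labeled strand; (3) if there are pairs of $2$-labeled edges hitting $c$ from outside $\cat{D}$ which are adjacent along $c$, apply a square isomorphism from \eqref{eq:squares} to convert them into an internal $2$-labeled edge, reducing this case to (2). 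The orientation-forced pairing of external $2$-labeled edges, together with the two orientation patterns of the square isomorphism recorded in \eqref{eq:squares}, ensure that (3) is always applicable.

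The essential point for the graded statement is that each of the isomorphisms used in steps (1)--(3) is degree-preserving: no power of $q$ is introduced along the way. Since $\cat{D}$ meets only finitely many $2$-labeled features of $W$, the procedure terminates after finitely many isomorphisms and produces an isomorphism $W\cong W'$ in $\Sfoam$, where in $W'$ the circle $c$ is realized as an isolated oriented $1$-labeled circle bounding a disk in the complement of the remaining web. Writing $W' = c \sqcup V$ locally near $\cat{D}$, with $V$ the web described in the statement, I then apply the $1$-labeled circle isomorphism $\cong q\emptyset \oplus q^{-1}\emptyset$ from \eqref{eq:circles} to conclude $W \cong W' \cong qV \oplus q^{-1}V$.

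The only real subtlety is checking that every intermediate digon or square that arises in the algorithm matches, with the correct local labeling and orientation data, the LHS of one of the degree-preserving isomorphisms listed in Lemma~\ref{lem:webisos}; this is automatic because the only vertex type available where a $2$-labeled edge meets an oriented $1$-labeled arc of $c$ is the mixed trivalent vertex featured in \eqref{eg:digons}, and the external $2$-labeled edges crossing $c$ come in pairs with opposite orientations, which are precisely the configurations resolved by the two square isomorphisms in \eqref{eq:squares}.
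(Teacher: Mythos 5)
Your proof is correct and is essentially the argument the paper intends: you categorify the three-step reduction algorithm of Lemma~\ref{lem:neckcut} using the degree-preserving local isomorphisms of Lemma~\ref{lem:webisos} to isolate the circle, and then finish with the delooping isomorphism $\cong q\,\emptyset\oplus q^{-1}\emptyset$ for a $1$-labeled circle. The paper packages the same computation at the level of foams, deducing the corollary from the generalized neck-cutting Lemma~\ref{lem:neckcut2} applied to the identity foam on $W$, but the underlying content is identical.
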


Recall the standard basis $\Su\basisstd$ for $\SWebq$, which consists of (signed) webs that are given by the skein algebra multiplication of an integer lamination $L\in \cal{L}(\Su)$ and a $2$-labeled multi-curve $\w{x}$ for some $x\in H_1(\Su)$. From now on we interpret $\Su\basisstd$ as a collection of objects in $\Sfoam$ (ignoring minus signs). Lemma~\ref{lem:equivobjectsdecat} then has the following categorified version which admits an analogous proof.

\begin{lemma} \label{lem:equivobjectsS} Every web $W$ in $\Sfoam$ is isomorphic to a direct sum of grading shifts of copies of one element of $\Su\basisstd$.
\end{lemma}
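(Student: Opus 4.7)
The plan is to categorify the proof of Lemma~\ref{lem:equivobjectsdecat} step by step, replacing each equality by an isomorphism in $\Sfoam$ and each circle-removal factor $(q+q^{-1})$ by the direct sum decomposition $qV \oplus q^{-1}V$ supplied by Corollary~\ref{cor:neckcut}. Since both the homology class $[W]\in H_1(\Su)$ and the integer lamination $l(W)$ are invariant under all the web relations used, the target standard basis element must be the unique $\vec{l}(W)*\w{x}\in\Su\basisstd$ with $l(\vec{l}(W)*\w{x})=l(W)$ and $[\vec{l}(W)*\w{x}]=[W]$, and hence all direct summands will necessarily be shifts of this single object.

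First, I would iteratively apply Corollary~\ref{cor:neckcut}: whenever $c(W)$ contains a $1$-labeled circle bounding a disk in its complement, I split $W\cong qV\oplus q^{-1}V$ with $V$ having one fewer inessential circle. Since the total number of such components strictly decreases, finitely many applications yield an isomorphism $W \cong \bigoplus_i q^{a_i} W'$, where the direct summands are all shifts of a single web $W'$ whose underlying curve $c(W')$ consists only of essential simple closed curves realising $l(W)$.

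Second, by realising an ambient isotopy of $\Su$ as a cylindrical foam in $\Sfoam$, I may assume that $c(W')$ lies in a tubular neighbourhood of the prescribed oriented representative $\vec{l}(W)\in\cal{L}(\Su)$. Local applications of the digon and square isomorphisms of Lemma~\ref{lem:webisos} then let me separate the $1$- and $2$-labeled content of $W'$: each $1$-labeled edge agreeing with the orientation of $\vec{l}(W)$ is absorbed into the lamination, while each oppositely oriented $1$-labeled edge is paired with an adjacent $2$-labeled edge and re-expressed, via digons and squares, as a piece of an embedded $2$-labeled multi-curve. This produces an isomorphism $W' \cong q^k\,\vec{l}(W)*W_2$ for some $k\in\Z$ and some $2$-labeled multi-curve $W_2$. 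Finally, applying the $2$-labeled circle isomorphism in \eqref{eq:circles} to remove inessential components of $W_2$ leaves $W_2 = \w{x}$ for the unique $x\in H_1(\Su)$ compatible with $[W]$, completing the identification with an element of $\Su\basisstd$.

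The main technical point lies in the second step, where one must verify that the digon and square moves can be organised coherently across the whole web to land in the standard form $\vec{l}(W)*\w{x}$ with a consistent global $q$-shift independent of the choices made. Because the corresponding signed count is already pinned down in the decategorified statement and both $l(W)$ and $[W]$ are conserved under every move employed, this amounts to careful bookkeeping rather than a conceptual obstacle.
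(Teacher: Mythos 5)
Your proposal is correct and is exactly what the paper intends: it states that Lemma~\ref{lem:equivobjectsS} "admits an analogous proof" to Lemma~\ref{lem:equivobjectsdecat}, and you carry out precisely that categorification, replacing Lemma~\ref{lem:neckcut} by Corollary~\ref{cor:neckcut} and the web relations by the isomorphisms of Lemma~\ref{lem:webisos}. The observation that the target basis element is pinned down by $l(W)$ and $[W]$ is the right reason why all summands are shifts of a single object.
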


\begin{lemma}\label{lem:closedeval}
Every closed foam that is contained in an embedded 3-ball in $\Su\times[0,1]$ evaluates to a scalar in $\Sfoam$
\end{lemma}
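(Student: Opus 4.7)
The plan is to reduce the statement to Blanchet's evaluation in the abstract foam category $\twoFoam$, where closed foams are known to give scalars. First I would choose a diffeomorphism between the embedded 3-ball $B \subset \Su \times [0,1]$ and the standard 3-ball in $\R^3$, and transport the closed foam $F \subset B$ to a closed foam $F' \subset \R^3$. Since every relation defining $\Sfoam$—the sphere and theta evaluations \eqref{sl2closedfoam}, \eqref{2labeledsphere}, \eqref{sl2thetafoam_enh}, the neck-cutting relations \eqref{sl2neckcutting} and \eqref{sl2neckcutting_enh_2lab}, the Blanchet exchange relations \eqref{sl2Fig5Blanchet_1}, \eqref{sl2Fig5Blanchet_2}, \eqref{sl2NH_enh}, and the dot relation \eqref{twodotsheet}—is local and supported in an arbitrarily small 3-ball, the class $[F] \in \End_{\Sfoam}(\emptyset)$ is determined by the class of $F'$ in $\End_{\twoFoam}(\emptyset)$. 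By Blanchet's construction this latter ring is $\Q$, so $[F]$ is a scalar multiple of the identity.

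For concreteness, I would also sketch a direct reduction algorithm inside $B$. The key steps are: (i) choose a maximal system of compression disks for the underlying 1-labeled surface $c(F)$, which exists because $c(F)$ is a closed orientable surface embedded in a 3-ball, and apply Lemma~\ref{lem:neckcut2} along each disk in turn to rewrite $F$ as a $\Q$-linear combination of foams whose 1-labeled parts are disjoint unions of spheres; (ii) perform the analogous compression reduction on the 2-labeled facets using \eqref{sl2neckcutting_enh_2lab}, reducing each 2-labeled component to a sphere; (iii) evaluate the resulting foams, which now decompose as disjoint unions of bare 1-labeled spheres, bare 2-labeled spheres, and theta-foam-like configurations, using \eqref{sl2closedfoam}, \eqref{2labeledsphere}, and \eqref{sl2thetafoam_enh}. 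Dots in excess of one on any facet are eliminated by \eqref{twodotsheet} along the way.

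The main obstacle is to ensure that this reduction terminates with a well-defined scalar independent of the choices made (order of compression disks, location of necks, separation of seam circles from the disks, and so on). This coherence is precisely the essential content of Blanchet's TQFT construction, and can either be imported from there or, alternatively, verified by a confluence argument using the local relations above. Either way, the output is an element of $\Q$, which proves the claim.
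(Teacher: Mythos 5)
Your direct algorithm follows the paper's strategy (neck-cut $c(F)$ down to dotted spheres via Lemma~\ref{lem:neckcut2}, then evaluate), but step (iii) hides the real work. After compression, a $1$-labeled sphere can still carry arbitrarily many seam circles with $2$-labeled facets attached in complicated patterns; these are not disjoint unions of bare spheres, bare $2$-labeled spheres and theta foams, and compressing the $2$-labeled facets in your step (ii) does not detach them from the spheres (those facets have boundary on the seams, so they do not become closed $2$-labeled components). The missing ingredient, which is exactly what the paper supplies, is this: take an innermost sphere, choose a point on it away from all seams so that its complement is a disk, and run the algorithm from the proof of Lemma~\ref{lem:neckcut} --- internal $2$-labeled circles, then internal edges, then adjacent external pairs --- promoted to foam relations via the isomorphisms of Lemma~\ref{lem:webisos}, to push every $2$-labeled facet off that sphere. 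Only then does \eqref{sl2closedfoam} apply; iterating over all spheres leaves a purely $2$-labeled closed foam, which is finished by \eqref{sl2neckcutting_enh_2lab} and \eqref{2labeledsphere}. Your appeal to \eqref{sl2thetafoam_enh} covers only the simplest such configuration.

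Your first route (transport to $\R^3$ and quote $\End_{\twoFoam}(\emptyset)\cong\Q$) also has a logical gap that you partly acknowledge but do not close: $\twoFoam$ is defined as the quotient by the \emph{entire} kernel of Blanchet's TQFT, whereas $\Sfoam$ only imposes the listed local relations in embedded balls. Knowing the scalar value of $F'$ in $\twoFoam$ therefore does not by itself yield $F=\lambda\,\id_\emptyset$ in $\Sfoam$; one must check that the abstract evaluation can be realized step by step by the listed relations applied in sub-balls of the given $3$-ball, which amounts to the direct argument above. So this route defers rather than avoids the content of the lemma.
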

\begin{proof}Let $F$ be such a foam and $c(F)$ its underlying $1$-labeled surface. By Lemma~\ref{lem:neckcut2}, we may apply generalized neck-cutting relations and then assume that $c(F)$ is a union of spheres, possibly dotted. Starting with an innermost sphere, we choose a point on it (disjoint from the interaction with $2$-labeled facets) to obtain a complementary disk. Using the same strategy as in the proof of Lemma \ref{lem:neckcut}, but without actually delooping, we can free the disk, and thus the sphere, from all $2$-labeled interaction. The sphere then evaluates to a scalar. Now inductively proceed to evaluate all spheres in $c(F)$ until a purely $2$-labeled foam remains. This evaluates to a scalar by virtue of $2$-labeled neck-cutting and $2$-labeled sphere evaluation.
\end{proof}

The following lemma implies that the categories $\Sfoam$ (and all its versions considered in this paper) decompose into blocks indexed by first homology classes. 
\begin{lemma}
If $W_1$ and $W_2$ are webs in $\Sfoam$ with $[W_1]\neq [W_2]$, then $\Sfoam(W_1,W_2)=0$.
\end{lemma}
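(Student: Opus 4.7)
The plan is to show that any foam $F\colon W_1\to W_2$ in $\Sfoam$ gives rise to an oriented surface cobordism in $\Su\times[0,1]$ between the doubled multi-curves $\overline{W_1}\subset \Su\times\{0\}$ and $\overline{W_2}\subset \Su\times\{1\}$ from Definition~\ref{def:homologyclass}. Since morphisms in $\Sfoam$ are $\Q$-linear combinations of such foams, it suffices to verify the statement on individual foams: if no such cobordism exists because the homology classes are unequal, then there are no foams at all, and the hom space is zero.

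Concretely, given a foam $F$, I would construct a compact oriented surface $\overline{F}\subset \Su\times[0,1]$ as follows. Take the union of all $1$-labeled facets of $F$, and replace each $2$-labeled facet by two parallel copies of itself, pushed slightly apart in a bicollar neighborhood of the facet. Near each seam of $F$, the two $1$-labeled facets meeting there attach naturally to the two parallel copies of the $2$-labeled facet (smoothing out the trivalent vertices in the process), exactly mirroring the doubling operation used to define $\overline{W_1}$ and $\overline{W_2}$ on the boundary. The orientation convention at seams (the orientations induced on the seam from the two $1$-labeled facets agree with each other and disagree with the one from the $2$-labeled facet) is precisely what is needed so that the doubled $2$-labeled facets inherit a compatible orientation from the $1$-labeled ones, producing a globally oriented cobordism $\overline{F}$ with $\partial \overline{F} = \overline{W_2}\times\{1\} \sqcup -\overline{W_1}\times\{0\}$.

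Next, I would invoke the standard fact that an oriented surface cobordism witnesses equality of homology classes: the fundamental class of $\overline{F}$ relative to its boundary lives in $H_2(\Su\times[0,1],\overline{W_1}\sqcup \overline{W_2})$, and the long exact sequence of the pair sends it to $[\overline{W_2}]-[\overline{W_1}]=0$ in $H_1(\Su\times[0,1])\cong H_1(\Su)$. By definition of $[W_i]$, this is exactly the statement $[W_1]=[W_2]$.

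The potential subtlety — and really the only content requiring care — is checking that the orientation-and-seam convention in $\Sfoam$ actually guarantees that the doubling procedure produces an \emph{oriented} surface, not merely an unoriented one, and that the boundary identification on the $i=0,1$ slices really agrees with the doubling used to define $\overline{W_i}$. This is a routine local check in a neighborhood of a seam and of a trivalent vertex of $W_i$, and no global obstruction appears. Once this is verified, the lemma follows by contrapositive.
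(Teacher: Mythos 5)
Your argument is exactly the paper's: push off the $2$-labeled facets into two parallel $1$-labeled copies to obtain an oriented cobordism $\overline{F}$ between $\overline{W_1}$ and $\overline{W_2}$, which forces $[W_1]=[W_2]$. You simply spell out the local seam/orientation check that the paper leaves implicit, so the proposal is correct and matches the paper's proof.
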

\begin{proof} Suppose there is a foam $F$ between $W_1$ and $W_2$. Interpreting $2$-labeled edges and facets as doubled up, we can perform a small push-off to obtain an oriented cobordism $\overline{F}$ between the multi-curves $\overline{W_1}$ and $\overline{W_2}$, which are thus homologous. 
\end{proof}

We say a foam $F$ is (un)orientable if its underlying $1$-labeled surface $c(F)$ has this property. The following proposition is similar to \cite[Prop 2.2.1]{Queff_PhD} and ensures that the presence of unorientable foams does not lead to unexpected relations between orientable foams.

\begin{proposition} \label{prop:unor}
  The set of unorientable foams in $\Sfoam$ is closed under the local relations from Definition~\ref{def:Tfoam}.
\end{proposition}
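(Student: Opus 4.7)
The natural strategy is a case-by-case verification through the list of local relations in Definition~\ref{def:Tfoam}. For each relation $L = \sum_i c_i R_i$, I would show: if $F$ is a foam whose restriction to a $3$-ball $B \subset \Su \times [0,1]$ equals the local model $L$ and whose underlying $1$-labeled surface $c(F)$ is unorientable, then each $F_i$ obtained by substituting $R_i$ for $L$ inside $B$ (keeping $c(F) \setminus B$ outside) is again unorientable whenever $c_i \neq 0$.

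I would first dispose of the relations that leave $c(F)$ topologically unchanged: the two-dot relation \eqref{twodotsheet} and the dot-sliding relation \eqref{sl2DotSliding} act only on decorations, while the $2$-labeled sphere evaluation \eqref{2labeledsphere} and the $2$-labeled neck-cutting \eqref{sl2neckcutting_enh_2lab} modify only $2$-labeled facets. In each case $c(F_i) = c(F)$ as embedded surfaces, so unorientability is trivially preserved. The remaining relations --- the $1$-labeled sphere evaluation \eqref{sl2closedfoam}, the $1$-labeled neck-cutting \eqref{sl2neckcutting}, the theta foam evaluation \eqref{sl2thetafoam_enh}, the Blanchet relations \eqref{sl2Fig5Blanchet_1}, \eqref{sl2Fig5Blanchet_2}, and the NH relation \eqref{sl2NH_enh} --- do alter $c(F)$ inside $B$. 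For these, the key observation is that the local $1$-labeled pieces of $L$ and of each $R_i$ are themselves orientable surfaces with boundary inside the orientable ball $B$ and share the same boundary web on $\partial B$. One then compares how an orientation on $c(F) \setminus B$ can be extended across $\partial B$ to either the LHS or each RHS local piece, and shows that both sides impose the same compatibility constraint on the boundary circles; consequently $c(F)$ is orientable if and only if $c(F_i)$ is orientable.

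The main obstacle is to make this boundary-matching argument uniform and precise across all the topology-altering relations, and in particular to handle the neck-cutting relation \eqref{sl2neckcutting}, where the LHS is a cylinder while each RHS term features two capping disks: the two disks a priori offer independent orientation choices on their boundary circles, whereas the cylinder's boundary orientations are linked. The resolution I would pursue is to argue concretely that substituting the cylinder by the capping disks amounts to surgery on $c(F)$ along a simple closed curve $\gamma$ that bounds a disk in the ball $B$, and that such surgery preserves unorientability: any orientation-reversing loop in $c(F)$ that avoids $B$ persists in $c(F_i)$, while any such loop that passes through $B$ can be rerouted along the boundary arcs of $c(F) \cap \partial B$ and through the new capping disks to yield an orientation-reversing loop in $c(F_i)$. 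A parallel rerouting argument covers the Blanchet and NH relations \eqref{sl2Fig5Blanchet_1}--\eqref{sl2NH_enh}, where the local modification is again a surgery-like move on $c(F)$ supported inside an orientable $3$-ball.
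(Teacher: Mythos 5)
There is a genuine gap, and it sits exactly at the point you flagged as the main obstacle. Your claim that surgery along a compression disk preserves unorientability is false, and the rerouting argument does not work: if the orientation-reversing loop $\gamma$ passes through the neck annulus $A$ exactly once, entering at a point of one boundary circle and leaving at a point of the other, then after replacing $A$ by the two capping disks the loop is severed, and since the two disks are \emph{disjoint} and each is simply connected, there is no way to reconnect the two endpoints of $\gamma$ through the new local piece. In this situation the surgered surface can perfectly well be orientable (this is the standard phenomenon of compressing away a cross-cap), so your intended conclusion ``$c(F)$ is orientable if and only if $c(F_i)$ is orientable'' fails for \eqref{sl2neckcutting}, and hence also for \eqref{sl2NH_enh}.

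The proposition is nevertheless true, but for a different reason: one must exploit the \emph{coefficients} of the relation, not just the topology of its terms. The paper first reduces to \eqref{sl2neckcutting} (deducing \eqref{sl2NH_enh} from it via \eqref{sl2Fig5Blanchet_1} and \eqref{sl2DotSliding}), and then, in the problematic case where neck-cutting would produce orientable terms, normalizes the disorienting path $\gamma$ so that it meets $A$ exactly once and crosses an odd number of seams. The two terms on the right-hand side of \eqref{sl2neckcutting} differ only in the position of a single dot, and the image of $\gamma$ after surgery transports one dot position to the other across that odd number of seams; by the dot-sliding relation \eqref{sl2DotSliding} this introduces a sign, so the two orientable terms cancel and the relation outputs zero. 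In other words, orientable foams \emph{do} appear when a relation is applied to an unorientable foam, but only in cancelling pairs — which is why the span of unorientable foams is closed. Your case analysis of the topology-preserving relations is fine, but without the sign-cancellation mechanism the argument cannot be completed.
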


\begin{proof}
 Let $F$ be a connected unorientable dotted foam with underlying $1$-labeled surface $c(F)$, possibly with boundary. We want to see whether there are orientable representatives in the equivalence class of $c(F)$ under the local relations from Definition~\ref{def:Tfoam}.
 
The only relations that could change orientability are \eqref{sl2neckcutting} and \eqref{sl2NH_enh}, that, up to the $2$-labeled facet and a sign, look exactly alike. Using \eqref{sl2Fig5Blanchet_1} and \eqref{sl2DotSliding}, one can even deduce the latter one from the former one, so we will focus on this one.

Assume that there exists a compression disk where we can perform neck-cutting, so that the underlying surface $S$ of the resulting foam is orientable. Since $c(F)$ is unorientable and $S$ is orientable, there is a path $\gamma$ in $c(F)$ starting on the annulus $A$ where we perform the relation, and coming back after having crossed an odd number of seams.

\begin{equation}
\xy
 (0,0)*{
\begin{tikzpicture} [scale=0.6,fill opacity=0.2,  decoration={markings, mark=at position 0.6 with {\arrow{>}};  }]
	\draw [fill=red] (0,4) ellipse (1 and 0.5);
	\path [fill=red] (0,0) ellipse (1 and 0.5);
	\draw (1,0) .. controls (1,-.66) and (-1,-.66) .. (-1,0);
	\draw[dashed] (1,0) .. controls (1,.66) and (-1,.66) .. (-1,0);
	\draw (1,4) -- (1,0);
	\draw (-1,4) -- (-1,0); 
	\path[fill=red, opacity=.3] (-1,4) .. controls (-1,3.34) and (1,3.34) .. (1,4) -- 
		(1,0) .. controls (1,.66) and (-1,.66) .. (-1,0) -- cycle;
	\draw [postaction={decorate}] (0,-.5) -- (0,3.5);
	\node [opacity=1] at (-2,2) {$\gamma$};
	\node [opacity=1] at (0,1.5) {$\times$};
	\draw [dotted] (0,3.5) .. controls (0,4) and (-1.5,4) .. (-1.5,3) -- (-1.5,-.5) .. controls (-1.5,-1) and (0,-1) .. (0,-.5); 
\end{tikzpicture}};
\endxy
\end{equation}

 We can assume that this path $\gamma$ goes only once through $A$. Indeed, if this is not the case, we locally have :
 \[
  \xy
   (0,0)*{
 \begin{tikzpicture} [scale=.5,fill opacity=0.2,decoration={markings, mark=at position 0.5 with {\arrow{>}}; }]
  \node[opacity=1] at (0,0) {$\times$};
  \draw [postaction={decorate}] (0,0) -- (0,1);
  \draw [postaction={decorate}](1,1) -- (1,-1);
  \node[opacity=1] at (1.5,1) {$\gamma$};
 \end{tikzpicture}};
  \endxy
  \;\;\rightarrow\;\;
  \xy
   (0,0)*{
 \begin{tikzpicture} [scale=.5,fill opacity=0.2,decoration={markings, mark=at position 0.5 with {\arrow{>}}; }]
  \node[opacity=1] at (0,0) {$\times$};
  \draw[postaction={decorate}] (0,0)  -- (0,1);
  \draw[postaction={decorate}] (1,1) .. controls (1,0) .. (0,0);
  \draw[postaction={decorate}] (0,0) .. controls (1,0) .. (1,-1);
  \node[opacity=1] at (1.5,1.5) {$\gamma_1$};
  \node[opacity=1] at (1.5,-.5) {$\gamma_2$};
  \end{tikzpicture}};
  \endxy
\quad
 \textrm{or}
\quad
  \xy
  (0,0)*{
\begin{tikzpicture} [scale=.5,fill opacity=0.2,decoration={markings, mark=at position 0.5 with {\arrow{>}}; }]
 \node[opacity=1] at (0,0) {$\times$};
 \draw [postaction={decorate}] (0,0) -- (0,1);
 \draw [postaction={decorate}](1,-1) -- (1,1);
 \node[opacity=1] at (1.5,1) {$\gamma$};
\end{tikzpicture}};
  \endxy
  \;\;\rightarrow\;\;
  \xy
  (0,0)*{
\begin{tikzpicture} [scale=.5,fill opacity=0.2,decoration={markings, mark=at position 0.5 with {\arrow{>}}; }]
 \node[opacity=1] at (0,0) {$\times$};
 \draw [postaction={decorate}] (0,0)  -- (0,1);
 \draw [postaction={decorate}] (1,-1) .. controls (1,0) .. (0,0);
 \draw [postaction={decorate}] (0,0) .. controls (1,0) .. (1,1);
 \node[opacity=1] at (-.5,1.5) {$\gamma_1$};
 \node[opacity=1] at (1.5,1.5) {$\gamma_2$};
 \end{tikzpicture}
  };
  \endxy
 \]

 Then, either $\gamma_1$ is a disorientation path, and we focus on it, or $\gamma_2$ is. Applying this reduction process as many times as necessary, we end up with a path going one or zero time through $A$. Zero corresponds to the case where $S$ is unorientable, which contradicts our assumption.

 So, applying the neck-cutting relation, we get a sum :

\begin{equation}
\xy
 (0,0)*{
\begin{tikzpicture} [scale=0.6,fill opacity=0.2,  decoration={markings, mark=at position 0.6 with {\arrow{>}};  }]
	\draw [fill=red] (0,4) ellipse (1 and 0.5);
	\path [fill=red] (0,0) ellipse (1 and 0.5);
	\draw (1,0) .. controls (1,-.66) and (-1,-.66) .. (-1,0);
	\draw[dashed] (1,0) .. controls (1,.66) and (-1,.66) .. (-1,0);
	\draw (1,4) -- (1,0);
	\draw (-1,4) -- (-1,0); 
	\path[fill=red, opacity=.3] (-1,4) .. controls (-1,3.34) and (1,3.34) .. (1,4) -- 
		(1,0) .. controls (1,.66) and (-1,.66) .. (-1,0) -- cycle;
\end{tikzpicture}};
\endxy
\;\;=\;\;
\xy
(0,-1)*{
\begin{tikzpicture} [scale=0.6,fill opacity=0.2,  decoration={markings, mark=at position 0.6 with {\arrow{>}};  }]
	\draw [fill=red] (0,4) ellipse (1 and 0.5);
	\draw (-1,4) .. controls (-1,2) and (1,2) .. (1,4);
	\path [fill=red, opacity=.3] (1,4) .. controls (1,3.34) and (-1,3.34) .. (-1,4) --
		(-1,4) .. controls (-1,2) and (1,2) .. (1,4);
	\node [opacity=1] at (0,3) {$\bullet$};
	\draw [postaction={decorate}] (0,3) -- (0,3.5);
	\node [opacity=1] at (-2,2) {$\gamma$};
	\path [fill=red] (0,0) ellipse (1 and 0.5);
	\draw (1,0) .. controls (1,-.66) and (-1,-.66) .. (-1,0);
	\draw[dashed] (1,0) .. controls (1,.66) and (-1,.66) .. (-1,0);
	\draw (-1,0) .. controls (-1,2) and (1,2) .. (1,0);
	\path [fill=red, opacity=.3] (1,0) .. controls (1,.66) and (-1,.66) .. (-1,0) -- 
		(-1,0) .. controls (-1,2) and (1,2) .. (1,0);
	\draw [postaction={decorate}] (0,-.5) -- (0,0);
	\draw [dotted] (0,3.5) .. controls (0,4) and (-1.5,4) .. (-1.5,3) -- (-1.5,-.5) .. controls (-1.5,-1) and (0,-1) .. (0,-.5); 
\end{tikzpicture}};
\endxy
\;\; + \;\;
\xy
(0,0)*{\begin{tikzpicture} [scale=0.6,fill opacity=0.2,  decoration={markings, mark=at position 0.6 with {\arrow{>}};  }]
	\draw [fill=red] (0,4) ellipse (1 and 0.5);
	\draw (-1,4) .. controls (-1,2) and (1,2) .. (1,4);
	\path [fill=red, opacity=.3] (1,4) .. controls (1,3.34) and (-1,3.34) .. (-1,4) --
		(-1,4) .. controls (-1,2) and (1,2) .. (1,4);
	\path [fill=red] (0,0) ellipse (1 and 0.5);
	\draw (1,0) .. controls (1,-.66) and (-1,-.66) .. (-1,0);
	\draw[dashed] (1,0) .. controls (1,.66) and (-1,.66) .. (-1,0);
	\draw (-1,0) .. controls (-1,2) and (1,2) .. (1,0);
	\path [fill=red, opacity=.3] (1,0) .. controls (1,.66) and (-1,.66) .. (-1,0) -- 
		(-1,0) .. controls (-1,2) and (1,2) .. (1,0);
	\node [opacity=1] at (0,0) {$\bullet$};
\end{tikzpicture}};
\endxy \nn
 \end{equation}
 
The image of $\gamma$ after neck-cutting transports the dot of the left term to the place where the one of the right term lies. This is done by crossing an odd number of seams, and therefore it produces a $-1$ coefficient by \eqref{sl2DotSliding}, and the sum is actually zero. 
\end{proof}

\begin{remark} \label{rem:unor}
  The previous proposition implies in particular that the subcategory of orientable foams $\Sfoamor$ is isomorphic to the quotient of the foam category by the unorientable foams.
\end{remark}

\begin{remark}
A similar argument can be used to show that an unorientable foam $F$ that carries a dot on an unorientable component of $c(F)$ must be zero.  
\end{remark}

\begin{remark}
We denote by $\SCob$ the category of Bar-Natan cobordisms in $\Su\times [0,1]$. Its objects are unoriented multi-curves in $\Su$ and its morphisms are $\Q$-linear combinations of dotted cobordisms, properly embedded in $\Su\times [0,1]$, modulo isotopy relative to the boundary and the relations \eqref{sl2closedfoam}, \eqref{sl2neckcutting} and \eqref{twodotsheet}, see \cite[Section 11.6]{BN2} and \cite{AF}. It is clear from the definition, that $\SCob$ and $\Sfoam$ are closely related when defined over $\Z/2\Z$.
One of the disadvantages of $\SCob$ is that the analogues of Proposition~\ref{prop:unor} and Remark~\ref{rem:unor} are false. Another one is that Khovanov homology has a sign-ambiguity, when defined via $\SCob$.
\end{remark}

\begin{proposition} \label{prop:Blanchet} Every orientable foam in $\Sfoam$ is a Blanchet foam.
\end{proposition}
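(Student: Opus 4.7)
The plan is to derive a contradiction from the assumption that the two $1$-labeled facets meeting along some seam of an orientable foam $F$ actually coincide as a single facet $\Phi$ of $c(F)$. Once this possibility is ruled out, Blanchet's remaining datum---an ordering of the two $1$-labeled facets at each seam---can be supplied intrinsically from the orientation of the seam via the right-hand rule, as already noted in the remark preceding the proposition. So the content of the proposition reduces to showing that along every seam, the two locally incident $1$-labeled facets belong to distinct components of $c(F)$.

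So, suppose for contradiction that a seam $s$ has both its local $1$-labeled facets belonging to the same facet $\Phi$ of $c(F)$, with boundary arcs $\alpha_1,\alpha_2 \subset \partial \Phi$ both mapping to $s$ under the embedding in $\Su \times [0,1]$. Fix an orientation of $c(F)$, which restricts to an orientation of $\Phi$ and hence, by the outward-normal-first convention, to boundary orientations on $\alpha_1$ and $\alpha_2$. By the seam condition in Definition~\ref{def:Tfoam}, the two orientations on $s$ obtained by pushing forward the boundary orientations from $\alpha_1$ and $\alpha_2$ agree. Consequently, the gluing map $\phi \colon \alpha_1 \to \alpha_2$ implicitly defined by $\alpha_1 \to s \to \alpha_2$ is orientation-preserving with respect to the boundary orientations induced by $\Phi$.

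Next, I would invoke the standard criterion that the self-gluing of an oriented surface with boundary along two disjoint boundary arcs via an orientation-\emph{preserving} identification yields a non-orientable surface, modeled locally on how a Möbius band is obtained by an orientation-preserving self-identification of an annulus. A neighborhood of $s$ in $c(F)$, obtained by removing the adjacent $2$-labeled facet and completing the gluing, is of exactly this form, and hence is non-orientable. This contradicts the assumed orientability of $c(F)$, completing the proof.

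The main obstacle, such as it is, is bookkeeping the conventions correctly: consistently tracking how an orientation of $\Phi$ determines boundary orientations on $\alpha_1,\alpha_2$, how these push forward to $s$, and how the foam's own seam-orientation convention matches up with the gluing. No deeper topology is needed beyond the orientability criterion for gluings of oriented surfaces, and the argument is local in a neighborhood of the seam, so it applies uniformly whether $s$ is an arc or a circle.
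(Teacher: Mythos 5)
Your proof is correct and follows essentially the same route as the paper's: both arguments reduce to the observation that the seam condition forces the two locally incident $1$-labeled boundary arcs to induce the \emph{same} orientation on the seam, so that self-gluing a single oriented facet along that seam is orientation-reversing and produces a non-orientable piece of $c(F)$. The paper phrases this contrapositively via an explicit orientation-reversing loop crossing the seam, while you phrase it as a contradiction via the M\"obius-band gluing criterion, but these are the same computation.
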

\begin{proof} Suppose a foam $F$ in $\Sfoam$ is not a Blanchet foam, i.e. there exists a seam to which one connected oriented $1$-labeled facet is glued along two of its boundary components. Then we choose a path in the facet from one boundary to the other, with start and end points that are identified by the gluing. The orientation of the facet is preserved along the path, but it switches across the seam. After erasing the $2$-labeled sheet, the path thus becomes an orientation reversing loop, and so $F$ is not orientable.  
\end{proof}

\begin{definition} We let $\Sfoamred$ denote the full subcategory of $\Sfoam$ with objects given by direct sums of grading shifts of webs without inessential $1$-labeled components.
\end{definition}
Note that by Corollary~\ref{cor:neckcut} the subcategory $\Sfoamred$ is equivalent to $\Sfoam$. All objects in the latter can be decomposed into webs without inessential $1$-labeled components, and so the inclusion is an equivalence.

The simple but key fact in our analysis is that $\Sfoamred$ is non-negatively graded, which follows from an elementary topological lemma.

\begin{lemma}\label{lem:Eulerchar}
Suppose that $\Su\neq \Stwo$. Then a connected surface, properly embedded in $\Su\times [0,1]$, is either a disk, a sphere bounding a ball, or it has non-positive Euler characteristic. In particular, non-orientable surfaces are of negative Euler characteristics.
\end{lemma}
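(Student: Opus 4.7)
The plan is to invoke the classification of compact connected surfaces: those with positive Euler characteristic are the disk ($\chi=1$), the sphere $\Stwo$ ($\chi=2$), and the real projective plane $\mathbb{RP}^2$ ($\chi=1$). So it suffices to show that every embedded $\Stwo$ bounds a ball and that $\mathbb{RP}^2$ does not embed at all. The central observation powering both is that, since $\Su$ is oriented of finite type and $\Su\neq\Stwo$, its universal cover $\tilde\Su$ is contractible (homeomorphic to $\R^2$); consequently $\widetilde{\Su\times[0,1]}\cong\tilde\Su\times[0,1]$ is homeomorphic to the closed slab $\R^2\times[0,1]$ and in particular embeds into $\R^3$.

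For the sphere case, a simply-connected embedded sphere $S\subset\Su\times[0,1]$ lifts to an embedded sphere $\tilde S\subset\widetilde{\Su\times[0,1]}\subset\R^3$, which bounds a $3$-ball $B$ by Alexander's theorem. I choose the lift so that $B$ is innermost, i.e.\ contains no other lift of $S$ in its interior. Then for any non-identity deck transformation $g\in\pi_1(\Su)$ one must have $g(B)\cap B=\emptyset$: since distinct lifts of $S$ are disjoint, the only alternatives would be $g(B)=B$, ruled out by freeness of the covering action together with Brouwer's fixed-point theorem on the ball $B$, or a strict containment, ruled out by innermostness. Hence the covering projection is injective on $B$ (a continuous bijection from a compact set to a Hausdorff space is a homeomorphism), yielding an embedded $3$-ball in $\Su\times[0,1]$ bounded by $S$.

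For $\mathbb{RP}^2$, the group $\pi_1(\Su)$ is torsion-free whenever $\Su$ is of finite type and $\Su\neq\Stwo$, so any homomorphism from $\pi_1(\mathbb{RP}^2)=\Z/2$ into $\pi_1(\Su)$ vanishes. Hence any embedding $\mathbb{RP}^2\hookrightarrow\Su\times[0,1]$ lifts to an embedding $\mathbb{RP}^2\hookrightarrow\widetilde{\Su\times[0,1]}\subset\R^3$, contradicting the classical non-embeddability of $\mathbb{RP}^2$ into $\R^3$. The ``non-orientable implies $\chi<0$'' statement then follows from the classification: a connected non-orientable surface has $\chi\leq 1$, with equality only for $\mathbb{RP}^2$ (just excluded) and with $\chi=0$ only for the Klein bottle $K$; the latter is ruled out by an analogous argument that exploits the Klein-bottle relation $bab^{-1}=a^{-1}$ to force $a$ to map to zero under any $\pi_1(K)\to\pi_1(\Su)$, combined with an incompressibility/tube-surgery analysis of any hypothetical embedding into the orientable $3$-manifold $\Su\times[0,1]$.

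The main obstacle is the Klein bottle case: unlike for $\mathbb{RP}^2$, partial lifts of a putative Klein-bottle embedding to the universal cover can be annular rather than closed — and annuli do embed in $\R^3$ — so the bare universal-cover argument must be supplemented by a finer analysis using the $I$-bundle structure of $\Su\times[0,1]$ and the irreducibility obtained in the sphere step.
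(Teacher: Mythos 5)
Your sphere argument (lift to the universal cover, Alexander's theorem, an innermost ball, Brouwer to exclude a deck transformation preserving that ball) is correct and is essentially the ``covering argument'' the paper itself invokes for this step, and the exclusion of $\mathbb{RP}^2$ via torsion-freeness of $\pi_1(\Su)$ is also sound. The gaps are in the non-orientable part. First, your classification statement is wrong: the connected non-orientable surfaces with $\chi=0$ are the Klein bottle \emph{and the M\"obius band} (one crosscap, one boundary circle). Since the lemma is about \emph{properly embedded} surfaces, which may have boundary on $\Su\times\{0,1\}$, the M\"obius band is a case you must treat, and you never mention it. Second, you explicitly do not prove the Klein bottle case: ``an incompressibility/tube-surgery analysis'' is a placeholder, not an argument, and as you yourself note the universal-cover strategy genuinely breaks down there because the embedding need not lift.

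Both gaps close at once if, for the non-orientability claims, you replace the universal cover by the more elementary observation the paper uses: $\Su\times[0,1]$ itself embeds in $\R^3$ (it is a regular neighbourhood of $\Su$ standardly embedded, restricted to the punctures), and no closed non-orientable surface embeds in $\R^3$, since such a surface is $\Z/2$-null-homologous, hence separating, hence two-sided, hence orientable. This disposes of the Klein bottle (and any closed non-orientable surface) immediately, with no lifting needed. For the M\"obius band, and more generally any non-orientable surface with one boundary component, double it across the boundary face of $\Su\times[0,1]$: the result is a closed non-orientable surface embedded in $\Su\times[-1,1]\cong\Su\times[0,1]$, which is again impossible. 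What remains then has at least two boundary circles or at least one boundary circle and at least two crosscaps, forcing $\chi=2-g-l<0$.
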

\begin{proof}
  Recall that the Euler characteristics of an orientable surface of genus $g$ with $l$ boundary components is given by $2-2g-l$. This will be non-positive, unless $g=0$ and $l\leq 1$. The $g=0$ and $l=1$ case corresponds to a disk. The $l=0$, $g=0$ case corresponds to a sphere, but all spheres in thickened surfaces (except the thickened sphere) bound balls. If $\Su$ is a disk, this is due to Alexander. For closed surfaces of positive genus, the same result can be deduced from the case of the disk via a covering argument. Finally, it is easy to see that adding punctures cannot create new essential spheres.
  
  Let us now turn towards the unorientable case, when the Euler characteristics is $2-g-l$ where $g\geq 1$ is the number of crosscaps of the surface and $l$ the number of boundary components. $l=0$ is impossible since no unorientable surface can be embedded in $\R^3$ and thus in a thickened surface (which could itself be embedded in $\R^3$). Finally, for the $l=1$ case, note that such a surface bounding a 1-component curve could be made into a closed unorientable surface by taking the union with a $[0,1]\rightarrow [0,-1]$-reflection of it. An embedding of the former in $\Su\times [0,1]$ would give rise to an embedding of a closed unorientable surface in $\Su\times [-1,1]$, which we have already ruled out. Thus $l\geq 2$ and $2-g-l<0$.
  \end{proof}
 
\begin{corollary} \label{cor:nonneggrading} Suppose that $\Su\neq \Stwo$. Then the morphism spaces in $\Sfoamred$ are non-negatively graded. That is $\Sfoamred(W_1,q^k W_2)= 0$ if $k<0$, for any webs $W_1$ and $W_2$ in $\Sfoamred$.
\end{corollary}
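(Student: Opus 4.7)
The plan is to establish the degree bound $\deg(F) = 2d - \chi(c(F)) \geq 0$ for any non-zero foam $F$ representing a morphism between webs $W_1, W_2 \in \Sfoamred$, where $d$ is the total number of dots on $F$ and $c(F)$ is its underlying $1$-labeled surface. Decomposing $c(F) = \bigsqcup_i C_i$ into connected components and using additivity of $\chi$ gives $\deg(F) = \sum_i \bigl(2 d_i - \chi(C_i)\bigr)$, where $d_i$ denotes the number of dots lying on $C_i$. By Lemma~\ref{lem:Eulerchar}---this is precisely where the hypothesis $\Su \neq \Stwo$ enters---each $C_i$ is either a disk, a sphere bounding a ball in $\Su \times [0,1]$, or a surface with $\chi(C_i) \leq 0$. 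I will show that each summand $2 d_i - \chi(C_i)$ is non-negative.

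No $C_i$ can be a disk, by the hypothesis $W_1, W_2 \in \Sfoamred$: the boundary of such a $C_i$ is a single simple closed curve lying entirely in one of $c(W_1)$ or $c(W_2)$, and since $C_i$ exhibits it as null-homotopic in $\Su \times [0,1]$, the inclusion $\Su \hookrightarrow \Su \times [0,1]$ being a homotopy equivalence forces it to be null-homotopic in $\Su$ as well. It must therefore bound a disk in $\Su$, contradicting the condition that $c(W_j)$ has no inessential components. For a sphere component $C_i$ the contribution $2d_i - \chi(C_i) = 2d_i - 2$ is non-negative as soon as $d_i \geq 1$. If instead $d_i = 0$, I aim to show directly that $F = 0$: one uses the strategy from the proof of Lemma~\ref{lem:closedeval}, clearing the interaction of $C_i$ with $2$-labeled facets attached along seams by means of the local web isomorphisms of Lemma~\ref{lem:webisos}, until $C_i$ appears as an isolated undotted $1$-labeled sphere in a $3$-ball disjoint from the rest of $F$. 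The closed sphere relation~\eqref{sl2closedfoam} then forces $F = 0$. Components with $\chi(C_i) \leq 0$ contribute $2 d_i - \chi(C_i) \geq 0$ trivially.

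Combining the three cases yields $\deg(F) \geq 0$ for every non-zero foam representative, which is precisely the statement $\Sfoamred(W_1, q^k W_2) = 0$ for $k < 0$. The main obstacle is the sphere case: one has to verify that the simplification procedure from Lemma~\ref{lem:closedeval} can be carried out \emph{locally} around the single sphere component $C_i$ without disturbing the rest of the foam, so that isolating an undotted sphere really does kill $F$ via~\eqref{sl2closedfoam}. This should be a routine adaptation of the argument already present in that proof.
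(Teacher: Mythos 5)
Your proof is correct and follows essentially the same route as the paper: apply Lemma~\ref{lem:Eulerchar} componentwise to $c(F)$, note that disks are excluded by the defining condition on objects of $\Sfoamred$, and kill undotted spheres via the sphere-freeing argument of Lemma~\ref{lem:closedeval} together with relation~\eqref{sl2closedfoam}. The only difference is cosmetic: you spell out why a disk component would force an inessential boundary circle, which the paper leaves implicit.
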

\begin{proof} Let $F$ be a foam with $d$ dots between webs in $\Sfoamred$ and $c(F)$ its underlying $1$-labeled surface. Then $\deg(F)=2d-\chi(c(F))$ and by Lemma~\ref{lem:Eulerchar} the only negative contribution to this degree can come from disks or undotted spheres in $c(F)$. However, disks are explicitly ruled out in $\Sfoamred$ and we claim that undotted spheres in $c(F)$ evaluate to zero under the foam relations. This follows directly from the sphere-freeing argument in the proof of Lemma~\ref{lem:closedeval} and the fact that undotted $1$-labeled spheres evaluate to zero.
\end{proof}

\begin{definition} We denote by $\Sfoamred_0$ the degree-zero subcategory of $\Sfoamred$, i.e. the category with the same objects as $\Sfoamred$, but with \[\Sfoamred_0(q^k W_1,q^l W_2 )=\begin{cases} 
 \Sfoamred(q^k W_1,q^k W_2 ) & k=l\\
 0 & k\neq l\end{cases}.\]
\end{definition}
The non-negative grading of the morphism spaces of $\Sfoamred$ implies that $\Sfoamred_0$ can alternatively be seen as a subcategory or as a quotient category of $\Sfoamred$. 

\begin{definition}\label{defi:truncation} Let $\Sfoam_0$ denote the quotient category of $\Sfoam$ induced from the quotient $\Sfoamred_0$ under the equivalence of $\Sfoam$ with $\Sfoamred$. We will call the induced non-negative grading of $\Sfoam$ the \emph{essential $q$-grading}.
\end{definition}

\begin{corollary} \label{cor:oriented}Unorientable foams in $\Sfoam$ are sent to zero in $\Sfoam_0$.
\end{corollary}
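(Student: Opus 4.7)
The plan is to run the argument through the equivalence $\Sfoam\simeq \Sfoamred$ and exploit the non-negativity of the $q$-grading on $\Sfoamred$ established in Corollary~\ref{cor:nonneggrading}. Given an unorientable foam $F\colon W_1\to W_2$ in $\Sfoam$, I would first reduce it to a foam in $\Sfoamred$ by pre- and post-composing with the delooping isomorphisms supplied by Corollary~\ref{cor:neckcut} to erase the inessential $1$-labeled components of $c(W_1)$ and $c(W_2)$. These delooping foams are implemented via the neck-cutting relation and are built entirely from (dotted) disks and tubes supported in disjoint neighborhoods of the inessential circles being removed. Since capping an unorientable component of $c(F)$ along a boundary circle with an orientable disk cannot make it orientable (any orientation of the cap would restrict to one on the underlying unorientable piece), the resulting foam $\tilde F\colon \tilde W_1\to \tilde W_2$ in $\Sfoamred$ still has at least one unorientable connected component in $c(\tilde F)$, inherited from $c(F)$.

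Next I would compute $\deg(\tilde F)=\sum_i(2d_i-\chi(S_i))$ component by component, where $c(\tilde F)=\bigsqcup_i S_i$ and $d_i$ counts dots on $S_i$. Following the argument of Corollary~\ref{cor:nonneggrading}, the only components that could contribute negatively are closed disks or undotted spheres. A closed disk component would force its boundary to appear as a null-homotopic circle in $c(\tilde W_1)$ or $c(\tilde W_2)$: since $\Su\neq\Stwo$ and $\Su\times[0,1]$ deformation retracts to $\Su$, any such circle bounds a disk in $\Su$, contradicting $\tilde W_i\in\Sfoamred$. An undotted sphere component forces $\tilde F=0$ by Lemma~\ref{lem:closedeval}, in which case there is nothing to prove. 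Thus every remaining component contributes non-negatively to $\deg(\tilde F)$.

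Finally, I would invoke Lemma~\ref{lem:Eulerchar}: every connected unorientable surface properly embedded in $\Su\times[0,1]$ has $\chi\leq -1$. The distinguished unorientable component of $c(\tilde F)$ therefore contributes at least $2d_i-\chi(S_i)\geq 1$ to the total degree, while every other component contributes at least $0$, so $\deg(\tilde F)\geq 1$. Hence $\tilde F$ lies in strictly positive essential $q$-degree and vanishes in $\Sfoamred_0\simeq\Sfoam_0$. The main (and essentially only) delicate point is the first paragraph's assertion that orientability type is preserved under delooping; this is automatic because the delooping foams attach only along the inessential boundary circles of $c(F)$, and gluing on orientable caps or tubes can never destroy the disorientation loop inside an unorientable component.
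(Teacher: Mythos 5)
Your proof is correct and follows the paper's (one-line) argument: the corollary is deduced from Lemma~\ref{lem:Eulerchar}, exactly as you do, with the unorientable component contributing $2d-\chi\geq 1$ to the degree and the remaining components contributing non-negatively as in Corollary~\ref{cor:nonneggrading}. Your extra care about transporting to $\Sfoamred$ and about orientability type being preserved under delooping is a faithful unpacking of what the paper leaves implicit.
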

\begin{proof} This follows from Lemma~\ref{lem:Eulerchar}.
\end{proof}

The following proposition implies that these categories are non-degenerate.		
											
\begin{proposition}\label{prop:linindep} Consider a collection of foams in a morphism space of $\Sfoamred_0$, which have incompressible underlying $1$-labeled surfaces that are pairwise non-isotopic. Then the elements of this collection are linearly independent and, in particular, non-zero.
\end{proposition}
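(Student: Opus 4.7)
My plan is to establish linear independence by constructing a non-degenerate pairing that distinguishes foams according to the isotopy class of their underlying $1$-labeled surface. Given a putative relation $\sum_i a_i F_i = 0$ in $\Sfoamred_0(W_1, W_2)$ with pairwise non-isotopic incompressible $c(F_i)$, I will produce test morphisms which, when composed with this relation, force the coefficients to vanish one by one.

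The test morphisms will be the reflected foams $F_k^*\colon W_2 \to W_1$, obtained by reflecting $F_k$ across the midplane of $\Su \times [0,1]$ (with orientation conventions on facets and seams adjusted to produce a valid foam). Pre-composing the relation with $F_k^*$ yields $\sum_i a_i (F_k^* \circ F_i) = 0$ in $\End_{\Sfoamred_0}(W_1)$, and the underlying $1$-labeled surface of $F_k^* \circ F_i$ is the glued surface $c(F_i) \cup_{c(W_2)} c(F_k)^*$. For the diagonal term $i=k$, this is the double of $c(F_k)$ along $c(W_2)$, which admits natural compression disks (a collar of $c(W_2)$ itself is a family of such); iteratively applying Lemma~\ref{lem:neckcut2} together with the closed-foam sphere-freeing procedure of Lemma~\ref{lem:closedeval} reduces $F_k^* \circ F_k$ to a non-zero scalar multiple of an ``identity-like'' foam on $W_1$. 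For off-diagonal terms $i \neq k$, the glued surface $c(F_i) \cup c(F_k)^*$ is \emph{not} isotopic to a double, since $c(F_i)$ and $c(F_k)$ are non-isotopic and each is incompressible, so the neck-cutting expansion produces foams with strictly greater topological complexity than the diagonal term.

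The main obstacle is making precise the claim that non-isotopic incompressible pieces glued together yield a genuinely different element of $\End_{\Sfoamred_0}(W_1)$ than the double. I expect to handle this by ordering the isotopy classes by a complexity invariant (for instance, the negative Euler characteristic of $c(F_i)$, or the minimal number of compression moves needed to reduce the glued surface to a standard normal form), and then showing that the resulting pairing matrix $(F_k^* \circ F_i)_{i,k}$ is (block-)triangular with non-zero diagonal entries in this order. This lets me peel off the coefficients one at a time, starting from the isotopy class of least complexity. The core topological input driving the triangularity is precisely the incompressibility and pairwise non-isotopy of the $c(F_i)$, which prevents the off-diagonal gluings from collapsing onto diagonal normal forms under compression.
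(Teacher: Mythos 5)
Your pairing strategy has two genuine gaps, one in the diagonal entries and one in the off-diagonal structure, and the paper's proof avoids both by taking a different route.

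First, the non-vanishing of $F_k^*\circ F_k$. A collar of $c(W_2)$ inside the doubled surface is an annulus, not a disk, so it does not furnish compression disks; and if $c(F_k)$ consists of incompressible annuli and tori (which, by Lemma~\ref{lem:Eulerchar}, is exactly what degree-zero undotted foams in $\Sfoamred_0$ look like), the double is again incompressible and there is nothing to neck-cut. More seriously, even after reducing $F_k^*\circ F_k$ to an ``identity-like'' foam, you would still need to know that \emph{that} foam is non-zero in $\Sfoamred_0$ --- which is precisely an instance of the statement being proved, so the argument is circular. The paper breaks the circle with an external invariant: it forgets the embedding, completes $F$ to a closed \emph{abstract} Blanchet foam by gluing on a reversed copy, and evaluates under Blanchet's TQFT to $\pm 2^t\neq 0$; since that evaluation is constant under all foam relations, $F\neq 0$ in $\Sfoam$. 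Your embedded composition cannot play this role because essential tori in $\Su\times[0,1]$ do not evaluate to scalars.

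Second, the triangularity of the matrix $(F_k^*\circ F_i)$ is asserted but not established, and the proposed complexity orderings do not work: all the relevant incompressible components are annuli and tori, so Euler characteristic cannot separate isotopy classes, and two non-isotopic incompressible annuli glued along $c(W_2)$ typically produce another incompressible annulus of the same complexity --- the off-diagonal entries need not vanish and need not be subordinate to the diagonal. The paper does not need any such pairing: it observes that a linear relation involving foams with pairwise non-isotopic underlying surfaces would have to use topology-changing foam relations, i.e.\ generalized neck-cutting as in Lemma~\ref{lem:neckcut2}, and an undotted foam with incompressible underlying surface can only occur on the neck-cut (dotted) side of such a relation --- a contradiction. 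You would need to either import the abstract TQFT evaluation for the diagonal and prove the triangularity claim, or replace the pairing argument with an argument of this normal-form type.
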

An analogous result for Bar-Natan cobordism categories $\SCob$ is well-known, see e.g. \cite{AF} and \cite{Kai}.
\begin{proof} By the proof of Lemma~\ref{lem:Eulerchar}, the underlying $1$-labeled surface of a foam $F$ from the chosen collection consist of incompressible annuli and tori.

 We first argue that $F$ is non-zero in $\Sfoam$. Since $F$ is of degree zero, it is orientable. By forgetting the embedding information, $F$ can also be regarded as an abstract Blanchet foam with boundary, see Proposition~\ref{prop:Blanchet}. It can then be completed by another suitable abstract foam---e.g. another copy of $F$ with orientations reversed---to give an abstract foam without boundary. We now use Blanchet's TQFT to evaluate this foam to a scalar in $\Z$ and we claim it is non-zero.
 
 In fact, the envisioned doubling of $F$ has a non-zero evaluation $\pm 2^{t}$ where $t$ is the number of $1$-labeled tori in the underlying surface of the doubled abstract foam. To see this, one can first abstractly neck-cut all $2$-labeled facets and remove $2$-labeled spheres, so that only $2$-labeled disks with boundaries on $1$-labeled tori remain. This only changes the abstract foam evaluation by at most a sign. Now there are two possibilities for the remaining disks. Either they bound a disk on the torus, in which case they can be removed, or their boundary is an essential curve on the torus. In the latter case, these disks come in parallel pairs for orientation reasons, and after neck-cutting and one application of relation~\eqref{sl2Fig5Blanchet_1}, they can be removed as well. Finally, each remaining $1$-labeled torus without $2$-labeled interaction evaluates to $2$. 
 
 Now, since Blanchet's foam evaluation is constant under all foam relations, in particular those performed in the embedded sense in $\Su\times [0,1]$, the foam $F$ cannot be zero in $\Sfoam$. We conclude that a non-trivial linear relation between the foams from the chosen collection would need to involve several distinct foams, whose underlying $1$-labeled surfaces are pairwise non-isotopic by assumption. Such a linear relation would, thus, involve foam relations which change the topology of the underlying $1$-labeled surfaces, i.e. generalized neck-cutting relations as in Lemma~\ref{lem:neckcut2}. However, the foams in question, having incompressible underlying surfaces, could only appear on the neck-cut (i.e. dotted) side of neck-cutting relations. Since they also do not carry dots, this implies that there cannot be a non-trivial linear relation between them. 
\end{proof}

\subsection{Jones--Wenzl foams}
\label{sec:JWFoam}
In this section we describe a set of idempotent foams in $\Sfoam$ that are categorified analogues of the elements of the basis $\Su\basisJW$ of $\SWebq$. In the next section we will see that these idempotents generate all objects in the idempotent completion $\Kar(\Sfoam)$ if $\Su\neq \T$. The foams we will consider are modeled on $\glnn{2}$-versions of the famous Jones--Wenzl projectors, which we now recall.
 
\begin{definition} Let $\Webq$ denote the category of $\glnn{2}$ webs in a horizontal strip, with
\begin{itemize} 
\item objects, finite sequences of elements of the set $\{1,2,1^*,2^*\}$, including the empty sequence,
\item morphisms, $\Q(q)$-linear combinations of $\glnn{2}$ webs properly embedded in the horizontal strip $\R\times [0,1]$, viewed as mapping from the sequence on the bottom boundary to the one on the top boundary. These webs are considered up to isotopy relative to the boundary and modulo the $\glnn{2}$ web relations \eqref{eqn:circles}--\eqref{eqn:squares}.
\end{itemize}
Here $1$ and $2$ encode upward pointing boundary points of associated label, and $1^*$ and $2^*$ encode downward pointing boundary points. Composition is given by stacking strips with matching boundary data. We will also consider the version $\Web$, which is defined over $\Q$ instead of $\Q(q)$, with all relations \eqref{eqn:circles}--\eqref{eqn:squares} specialized to $q=1$.
\end{definition}

\begin{definition}The $U_q(\glnn{2})$ Jones--Wenzl projectors $P_m\in \Webq\otimes \Q(q)$ are defined by $P_1=\id_1$ and then:
 \[\begin{tikzpicture}[anchorbase, scale=.3]
\fill[black,opacity=.2] (0,1) rectangle (3,3);
\draw[thick] (0,1) rectangle (3,3);
\draw [very thick] (.5,0) to (.5,1);
\draw [thick, dotted] (.7,0.5) to (1.3,.5);
\draw [very thick] (1.5,0) to (1.5,1);
\draw [very thick] (2.5,0) to (2.5,1);
\draw [very thick,->] (.5,3) to (.5,4);
\draw [thick, dotted] (.7,3.25) to (1.3,3.25);
\draw [very thick,->] (1.5,3) to (1.5,4);
\draw [very thick,->] (2.5,3) to (2.5,4);
\node at (1.5,1.9) {$P_{m+1}$};
\end{tikzpicture}
   \;\; := \;\;
   \begin{tikzpicture}[anchorbase, scale=.3]
\fill[black,opacity=.2] (0,1) rectangle (2,3);
\draw[thick] (0,1) rectangle (2,3);
\draw [very thick] (.5,0) to (.5,1);
\draw [thick, dotted] (.7,0.5) to (1.3,.5);
\draw [very thick] (1.5,0) to (1.5,1);
\draw [very thick,->] (2.5,0) to (2.5,4);
\draw [very thick,->] (.5,3) to (.5,4);
\draw [thick, dotted] (.7,3.25) to (1.3,3.25);
\draw [very thick,->] (1.5,3) to (1.5,4);
\node at (1,1.9) {$P_{m}$};
\end{tikzpicture}
    \;-\;\frac{[m]}{[m+1]}\;
       \begin{tikzpicture}[anchorbase, scale=.3]
\fill[black,opacity=.2] (0,.5) rectangle (2,1.5);
\draw[thick] (0,.5) rectangle (2,1.5);
\fill[black,opacity=.2] (0,2.5) rectangle (2,3.5);
\draw[thick] (0,2.5) rectangle (2,3.5);
\draw [very thick] (.5,0) to (.5,.5);
\draw [very thick] (1.5,0) to (1.5,.5);
\draw [double] (2,1.75) to (2,2.25);
\draw [very thick] (2.5,0) to (2.5,1.5)to [out=90,in=90] (1.5,1.5);
\draw [very thick,->] (1.5,2.5) to [out=270,in=270] (2.5,2.5) to (2.5,4); 
\draw [very thick] (.5,1.5) to (.5,2.5);
\draw [thick, dotted] (.7,2) to (1.3,2);
\draw [very thick,->] (.5,3.5) to (.5,4);
\draw [very thick,->] (1.5,3.5) to (1.5,4);
\node at (1,.95) {\tiny$P_{m}$};
\node at (1,2.95) {\tiny$P_{m}$};
\end{tikzpicture}   
\]
C.f. Wenzl~\cite{Wen} in the case of $U_q(\slnn{2})$.
\end{definition}
 We give two examples:
\[\begin{tikzpicture}[anchorbase, scale=.3]
\fill[black,opacity=.2] (0,1) rectangle (2,3);
\draw[thick] (0,1) rectangle (2,3);
\draw [very thick] (.5,0) to (.5,1);
\draw [very thick] (1.5,0) to (1.5,1);
\draw [very thick,->] (.5,3) to (.5,4);
\draw [very thick,->] (1.5,3) to (1.5,4);
\node at (1,1.9) {$P_{2}$};
\end{tikzpicture} 
\quad = \quad
\begin{tikzpicture}[anchorbase, scale=.3]
\draw [very thick,->] (.5,0) to (.5,4);
\draw [very thick,->] (1.5,0) to (1.5,4);
\end{tikzpicture}
-
\frac{1}{[2]}
\begin{tikzpicture}[anchorbase, scale=.3]
\draw [very thick] (.5,0) to (.5,.5) to [out=90,in=225] (1,1.5);
\draw [very thick] (1.5,0) to (1.5,.5) to [out=90,in=315] (1,1.5);
\draw[double] (1,1.5) to (1,2.5);
\draw [very thick,->] (1,2.5) to [out=135,in=270] (.5,3.5) to (.5,4);
\draw [very thick,->] (1,2.5) to [out=45,in=270] (1.5,3.5) to (1.5,4);
\end{tikzpicture}
\quad, \quad
\begin{tikzpicture}[anchorbase, scale=.3]
\fill[black,opacity=.2] (0,1) rectangle (3,3);
\draw[thick] (0,1) rectangle (3,3);
\draw [very thick] (.5,0) to (.5,1);
\draw [very thick] (1.5,0) to (1.5,1);
\draw [very thick] (2.5,0) to (2.5,1);
\draw [very thick,->] (.5,3) to (.5,4);
\draw [very thick,->] (1.5,3) to (1.5,4);
\draw [very thick,->] (2.5,3) to (2.5,4);
\node at (1.5,1.9) {$P_{3}$};
\end{tikzpicture} 
\quad = \quad
\begin{tikzpicture}[anchorbase, scale=.3]
\draw [very thick,->] (.5,0) to (.5,4);
\draw [very thick,->] (1.5,0) to (1.5,4);
\draw [very thick,->] (2.5,0) to (2.5,4);
\end{tikzpicture}
-
\frac{[2]}{[3]}
\left(
\begin{tikzpicture}[anchorbase, scale=.3]
\draw [very thick] (.5,0) to (.5,.5) to [out=90,in=225] (1,1.5);
\draw [very thick] (1.5,0) to (1.5,.5) to [out=90,in=315] (1,1.5);
\draw [very thick,->] (2.5,0) to (2.5,4);
\draw[double] (1,1.5) to (1,2.5);
\draw [very thick,->] (1,2.5) to [out=135,in=270] (.5,3.5) to (.5,4);
\draw [very thick,->] (1,2.5) to [out=45,in=270] (1.5,3.5) to (1.5,4);
\draw [very thick,->] (2.5,3) to (2.5,4);
\end{tikzpicture}
+
\begin{tikzpicture}[anchorbase, scale=.3]
\draw [very thick] (.5,0) to (.5,.5) to [out=90,in=225] (1,1.5);
\draw [very thick] (1.5,0) to (1.5,.5) to [out=90,in=315] (1,1.5);
\draw [very thick,->] (-.5,0) to (-.5,4);
\draw[double] (1,1.5) to (1,2.5);
\draw [very thick,->] (1,2.5) to [out=135,in=270] (.5,3.5) to (.5,4);
\draw [very thick,->] (1,2.5) to [out=45,in=270] (1.5,3.5) to (1.5,4);
\draw [very thick,->] (-.5,0) to (-.5,4);
\end{tikzpicture}
\right)
+
\frac{1}{[3]}
\left(
\begin{tikzpicture}[anchorbase, scale=.3]
\draw [very thick] (.5,0) to [out=90,in=225] (1,1);
\draw [very thick] (1.5,0) to [out=90,in=315] (1,1);
\draw [very thick] (2.5,0) to (2.5,1) to [out=90,in=315] (2,2.25);
\draw[double] (1,1) to (1,1.75);
\draw [very thick] (1,1.75) to (2,2.25);
\draw[double] (2,2.25) to (2,3);
\draw [very thick,->] (1,1.75) to [out=135,in=270] (.5,3) to (.5,4);
\draw [very thick,->] (2,3) to [out=135,in=270] (1.5,4);
\draw [very thick,->] (2,3) to [out=45,in=270] (2.5,4);
\end{tikzpicture}
+
\begin{tikzpicture}[anchorbase, scale=.3]
\draw [very thick] (.5,0) to [out=90,in=225] (1,2.25);
\draw [very thick] (1.5,0) to [out=90,in=225] (2,1);
\draw [very thick] (2.5,0) to [out=90,in=315] (2,1);
\draw[double] (2,1) to (2,1.75);
\draw [very thick] (2,1.75) to (1,2.25);
\draw[double] (1,2.25) to (1,3);
\draw [very thick,->] (2,1.75) to [out=45,in=270] (2.5,3) to (2.5,4);
\draw [very thick,->] (1,3) to [out=135,in=270] (0.5,4);
\draw [very thick,->] (1,3) to [out=45,in=270] (1.5,4);
\end{tikzpicture}
\right)
\]

For $\glnn{2}$, we get analogous projectors in $\Web$ by replacing the quantum integers $[m]$ and $[m+1]$ in the recursion by the corresponding integers. It is easy to check that the morphisms $P_m$ are idempotent and that they annihilate \textit{turnbacks} if $m\geq 2$:

\[\begin{tikzpicture}[anchorbase, scale=.3]
\fill[black,opacity=.2] (0,1) rectangle (4,3);
\draw[thick] (0,1) rectangle (4,3);
\draw [very thick] (.5,0) to (.5,1);
\draw [thick, dotted] (.7,0.5) to (1.3,.5);
\draw [thick, dotted] (2.7,0.5) to (3.3,.5);
\draw [very thick] (1.5,0) to (1.5,1);
\draw [very thick] (2.5,0) to (2.5,1);
\draw [very thick] (3.5,0) to (3.5,1);
\draw [very thick,->] (.5,3) to (.5,5);
\draw [thick, dotted] (.7,4.25) to (1.7,4.25);
\draw [thick, dotted] (2.3,4.25) to (3.3,4.25);
\draw [very thick] (1.5,3) to [out=90,in=225] (2,4);
\draw [very thick] (2.5,3) to [out=90,in=315] (2,4);
\draw[double, ->] (2,4) to (2,5);
\draw [very thick,->] (3.5,3) to (3.5,5);
\node at (2,1.9) {$P_{m}$};
\end{tikzpicture}
\quad = \quad 
0
\quad = \quad
\begin{tikzpicture}[anchorbase, scale=.3]
\fill[black,opacity=.2] (0,1) rectangle (4,3);
\draw[thick] (0,1) rectangle (4,3);
\draw [very thick] (.5,-1) to (.5,1);
\draw [thick, dotted] (.7,-0.5) to (1.7,-.5);
\draw [thick, dotted] (2.3,-0.5) to (3.3,-.5);
\draw[double] (2,-1) to (2,0);
\draw [very thick] (2,0) to [out=135,in=270] (1.5,1);
\draw [very thick] (2,0) to [out=45,in=270] (2.5,1);
\draw [very thick] (3.5,-1) to (3.5,1);
\draw [very thick,->] (.5,3) to (.5,4);
\draw [thick, dotted] (.7,3.25) to (1.3,3.25);
\draw [thick, dotted] (2.7,3.25) to (3.3,3.25);
\draw [very thick,->] (1.5,3) to (1.5,4);
\draw [very thick,->] (2.5,3) to(2.5,4);
\draw [very thick,->] (3.5,3) to (3.5,4);
\node at (2,1.9) {$P_{m}$};
\end{tikzpicture}
\]

\begin{remark}
\label{rem:JWprop}
Checking idempotency and turnback annihilation property of the Jones--Wenzl projectors only requires the use of the following web relations:
\begin{equation}
\label{eqn:relevantwebrel}
\begin{tikzpicture}[anchorbase, scale=.5]
\draw [double] (.5,0) -- (.5,.3);
\draw [very thick] (.5,.3) .. controls (.4,.35) and (0,.6) .. (0,1) .. controls (0,1.4) and (.4,1.65) .. (.5,1.7);
\draw [very thick] (.5,.3) .. controls (.6,.35) and (1,.6) .. (1,1) .. controls (1,1.4) and (.6,1.65) .. (.5,1.7);
\draw [double, ->] (.5,1.7) -- (.5,2);
\end{tikzpicture}
\quad= \quad
2\;
\begin{tikzpicture}[anchorbase, scale=.5]
\draw [double,->] (.5,0) -- (.5,2);
\end{tikzpicture}
\quad, \quad
\begin{tikzpicture}[anchorbase,scale=.5]
\draw [double] (0,0) -- (0,0.5);
\draw [very thick] (1,0) -- (1,.7);
\draw [very thick] (0,0.5) -- (1,.7);
\draw [double] (1,.7) -- (1,1.3);
\draw [very thick] (0,.5) -- (0,1.5);
\draw [very thick] (1,1.3) -- (0,1.5);
\draw [double,->] (0,1.5) -- (0,2);
\draw [very thick, ->] (1,1.3) -- (1,2);
\end{tikzpicture}
\quad = \quad
\begin{tikzpicture}[anchorbase,scale=.5]
\draw [double,->] (0,0) -- (0,2);
\draw [very thick,->] (1,0) -- (1,2);
\end{tikzpicture}
\quad,\quad
\begin{tikzpicture}[anchorbase,scale=.5]
\draw [double] (1,0) -- (1,0.5);
\draw [very thick] (0,0) -- (0,.7);
\draw [very thick] (1,0.5) -- (0,.7);
\draw [double] (0,.7) -- (0,1.3);
\draw [very thick] (1,.5) -- (1,1.5);
\draw [very thick] (0,1.3) -- (1,1.5);
\draw [double,->] (1,1.5) -- (1,2);
\draw [very thick, ->] (0,1.3) -- (0,2);
\end{tikzpicture}
\quad = \quad
\begin{tikzpicture}[anchorbase,scale=.5]
\draw [double,->] (1,0) -- (1,2);
\draw [very thick,->] (0,0) -- (0,2);
\end{tikzpicture}
\end{equation}
\end{remark}

The key to obtaining idempotent foams from Jones--Wenzl projectors is the following well-known  fact about foams in the thickened annulus $\A\times[0,1]$. 

\begin{lemma}
\label{lem:WebtoAfoam}
There exists a well-defined functor $\Web\to \Afoam$, which sends webs $W$ to the annular rotation foams $W\times \Ss^1$. 
\end{lemma}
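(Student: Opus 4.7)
The plan is to define the functor on objects by sending a sequence $\mathbf{a}$ in $\{1,2,1^*,2^*\}$ to the annular web $\mathbf{a}\times \Ss^1$ consisting of concentric oriented circles in $\A$ with labels and orientations prescribed by $\mathbf{a}$. On morphisms, a web $W\colon \mathbf{a}\to \mathbf{b}$ in a horizontal strip is sent to its rotation foam $W\times \Ss^1$ properly embedded in $\A\times[0,1]$: each edge of $W$ rotates to an annular facet, each trivalent vertex rotates to a seam circle joining a $2$-labeled facet with two $1$-labeled facets, and the web's orientations induce the required orientations on facets and seams. Functoriality with respect to composition is immediate, since vertical stacking of webs in the strip commutes with the rotation operation.

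The main task is to verify that this assignment respects the defining relations of $\Web$, i.e.\ the $\glnn{2}$ web relations \eqref{eqn:circles}--\eqref{eqn:squares} specialized to $q=1$. For each such local relation $W_1=c\cdot W_2$, supported in some disk $D$ in the strip, the two rotation foams differ only inside the solid torus $D\times \Ss^1\subset \A\times[0,1]$, and it suffices to check the equality $W_1\times \Ss^1 = c\cdot W_2\times \Ss^1$ in $\Afoam$ via closed foam evaluations supported in that solid torus. Concretely: the $1$-labeled circle relation rotates to a compressible $1$-labeled torus, which via Lemma~\ref{lem:neckcut2} (along a compression disk inside the solid torus it bounds) reduces to a sum of two dotted spheres, each evaluating to $1$ by \eqref{sl2closedfoam}, giving the required scalar $2$. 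The $2$-labeled circle rotates to a compressible $2$-labeled torus, which by \eqref{sl2neckcutting_enh_2lab} and \eqref{2labeledsphere} evaluates to $(-1)\cdot(-1)=1$. The digon relations \eqref{eqn:digons} and the square/bigon relations \eqref{eqn:squares} rotate to foam identities that can be checked by applying Blanchet's local moves \eqref{sl2Fig5Blanchet_1}--\eqref{sl2Fig5Blanchet_2} inside the solid torus and then reducing any remaining closed $1$- and $2$-labeled components via iterated neck-cutting and sphere evaluations.

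The main obstacle is the careful bookkeeping of orientations on facets and seams: rotating an oriented web produces a foam whose facet and seam orientations must match the conventions in the foam relations of Section~\ref{sec:foams}, and one must verify in each case that the compression disks for the rotated tori and the local reduction moves are compatible with those induced orientations. Once this is done, each verification reduces to a finite check of closed foam evaluations inside an embedded ball in the solid torus, which can be evaluated by Lemma~\ref{lem:closedeval}. An alternative, more uniform route is to invoke Queffelec--Rose~\cite{QR2} and Queffelec--Rose--Sartori~\cite{QRS}, who establish the analogous rotation-foam principle for the $\glnn{N}$ foam categories; specializing their constructions to $N=2$ and our conventions yields the desired functor.
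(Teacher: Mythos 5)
Your proposal is correct and takes essentially the same approach as the paper: the paper's proof of this lemma is the single sentence that ``a direct computation verifies that the foam relations imply that rotation foams satisfy the $\glnn{2}$ web relations at $q=1$,'' and your sketch is precisely that computation carried out (circle relations via neck-cutting the compressible rotated tori down to dotted and $2$-labeled spheres, digon and square relations via the local foam moves). The alternative uniform route you mention via Queffelec--Rose and Queffelec--Rose--Sartori is also the one the authors themselves cite in the introduction.
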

\begin{proof} A direct computation verifies that the foam relations from Definition~\ref{def:Tfoam} imply that rotation foams satisfy the $\glnn{2}$ web relations at $q=1$. 
\end{proof}

Any choice of annular neighborhood of an oriented simple closed curve on $\Su$ determines an embedding $\Afoam\to \Sfoam$ and thus a functor $\Web \to \Sfoam$. The images of Jones--Wenzl projectors under such a functor are idempotent foams, and we call them \textit{Jones--Wenzl foams}. 

\begin{example} The second Jones--Wenzl projector $P_2$ spun around the longitude of the torus:
\[
\begin{tikzpicture}[fill opacity=.2,anchorbase,xscale=.7, yscale=0.7]
\torusback{3}{2}{3.5}
\fill [fill=red] (0.625,4.75) to (3.625,4.75) to (3.625,1.25) to (0.625,1.25) to (0.625,4.75);
\fill [fill=red] (0.375,4.25) to (3.375,4.25) to (3.375,0.75) to (.375,0.75) to (0.375,4.25) ;
\draw [red] (0.375,4.25) to (0.375,0.75);
\draw [red] (0.625,4.75) to (0.625,1.25);
\draw [red] (3.375,4.25) to (3.375,0.75);
\draw [red] (3.625,4.75) to (3.625,1.25);
\coordinate (a) at (0.125,-2.75);
\draw[very thick, directed=.5] (.25,3.5)+(a) to ($(3.25,3.5)+(a)$);
\draw[very thick, directed=.5] (.5,4)+(a) to ($((3.5,4)+(a)$);
\coordinate (a) at (0.125,0.75);
\draw[very thick, directed=.5] (.25,3.5)+(a) to ($(3.25,3.5)+(a)$);
\draw[very thick, directed=.5] (.5,4)+(a) to ($(3.5,4)+(a)$);
\torusfront{3}{2}{3.5}
\end{tikzpicture}
\quad - \quad
\frac{1}{2}\;
\begin{tikzpicture}[fill opacity=.2,anchorbase,xscale=.7, yscale=0.7]
\torusback{3}{2}{3.5}
\draw [very thick, red, directed=.55] (.5,2.25) to (3.5,2.25);
\draw [very thick, red, directed=.55] (3.5,3.25) to  (.5,3.25);
\fill [fill=yellow] (3.5,3.25) to  (.5,3.25) to (.5,2.25) to (3.5,2.25) to (3.5,3.25) ;
\fill [fill=red] (3.5,3.25) to (.5,3.25) to [out=75,in=270] (0.625,4.75) to (3.625,4.75) to [out=270,in=75] (3.5,3.25);
\fill [fill=red] (3.5,3.25) to (.5,3.25) to [out=105,in=270] (0.375,4.25) to (3.375,4.25) to [out=270,in=105] (3.5,3.25);
\fill [fill=red] (3.5,2.25) to (.5,2.25) to [out=255,in=90]  (.375,0.75) to (3.375,0.75) to [out=90,in=255] (3.5,2.25);
\fill [fill=red] (3.5,2.25) to (.5,2.25) to [out=285,in=90]  (.625,1.25) to (3.625,1.25) to [out=90,in=285] (3.5,2.25);
\draw [red] (0.375,4.25) to [out=270,in=105] (.5,3.25);
\draw [red] (0.625,4.75) to [out=270,in=75] (.5,3.25);
\draw [red] (3.625,4.75) to [out=270,in=75] (3.5,3.25);
\draw [red] (3.375,4.25) to [out=270,in=105] (3.5,3.25);
\draw [red] (3.5,3.25) to (3.5,2.25);
\draw [red] (0.5,3.25) to (0.5,2.25);
\draw [red] (.625,1.25) to [out=90,in=285] (.5,2.25);
\draw [red] (.375,0.75) to [out=90,in=255] (.5,2.25);
\draw [red] (3.375,0.75) to [out=90,in=255] (3.5,2.25);
\draw [red] (3.625,1.25) to [out=90,in=285] (3.5,2.25);
\coordinate (a) at (0.125,-2.75);
\draw[very thick, directed=.5] (.25,3.5)+(a) to ($(3.25,3.5)+(a)$);
\draw[very thick, directed=.5] (.5,4)+(a) to ($((3.5,4)+(a)$);
\coordinate (a) at (0.125,0.75);
\draw[very thick, directed=.5] (.25,3.5)+(a) to ($(3.25,3.5)+(a)$);
\draw[very thick, directed=.5] (.5,4)+(a) to ($(3.5,4)+(a)$);
\torusfront{3}{2}{3.5}
\end{tikzpicture}
\]
\end{example} 
Given several disjoint oriented simple closed curves on $\Su$, we can simultaneously embed one spun Jones--Wenzl foam per curve. 
\begin{definition}
Let $L=\{(C_i,n_i)\}\in \cal{L}(\Su)$ be an oriented integer lamination on $\Su$ (see Definition~\ref{def:intlam}), then we denote by $L^F_S$ the idempotent foam in $\Sfoam$ obtained by simultaneously embedding the spun Jones--Wenzl projectors $P_{n_i}\times S^1$ in $\A\times [0,1]$ along the oriented simple closed curves $C_i$ into $\Su\times [0,1]$. 
\end{definition} 
The foams $L^F_S$ are designed to decategorify to the basis elements $L_S\in \Su\basisJW$ (we will see that they do so in Lemma~\ref{lem:JWdecat}). However, a generic basis element in $\Su\basisJW$ is of the form $L_S*\w{x}$ where $x$ is a multi-curve on $\Su$ representing a certain first homology class of $\Su$. We also need idempotent foams $L^F_S*\w{x}$ corresponding to such basis elements $L_S*\w{x}$. In Section~\ref{sec:stwo} we will describe a general framework for how superposition with $2$-labeled multi-curves give auto-equivalences $-*\w{x}$ of the foam categories $\Sfoam$, which allow $L^F_S*\w{x}$ to be obtained from $L^F_S$. Since this framework is best explained after introducing the Khovanov functors in Section~\ref{sec:BKh} and since it depends on an additional Functoriality Conjecture~\ref{conj:functoriality}, we will here give a self-contained description of the idempotent foams $L^F_S*\w{x}$.

Recall that the Jones--Wenzl projectors $P_m$ can be written as linear combinations of webs built from the following elementary webs:
\[\id:=\begin{tikzpicture}[anchorbase, scale=.3]
\draw [very thick,->] (.5,0) to (.5,2);
\draw [very thick,->] (1.5,0) to (1.5,2);
\end{tikzpicture}
\quad,\quad
\id_2:=\begin{tikzpicture}[anchorbase, scale=.3]
\draw [double,->] (.5,0) to (.5,2);
\end{tikzpicture}
\quad,\quad
M:=
\begin{tikzpicture}[anchorbase, scale=.3]
\draw [very thick] (.5,0) to [out=90,in=225] (1,1);
\draw [very thick] (1.5,0) to [out=90,in=315] (1,1);
\draw[double,->] (1,1) to (1,2);
\end{tikzpicture}
\quad,\quad
S:=
\begin{tikzpicture}[anchorbase, scale=.3]
\draw[double] (1,2) to (1,3);
\draw [very thick,->] (1,3) to [out=135,in=270] (.5,4);
\draw [very thick,->] (1,3) to [out=45,in=270] (1.5,4);
\end{tikzpicture}
\]
Here $M$ stands for merge and $S$ for split. We define superposed rotation foams locally via:
\begin{align*}
(\id\times S^1)*\w{x}:=&\;
  \begin{tikzpicture}[anchorbase,scale=.7, fill opacity=.2]
\fill[red] (.375,.75)  to (.375,3.25)  to  (3.375,3.25) to (3.375,.75) to (.375,.75);
\fill[red] (.625,1.25)  to (.625,3.75)  to (3.625,3.75) to (3.625,1.25) to (.625,1.25);
\fill[yellow] (1.5,0) to [out=75,in=255] (2.625,.75) to (2.625,3.25) to [out=255,in=75] (1.5,2.5) to (1.5,0); 
\fill[yellow] (1.625,.75) to [out=75,in=255] (2.375,1.25) to (2.375,3.75)  to [out=255,in=75] (1.625,3.25) to (1.625,.75); 
\fill[yellow] (1.375,1.25) to [out=75,in=255] (2.5,2) to (2.5,4.5)   to [out=255,in=75] (1.375,3.75) to (1.375,1.25); 
\draw (.375,.75)  to (.375,3.25); 
\draw (3.375,.75)  to (3.375,3.25); 
\draw (.625,1.25)  to (.625,3.75); 
\draw (3.625,1.25)  to (3.625,3.75); 
\draw (2.5,2) to (2.5,4.5) ;
\draw (1.5,2.5) to (1.5,0);
\draw[very thick, directed=.22, rdirected=.55,directed=.85] (.375,.75) to (3.375,.75);
\draw[very thick, directed=.22, rdirected=.52,directed=.85] (.625,1.25) to (3.625,1.25);
\draw[double] (1.5,0) to [out=75,in=255] (2.625,.75);
\draw[double] (1.625,.75) to [out=75,in=255] (2.375,1.25);
\draw[double,->] (1.375,1.25) to [out=75,in=255] (2.5,2);
\draw[red, very thick, directed=.55] (2.625,.75) to (2.625,3.25); 
\draw[red, very thick, rdirected=.55] (1.625,0.75) to (1.625,3.25);
\draw[red, very thick, directed=.55] (2.375,1.25) to (2.375,3.75);  
\draw[red, very thick, rdirected=.55] (1.375,1.25) to (1.375,3.75);  
\draw[very thick, directed=.2, rdirected=.55,directed=.85] (.375,3.25) to (3.375,3.25);
\draw[very thick, directed=.2, rdirected=.52,directed=.85] (.625,3.75) to (3.625,3.75);
\draw[double] (1.5,2.5) to [out=75,in=255] (2.625,3.25);
\draw[double] (1.625,3.25) to [out=75,in=255] (2.375,3.75);
\draw[double,->] (1.375,3.75) to [out=75,in=255] (2.5,4.5);
  \end{tikzpicture}
\quad,\quad
(\id_2\times S^1)*\w{x}:=\; 
  \begin{tikzpicture}[anchorbase,scale=.7, fill opacity=.2]
\fill[yellow] (1.5,0) to [out=75,in=180] (3.5,1) to (3.5,3.5) to [out=180,in=75] (1.5,2.5) to (1.5,0) ; 
\fill[yellow] (.5,1) to [out=0,in=255] (2.5,2) to (2.5,4.5) to [out=255,in=0] (.5,3.5) to (.5,1); 
\draw (3.5,1) to (3.5,3.5); 
\draw (.5,1)  to (.5,3.5); 
\draw (2.5,2) to (2.5,4.5) ;
\draw (1.5,2.5) to (1.5,0);
\draw[double,->] (1.5,0) to [out=75,in=180] (3.5,1);
\draw[double,->] (.5,1) to [out=0,in=255] (2.5,2); 
\draw[double,->] (1.5,2.5) to [out=75,in=180] (3.5,3.5);
\draw[double,->] (.5,3.5) to [out=0,in=255] (2.5,4.5);
  \end{tikzpicture}
\\
(M\times S^1)*\w{x}:=& \;
  \begin{tikzpicture}[anchorbase,scale=.7, fill opacity=.2]
\fill[red] (.375,.75)  to (.375,2.5)  to [out=90,in=240](.5,3) to [out=0,in=180] (2,2.5) to [out=0,in=180] (3.5,3) to [out=240,in=90] (3.375,2.5) to (3.375,.75)  to (.375,.75) ;
\fill[red] (.625,1.25)to (.625,2)  to [out=90,in=300](.5,3) to [out=0,in=180] (2,2.5) to [out=0,in=180] (3.5,3) to [out=300,in=90] (3.625,2) to (3.625,1.25) to (.625,1.25) ;
\fill[yellow] (1.5,0) to [out=75,in=255] (2.625,.75) to (2.625,1.5) to [out=90,in=180] (3.5,3) to (3.5,3.5) to [out=180,in=75] (1.5,2.5) to (1.5,0); 
\fill[yellow] (2.375,1.25) to [out=255,in=75]  (1.625,0.75) to (1.625,1.25) to [out=90,in=180] (2,2.5) to [out=0,in=90] (2.375,1.75) to (2.375,1.25); 
\fill[yellow] (1.375,1.25) to [out=75,in=255] (2.5,2) to (2.5,4.5)  to [out=255,in=0] (.5,3.5) to (.5,3) to [out=0,in=90] (1.375,2) to (1.375,1.25); 
\draw (.375,.75) to (.375,2.5)  to [out=90,in=240](.5,3) to (.5,3.5); 
\draw (3.375,.75) to (3.375,2.5)  to [out=90,in=240](3.5,3) to (3.5,3.5); 
\draw (.625,1.25) to (.625,2)  to [out=90,in=300](.5,3); 
\draw (3.625,1.25) to (3.625,2) to [out=90,in=300](3.5,3); 
\draw (2.5,2) to (2.5,4.5) ;
\draw (1.5,2.5) to (1.5,0);
\draw[very thick, directed=.2, rdirected=.55,directed=.85] (.375,.75) to (3.375,.75);
\draw[very thick, directed=.2, rdirected=.52,directed=.85] (.625,1.25) to (3.625,1.25);
\draw[double] (1.5,0) to [out=75,in=255] (2.625,.75);
\draw[double] (1.625,.75) to [out=75,in=255] (2.375,1.25);
\draw[double,->] (1.375,1.25) to [out=75,in=255] (2.5,2);
\draw[red, very thick, directed=.55] (2.625,.75) to (2.625,1.5) to [out=90,in=180] (3.5,3); 
\draw[red, very thick, rdirected=.35] (1.625,0.75) to (1.625,1.25) to [out=90,in=180] (2,2.5) to [out=0,in=90] (2.375,1.75) to (2.375,1.25);
\draw[red, very thick, directed=.55] (.5,3) to [out=0,in=90] (1.375,2) to (1.375,1.25);  
\draw[double,->] (1.5,2.5) to [out=75,in=180] (3.5,3.5);
\draw[double,->] (.5,3.5) to [out=0,in=255] (2.5,4.5);
  \end{tikzpicture}
  \;
  \colon
  \quad 
  \begin{tikzpicture}[anchorbase,scale=.5]
  \draw[very thick, directed=.22, rdirected=.55,directed=.85] (0,1) to (3,1);
  \draw[very thick, directed=.22, rdirected=.55,directed=.85] (0,2) to (3,2);
  \draw[double] (1.5,0) to [out=90,in=270] (2,1);
\draw[double] (1,1) to [out=90,in=270] (2,2);
\draw[double,->] (1,2) to [out=90,in=270] (1.5,3);
  \end{tikzpicture}
  \xrightarrow{\textrm{unzip}}
  \begin{tikzpicture}[anchorbase,scale=.5]
  \draw[very thick, directed=.22, rdirected=.55,directed=.85] (0,1) to (0.8,1) (1.2,1) to (3,1);
  \draw[very thick, directed=.22, rdirected=.55,directed=.85] (0,2) to (1.8,2) (2.2,2) to(3,2);
  \draw[double] (1.5,0) to [out=90,in=270] (2,1);
\draw[very thick] (.7,1) to (.8,1) to [out=90,in=270] (1.8,2) to (1.7,2);
\draw[very thick] (1.3,1) to(1.2,1) to [out=90,in=270] (2.2,2) to (2.1,2);
\draw[double,->] (1,2) to [out=90,in=270] (1.5,3);
  \end{tikzpicture}
  \xrightarrow{\textrm{non-local digon closure}}
  \begin{tikzpicture}[anchorbase,scale=.5]
  \draw[double,->] (1.5,0) to [out=90,in=180] (3,1.5);
\draw[double,->] (0,1.5) to [out=00,in=270] (1.5,3);
  \end{tikzpicture}
  \\
(S\times S^1)*\w{x}:=-&\; 
  \begin{tikzpicture}[anchorbase,xscale=-.7,yscale=-.7, fill opacity=.2]
\fill[red] (.375,.75)  to (.375,2.5)  to [out=90,in=240](.5,3) to [out=0,in=180] (2,2.5) to [out=0,in=180] (3.5,3) to [out=240,in=90] (3.375,2.5) to (3.375,.75)  to (.375,.75) ;
\fill[red] (.625,1.25)to (.625,2)  to [out=90,in=300](.5,3) to [out=0,in=180] (2,2.5) to [out=0,in=180] (3.5,3) to [out=300,in=90] (3.625,2) to (3.625,1.25) to (.625,1.25) ;
\fill[yellow] (1.5,0) to [out=75,in=255] (2.625,.75) to (2.625,1.5) to [out=90,in=180] (3.5,3) to (3.5,3.5) to [out=180,in=75] (1.5,2.5) to (1.5,0); 
\fill[yellow] (2.375,1.25) to [out=255,in=75]  (1.625,0.75) to (1.625,1.25) to [out=90,in=180] (2,2.5) to [out=0,in=90] (2.375,1.75) to (2.375,1.25); 
\fill[yellow] (1.375,1.25) to [out=75,in=255] (2.5,2) to (2.5,4.5)  to [out=255,in=0] (.5,3.5) to (.5,3) to [out=0,in=90] (1.375,2) to (1.375,1.25); 
\draw (.375,.75) to (.375,2.5)  to [out=90,in=240](.5,3) to (.5,3.5); 
\draw (3.375,.75) to (3.375,2.5)  to [out=90,in=240](3.5,3) to (3.5,3.5); 
\draw (.625,1.25) to (.625,2)  to [out=90,in=300](.5,3); 
\draw (3.625,1.25) to (3.625,2) to [out=90,in=300](3.5,3); 
\draw (2.5,2) to (2.5,4.5) ;
\draw (1.5,2.5) to (1.5,0);
\draw[double,<-] (1.5,2.5) to [out=75,in=180] (3.5,3.5);
\draw[double,<-] (.5,3.5) to [out=0,in=255] (2.5,4.5);
\draw[red, very thick, directed=.55] (2.625,.75) to (2.625,1.5) to [out=90,in=180] (3.5,3); 
\draw[red, very thick, rdirected=.35] (1.625,0.75) to (1.625,1.25) to [out=90,in=180] (2,2.5) to [out=0,in=90] (2.375,1.75) to (2.375,1.25);
\draw[red, very thick, directed=.55] (.5,3) to [out=0,in=90] (1.375,2) to (1.375,1.25); 
\draw[very thick, rdirected=.2, directed=.55,rdirected=.85] (.375,.75) to (3.375,.75);
\draw[very thick, rdirected=.2, directed=.52,rdirected=.85] (.625,1.25) to (3.625,1.25);
\draw[double,<-] (1.5,0) to [out=75,in=255] (2.625,.75);
\draw[double] (1.625,.75) to [out=75,in=255] (2.375,1.25);
\draw[double] (1.375,1.25) to [out=75,in=255] (2.5,2); 
  \end{tikzpicture}
  \;
  \colon
  \quad 
  \begin{tikzpicture}[anchorbase,scale=.5]
  \draw[double,->] (1.5,0) to [out=90,in=180] (3,1.5);
\draw[double,->] (0,1.5) to [out=00,in=270] (1.5,3);
  \end{tikzpicture}
  \xrightarrow{\textrm{non-local digon opening}}
  \begin{tikzpicture}[anchorbase,scale=.5]
  \draw[very thick, directed=.22, rdirected=.55,directed=.85] (0,1) to (0.8,1) (1.2,1) to (3,1);
  \draw[very thick, directed=.22, rdirected=.55,directed=.85] (0,2) to (1.8,2) (2.2,2) to(3,2);
  \draw[double] (1.5,0) to [out=90,in=270] (2,1);
\draw[very thick] (.7,1) to (.8,1) to [out=90,in=270] (1.8,2) to (1.7,2);
\draw[very thick] (1.3,1) to(1.2,1) to [out=90,in=270] (2.2,2) to (2.1,2);
\draw[double,->] (1,2) to [out=90,in=270] (1.5,3);
  \end{tikzpicture}
   \xrightarrow{-\textrm{zip}}
  \begin{tikzpicture}[anchorbase,scale=.5]
  \draw[very thick, directed=.22, rdirected=.55,directed=.85] (0,1) to (3,1);
  \draw[very thick, directed=.22, rdirected=.55,directed=.85] (0,2) to (3,2);
  \draw[double] (1.5,0) to [out=90,in=270] (2,1);
\draw[double] (1,1) to [out=90,in=270] (2,2);
\draw[double,->] (1,2) to [out=90,in=270] (1.5,3);
  \end{tikzpicture}
 \end{align*}
These local models (and their back-to-front reflections) are used wherever the $2$-labeled curve $\w{x}$ has an intersection with the annulus $\A\subset \Su$, along which the spun webs $\id\times S^1$, $M\times S^1$ and $S\times S^1$ are embedded. Away from these intersections, we do not change the spun foam, but we add vertical $2$-labeled sheets over $\w{x}$. 

There are also obvious choices for the local replacements in the case of identity webs with more than two strands and for split and merge vertices with several parallel strands added. Any web can be written as the composition $W=W_m\circ \cdots \circ W_2 \circ W_1 $ of such pieces $W_i$ and we now define $(W\times S^1)*\w{x}$ to be the foam obtained from the rotation foam $W\times S^1$ by the appropriate composition of local replacements  

\begin{remark} The minus sign in the local replacement for $(S\times S^1)*\w{x}$ may appear arbitrary to the reader. This is because it is indeed arbitrary, although not unmotivated. In Section~\ref{sec:stwo} we will explain how superposition $-*\w{x}$ induces an auto-equivalence of $\Sfoam$, which is defined via the Khovanov functor. In this framework, the local replacements for $(M\times S^1)*\w{x}$ and $(S\times S^1)*\w{x}$ are induced by mutually inverse fork-slide moves (see e.g. the first isomorphism in Lemma~\ref{lem:forksliding2}). In order for the fork-slide foams to be inverse to each other, one has to carry a sign. Which one carries a sign depends on an ordering of the resolutions of crossings between $1$- and $2$-labeled edges. In the following computations concerning superposed rotation foams, this arbitrary choice is irrelevant, as long as it is chosen consistently on the equal domain and target webs.
\end{remark}

\begin{definition}\label{def:JWfoam}
Let $L=\{(C_i,n_i)\}\in \cal{L}(\Su)$ be an oriented integer lamination on $\Su$ and $\w{x}$ a $2$-labeled multi-curve on $\Su$ transverse to all $C_i$, which minimizes the intersection number with all $C_i$ in its homology class. Then we define $L^F_S*\w{x}$ to be the foam in $\Sfoam$ obtained from $L^F_S$ by local replacements as described above for every intersection between $C_i$ and $\w{x}$. We call them \emph{Jones--Wenzl basis foams}.
\end{definition} 

\begin{definition} A foam $F\colon W_1 \to W_2$ in $\Sfoamred$ is said to \emph{have a turnback at $W_i$} if $c(F)$ contains a boundary-parallel annulus with both boundary circles on $c(W_i)$. It is said have a \emph{standard turnback} if $F=F^\prime \circ F_S$ (or $F=F_M\circ F^\prime$), where $F_S$ is locally modeled on a superposed split foam $(S\times S^1)*\w{x}$ near the above-mentioned boundary circles, and on the identity foam elsewhere (similarly, $F_M$ is modeled on a merge foam). We say a morphism $G\colon W_1 \to W_2$ in $\Sfoamred$ \emph{annihilates turnbacks} if $F_2\circ G = 0 =G \circ F_1$ whenever $F_i$ has a turnback at $W_i$.   
\end{definition}

\begin{lemma}\label{lem:turnbacks} Let $W$ be the source web of one of the Jones--Wenzl basis foams $L^F_S*\w{x}$ from Definition~\ref{def:JWfoam}. Then any foam $F$ which has a turnback at $W$ can be expressed as a linear combination of foams having standard turnbacks at $W$.
\end{lemma}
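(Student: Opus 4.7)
I would first localize the problem. Since $F$ has a turnback at $W$, the underlying surface $c(F)$ contains a boundary-parallel annulus $A$ with both boundary circles on $c(W)$, and by a small isotopy we may assume $A$ lies in a thin collar $\Su\times[0,\epsilon]$ of the source. The two boundary circles of $A$ are two of the parallel $1$-labeled copies of some essential simple closed curve $C_i$ appearing in the lamination $L$; indeed, the source $W$ of $L^F_S*\w{x}$ consists precisely of the $n_i$ parallel $1$-labeled copies of each $C_i$ together with the $2$-labeled arcs from $\w{x}$ crossing them in the minimal way prescribed by Definition~\ref{def:JWfoam}.

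Next, I would analyze the interaction of $A$ with the rest of $F$ within this collar. The only facets of $F$ adjacent to $A$ in $\Su\times[0,\epsilon]$ are $2$-labeled sheets: either vertical sheets continuing the $2$-labeled arcs of $\w{x}$ upward from $W$, or $2$-labeled facets introduced by $F$ itself whose seams meet $A$. Horizontal cross-sections $F\cap(\Su\times\{t\})$ are then webs to which the isomorphisms collected in Lemma~\ref{lem:webisos} apply. I would use a one-parameter family of these web isomorphisms, realized by foam isomorphisms in the collar, to normalize the configuration: digon and square relations let me straighten each vertical $2$-labeled sheet coming from $\w{x}$ so that its interaction with $A$ is trivial, and the remaining $2$-labeled interactions arising from seams of $F$ can be pushed into a small neighborhood of a single point of $A$ near the bottom boundary.

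The heart of the argument is then to apply the generalized neck-cutting relation of Lemma~\ref{lem:neckcut2} to a compression disk for $A$ chosen close to the bottom boundary $W$. This expresses $F$ as a linear combination of foams in which the annulus $A$ has been replaced by two disks, each capping one of the boundary circles, together with an extra dot. A further application of Lemma~\ref{lem:webisos} (in particular the digon collapse) combined with the isotopy of the two resulting dotted caps into a merge-then-cap configuration identifies each term with $F'\circ F_S$ (or, dually, $F_M\circ F'$), where $F_S$ (resp.\ $F_M$) locally agrees with the superposed split foam $(S\times S^1)*\w{x}$ (resp.\ merge foam $(M\times S^1)*\w{x}$) of Section~\ref{sec:JWFoam} and is the identity elsewhere. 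This is exactly the standard turnback form.

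The main obstacle will be the bookkeeping of all possible local interactions of $A$ with the $2$-labeled facets: the superposed $\w{x}$ can cross the annular neighborhood of $C_i$ several times, and $F$ may introduce further $2$-labeled seams whose type needs to be reduced by Lemma~\ref{lem:webisos}. The minimality of the intersection of $\w{x}$ with each $C_i$ fixed in Definition~\ref{def:JWfoam} is essential here, since it restricts the configurations of vertical $2$-labeled sheets one has to consider and ensures that the normal form produced at the end genuinely matches the local model of the standard turnback, signs and dots included.
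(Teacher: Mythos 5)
There is a genuine gap at the heart of your argument: you propose to ``apply the generalized neck-cutting relation of Lemma~\ref{lem:neckcut2} to a compression disk for $A$,'' but the turnback annulus $A$ admits no compression disk. Its two boundary circles are parallel copies of an \emph{essential} simple closed curve $C_i$ of the lamination, so the core of $A$ is essential in $\Su$ and does not bound a disk in $\Su\times[0,1]$; every essential simple closed curve on $A$ is isotopic to that core, so $A$ is incompressible. For the same reason your preliminary normalization cannot succeed: you cannot isotope and simplify $F$ so that $A$ meets the $2$-labeled facets trivially. The two boundary circles of $A$ are coherently oriented (both carry the orientation of $C_i$ prescribed by $L$), whereas the facet orientations of an annulus disjoint from all seams would induce opposite orientations on its two boundary circles. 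The orientation can only flip across seams, so any path in $A$ from one boundary circle to the other must cross an \emph{odd} number of seams --- the $2$-labeled interaction is unavoidable, and in fact it is the key resource.

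The paper's proof exploits exactly this: it picks points $p_1\in c_1$, $p_2\in c_2$ and paths $\pi_1,\pi_2$ on the $1$-labeled facets meeting at a seam, reduces the seam count of $\pi_2^{-1}\circ\pi_1$ to one by the algorithm of Lemma~\ref{lem:neckcut}, and then takes as compression disk (for the whole surface $c(F)$, not for $A$ alone) a push-off of the \emph{composite} cycle $c_2^{-1}\circ\pi_2^{-1}\circ\pi_1\circ c_1$, which is null-homologous because it is essentially the boundary-connected sum of $c_1$ and $c_2^{-1}$. Neck-cutting along this disk via Lemma~\ref{lem:neckcut2} produces two dotted terms; one dies because its underlying surface contains an undotted sphere, and the other contributes a dotted sphere (hence a sign) together with the standard turnback modeled on $(S\times S^1)*\w{x}$ or $(M\times S^1)*\w{x}$. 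To repair your argument you would need to (i) add the seam-parity/orientation argument, (ii) replace your nonexistent compression disk for $A$ by this composite cycle, and (iii) explain why exactly one of the two neck-cutting summands survives.
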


\begin{proof}
Consider the two circles $c_1,c_2$ in $c(W)$ which are connected by the turnback annulus in $c(F)$. We choose two points $p_1\in c_1$ and $p_2\in c_2$, such that corresponding $1$-labeled edges in $W$ have the orientation prescribed by the oriented integer lamination $L$. By assumption there exist paths $\pi_i$ on the $1$-labeled facets of $F$, which connect the points $p_i$ to a point on a seam. Due to orientation reasons, the composite path $\pi=\pi_2^{-1}\circ \pi_1$ passes an odd number of seams. By applying foam relations along a strategy analogous to the one employed in the proof of Lemma~\ref{lem:neckcut}, we may assume this number of seams to be one. The cycle $c$ resulting from a small push-off of $c_2^{-1} \circ\pi_2^{-1}\circ \pi_1\circ c_1$ bounds a compression disk and we perform neck-cutting along it using Lemma~\ref{lem:neckcut2}. We illustrate a simple case, in which we focus only on the neighborhood of $c_1$ and $c_2$ in the bottom web and suppress dots arising from neck-cutting:
\[
  \begin{tikzpicture}[anchorbase,scale=.7, fill opacity=.2]
\annback{3}{2}{1.5}
\fill[red] (.375,.75)  to (.375,2.25)  to [out=0,in=210] (1.5,2.5) to (2.5,2.5)  to [out=330,in=180] (3.375,2.25) to (3.375,.75) to (.375,.75);
\fill[red] (.625,1.25)  to (.625,2.75)  to [out=0,in=150] (1.5,2.5) to (2.5,2.5)  to [out=30,in=180] (3.625,2.75) to (3.625,1.25) to (.625,1.25);
\fill[yellow](1.5,2.5) to [out=270,in=180] (2,2) to [out=0,in=270](2.5,2.5) to (1.5,2.5); 
\draw[red] (.375,.75)  to (.375,2.25); 
\draw[red] (3.375,.75)  to (3.375,2.25); 
\draw[red] (.625,1.25)  to (.625,2.75); 
\draw[red] (3.625,1.25)  to (3.625,2.75); 
\draw[directed=.70] (1.875,.75) to [out=90,in=240] (2,2);
\draw[directed=.55] (2.125,1.25) to [out=90,in=300] (2,2);
\draw (.625,2.45) to [out=0,in=150] (1.55,2.2); 
\draw[directed=.55] (.375,1.95) to [out=0,in=210] (1.55,2.2);
\draw (2.45,2.2) to [out=30,in=180] (3.625,2.45);
\draw[directed=.55] (2.45,2.2) to [out=330,in=180] (3.375,1.95);
\draw[very thick, directed=.55] (.375,.75) to (3.375,.75);
\draw[very thick, directed=.55] (.625,1.25) to (3.625,1.25);
\draw[red, very thick, directed=.55] (1.5,2.5) to [out=270,in=180] (2,2) to [out=0,in=270] (2.5,2.5); 
\draw[very thick, directed=.55] (.625,2.75) to [out=0,in=150] (1.5,2.5); 
\draw[very thick, directed=.55] (.375,2.25) to [out=0,in=210] (1.5,2.5);
\draw[double] (1.5,2.5) to (2.5,2.5);
\draw[very thick, directed=.55] (2.5,2.5) to [out=30,in=180] (3.625,2.75);
\draw[very thick, directed=.55] (2.5,2.5) to [out=330,in=180] (3.375,2.25);
\node[opacity=1] at (3.775,.75) {\tiny $c_1$};
\node[opacity=1] at (4.025,1.25) {\tiny $c_2$};
\node[opacity=1] at (3.8,2.45) {\tiny $c$};
\node[opacity=1] at (1.65,1.75) {\tiny $\pi_1$};
\node[opacity=1] at (2.35,1.75) {\tiny $\pi_2$};
\annfrontd{3}{2}{1.5}
  \end{tikzpicture}
 \quad  \to \quad
  \begin{tikzpicture}[anchorbase,scale=.7, fill opacity=.2]
\annback{3}{2}{1.5}
\fill[red] (.375,.75)  to [out=90,in=240](.5,2) to (3.5,2) to [out=240,in=90] (3.375,.75) to (.375,.75) ;
\fill[red] (.625,1.25)  to [out=90,in=300](.5,2) to (3.5,2) to [out=300,in=90] (3.625,1.25) to (.625,1.25) ;
\fill[yellow](.5,2.5) to (3.5,2.5)to (3.5,2)to (.5,2)to (.5,2.5); 
\draw[red] (.375,.75)  to [out=90,in=240](.5,2) to (.5,2.5); 
\draw[red] (3.375,.75)  to [out=90,in=240](3.5,2) to (3.5,2.5); 
\draw[red] (.625,1.25)  to [out=90,in=300](.5,2); 
\draw[red] (3.625,1.25)  to [out=90,in=300](3.5,2); 
\draw[very thick, directed=.55] (.375,.75) to (3.375,.75);
\draw[very thick, directed=.55] (.625,1.25) to (3.625,1.25);
\draw[red, very thick, directed=.55] (.5,2) to (3.5,2);
\draw[double,directed=.55] (.5,2.5) to (3.5,2.5);
\annfrontd{3}{2}{1.5}
  \end{tikzpicture}
  \]

The result is a linear combination of two foams, both of which have a standard turnback (up to isotopy) and a dot each. In fact, one of the summands has a dot on its turnback annulus, but this summand is equal to zero, because its underlying surface contains an undotted sphere. The remaining summand has a dotted sphere in its underlying surface, which can be removed at the expense of a sign. 
\end{proof}

For the next proposition, we need two foam relations, which follow from the ones in Definition~\ref{def:Tfoam}:
\begin{align} \label{eqn:pinchdisk}
\begin{tikzpicture} [anchorbase,scale=.5,fill opacity=0.2]
\path[fill=red]  (2,5) to [out=90,in=270] (2,1) to [out=180,in=0] (0.25,1) to  (0.25,1.75) to [out=0, in=270] (1,2.5) to [out=90, in=0] (0.25,3.25) to [out=180, in=90] (-0.5,2.5) to [out=270,in=180] (0.25,1.75) to (0.25,1)  to (-2,1) to (-2,5) to (2,5);
\path[fill=red]  (2.5,4) to [out=90,in=270] (2.5,0) to [out=180,in=0] (0.25,0) to  (0.25,1.75) to [out=0, in=270] (1,2.5) to [out=90, in=0] (0.25,3.25) to [out=180, in=90] (-0.5,2.5) to [out=270,in=180] (0.25,1.75) to (0.25,0)  to (-1.5,0) to (-1.5,4) to (2.5,4);
\path[fill=yellow] (0.25,1.75) to [out=0, in=270] (1,2.5) to [out=90, in=0] (0.25,3.25) to [out=180, in=90] (-0.5,2.5) to [out=270,in=180] (0.25,1.75);
	\draw[very thick, directed=.65] (2,1) to [out=180,in=0] (-2,1);
	\draw[very thick, directed=.55] (2.5,0) to [out=180,in=0] (-1.5,0);
	\draw (2,1) to (2,5);
	\draw (2.5,0) to (2.5,4);
	\draw (-1.5,0) to (-1.5,4);
	\draw (-2,1) to (-2,5);	
	\draw[dashed] (2,3) to [out=180,in=45] (1,2.5);
	\draw[dashed] (2.5,2) to [out=180,in=315] (1,2.5);
	\draw[dashed] (1,2.5) to (-.5,2.5);
	\draw[dashed] (-.5,2.5) to [out=225,in=0] (-1.5,2);
	\draw[dashed] (-.5,2.5) to [out=135,in=0] (-2,3);
	\draw[very thick, red, directed=.65] (1,2.5) to [out=270,in=0]  (0.25,1.75) to [out=180, in = 270] (-0.5,2.5) to [out=90, in = 180] (0.25,3.25) to [out=0, in = 90] (1,2.5);
	\draw[very thick, directed=.65] (2,5) to [out=180,in=0] (-2,5);
	\draw[very thick, directed=.55] (2.5,4) to [out=180,in=0] (-1.5,4);
\end{tikzpicture}
\quad&=\quad
\begin{tikzpicture} [anchorbase,scale=.5,fill opacity=0.2]
\path[fill=red]  (2,5) to [out=90,in=270] (2,1) to (-2,1) to (-2,5) to (2,5);
\path[fill=red]  (2.5,4) to [out=90,in=270] (2.5,0)  to (-1.5,0) to (-1.5,4) to (2.5,4);
	\draw[very thick, directed=.65] (2,1) to [out=180,in=0] (-2,1);
	\draw[very thick, directed=.55] (2.5,0) to [out=180,in=0] (-1.5,0);
	\draw (2,1) to (2,5);
	\draw (2.5,0) to (2.5,4);
	\draw (-1.5,0) to (-1.5,4);
	\draw (-2,1) to (-2,5);	
	\draw[dashed] (2,3) to (-2,3);
	\draw[dashed] (2.5,2) to (-1.5,2);
	\draw[very thick, directed=.65] (2,5) to [out=180,in=0] (-2,5);
	\draw[very thick, directed=.55] (2.5,4) to [out=180,in=0] (-1.5,4);
	\node[opacity=1] at (1.75,.5) {$\bullet$};
\end{tikzpicture}
\quad-\quad
\begin{tikzpicture} [anchorbase,scale=.5,fill opacity=0.2]
\path[fill=red]  (2,5) to [out=90,in=270] (2,1) to (-2,1) to (-2,5) to (2,5);
\path[fill=red]  (2.5,4) to [out=90,in=270] (2.5,0)  to (-1.5,0) to (-1.5,4) to (2.5,4);
	\draw[very thick, directed=.65] (2,1) to [out=180,in=0] (-2,1);
	\draw[very thick, directed=.55] (2.5,0) to [out=180,in=0] (-1.5,0);
	\draw (2,1) to (2,5);
	\draw (2.5,0) to (2.5,4);
	\draw (-1.5,0) to (-1.5,4);
	\draw (-2,1) to (-2,5);	
	\draw[dashed] (2,3) to (-2,3);
	\draw[dashed] (2.5,2) to (-1.5,2);
	\draw[very thick, directed=.65] (2,5) to [out=180,in=0] (-2,5);
	\draw[very thick, directed=.55] (2.5,4) to [out=180,in=0] (-1.5,4);
	\node[opacity=1] at (-1,4.5) {$\bullet$};
\end{tikzpicture}
\\
\label{eqn:blister}
\begin{tikzpicture} [anchorbase,scale=.5,fill opacity=0.2]
\node[opacity=1] at (0.1,3.45) {\tiny $b$};
	\node[opacity=1] at (-0.2,3.3) {$\bullet$};
\path[fill=red]  (1,2.5) to [out=90,in=0] (0,4) to [out=180, in=90] (-1,2.5) to [out=270,in=180] (0,1) to [out=0,in=270] (1,2.5) ;
\path[fill=red]  (1,2.5) to [out=90,in=0] (0,4) to [out=180, in=90] (-1,2.5) to [out=270,in=180] (0,1) to [out=0,in=270] (1,2.5) ;
\path[fill=yellow] (2,4.5) to (2,2.5) to (1,2.5) to [out=90,in=0] (0,4) to [out=180, in=90] (-1,2.5) to [out=270,in=180] (0,1) to [out=0,in=270] (1,2.5) to (2, 2.5) to (2,0.5) to (-2 , 0.5) to (-2, 4.5) to (2,4.5);
	\draw[double, directed=.55] (2,0.5) to (-2,0.5);
	\draw[very thick, red, directed=.65] (1,2.5) to [out=90,in=0] (0,4) to [out=180,in=90] (-1,2.5);
	\draw[very thick, red, directed=.65] (-1,2.5) to [out=270,in=180] (0,1) to [out=0,in=270] (1,2.5);
	\draw (2,0.5) to (2,4.5);
	\draw (-2,0.5) to (-2,4.5);
\draw[dashed] (2,2.5) to (1,2.5);
	\draw[dashed] (-1,2.5) to  (-2,2.5);
	\draw[dashed] (1,2.5) to [out=135,in=0] (0,3) to [out=180, in=45] (-1,2.5);
	\draw[dashed] (1,2.5) to [out=225,in=0] (0,2) to [out=180,in=315] (-1,2.5);
	\draw[double, directed=.55] (2,4.5) to (-2,4.5);
	\node[opacity=1] at (0.5,1.65) {\tiny $a$};
	\node[opacity=1] at (0.2,1.5) {$\bullet$};	
\end{tikzpicture}
\quad&= \quad (\delta_{a,1}\delta_{b,0}-\delta_{a,0}\delta_{b,1})\;
\begin{tikzpicture} [anchorbase,scale=.5,fill opacity=0.2]
\path[fill=yellow] (2,4.5) to (2,0.5) to (-2 , 0.5) to (-2, 4.5) to (2,4.5);
	\draw[double, directed=.55] (2,0.5) to (-2,0.5);
	\draw (2,0.5) to (2,4.5);
	\draw (-2,0.5) to (-2,4.5);
        \draw[dashed] (2,2.5) to (1,2.5);
	\draw[dashed] (2,2.5) to  (-2,2.5);
	\draw[double, directed=.55] (2,4.5) to (-2,4.5);
\end{tikzpicture}
\end{align} 

\begin{proposition}\label{prop:idempotent} The morphisms $L^F_S*\w{x}$ in $\Sfoamred$ are idempotent and annihilate turnbacks.
\end{proposition}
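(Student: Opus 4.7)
The strategy rests on two structural inputs: the Jones--Wenzl projectors $P_m$ are idempotent in $\Web$ and annihilate turnbacks, and these properties depend only on the web relations listed in \eqref{eqn:relevantwebrel} by Remark~\ref{rem:JWprop}; second, Lemma~\ref{lem:WebtoAfoam} gives a functor $\Web\to\Afoam$, $W\mapsto W\times S^1$, whose image embeds into $\Sfoam$ via a tubular neighborhood of any oriented simple closed curve.

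\emph{Pure Jones--Wenzl foam case ($\w{x}=\emptyset$).} Because the components $C_i$ of the lamination $L=\{(C_i,n_i)\}$ are disjoint and non-isotopic, the spun projectors $P_{n_i}\times S^1$ are supported in pairwise disjoint annular neighborhoods and compose independently. Applying the functor of Lemma~\ref{lem:WebtoAfoam} to the identity $P_{n_i}\circ P_{n_i}=P_{n_i}$ yields $(P_{n_i}\times S^1)\circ(P_{n_i}\times S^1)=P_{n_i}\times S^1$, and taking the composite over $i$ gives $L^F_S\circ L^F_S=L^F_S$. For turnback annihilation, I first invoke Lemma~\ref{lem:turnbacks} to reduce to the case of a standard turnback $F_S$ at the source web of $L^F_S$. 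Such an $F_S$ is supported in the annular neighborhood of some $C_i$ and is a spun merge; the composite $L^F_S\circ F_S$, interpreted through the same functor, becomes the spin of $P_{n_i}$ post-composed with a turnback, which vanishes because $P_{n_i}$ annihilates turnbacks in $\Web$. The argument for a target turnback is symmetric.

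\emph{General case.} I claim that the assignment $W\mapsto (W\times S^1)*\w{x}$, obtained by composing the local replacements defined before Definition~\ref{def:JWfoam}, extends to a functor $\Phi_{\w{x}}\colon \Web\to \Sfoam$. Granted this, idempotence and turnback annihilation of $L^F_S*\w{x}$ follow as before by applying $\Phi_{\w{x}}$ to the relevant identities for $P_{n_i}$ (Remark~\ref{rem:JWprop}), together with Lemma~\ref{lem:turnbacks} to reduce arbitrary turnbacks to standard ones, which in this setting are precisely of the form $(M\times S^1)*\w{x}$ or $(S\times S^1)*\w{x}$.

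To verify functoriality of $\Phi_{\w{x}}$, by Remark~\ref{rem:JWprop} it suffices to check the three web relations in \eqref{eqn:relevantwebrel} at $q=1$. Away from intersections of $C_i$ with $\w{x}$ these follow from Lemma~\ref{lem:WebtoAfoam} since only vertical $2$-labeled sheets are added. Near an intersection the verification reduces to a bounded list of local identities between superposed merge/split and identity pieces, which are direct consequences of Lemma~\ref{lem:webisos}, the fork-slide-type relations \eqref{sl2Fig5Blanchet_1}--\eqref{sl2Fig5Blanchet_2}, the square-neck relation \eqref{sl2NH_enh}, and the $2$-labeled neck-cutting \eqref{sl2neckcutting_enh_2lab}, combined with the pinch-disk and blister identities \eqref{eqn:pinchdisk}--\eqref{eqn:blister}. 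The minus sign in the definition of $(S\times S^1)*\w{x}$ is chosen precisely so that the two square relations in \eqref{eqn:relevantwebrel} hold simultaneously with consistent signs across all local configurations.

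\emph{Main obstacle.} The genuine work is the functoriality verification in the last paragraph: one must check, in each local model around an intersection $C_i\cap\w{x}$, that the two ways of reading a composition---as a superposed spun merge-then-split versus an identity, and as the two orientations of the square-face swap---produce equal foams in $\Sfoam$. The calculations are routine applications of the Blanchet relations but demand careful bookkeeping of orientations and signs; once they are in place, everything else reduces formally to the $\Web$-level statements for $P_m$.
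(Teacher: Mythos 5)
Your overall strategy is the same as the paper's: reduce everything to the statement that the relations \eqref{eqn:relevantwebrel} continue to hold for the superposed rotation foams, then import idempotence and turnback annihilation from the Jones--Wenzl projectors via Lemma~\ref{lem:WebtoAfoam}, Remark~\ref{rem:JWprop} and Lemma~\ref{lem:turnbacks}. You have correctly identified all the ingredients, including the pinch-disk and blister identities \eqref{eqn:pinchdisk}--\eqref{eqn:blister}.

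However, the heart of the proof is exactly the step you defer, and your sketch of how it would go is not right. You claim the verification ``reduces to a bounded list of local identities'' checked independently near each intersection of $C_i$ with $\w{x}$. For the digon relation $((M\circ S)\times S^1)*\w{x}=2\,(\id\times S^1)*\w{x}$ this cannot work intersection-by-intersection: applying \eqref{eqn:pinchdisk} at each of the $k$ intersections produces a sum of $2^k$ signed terms (a choice of dot placement, left or right, at each intersection), and a purely local argument would yield a coefficient $2^k$ rather than $2$. The actual argument is global: the blisters created at adjacent intersections share facets, so by \eqref{eqn:blister} only the two \emph{coherent} terms survive --- all dots on the left, or all dots on the right. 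The all-left term gives the identity, and the all-right term carries a sign $(-1)^k$ that is precisely cancelled by the sign from the $k$ blister evaluations, so the two surviving terms sum to $2\,(\id\times S^1)*\w{x}$. Similarly, the square relations in \eqref{eqn:relevantwebrel} are verified not by a local dictionary but by observing that the full left-to-right-and-back composite of local models far-commutes into a pair of mutually inverse signed isotopies. So while your reduction is the correct one, the assertion that the remaining check is a routine collection of independent local identities is where the proof actually lives, and as stated it would produce the wrong scalar.
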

\begin{proof} It suffices to prove that the relations \eqref{eqn:relevantwebrel} on spun foams continue to hold after the local replacements necessitated by the superposition with $\w{x}$. Then the two claimed properties follow from the corresponding properties of the $\glnn{2}$ Jones--Wenzl projectors via Lemma~\ref{lem:WebtoAfoam}, Remark~\ref{rem:JWprop} and the fact that Jones--Wenzl basis foams annihilate turnbacks if they annihilate standard turnbacks, see Lemma~\ref{lem:turnbacks}.

  We inspect the first relation in \eqref{eqn:relevantwebrel}, whose left-hand side is a digon. A spun digon superposed with $\w{x}$ can be written as $((M\circ S)\times S^1)*\w{x}$. Locally, at each intersection we see a foam of the following type:
\begin{align*}
  \begin{tikzpicture}[anchorbase,scale=.5]
  \draw[double,->] (1.5,0) to [out=90,in=180] (3,1.5);
\draw[double,->] (0,1.5) to [out=00,in=270] (1.5,3);
  \end{tikzpicture}
  \xrightarrow{\textrm{non-local digon opening}}
  \begin{tikzpicture}[anchorbase,scale=.5]
  \draw[very thick, directed=.22, rdirected=.55,directed=.85] (0,1) to (0.8,1) (1.2,1) to (3,1);
  \draw[very thick, directed=.22, rdirected=.55,directed=.85] (0,2) to (1.8,2) (2.2,2) to(3,2);
  \draw[double] (1.5,0) to [out=90,in=270] (2,1);
\draw[very thick] (.7,1) to (.8,1) to [out=90,in=270] (1.8,2) to (1.7,2);
\draw[very thick] (1.3,1) to(1.2,1) to [out=90,in=270] (2.2,2) to (2.1,2);
\draw[double,->] (1,2) to [out=90,in=270] (1.5,3);
  \end{tikzpicture}
   \xrightarrow{- \textrm{zip}}
  \begin{tikzpicture}[anchorbase,scale=.5]
  \draw[very thick, directed=.22, rdirected=.55,directed=.85] (0,1) to (3,1);
  \draw[very thick, directed=.22, rdirected=.55,directed=.85] (0,2) to (3,2);
  \draw[double] (1.5,0) to [out=90,in=270] (2,1);
\draw[double] (1,1) to [out=90,in=270] (2,2);
\draw[double,->] (1,2) to [out=90,in=270] (1.5,3);
  \end{tikzpicture}
   \xrightarrow{\textrm{unzip}}
  \begin{tikzpicture}[anchorbase,scale=.5]
  \draw[very thick, directed=.22, rdirected=.55,directed=.85] (0,1) to (0.8,1) (1.2,1) to (3,1);
  \draw[very thick, directed=.22, rdirected=.55,directed=.85] (0,2) to (1.8,2) (2.2,2) to(3,2);
  \draw[double] (1.5,0) to [out=90,in=270] (2,1);
\draw[very thick] (.7,1) to (.8,1) to [out=90,in=270] (1.8,2) to (1.7,2);
\draw[very thick] (1.3,1) to(1.2,1) to [out=90,in=270] (2.2,2) to (2.1,2);
\draw[double,->] (1,2) to [out=90,in=270] (1.5,3);
  \end{tikzpicture}
  \xrightarrow{\textrm{non-local digon closure}}
  \begin{tikzpicture}[anchorbase,scale=.5]
  \draw[double,->] (1.5,0) to [out=90,in=180] (3,1.5);
\draw[double,->] (0,1.5) to [out=00,in=270] (1.5,3);
  \end{tikzpicture}
  \\
   = \begin{tikzpicture}[anchorbase,scale=.5]
  \draw[double,->] (1.5,0) to [out=90,in=180] (3,1.5);
\draw[double,->] (0,1.5) to [out=00,in=270] (1.5,3);
  \end{tikzpicture}
  \xrightarrow{\textrm{non-local digon opening}}
  \begin{tikzpicture}[anchorbase,scale=.5]
  \draw[very thick, directed=.22, rdirected=.55,directed=.85] (0,1) to (0.8,1) (1.2,1) to (3,1);
  \draw[very thick, directed=.22, rdirected=.55,directed=.85] (0,2) to (1.8,2) (2.2,2) to(3,2);
  \draw[double] (1.5,0) to [out=90,in=270] (2,1);
\draw[very thick] (.7,1) to (.8,1) to [out=90,in=270] (1.8,2) to (1.7,2);
\draw[very thick] (1.3,1) to(1.2,1) to [out=90,in=270] (2.2,2) to (2.1,2);
\draw[double,->] (1,2) to [out=90,in=270] (1.5,3);
  \end{tikzpicture}
   \xrightarrow{
 \begin{tikzpicture}[anchorbase,scale=.4]
  \draw[very thick, directed=.22, rdirected=.55,directed=.85] (0,1) to (0.8,1) (1.2,1) to (3,1);
  \draw[very thick, directed=.22, rdirected=.55,directed=.85] (0,2) to (1.8,2) (2.2,2) to(3,2);
  \draw[double] (1.5,0) to [out=90,in=270] (2,1);
\draw[very thick] (.7,1) to (.8,1) to [out=90,in=270] (1.8,2) to (1.7,2);
\draw[very thick] (1.3,1) to(1.2,1) to [out=90,in=270] (2.2,2) to (2.1,2);
\draw[double,->] (1,2) to [out=90,in=270] (1.5,3);
\node at (2.1,1.7) {$\bullet$};
  \end{tikzpicture}   
  -
  \begin{tikzpicture}[anchorbase,scale=.4]
  \draw[very thick, directed=.22, rdirected=.55,directed=.85] (0,1) to (0.8,1) (1.2,1) to (3,1);
  \draw[very thick, directed=.22, rdirected=.55,directed=.85] (0,2) to (1.8,2) (2.2,2) to(3,2);
  \draw[double] (1.5,0) to [out=90,in=270] (2,1);
\draw[very thick] (.7,1) to (.8,1) to [out=90,in=270] (1.8,2) to (1.7,2);
\draw[very thick] (1.3,1) to(1.2,1) to [out=90,in=270] (2.2,2) to (2.1,2);
\draw[double,->] (1,2) to [out=90,in=270] (1.5,3);
\node at (.9,1.3) {$\bullet$};
  \end{tikzpicture}
   }
  \begin{tikzpicture}[anchorbase,scale=.5]
  \draw[very thick, directed=.22, rdirected=.55,directed=.85] (0,1) to (0.8,1) (1.2,1) to (3,1);
  \draw[very thick, directed=.22, rdirected=.55,directed=.85] (0,2) to (1.8,2) (2.2,2) to(3,2);
  \draw[double] (1.5,0) to [out=90,in=270] (2,1);
\draw[very thick] (.7,1) to (.8,1) to [out=90,in=270] (1.8,2) to (1.7,2);
\draw[very thick] (1.3,1) to(1.2,1) to [out=90,in=270] (2.2,2) to (2.1,2);
\draw[double,->] (1,2) to [out=90,in=270] (1.5,3);
  \end{tikzpicture}
  \xrightarrow{\textrm{non-local digon closure}}
  \begin{tikzpicture}[anchorbase,scale=.5]
  \draw[double,->] (1.5,0) to [out=90,in=180] (3,1.5);
\draw[double,->] (0,1.5) to [out=00,in=270] (1.5,3);
  \end{tikzpicture}
\end{align*}
Here we have used relation \eqref{eqn:pinchdisk}. If the relevant annulus has $k$ transverse intersections with $\w{x}$, then all will be either of the type shown, or its reflection (this follows from the assumed minimality of the intersection number). Then the entire morphism will be a sum of $2^k$ signed foams, each of which is the composition of opening $k$ digons, placing $k$ dots and closing the $k$ digons. Among these foams with $k$ dotted blisters, only two are non-zero, and they arise when each blister is decorated by precisely one dot. In these cases, all blisters carry their dot on the same side. The term with all dots on the left side evaluates to the identity according to \eqref{eqn:blister}. The term with all dots on the right side carries a sign $(-1)^k$, which is precisely canceled by the sign resulting from the blister evaluation. In sum, we get twice the identity, or: 
\[((M\circ S)\times S^1)*\w{x} = 2 (\id \times S^1)*\w{x}\]
This is the spun and superposed version of the first relation in \eqref{eqn:relevantwebrel}.

The left-hand side of the second relation in \eqref{eqn:relevantwebrel} can be written as $(M\otimes \id_1)\circ(\id_1\otimes S)\circ(\id_1\otimes M)\circ(S\otimes \id_1)$ and the corresponding rotation foam, superposed with $\w{x}$, has the following local description at the intersection points:
\begin{align*}
\xymatrix{
  \begin{tikzpicture}[anchorbase,scale=.5]
  \draw[double,->] (1,0) to [out=90,in=180] (3,1.5);
\draw[double,->] (0,1.5) to [out=00,in=270] (1.5,3);
\draw[very thick, directed=.22, rdirected=.55,directed=.85] (0,0) to (3,0);
\draw[double] (1.5,-1) to [out=90,in=270] (2,0);
  \end{tikzpicture}
\ar@<.5ex>[rr]^{\textrm{n.-l. digon opening}}
& &
\ar@<.5ex>[ll]^{\textrm{n.-l. digon closure}}
  \begin{tikzpicture}[anchorbase,scale=.5]
  \draw[very thick, directed=.22, rdirected=.55,directed=.85] (0,1) to (0.8,1) (1.2,1) to (3,1);
  \draw[very thick, directed=.22, rdirected=.55,directed=.85] (0,2) to (1.8,2) (2.2,2) to(3,2);
  \draw[double] (1,0) to [out=90,in=270] (2,1);
\draw[very thick] (.7,1) to (.8,1) to [out=90,in=270] (1.8,2) to (1.7,2);
\draw[very thick] (1.3,1) to(1.2,1) to [out=90,in=270] (2.2,2) to (2.1,2);
\draw[double,->] (1,2) to [out=90,in=270] (1.5,3);
\draw[very thick, directed=.22, rdirected=.55,directed=.85] (0,0) to (3,0);
\draw[double] (1.5,-1) to [out=90,in=270] (2,0);
  \end{tikzpicture}
   \ar@<.5ex>[r]^{- \textrm{zip}}
   &
   \ar@<.5ex>[l]^{\textrm{unzip}}
  \begin{tikzpicture}[anchorbase,scale=.5]
  \draw[very thick, directed=.22, rdirected=.55,directed=.85] (0,1) to (3,1);
  \draw[very thick, directed=.22, rdirected=.55,directed=.85] (0,2) to (3,2);
  \draw[double] (1,0) to [out=90,in=270] (2,1);
\draw[double] (1,1) to [out=90,in=270] (2,2);
\draw[double,->] (1,2) to [out=90,in=270] (1.5,3);
\draw[very thick, directed=.22, rdirected=.55,directed=.85] (0,0) to (3,0);
\draw[double] (1.5,-1) to [out=90,in=270] (2,0);
  \end{tikzpicture}
   \ar@<.5ex>[r]^{\textrm{unzip}}
   &
   \ar@<.5ex>[l]^{- \textrm{zip}}
  \begin{tikzpicture}[anchorbase,scale=.5]
  \draw[very thick, directed=.22, rdirected=.55,directed=.85] (0,1) to (0.8,1) (1.2,1) to (3,1);
  \draw[very thick, directed=.22, rdirected=.55,directed=.85] (0,2) to (1.8,2) (2.2,2) to(3,2);
  \draw[double] (1.5,0) to [out=90,in=270] (2,1);
\draw[very thick] (.7,1) to (.8,1) to [out=90,in=270] (1.8,2) to (1.7,2);
\draw[very thick] (1.3,1) to(1.2,1) to [out=90,in=270] (2.2,2) to (2.1,2);
\draw[double] (1,2) to [out=90,in=270] (2,3);
\draw[very thick, directed=.22, rdirected=.55,directed=.85] (0,3) to (3,3);
\draw[double,->] (1,3) to [out=90,in=270] (1.5,4);
  \end{tikzpicture}
\ar@<.5ex>[rr]^{\textrm{n.-l. digon closure}}
  & &
  \ar@<.5ex>[ll]^{\textrm{n.-l. digon opening}}
  \begin{tikzpicture}[anchorbase,scale=.5]
  \draw[double,->] (1.5,0) to [out=90,in=180] (3,1.5);
\draw[double] (0,1.5) to [out=00,in=270] (2,3);
\draw[very thick, directed=.22, rdirected=.55,directed=.85] (0,3) to (3,3);
\draw[double,->] (1,3) to [out=90,in=270] (1.5,4);
  \end{tikzpicture}
 }
\end{align*}
In fact, the local model at each intersection point is the composition starting at the left and proceeding all the way to the right and back again to the left. The two steps in the middle far-commute, thus an equivalent description is the following:

\begin{align*}
\xymatrix{
  \begin{tikzpicture}[anchorbase,scale=.5]
  \draw[double,->] (1,0) to [out=90,in=180] (3,1.5);
\draw[double,->] (0,1.5) to [out=00,in=270] (1.5,3);
\draw[very thick, directed=.22, rdirected=.55,directed=.85] (0,0) to (3,0);
\draw[double] (1.5,-1) to [out=90,in=270] (2,0);
  \end{tikzpicture}
\ar@<.5ex>[rr]^{\textrm{n.-l. digon opening}}
\ar@<1.5ex>@/^2pc/[rrr]^{\textrm{isotopy}}
& &
\ar@<.5ex>[ll]^{\textrm{n.-l. digon closure}}
  \begin{tikzpicture}[anchorbase,scale=.5]
  \draw[very thick, directed=.22, rdirected=.55,directed=.85] (0,1) to (0.8,1) (1.2,1) to (3,1);
  \draw[very thick, directed=.22, rdirected=.55,directed=.85] (0,2) to (1.8,2) (2.2,2) to(3,2);
  \draw[double] (1,0) to [out=90,in=270] (2,1);
\draw[very thick] (.7,1) to (.8,1) to [out=90,in=270] (1.8,2) to (1.7,2);
\draw[very thick] (1.3,1) to(1.2,1) to [out=90,in=270] (2.2,2) to (2.1,2);
\draw[double,->] (1,2) to [out=90,in=270] (1.5,3);
\draw[very thick, directed=.22, rdirected=.55,directed=.85] (0,0) to (3,0);
\draw[double] (1.5,-1) to [out=90,in=270] (2,0);
  \end{tikzpicture}
      \ar@<.5ex>[r]^{\textrm{unzip}}
   &
   \ar@<.5ex>[l]^{- \textrm{zip}}
  \begin{tikzpicture}[anchorbase,scale=.5]
  \draw[very thick, directed=.22, rdirected=.55,directed=.85] (0,1) to (0.8,1) (1.2,1) to (1.8,1) (2.2,1) to (3,1);
  \draw[very thick, directed=.22, rdirected=.55,directed=.85] (0,0) to (0.8,0) (1.2,0)  to(3,0);
\draw[very thick] (.7,1) to (.8,1) to [out=90,in=270] (1.8,2) to (1.7,2);
\draw[very thick] (1.3,1) to(1.2,1) to [out=90,in=270] (2.2,2) to (2.1,2);
\draw[very thick] (.7,0) to (.8,0) to [out=90,in=270] (1.8,1) to (1.7,1);
\draw[very thick] (1.3,0) to(1.2,0) to [out=90,in=270] (2.2,1) to (2.1,1);
\draw[double,->] (1,2) to [out=90,in=270] (1.5,3);
\draw[very thick, directed=.22, rdirected=.55,directed=.85] (0,2) to (1.8,2) (2.2,2)to (3,2);
\draw[double] (1.5,-1) to [out=90,in=270] (2,0);
  \end{tikzpicture}
  \ar@<1.5ex>@/^2pc/[rrr]^{-\textrm{isotopy}}
\ar@<1.5ex>@/^2pc/[lll]^{-\textrm{isotopy}}
   \ar@<.5ex>[r]^{- \textrm{zip}}
   &
   \ar@<.5ex>[l]^{\textrm{unzip}}
  \begin{tikzpicture}[anchorbase,scale=.5]
  \draw[very thick, directed=.22, rdirected=.55,directed=.85] (0,1) to (0.8,1) (1.2,1) to (3,1);
  \draw[very thick, directed=.22, rdirected=.55,directed=.85] (0,2) to (1.8,2) (2.2,2) to(3,2);
  \draw[double] (1.5,0) to [out=90,in=270] (2,1);
\draw[very thick] (.7,1) to (.8,1) to [out=90,in=270] (1.8,2) to (1.7,2);
\draw[very thick] (1.3,1) to(1.2,1) to [out=90,in=270] (2.2,2) to (2.1,2);
\draw[double] (1,2) to [out=90,in=270] (2,3);
\draw[very thick, directed=.22, rdirected=.55,directed=.85] (0,3) to (3,3);
\draw[double,->] (1,3) to [out=90,in=270] (1.5,4);
  \end{tikzpicture}
\ar@<.5ex>[rr]^{\textrm{n.-l. digon closure}}
  & &
  \ar@<.5ex>[ll]^{\textrm{n.-l. digon opening}}
  \ar@<1.5ex>@/^2pc/[lll]^{\textrm{isotopy}}
  \begin{tikzpicture}[anchorbase,scale=.5]
  \draw[double,->] (1.5,0) to [out=90,in=180] (3,1.5);
\draw[double] (0,1.5) to [out=00,in=270] (2,3);
\draw[very thick, directed=.22, rdirected=.55,directed=.85] (0,3) to (3,3);
\draw[double,->] (1,3) to [out=90,in=270] (1.5,4);
  \end{tikzpicture}
 }
\end{align*}
Note that the composites of the outer maps are just (signed) isotopies. The total composition from the left to the right and back is thus the identity. So we have verified:
\[((M\otimes \id_1)\circ(\id_1\otimes S)\circ(\id_1\otimes M)\circ(S\otimes \id_1))*\w{x}= (\id_2\otimes \id_1)*\w{x}\]
This is the spun and superposed version of the second relation in \eqref{eqn:relevantwebrel}. The third relation is analogous.
\end{proof}

Recall from Lemma~\ref{lem:webisos} that the defining web relations and isotopies in $\SWebq$ are lifted to isomorphisms in $\Sfoam$. This implies that we have a well-defined map $\gamma\colon \SWebq \to K_0(\Sfoam)$. We can further compose this with the natural map induced by the embedding of the foam category into its idempotent completion to get a map $\gamma^\prime\colon \SWebq \to K_0(\Kar(\Sfoam))$. 

\begin{lemma}\label{lem:JWdecat} For every basis element $L_S*\w{x}\in \Su\basisJW$ we have $\gamma^\prime(L_S*\w{x})=[L^F_S*\w{x}]\in K_0(\Kar(\Sfoam))$.
\end{lemma}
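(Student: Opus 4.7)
The plan is to reduce to the case of a single essential oriented simple closed curve and perform induction on the color, using the Jones--Wenzl recursion to produce a direct-sum decomposition in $\Kar(\Sfoam)$ that mirrors the Chebyshev recursion defining $S_n$.

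First I would dispose of the case $\w{x}=\emptyset$. Because the curves $C_i$ making up $L\in\cal{L}(\Su)$ are disjointly and essentially embedded and pairwise non-isotopic, the idempotent $L^F_S$ is a product of the idempotents $P_{n_i}^F*C_i$, each supported in a disjoint annular neighborhood. Such commuting tensor factorizations behave multiplicatively on $K_0$, and the corresponding statement in $\SWebq$ reads $L_S=\prod_i S_{n_i}(C_i)$, so it is enough to establish the single-curve equality $\gamma^\prime(S_n(C))=[P_n^F*C]$ for each $n\geq 0$. The base cases $n=0,1$ are immediate: $P_0^F*C=\emptyset$ and $P_1^F*C=C$, while $S_0=1$ and $S_1(C)=C$.

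For the inductive step I would invoke the standard Jones--Wenzl decomposition in the Karoubi envelope of $\Web$, namely
\[
P_n \otimes \mathbbm{1}_1 \;\cong\; P_{n+1} \;\oplus\; \bigl(P_{n-1} \otimes \textstyle\bigwedge\bigr),
\]
which follows from the idempotency of $P_n$, its turnback-annihilation property and the digon relation in \eqref{eqn:relevantwebrel}. Applying the spinning functor $\Web\to\Afoam$ from Lemma~\ref{lem:WebtoAfoam}, which sends $\mathbbm{1}_1$ to $C$ and the digon-producing tensor factor to a $2$-labeled parallel copy of $C$, and then embedding along an annular neighborhood of $C$ in $\Su$, this decomposition transports to an isomorphism
\[
P_n^F*C \;*\;C \;\cong\; P_{n+1}^F*C \;\oplus\; \bigl(P_{n-1}^F*C\,*\w{[C]}\bigr)
\]
in $\Kar(\Sfoam)$; the survival of the splitting under this transfer is guaranteed by the idempotency and turnback-annihilation proved in Proposition~\ref{prop:idempotent}, which ensure that the injection/projection morphisms built from caps, cups, merges and projectors remain inverse to each other in $\Sfoam$. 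Passing to $K_0(\Kar(\Sfoam))$ yields
\[
[P_{n+1}^F*C]=[P_n^F*C]\cdot[C]-[P_{n-1}^F*C]\cdot[\w{[C]}],
\]
which is precisely the defining Chebyshev-type recursion for $S_{n+1}(C)$, closing the induction.

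Finally, for the superposition with a $2$-labeled multi-curve $\w{x}$, the local replacements in Definition~\ref{def:JWfoam} at each intersection are, up to the sign conventions, (signed) compositions of digon openings/closings and fork-slide foams; these are isomorphisms in $\Sfoam$, so at the level of $K_0(\Kar(\Sfoam))$ the object $L^F_S*\w{x}$ differs from $L^F_S$ only by multiplication with $[\w{x}]$. On the skein side, $L_S*\w{x}$ is by definition the skein-algebra product of $L_S$ with $\w{x}$, and $\w{x}$ is invertible in $\SWebq$ by Corollary~\ref{cor:2labsub}; combining this with the $\w{x}=\emptyset$ case already proved completes the argument. The main obstacle is the transport step: one must verify that the direct-sum decomposition in $\Kar(\Web)$ really survives spinning and embedding along $C$ in $\Sfoam$, i.e.\ that the maps realizing the splitting remain mutually inverse in the presence of possible extra essential topology on $\Su$. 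This reduces to the web-category identities used to verify idempotency and turnback annihilation of the spun projectors, which are exactly those furnished by Proposition~\ref{prop:idempotent}.
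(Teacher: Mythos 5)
Your proof is correct and follows essentially the same route as the paper: reduce to a single essential curve (the annulus case), identify the decomposition of spun identity foams into Jones--Wenzl foams with the $S_n$ recursion/triangular basis change, and transport everything to the superposed setting via the relations established in Proposition~\ref{prop:idempotent}. The paper states the annulus step by directly matching the positive triangular basis change from $\A\basisstd$ to $\A\basisJW$ with the idempotent decomposition, whereas you make the same computation explicit as an induction on the Jones--Wenzl recursion; the content is identical.
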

\begin{proof}
Via the proof of Proposition~\ref{prop:idempotent}, the statement can be deduced from $\gamma^\prime(L_S)=[L^F_S]\in K_0(\Kar(\Sfoam))$, which can be checked one simple closed curve in $L$ at a time. This corresponds to the case $\Su=\A$, where the positive triangular basis change from $\A\basisstd$ to $\A\basisJW$ is easily seen to agree with the decomposition of identity foams in $\gamma(\A\basisstd)$ into Jones--Wenzl basis foams.
\end{proof}
In fact, we have just proved more:

\begin{corollary} \label{cor:JWdecomp} Let $L*\w{x}$ denote the source web of $L_S*\w{x}$, which we interpret as a standard basis element of $\SWebq$. Then the identity foam on $L*\w{x}$ can be written as a sum of primitive idempotents in $\Sfoam$, which are isomorphic to Jones--Wenzl basis foams in $\Kar(\Sfoam)$. Moreover, the decomposition multiplicities agree with those in the basis change from $\Su\basisstd$ to $\Su\basisJW$.
\end{corollary}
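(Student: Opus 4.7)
The plan is to bootstrap directly from the proof of Lemma~\ref{lem:JWdecat}, which already identifies the class $[L^F_S * \w{x}]$ in $K_0(\Kar(\Sfoam))$ with $\gamma^\prime(L_S * \w{x})$. That proof reduced to the annular case one simple closed curve at a time, so I would work in an annular neighborhood of a single oriented component $C$ of $L$ carrying $n$ parallel copies; because the components of $L$ are disjoint and pairwise non-isotopic, the full identity foam on $L$ is the ``disjoint union'' of its per-component restrictions, and any per-component idempotent decomposition recombines into a decomposition on $L$. Similarly, the superposition with $\w{x}$ is local to each intersection with $C$, and the proof of Proposition~\ref{prop:idempotent} shows that the local replacements preserve every relation needed to manipulate the spun foams, so we can perform all decompositions for $\w{x} = \emptyset$ and then superpose.

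Next I would invoke the classical idempotent decomposition of the identity on the object $1^n$ in $\Web$: the representation $V^{\otimes n}$ of $U(\gl_2)$ splits as $\bigoplus_{k+2m=n} \Sym^k(V) \otimes (\Lambda^2 V)^{\otimes m}$ with the standard multiplicities, which are precisely the coefficients appearing in the positive triangular basis change from $\A\basisstd$ to $\A\basisJW$ (compare Definition~\ref{def:cheb} and the polynomial identity $X^n = \sum_{k+2m=n} a_{n,k,m} S_k Y^m$). On the web level this translates into a decomposition of $\id_{1^n}$ into primitive idempotents that are built from the Jones--Wenzl projectors $P_k$ surrounded by ``filler'' merges and splits onto $m$ auxiliary $2$-labeled strands. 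Applying the rotation functor $\Web \to \Afoam$ from Lemma~\ref{lem:WebtoAfoam} and embedding into $\Sfoam$ via the chosen annular neighborhood of $C$ converts this decomposition into an identity-foam decomposition whose summands are, up to isomorphism, the Jones--Wenzl basis foams $L^F_S$ with various $S$.

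Once each single-curve decomposition is in place, I would superpose with $\w{x}$ using the local models of Definition~\ref{def:JWfoam}. The verification in Proposition~\ref{prop:idempotent} implies that the superposed relations still hold and that $L^F_S * \w{x}$ remains idempotent and turnback-annihilating, so the summands in the superposed identity are precisely the Jones--Wenzl basis foams of Definition~\ref{def:JWfoam} (possibly with the intermediate auxiliary $2$-labeled multi-curves absorbed into $\w{x}$). Primitivity of each summand follows from Proposition~\ref{prop:linindep} together with the turnback-annihilation of the Jones--Wenzl projectors: any non-trivial subidempotent would have to factor through a web of strictly smaller complexity, which is ruled out by the absence of non-trivial turnback factorizations through $L^F_S * \w{x}$. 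That the multiplicities agree with those of the basis change from $\Su\basisstd$ to $\Su\basisJW$ is automatic, since they match on the polynomial level before applying any categorification functors.

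The main obstacle I anticipate is the bookkeeping for signs and combinatorics introduced by the local replacement $(S \times S^1) * \w{x}$ at each intersection of $C$ with $\w{x}$; however, the sign analysis already carried out in the proof of Proposition~\ref{prop:idempotent} (where dotted blisters on opposite sides cancel in pairs and a compensating $(-1)^k$ arises from the $k$ blister evaluations) applies uniformly here, so the decomposition in the $\w{x} = \emptyset$ case lifts without change. No new foam relations beyond those used in Lemma~\ref{lem:JWdecat} and Proposition~\ref{prop:idempotent} are required.
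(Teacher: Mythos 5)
Your proposal is correct and follows essentially the same route as the paper: the corollary is obtained there as a direct byproduct of the proofs of Lemma~\ref{lem:JWdecat} and Proposition~\ref{prop:idempotent}, i.e.\ by decomposing the identity component-by-component in an annular neighborhood via the Jones--Wenzl idempotents (with multiplicities matching the triangular basis change at the symmetric-polynomial level) and then superposing with $\w{x}$ using the local models. You merely spell out the details that the paper leaves implicit in the phrase ``we have just proved more''.
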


\subsection{Decategorification}

\begin{theorem}\label{thm:decat} The map $\gamma$ that sends webs in $\SWebq$ to the class of the corresponding object in $K_0(\Sfoam)$ is an isomorphism of $H_1(\Su)$-graded $\Z[q^{\pm 1}]$-modules. 
\end{theorem}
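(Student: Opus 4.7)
The plan is to verify well-definedness, surjectivity, and injectivity of $\gamma$ in turn. Well-definedness is immediate from Lemma~\ref{lem:webisos}, whose isomorphisms realize the defining circle, digon, and square skein relations of $\SWebq$ inside $\Sfoam$; the assignment $W \mapsto [W]$ therefore descends to a $\Z[q^{\pm 1}]$-linear map on $\SWebq$. That $\gamma$ respects the $H_1(\Su)$-grading follows because foams preserve the first homology class of their boundary multi-curves. Surjectivity is built into the definition of $\Sfoam$, whose objects are by construction direct sums of grading-shifted webs, so the image of $\gamma$ already generates $K_0(\Sfoam)$.

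For injectivity, Proposition~\ref{prop:stdbasis} reduces the claim to showing that the classes $\gamma(B)$, $B \in \Su\basisstd$, are $\Z[q^{\pm 1}]$-linearly independent in $K_0(\Sfoam)$. Since $\gamma$ is homogeneous, I restrict to a single homogeneous piece of fixed class $c \in H_1(\Su)$, where the relevant standard basis elements have the form $L * \w{x_L}$ for $L \in \cal{L}(\Su)$ and $x_L$ uniquely determined by $[L] + 2 x_L = c$.

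To obtain the required independence, I pass to the Karoubi envelope $\Kar(\Sfoam)$ and invoke Corollary~\ref{cor:JWdecomp}: each identity foam on $L * \w{x_L}$ splits there as a sum of primitive Jones--Wenzl basis foam idempotents with multiplicities governed by the triangular positive change of basis from $\Su\basisstd$ to $\Su\basisJW$. This reduces linear independence of the standard basis classes to the analogous statement for the classes $[L^F_S * \w{x_L}]$ of Jones--Wenzl basis foam idempotents. I would then prove that these idempotents give pairwise non-isomorphic summands in $\Kar(\Sfoam)$ by analyzing Hom spaces: Proposition~\ref{prop:idempotent} forces any morphism between two such summands to factor through foams without turnbacks at either boundary, and Lemma~\ref{lem:Eulerchar} combined with Proposition~\ref{prop:linindep} then forces strictly positive $q$-degree whenever the underlying laminations disagree, ruling out any degree-zero isomorphism between distinct idempotents.

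The main obstacle is the case $\Su \in \{\T, \Stwo\}$, where Proposition~\ref{prop:semisimple} does not apply and the endomorphism algebra of the empty web contains nontrivial degree-zero elements coming from boundary-parallel tori or spheres. For the torus I would carry out the above argument inside the quotient $\essTfoam$ (using extremal weight projectors in place of Jones--Wenzl projectors), combined with the Frohman--Gelca presentation of $\TWebq$ from Theorem~\ref{thm:FG} to account for the $\w{m,n}$ ambiguity, and then lift the result back to $\Tfoam$; for the sphere the skein module is small enough that linear independence can be checked by a direct computation.
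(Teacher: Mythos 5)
Your proposal follows essentially the same route as the paper: well-definedness via Lemma~\ref{lem:webisos}, surjectivity from Lemma~\ref{lem:equivobjectsS}, and injectivity by passing to $\Kar(\Sfoam)$, decomposing standard basis webs into Jones--Wenzl basis foams via Corollary~\ref{cor:JWdecomp}, using turnback annihilation and the non-negative grading to show these summands are pairwise non-isomorphic in degree zero, and invoking triangularity of the basis change; the torus is likewise deferred to the extremal weight projectors in $\essTfoam$ (the paper concludes via Proposition~\ref{prop:Tss} rather than the Frohman--Gelca presentation, but this is the same mechanism). The only point where you go beyond the paper is your explicit remark that $\Stwo$ needs separate treatment, which the paper's proof leaves implicit.
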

\begin{proof}
 We have already seen that $\gamma$ is well-defined. Since every object of $\Sfoam$ is isomorphic to a direct sum of shifts of standard basis webs, it is clear that $\gamma$ is surjective. For injectivity, we shall argue that the image $\cal{C}:=\gamma(\Su\basisstd)$ of the standard basis of $\SWebq$ is a linearly independent set in $K_0(\Sfoam)$. In fact, the standard basis webs generate an equivalent full subcategory of $\Sfoam$, so any relation between their classes in $K_0$ stems from an isomorphism of the form:
\[\bigoplus_{i\in I} q^{k_i} A_i \cong \bigoplus_{j\in I} q^{l_j} B_j\]
where the $A_i$ and $B_i$ are standard basis webs. We claim that if such an isomorphism exists, then there exists a bijection $\sigma\colon I \to J$ such that $A_i=B_{\sigma(i)}$ and $k_i=l_{\sigma(i)}$. This, in turn, would imply that such an isomorphism does not impose any non-trivial relation in $K_0(\Sfoam)$.

In order to prove the claim, we recall that the standard basis webs lie in the subcategory $\Sfoamred$ and the morphism spaces between them are non-negatively graded. This implies that we may assume that all components of the isomorphism are of degree zero (a priori, it might have higher degree components, but after truncation to degree zero, one will still have an isomorphism between the same objects). Without loss of generality, we may now assume that $k_i=0=l_j$ for all $i\in I$ and $j\in J$, i.e.:

\begin{equation}
\label{eqn:K0iso}\bigoplus_{i\in I} A_i \cong \bigoplus_{j\in I} B_j
\end{equation}

Next, we embed $\Sfoam$ into its Karoubi envelope $\Kar(\Sfoam)$, where \eqref{eqn:K0iso} still gives an isomorphism. Furthermore, thanks to Corollary~\ref{cor:JWdecomp} we can now decompose the $A_i$ (and $B_j$) further into the direct sum of objects of the Karoubi envelope, which are isomorphic to Jones--Wenzl basis foams. By Lemma~\ref{lem:JWdecat} their classes are contained in the image of the natural map $K_0(\Sfoam) \to K_0(\Kar(\Sfoam))$ and coincide with the images of the Jones--Wenzl basis elements $\Su\basisJW$ of the skein algebra $\SWebq$ under $\gamma$.

Next, we compute the $q$-degree zero morphism spaces between Jones--Wenzl basis foams. Note that if $\Su\neq \T$, then all turnback annuli that can appear in the $1$-labeled part of foams between such elements are supported in neighborhoods of simple closed curves in the corresponding lamination. As such, they are killed by Jones--Wenzl foams in the same way as turnbacks are killed by the usual Jones--Wenzl projectors, see Proposition~\ref{prop:idempotent}. This implies that only those degree zero foams which consist exclusively of vertical annuli induce non-zero morphisms between Jones--Wenzl basis foams. In particular, there are no morphisms between basis foams of different lamination type. 

The existence of the isomorphism \eqref{eqn:K0iso} implies that the same decomposed objects appear on both sides, with equal multiplicities. Since the decomposition multiplicities of standard basis elements into Jones--Wenzl basis elements are triangular, we infer that the two sides of \eqref{eqn:K0iso} already contained isomorphic objects with equal multiplicities.

In the case of the torus $\Su=\T$, the Jones--Wenzl basis foams do not kill all turnbacks, and we cannot use them in the above argument. However, in Section~\ref{sec:toric}, we introduce the rotation foams generated by extremal weight projectors, which are a suitable replacement that allow the completion of the proof in the torus case.
\end{proof}

We record the main step in this proof in a separate corollary. 
 
\begin{corollary} If $\Su\neq \T$, then $\Kar(\Sfoam_0)$ is semisimple, the simple objects are isomorphic to Jones--Wenzl basis foams, and the split Grothendieck group is isomorphic to $\SWebq$.
\end{corollary}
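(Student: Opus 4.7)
The plan is to extract the corollary directly from the arguments already developed in the proof of Theorem~\ref{thm:decat}, organizing them into three claims: (i) every object of $\Kar(\Sfoam_0)$ decomposes as a finite direct sum of Jones--Wenzl basis foams $L^F_S * \w{x}$, indexed by $\Su\basisJW$; (ii) the $\mathrm{Hom}$-spaces between distinct Jones--Wenzl basis foams in $\Sfoam_0$ vanish, while each endomorphism algebra is one-dimensional; (iii) the classes of the Jones--Wenzl basis foams therefore form a $\Z[q^{\pm 1}]$-basis of the split Grothendieck group that matches $\Su\basisJW$ under Theorem~\ref{thm:decat}.

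For (i), I would combine Lemma~\ref{lem:equivobjectsS} (every web in $\Sfoam$ is isomorphic to a direct sum of $q$-shifts of a standard basis web) with Corollary~\ref{cor:JWdecomp} (the identity endomorphism of a standard basis web decomposes into primitive Jones--Wenzl basis idempotents). Since all isomorphisms and idempotents used in these decompositions live in degree zero, they persist in $\Sfoam_0$ and produce the claimed direct sum decompositions after passing to the Karoubi envelope.

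The crucial step (ii) is essentially isolated in the proof of Theorem~\ref{thm:decat}. I would argue as follows: a degree-zero foam $F \colon L^F_S * \w{x} \to L'^F_S * \w{y}$ satisfies $\chi(c(F)) = 2d$, and by Lemma~\ref{lem:Eulerchar} combined with Corollary~\ref{cor:oriented} (unorientable foams vanish in $\Sfoam_0$) and the fact that undotted spheres evaluate to zero, every component of $c(F)$ must be an annulus. For $\Su \neq \T$ the only incompressible properly embedded annuli in $\Su \times [0,1]$ are, up to isotopy, of the form $C \times [0,1]$ for simple closed curves $C$ in $\Su$. Compressible annuli produce turnbacks at the source or target, which are killed by $L^F_S * \w{x}$ in view of Proposition~\ref{prop:idempotent} and Lemma~\ref{lem:turnbacks}. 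After the turnback contributions are eliminated, the remaining $1$-labeled part of $F$ is a union of vertical annuli over the common essential curves of the two laminations; the $2$-labeled part is governed similarly (it must match the multi-curves $\w{x}$ and $\w{y}$ up to isotopy). This forces $L = L'$ and $[x]=[y]$ in $H_1(\Su)$; Proposition~\ref{prop:linindep} then gives a one-dimensional degree-zero endomorphism algebra spanned by the identity foam, proving simplicity and orthogonality.

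The main obstacle is rigorously ruling out degree-zero morphisms between distinct basis foams, which requires the hypothesis $\Su \neq \T$ so that incompressible annuli are standard, together with careful bookkeeping of how $2$-labeled sheets interact with the $1$-labeled annuli without dropping the degree below zero. Once (i) and (ii) are in place, assertion (iii) follows immediately: by Lemma~\ref{lem:JWdecat} the classes $[L^F_S * \w{x}]$ match $\gamma(L_S * \w{x})$ in $K_0(\Kar(\Sfoam))$, and Theorem~\ref{thm:decat} identifies these classes with the basis $\Su\basisJW$ of $\SWebq$, yielding the isomorphism $K_0(\Kar(\Sfoam_0)) \cong \SWebq$.
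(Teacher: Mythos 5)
Your proposal is correct and follows essentially the same route as the paper, whose argument for this corollary is recorded inside the proof of Theorem~\ref{thm:decat}: decompose objects via Lemma~\ref{lem:equivobjectsS} and Corollary~\ref{cor:JWdecomp}, show that for $\Su\neq\T$ all degree-zero foams between Jones--Wenzl basis foams reduce to vertical annuli because turnbacks are supported near the lamination curves and are annihilated by Proposition~\ref{prop:idempotent}, and then decategorify via Lemma~\ref{lem:JWdecat}. Your step (ii) is, if anything, slightly more explicit than the paper's about why the underlying surface consists of annuli and where the hypothesis $\Su\neq\T$ enters.
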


\begin{definition} \label{def:cheb2} Let $\Su\neq \T$. We will write $\Su\basisJW^F$ for the set of objects of $\Kar(\Sfoam)$ given by Jones--Wenzl projector foams on standard basis webs. 
\end{definition}

\section{The Khovanov functors}
\label{sec:BKh}
\begin{definition}
The category $\Slinko$ is the category with:
\begin{itemize}
\item objects, oriented link embeddings in $\Su\times [0,1]$ with generic projection to $\Su$ and with link components labeled by colors from the set $\{1,2\}$ and with an ordering of the crossings,
\item morphisms, oriented, color-preserving link cobordisms embedded in $\Su\times[0,1]\times [0,1]$, modulo isotopy relative to the boundary, as well as trivial cobordisms that only reorder crossings.
\end{itemize}
More generally, we denote by $\Slink$ the category of link embeddings and cobordisms without the genericity assumptions.
\end{definition}
The projection of a link embedding is generic if it produces a link diagram. Since every link embedding can be isotoped into generic position, $\Slinko$ is an equivalent full subcategory of $\Slink$. Cobordisms in $\Slinko$ can also be isotoped into generic position and then presented as movies of link diagrams. Indeed, the time slices of a cobordism in generic position are link diagrams, except in finitely many points across which the link diagrams differ by handle attachments or Reidemeister moves. Isotopies of cobordisms in generic position can be assumed to be composed out of Carter--Rieger--Saito movie moves, \cite{CSp,CRS}.

The Khovanov functor as constructed by Blanchet~\cite{Blan} assigns to a crossing a chain complex built out of webs and foams between them, whose boundary data agree with those of the crossing: 
\begin{gather}
\Kh\left( \;
\begin{tikzpicture}[anchorbase, scale=.5]
\draw [very thick, ->] (2,1) to [out=180,in=0] (0,0);
 \draw [white,line width=.15cm] (2,0) to [out=180,in=0] (0,1) ;
\draw [very thick, ->] (2,0) to [out=180,in=0] (0,1);
\end{tikzpicture}
\;\right)
 \;\;= \;\;
\begin{tikzpicture}[anchorbase, scale=.5]
\draw [very thick, ->] (2,1) to (0,1);
\draw [very thick, ->] (2,0) to (0,0);
\end{tikzpicture}
\;\;\to\;\;
 q t^{-1} \;
\begin{tikzpicture}[anchorbase, scale=.5]
\draw [very thick] (2,0) to[out=180,in=315] (1.3,.5);
\draw [very thick] (2,1) to[out=180,in=45] (1.3,.5);
\draw [double] (1.3,.5) -- (.7,.5);
\draw [very thick, ->] (.7,.5) to[out=135,in=0]  (0,1);
\draw [very thick, ->] (.7,.5) to[out=225,in=0] (0,0);
\end{tikzpicture}
\quad,\quad
\Kh\left( \;
\begin{tikzpicture}[anchorbase, scale=.5]
\draw [very thick, ->] (2,0) to [out=180,in=0] (0,1);
 \draw [white,line width=.15cm] (2,1) to [out=180,in=0] (0,0) ;
\draw [very thick, ->] (2,1) to [out=180,in=0] (0,0);
\end{tikzpicture}
\;\right)
\;\;=\;\;
q^{-1} t  \;
\begin{tikzpicture}[anchorbase, scale=.5]
\draw [very thick] (2,0) to[out=180,in=315] (1.3,.5);
\draw [very thick] (2,1) to[out=180,in=45] (1.3,.5);
\draw [double] (1.3,.5) -- (.7,.5);
\draw [very thick, ->] (.7,.5) to[out=135,in=0]  (0,1);
\draw [very thick, ->] (.7,.5) to[out=225,in=0] (0,0);
\end{tikzpicture}
\;\;\to\;\;
\begin{tikzpicture}[anchorbase, scale=.5]
\draw [very thick, ->] (2,1) to (0,1);
\draw [very thick, ->] (2,0) to (0,0);
\end{tikzpicture}\nonumber
\\
\label{eq:thickcrossing-2}
\Kh\left(\begin{tikzpicture}[anchorbase, scale=.5]
\draw [very thick, ->] (2,1) to [out=180,in=0] (0,0);
 \draw [white,line width=.15cm] (2,0) to [out=180,in=0] (0,1) ;
\draw [double, ->] (2,0) to [out=180,in=0] (0,1);
\end{tikzpicture}
\right)
 = 
q t^{-1} \;
\begin{tikzpicture}[anchorbase, scale=.5]
\draw [double] (2,0) -- (1.4,0);
\draw [very thick, ->] (1.4,0) -- (0,0);
\draw [very thick] (2,1) -- (.6,1);
\draw [double, ->] (0.6,1) -- (0,1);
\draw [very thick] (.6,1) -- (1.4,0);
\end{tikzpicture} 
\;\;,\;\;
\Kh\left(\begin{tikzpicture}[anchorbase, scale=.5]
\draw [double, ->] (2,1) to [out=180,in=0] (0,0);
 \draw [white,line width=.15cm] (2,0) to [out=180,in=0] (0,1) ;
\draw [very thick, ->] (2,0) to [out=180,in=0] (0,1);
\end{tikzpicture}
\right)
 = 
q t^{-1} \;
\begin{tikzpicture}[anchorbase, scale=.5]
\draw [double] (2,1) -- (1.4,1);
\draw [very thick, ->] (1.4,1) -- (0,1);
\draw [very thick] (2,0) -- (.6,0);
\draw [double, ->] (0.6,0) -- (0,0);
\draw [very thick] (.6,0) -- (1.4,1);
\end{tikzpicture} 
\;\;,\;\;
\Kh\left(\begin{tikzpicture}[anchorbase, scale=.5]
\draw [double, ->] (2,1) to [out=180,in=0] (0,0);
 \draw [white,line width=.15cm] (2,0) to [out=180,in=0] (0,1) ;
\draw [double, ->] (2,0) to [out=180,in=0] (0,1);
\end{tikzpicture}
\right)
 = q^{2} t^{-2} \;
\begin{tikzpicture}[anchorbase, scale=.5]
\draw [double, ->] (2,0) -- (0,0);
\draw [double, ->] (2,1) -- (0,1);
\end{tikzpicture}
\\ \nonumber
\;\;\;\;\;\;\;\Kh\left( 
\begin{tikzpicture}[anchorbase, scale=.5]
\draw [very thick, ->] (2,0) to [out=180,in=0] (0,1);
 \draw [white,line width=.15cm] (2,1) to [out=180,in=0] (0,0) ;
\draw [double, ->] (2,1) to [out=180,in=0] (0,0);
\end{tikzpicture}
\right)
=
q^{-1} t  \;
\begin{tikzpicture}[anchorbase, scale=.5]
\draw [double] (2,1) -- (1.4,1);
\draw [very thick, ->] (1.4,1) -- (0,1);
\draw [very thick] (2,0) -- (.6,0);
\draw [double, ->] (0.6,0) -- (0,0);
\draw [very thick] (.6,0) -- (1.4,1);
\end{tikzpicture} 
\;\;,\;\;
\Kh\left(
\begin{tikzpicture}[anchorbase, scale=.5]
\draw [double, ->] (2,0) to [out=180,in=0] (0,1);
 \draw [white,line width=.15cm] (2,1) to [out=180,in=0] (0,0) ;
\draw [very thick, ->] (2,1) to [out=180,in=0] (0,0);
\end{tikzpicture}
\right)
 = 
q^{-1} t \;
\begin{tikzpicture}[anchorbase, scale=.5]
\draw [double] (2,0) -- (1.4,0);
\draw [very thick, ->] (1.4,0) -- (0,0);
\draw [very thick] (2,1) -- (.6,1);
\draw [double, ->] (0.6,1) -- (0,1);
\draw [very thick] (.6,1) -- (1.4,0);
\end{tikzpicture} 
\;\;,\;\;
\Kh\left( 
\begin{tikzpicture}[anchorbase, scale=.5]
\draw [double, ->] (2,0) to [out=180,in=0] (0,1);
 \draw [white,line width=.15cm] (2,1) to [out=180,in=0] (0,0) ;
\draw [double, ->] (2,1) to [out=180,in=0] (0,0);
\end{tikzpicture}
\right)
 = q^{-2} t^{2} \;
\begin{tikzpicture}[anchorbase, scale=.5]
\draw [double, ->] (2,0) -- (0,0);
\draw [double, ->] (2,1) -- (0,1);
\end{tikzpicture} 
\end{gather}
Above and throughout the rest of the paper, the $t$ variable will account for homological degree.
The non-zero differentials in the complexes associated to crossings of $1$-labeled strands are given by a single zip and unzip foam respectively. The other chain complexes all consist of a single object and only trivial differentials.

In order to define the functor on a link diagram $L$ in $\Su$, one first cuts out a little disk around each crossing to get a crossingless link diagram $L^\prime$ in the surface with disks removed. The functor then associates to $L$ a chain complex in $\Sfoam$, which is constructed as a formal tensor product of the chain complexes associated to all crossings. The tensor product is taken in the order specified by the ordering of crossings and it acts on webs by gluing them into $L^\prime$ and on foams by glueing them into $L^\prime\times [0,1]$. 

Recall that morphisms in $\Slinko$, i.e. link cobordisms in $\Su\times [0,1]^2$  between links embedded in $\Su\times [0,1]$ can generically be described by movies of link diagrams. 

\begin{lemma}\label{lem:mm} The movies associated to isotopic link cobordisms in $\Su\times [0,1]^2$ are related by a finite sequence of the Carter--Rieger--Saito movie moves of \cite[Section~7]{CSp} supported over disks in $\Su$.
\end{lemma}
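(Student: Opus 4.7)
The plan is to deduce this relative movie move theorem from the classical Carter--Rieger--Saito result over $\R^2$ via a localization and general position argument. The classical theorem says that any two movie presentations of isotopic cobordisms in $\R^3 \times [0,1]$ differ by a finite sequence of movie moves; my goal is to show that when the ambient manifold is $\Su \times [0,1]$, all the local modifications needed can be chosen to occur in disks embedded in $\Su$.

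First, I would set up the framework. Given an isotopy $H_s$ between two cobordisms $C_0, C_1 \subset \Su \times [0,1]^2$, I consider the one-parameter family of link diagrams obtained by projecting to $\Su$ and slicing at successive time values. By standard transversality (applied to the stratification of maps into $\Su$ with respect to the singularities that control generic crossings, tangencies, and Reidemeister/Morse events), I can perturb $H_s$ slightly so that at each parameter value the projection is generic except at finitely many $(s,t)$-values, and so that these critical events occur at pairwise distinct moments and at pairwise distinct points of $\Su$.

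Next comes the key localization step. At each critical event, the singular behavior is concentrated in a small neighborhood of a point $p \in \Su$: a Reidemeister move, a Morse critical point of the cobordism, a birth/death of crossings, or the exchange of two independent events (the ingredients behind the CRS moves) are all compactly supported. Since $\Su$ is a surface, each such singular locus admits an open disk neighborhood $\cat{D} \hookrightarrow \Su$ containing it, and by shrinking and perturbing we can ensure these disks are pairwise disjoint across the entire family, so each elementary modification is supported in a chart $\cat{D}\times [0,1]^2 \hookrightarrow \Su \times [0,1]^2$. Within such a chart, the isotopy is indistinguishable from an isotopy of cobordisms in $\R^3\times [0,1]$, and Carter--Rieger--Saito applies directly to identify the needed movie move.

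The main subtlety will be the step that globalizes these local moves into a finite sequence. One has to argue that the interpolating movies between critical events can be chosen to agree outside the disks, so that applying a CRS move over one disk does not disturb the movie elsewhere; this is essentially an application of the isotopy extension theorem for surface diagrams together with the fact that far-commutation of disjoint local modifications is itself one of the CRS movie moves. Once that is in place, stringing the local replacements together in the order dictated by the parameter $s$ gives the desired finite sequence of disk-supported movie moves relating the two movies. No ingredient beyond classical CRS and general position is required, but bookkeeping the ordering of simultaneous or nearly simultaneous events (which forces the use of the exchange-type movie moves) is where care has to be taken.
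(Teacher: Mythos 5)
Your proposal is correct and takes essentially the same approach as the paper: the paper's proof is the one-line assertion that the Carter--Rieger--Saito argument ``immediately extends'' to $\Su\times[0,1]$, and your general-position, localization-into-disks, and far-commutation bookkeeping is precisely the content of that assertion spelled out.
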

\begin{proof} The proof in \cite{CSp} immediately extends to the case of $\Su\times [0,1]$. 
\end{proof}

All invariants that we consider take values in homotopy categories, i.e. they associate chain complexes to links and chain maps modulo homotopy to link cobordism. Isotopies between such link cobordisms imply equality of the corresponding morphisms and so we do not care about the isotopy classes of isotopies. As a consequence, the functoriality of the Blanchet--Khovanov construction for $\Su$ follows from the case $\Su=\R^2$, which was proved in \cite{Blan}.

\begin{theorem}\label{thm:functoriality} The Blanchet--Khovanov construction produces a well-defined functor 
\[\Su\Kh \colon \Slinko\to \HC(\Sfoam).\]
\end{theorem}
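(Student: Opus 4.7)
The plan is to leverage two key features: the locality of the Blanchet--Khovanov construction and the reduction of functoriality to the planar case via Lemma~\ref{lem:mm}. First, I would verify that the assignment on objects is well-defined, by noting that the cube of resolutions is built from chain complexes associated to crossings, each supported in a disk $\cat{D}\hookrightarrow \Su$, glued into the crossingless web $L'$. The formal tensor product over an ordering of crossings is a chain complex in $\Sfoam$ because the foam categories for disks embed into $\Sfoam$ and the differentials for crossings in disjoint disks strictly commute (the zip/unzip foams are supported in disjoint $3$-balls). The trivial cobordisms that permute the ordering of crossings then induce the canonical isomorphisms of tensor products obtained by sign-corrected reordering of differentials, which lift to equal morphisms in $\HC(\Sfoam)$ because the relevant commutations are on the nose.

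Next, I would address morphisms by presenting each cobordism $C \in \Slinko(L_1,L_2)$ as a movie of link diagrams, and assigning to each elementary movie step (Reidemeister move or Morse modification) the chain map defined as in the planar case. One must check that the resulting chain map is independent of the movie presentation. By Lemma~\ref{lem:mm}, any two movie presentations of isotopic cobordisms differ by a finite sequence of Carter--Rieger--Saito movie moves, each supported in a disk $\cat{D}\hookrightarrow \Su$ (times $[0,1]^2$). Consequently, the required identities or homotopies between the assigned chain maps are supported in balls $\cat{D}\times[0,1]^2$. Inside such a ball, the foam relations of Definition~\ref{def:Tfoam} coincide with those used by Blanchet in \cite{Blan}. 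Therefore each movie-move identity, along with Reidemeister invariance, reduces to an instance of the verification already carried out in the planar case. The same locality argument handles the invariance under reordering of crossings at the level of morphisms.

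Finally, functoriality with respect to composition of cobordisms is automatic from the movie description: stacking movies yields the product chain map, and the identity cobordism yields the identity chain map, both on the nose before passing to the homotopy category. Together these verifications show that $\Su\Kh$ descends to a well-defined functor from $\Slinko$ to $\HC(\Sfoam)$.

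The main subtlety---really the only non-trivial step---is the verification that the local foam identities and homotopies needed for the Carter--Rieger--Saito movie moves only require relations supported in $3$-balls embedded in $\Su\times[0,1]^2$. This follows because the relevant chain maps are defined by elementary foams (identity, zip, unzip, cap, cup, saddle, etc.) that are each supported in a small ball; composing and comparing them only ever produces foams supported in such a ball, inside which $\Sfoam$ looks exactly like Blanchet's planar foam category. With this locality principle in hand, every required verification is inherited from \cite{Blan}.
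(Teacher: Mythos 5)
Your proposal is correct and follows the same route as the paper: reduce functoriality to the planar case of Blanchet via Lemma~\ref{lem:mm}, which guarantees that all Carter--Rieger--Saito movie moves are supported over disks in $\Su$, so every required identity or homotopy of chain maps is a local verification already carried out in \cite{Blan}. The paper's own proof is just a terser statement of exactly this locality argument (together with the observation that, since the target is a homotopy category, isotopies of cobordisms only need to induce equalities, not coherent higher data), so your additional bookkeeping about crossing orderings and composition is consistent elaboration rather than a different method.
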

Via the equivalence between $\Slinko$ and $\Slink$, the latter can be taken as source category for $\Su\Kh$. The properties of $\Su\Kh$ listed in Proposition~\ref{prop:prop} follow directly from the construction and Theorem~\ref{thm:functoriality}. The claim that $\Su\Kh$ categorifies the evaluation of links in the skein module $\SWebq$ is a consequence of Theorem~\ref{thm:decat} and the fact that the Grothendieck group of the homotopy category of an additive category is naturally isomorphic to the split Grothendieck group of the additive category, see \cite{RoseNote}. This completes the proof of Theorem~\ref{thm:linkhom}.

More generally, we have:
\begin{theorem} The bigraded colored Khovanov--Rozansky link homologies \cite{KhR} extend to functorial invariants of links in thickened surfaces $\Su$, with target the homotopy category of the category of $\mathfrak{gl}_N$ foams in $\Su\times [0,1]$ as constructed in \cite{ETW}. 
\end{theorem}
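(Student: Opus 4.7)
The plan is to follow the same strategy that proved Theorem~\ref{thm:functoriality}, with the $\glnn{2}$ foam category $\Sfoam$ replaced by the $\glnn{N}$ foam category from Ehrig--Tubbenhauer--Wedrich. First I would set up the colored link category $\Slink^N$, with objects given by oriented links in $\Su\times[0,1]$ whose components are labeled by integers from $\{1,\ldots,N-1\}$ (or more generally by fundamental weights of $\glnn{N}$) and morphisms given by color-preserving cobordisms in $\Su\times [0,1]^2$ modulo isotopy. As in the $\glnn{2}$ case, one passes to the full equivalent subcategory $\Slink^{N,\circ}$ whose objects have generic projection to $\Su$ and a chosen ordering of crossings, so that such links can be presented by colored link diagrams and cobordisms between them by movies of such diagrams.

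Next I would define, locally, the chain complex associated to each crossing between strands of colors $a$ and $b$ as a complex of colored webs and foams in a disk, exactly as in \cite{ETW}; for monochromatic $1$-labeled crossings this recovers the Blanchet data recalled in \eqref{eq:thickcrossing-2}, and for general colors one uses the Rickard complexes of \cite{ETW} built from ladder webs and their foam differentials. Given a colored link diagram $L$ on $\Su$, one removes a small disk around every crossing to obtain a crossingless colored web $L'$ on the punctured surface and then defines $\Su\KhR_N(L)$ as the formal tensor product, over the ordered crossings, of the local Rickard complexes glued into $L'$ and $L'\times[0,1]$. A cobordism presented as a movie of diagrams is sent to the composition of chain maps associated to the elementary frames (Reidemeister moves, handle attachments, and birth/death events), again following \cite{ETW}.

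The heart of the argument is then invariance under ambient isotopy of links and of link cobordisms. For links this amounts to Reidemeister invariance (including the fork-slide and braid-like colored Reidemeister moves), and for cobordisms it amounts to invariance under the Carter--Rieger--Saito movie moves. Each of these moves is supported in a disk in $\Su$ (for cobordisms, a disk in $\Su$ times $[0,1]^2$), so the verification reduces to the analogous statement in the planar case $\Su=\R^2$, which was proved in \cite{ETW}; the $\Su\times[0,1]^2$ analogue of Lemma~\ref{lem:mm} guarantees that any two generic movies for the same cobordism are connected by such local moves, and gluing the local foams and chain homotopies into the surface gives the global chain homotopies needed for well-definedness of $\Su\KhR_N$ on morphisms. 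Properties analogous to Proposition~\ref{prop:prop}, such as the $H_1(\Su)$-grading support and naturality under the action of $\mathrm{Diff}^+(\Su)$ and under orientation preserving embeddings of surfaces, then follow directly from the construction.

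The main obstacle is verifying the global colored Reidemeister and movie-move invariance from the local ones, including that the tensor product is independent (up to canonical isomorphism in the homotopy category) of the chosen ordering of crossings, since with higher colors the local complexes are genuinely multi-term. This step is an essentially formal far-commutation argument for local pieces supported in disjoint disks, as long as the local invariance statements already established in \cite{ETW} hold on the nose in the foam categories there, and can be transported sheet-by-sheet into $\Sfoam_N$ because the foam relations used in \cite{ETW} are themselves local. Once this is in place, functoriality of $\Su\KhR_N$ on $\Slink^N$ follows immediately.
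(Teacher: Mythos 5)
Your proposal is correct and takes essentially the same approach as the paper: the theorem is obtained by running the Blanchet--Khovanov cube-of-resolutions construction with the local colored Rickard complexes and functoriality results of \cite{ETW} in place of \cite{Blan}, and reducing all Reidemeister and Carter--Rieger--Saito movie-move checks to the planar case via their support in disks (the surface analogue of Lemma~\ref{lem:mm}).
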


The skein modules as well as the foam categories for $\glnn{N}$ with $N\geq 3$ are significantly more complex than their $\glnn{2}$ counterparts. Open questions include: 
\begin{itemize}
\item Does the category of $\glnn{N}$ foams in $\Su\times[0,1]$ admit a non-negative grading as in Corollary~\ref{cor:nonneggrading}?
\item Does it categorify the $\glnn{N}$ skein module of $\Su$ as in Theorem~\ref{thm:decat}?
\item Does its homotopy category admit an additional structure that categorifies the Morton--Samuelson commutator identities in the case of the torus \cite{MS}?
\end{itemize}

Khovanov homology and its higher rank cousins should also give categorifications of \textit{relative skein modules} of $3$-manifolds with boundary, in which one allows properly embedded tangles or webs with boundary. A first interesting case is the $3$-ball $B^3$ with a specified number of boundary points. Such relative skein modules can be categorified via Khovanov(--Rozansky) functors after fixing a projection of the ball onto a disk. However, the relevant target foam categories and the corresponding actions of diffeomorphisms of $B^3$ are only understood in very special cases, e.g. when there are at most four boundary points \cite{Wed1,Wed2} in which case the target categories are related to the motivic Donaldson--Thomas theory of Kontsevich--Soibelman~\cite{KoS}, see Sto\v{s}i\'{c}--Wedrich~\cite{SW}.

\subsection{Algebraic surface link homologies} \label{sec:alginv2}
In this section, we define an algebraic version of $\Su\Kh$, which has a direct decategorification relationship with the skein algebra $\SWebq$. Here we assume that $\Su\neq \T$ and we postpone the discussion of the case $\Su=\T$ to Section~\ref{sec:toric}.

\begin{definition} We define $\Su \Kh^\prime $ to be the composition of 
\begin{itemize}
\item the Khovanov functor $\Su\Kh \colon \Slinko \to \HC(\Sfoam)$,
\item  the projection to $\HC(\Sfoam_0)$,
\item  the natural functor $\HC(\Sfoam_0) \to \HC(\Kar(\Sfoam_0))$,
\item the functor induced by the representable functor $\bigoplus_{F \in {\Su}\basisJW^F}\bigoplus_{k\in \Z} \Hom_{\Kar(\Sfoam_0)}(F,q^k -)$, and
\item the functor of taking the homology of a $\Su\basisJW\times \Z$-graded chain complex.
\end{itemize}
\end{definition}
It is clear from the definition that $\Su\Kh^\prime$ is a link homology functor. From Theorem~\ref{thm:decat} it follows that for any $W\in \Su\basisJW$, the graded Euler characteristic $\chi_q(\Su\Kh^\prime_{W,*,*}(L))$ agrees with the coefficient of $W$ in the evaluation of the link $L$ in $\SWebq$. Furthermore, recall that the elements of $\Su\basisJW$ are determined by their underlying integer lamination and first homology class. We can, thus, interpret the target as the category of vector spaces graded by $\{\textrm{integer laminations on } \Su\}\times H_1(\Su)\times \Z\times \Z$ as stated in Theorem~\ref{thm:alglinkhomology}.

\subsection{Functoriality under foams} A beautiful aspect of the original construction of Khovanov homology is that it starts with a $1+1$-dimensional TQFT,  i.e. a functorial invariant for un-embedded or planar links, and then uses homological algebra to encode the embedding or crossing information described by the link diagram. The construction via Blanchet foams, that we use in this article, has the advantage that it produces an invariant that is properly functorial under link cobordisms, but it has the aesthetic disadvantage that it utilizes an intermediate category of webs and foams, whose set of objects is strictly larger than the set of planar links. It is then a natural questions whether Khovanov homology can be extended from the category of links and link cobordisms to a larger category of tangled webs and foams between them. An analogous question was raised by Khovanov--Rozansky at the end of \cite{KhR}. A first approach to resolve it in the case of Khovanov homology due to Clark--Morrison--Walker \cite{CMW} led to the introduction of the well-named concept of confusions, but no conclusion. In the construction using Blanchet foams, however, extending the Khovanov functor appears to be more natural. We will now explain the precise framework for this extension and conjecture that it is well-defined. In Section~\ref{sec:superposition}, we will assume that this conjecture holds and explore its consequences for categorified skein modules.

\begin{definition}
The category $\Stanwebo$ of tangled webs is the category with:
\begin{itemize}
\item objects, embeddings of framed webs in $\Su\times [0,1]$, with a fixed cyclic order of the edges around each vertex, with generic projection onto $\Su$ and an ordering of crossings,
\item morphisms, framed foams embedded in $\Su\times [0,1]\times [0,1]$, with boundary contained in $\Su\times [0,1]\times \{0,1\}$, with orientations on seams and a fixed cyclic order of the facets around each seam, modulo isotopy relative to the boundary. Additionally, there are identity foams that reorder crossings.
\end{itemize}
We will also consider the category $\Stanweb$, in which the genericity assumption is dropped. The framing conditions are explained in the following.
\end{definition}

Any web drawn on a surface admits a thickening to a ribbon graph, in which edges are replaced by bands, which join disks around the vertices. A framing on a web embedded in $\Su\times [0,1]$ is determined by an embedding of an orientable thickening of the web, up to isotopy. 

Similarly, any foam embedded in a 3-manifold can be thickened and a framing of a foam embedded in $\Su\times [0,1]\times [0,1]$ is determined by an embedding of such a thickening, up to isotopy. The boundary of a framed foam is a framed web, with cyclic order of edges around each vertex determined by the corresponding cyclic order of edges around a seam in the foam. 

When illustrating foams $F$ by projection onto $\Su\times \{1/2\} \times [0,1]$, we use the convention that the cyclic ordering of facets around each seam is determined by the right-hand rule from the orientation of the seam. Additionally, we require that the framing of $F$ induces the blackboard framing (parallel to $\Su$) on each of the illustrated time slices $F\cap \Su \times \{1/2\} \times \{t\}$.

The objects in the category $\Stanwebo$ can be visualized as tangled web diagrams by the generic projection onto $\Su\times \{1/2\}$. The morphisms then admit a description as movies of such diagrams, whose frames differ only by handle attachments, Reidemeister moves (featuring a framed version of the usual Reidemeister I move) or the new fork slide moves. As usual, this description is not faithful, and there exist additional movie moves, which relate movies that describe isotopic foams, see \cite{CS,Carter_foams}.

The Blanchet--Khovanov construction immediately extends to diagrams of tangled webs, and the invariants of diagrams that are related by fork slide moves are homotopy equivalent.

\begin{lemma} \label{lem:forksliding2} The following fork slide isomorphism holds in $\Sfoam$ and $\HC(\Sfoam)$ respectively:
\begin{gather*}
\Kh\left(\;
\begin{tikzpicture}[anchorbase, scale=.3]
\draw [very thick] (1,0) to [out=90,in=225] (.5,3);
\draw [very thick] (2,0) to [out=90,in=315] (.5,3);
\draw [double, ->] (.5,3) -- (.5,4);
\draw [white,line width=.15cm] (0,0) to [out=90,in=270] (2,4);
\draw [double, ->] (0,0) to [out=90,in=270] (2,4);
\end{tikzpicture}
\;\right)
\quad \cong \quad
\Kh\left(\;
\begin{tikzpicture}[anchorbase, scale=.3]
\draw [double,->]  (1.5,1) to [out=90,in=270] (.5,4);
\draw [very thick] (2,0) to[out=90,in=315] (1.5,1);
\draw [very thick] (1,0) to[out=90,in=225] (1.5,1);
\draw [white,line width=.15cm] (0,0) to [out=90,in=270] (2,4);
\draw [double, ->] (0,0) to [out=90,in=270] (2,4);
\end{tikzpicture}
\;\right)
\quad,\quad 
\Kh\left( \;
\begin{tikzpicture}[anchorbase, scale=.3]
\draw [very thick] (1,0) to [out=90,in=225] (.5,3);
\draw [very thick] (2,0) to [out=90,in=315] (.5,3);
\draw [double, ->] (.5,3) -- (.5,4);
\draw [white,line width=.15cm] (0,0) to [out=90,in=270] (2,4);
\draw [very thick, ->] (0,0) to [out=90,in=270] (2,4);
\end{tikzpicture}
\; \right)
\quad \cong \quad 
\Kh\left( \;
\begin{tikzpicture}[anchorbase, scale=.3]
\draw [double,->]  (1.5,1) to [out=90,in=270] (.5,4);
\draw [very thick] (2,0) to[out=90,in=315] (1.5,1);
\draw [very thick] (1,0) to[out=90,in=225] (1.5,1);
\draw [white,line width=.15cm] (0,0) to [out=90,in=270] (2,4);
\draw [very thick, ->] (0,0) to [out=90,in=270] (2,4);
\end{tikzpicture}
\; \right)
\end{gather*}
together with their variants obtained from changing positive into negative crossing or changing merge into split vertices. 
\end{lemma}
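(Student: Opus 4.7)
The fork slide moves are supported in a ball inside $\Su \times [0,1] \times [0,1]$, so by locality of the Blanchet--Khovanov construction it suffices to prove the lemma for $\Su = \R^2$; the general case then follows by the embedding functoriality in Proposition~\ref{prop:prop}. My plan is to treat the two assertions in turn, since they have different complexity.

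For the first isomorphism, involving a thick-over-thick crossing, I would apply the resolution formulas in \eqref{eq:thickcrossing-2}: a thick-over-thick or thick-over-thin crossing contributes a single web with a grading shift and no differential. Both sides thus collapse to a single object of $\Sfoam$, and the claim reduces to an isomorphism of such objects with matching $qt$-shift. The two resulting webs differ only by an application of the square relation from \eqref{eq:squares}, together with an isotopy in the region where the thick strand meets the merge (or split) vertex. The explicit isomorphism is then realized by the foam swept out by the fork slide movie itself: the identity foam outside a small neighborhood of the slide, glued to a single zip or unzip foam in that neighborhood. Invertibility follows from the web isomorphisms in Lemma~\ref{lem:webisos} and from the relations \eqref{sl2Fig5Blanchet_1} and \eqref{sl2Fig5Blanchet_2}, which together imply that a zip composed with its time-reversed unzip evaluates to the identity on a double edge.

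For the second isomorphism, the left-hand side now involves two thin-over-thin crossings that each contribute a two-term complex, so its Khovanov complex has four summands, whereas the right-hand side still consists of a single thick-over-thin crossing and is a one-term complex. I would show that the four-term complex deformation retracts in $\HC(\Sfoam)$ onto the summand built from the second resolution of both thin-over-thin crossings. The three remaining summands pair up via Gaussian elimination, because the relevant components of the zip and unzip differentials between them become isomorphisms after applying the digon and square relations from Lemma~\ref{lem:webisos}. The surviving summand is then related to the right-hand side by the same local square-relation isomorphism as in the first case.

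The main obstacle I anticipate is the sign and shift bookkeeping. One needs to verify that the fork slide foam respects the ordering of crossing resolutions built into the definition of $\Slinko$, and that the Gaussian elimination in the thin case produces the correct overall shift $qt^{-1}$ matching a single thick-over-thin crossing. This is the same sign convention that forces the minus sign in the local superposition $(S \times S^1)*\w{x}$ of Section~\ref{sec:JWFoam}; once fixed consistently, the remaining checks reduce to finite diagrammatic computations in a disk.
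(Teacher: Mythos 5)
Your treatment of the first isomorphism is fine and agrees with what the paper does: both sides are one-term complexes with matching shift $q^2t^{-2}$, and the claim reduces to a web isomorphism from the square relations \eqref{eq:squares}, realized by explicit (un)zip foams whose invertibility follows from \eqref{sl2Fig5Blanchet_1} and \eqref{sl2Fig5Blanchet_2}. The reduction to a local computation in a disk is also legitimate.

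The second isomorphism is where your argument breaks. You claim the four-term cube retracts onto ``the summand built from the second resolution of both thin-over-thin crossings,'' i.e.\ onto the corner where both crossings receive the dumbbell resolution. That corner sits in homological degree $\mp 2$ with shift $q^{\pm2}t^{\mp2}$, whereas the right-hand side is a single web in degree $\mp 1$ with shift $q^{\pm1}t^{\mp1}$ (a thin-over-thick crossing contributes $qt^{-1}$, not $q^2t^{-2}$). So the surviving object cannot be a corner of the cube; it must be one of the two \emph{mixed} resolutions, and indeed exactly one of those two middle webs coincides on the nose with the web on the right-hand side. Relatedly, ``the three remaining summands pair up via Gaussian elimination'' is arithmetically impossible: three objects cannot cancel in pairs. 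The mechanism that makes the count work, and which your proposal omits, is that the \emph{other} mixed resolution contains a $2$--$1$--$1$--$2$ digon and therefore splits by \eqref{eg:digons} into two shifted copies of the fork-plus-strand web; one copy Gaussian-cancels against the all-parallel corner and the other against the all-dumbbell corner (after a square-relation isomorphism), leaving precisely the matching middle term. This is exactly the content the paper records explicitly in Appendix~B (Section~\ref{sec:forkslide}): the homotopy equivalence is the relabeled identity on the matching middle term together with the composite foam \eqref{eqn:forkslidefoam} on the other middle term, and the sign bookkeeping is handled by the Clifford-algebra labels rather than by a crossing ordering. As written, your retraction target and your cancellation scheme are both wrong, so the second half of the proof does not go through without this correction.
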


The reason for the importance of a framing structure on foams comes from the functoriality question: whether it is possible to make the Khovanov construction into a functor from $\Stanwebo$ to the homotopy category of $\Sfoam$. If we disregard framings, then the following two movies of tangled web diagrams represent isotopic foams. This is Carter's twist zipper move:

\[
\begin{tikzpicture}[anchorbase]
  \node at (3,-2) {
\begin{tikzpicture}[anchorbase,scale=.3]
\draw (0,0) rectangle (6,6);
\draw [very thick] (1,3) -- (1.5,3);
\draw [very thick] (1.5,3) .. controls (2.3,5) and (3.7,1) .. (4.5,3);
\draw [very thick] (1.5,3) .. controls (1.9,2) and (2.1,2) ..  (2.7,2.7);
\draw [very thick] (3.3,3.3) .. controls (3.9,4) and (4.1,4) .. (4.5,3);
\draw [very thick] (4.5,3) -- (5,3);
\end{tikzpicture}
};
\node at (1.2,-2) {
\begin{tikzpicture}[anchorbase,scale=.3]
\draw (0,0) rectangle (6,6);
\draw [very thick] (1,3) -- (1.5,3);
\draw [very thick] (1.5,3) .. controls (2,4) and (3.4,1) .. (4.5,3);
\draw [very thick] (1.5,3) .. controls (1.7,2.4) and (1.8,2.4) ..  (2.3,2.7);
\draw [very thick] (2.8,3.2) .. controls (3.5,4) and (4.1,4) .. (4.5,3);
\draw [very thick] (4.5,3) -- (5,3);
\end{tikzpicture}
};
\node at (-.6,-2) {
\begin{tikzpicture}[anchorbase,scale=.3]
\draw (0,0) rectangle (6,6);
\draw [very thick] (1,3) -- (1.5,3);
\draw [very thick] (1.5,3) .. controls (2.5,4) and (3.5,4) .. (4.5,3);
\draw [very thick] (1.5,3) .. controls (2.5,2) and (3.5,2) ..  (4.5,3);
\draw [very thick] (4.5,3) -- (5,3);
\end{tikzpicture}
};
\node at (-2.4,-2) {
\begin{tikzpicture}[anchorbase,scale=.3]
\draw (0,0) rectangle (6,6);
\draw [very thick] (1,3) -- (5,3);
\end{tikzpicture}
};
\end{tikzpicture}
\leftrightarrow
\begin{tikzpicture}[anchorbase]
\node at (3,0) {
\begin{tikzpicture}[anchorbase,scale=.3]
\draw (0,0) rectangle (6,6);
\draw [very thick] (1,3) -- (1.5,3);
\draw [very thick] (1.5,3) .. controls (2.3,5) and (3.7,1) .. (4.5,3);
\draw [very thick] (1.5,3) .. controls (1.9,2) and (2.1,2) ..  (2.7,2.7);
\draw [very thick] (3.3,3.3) .. controls (3.9,4) and (4.1,4) .. (4.5,3);
\draw [very thick] (4.5,3) -- (5,3);
\end{tikzpicture}
};
\node at (1.2,0) {
\begin{tikzpicture}[anchorbase, scale=.3]
\draw (0,0) rectangle (6,6);
\draw [very thick] (1,3) -- (1.5,3);
\draw [very thick] (1.5,3) .. controls (2.6,5) and (4,2) .. (4.5,3);
\draw [very thick] (1.5,3) .. controls (2.1,2) and (2.5,2) .. (3.2,2.8);
\draw [very thick] (3.7,3.3) .. controls (4.2,3.6) and (4.1,3.6) .. (4.5,3);
\draw [very thick] (4.5,3) -- (5,3);
\end{tikzpicture}
};
\node at (-.6,0) {
\begin{tikzpicture}[anchorbase, scale=.3]
\draw (0,0) rectangle (6,6);
\draw [very thick] (1,3) -- (1.5,3);
\draw [very thick] (1.5,3) .. controls (2.5,4) and (3.5,4) .. (4.5,3);
\draw [very thick] (1.5,3) .. controls (2.5,2) and (3.5,2) ..  (4.5,3);
\draw [very thick] (4.5,3) -- (5,3);
\end{tikzpicture}
};
\node at (-2.4,0) {
\begin{tikzpicture}[anchorbase, scale=.3]
\draw (0,0) rectangle (6,6);
\draw [very thick] (1,3) -- (5,3);
\end{tikzpicture}
};
\end{tikzpicture}
\]
However, it seems impossible to define vertex twist maps between the Khovanov invariants of these tangled webs in a coherent way, such that the construction assigns homotopic chain maps to both sides of this move. For a more detailed discussion about this issue, we refer to \cite{Queff_PhD}. In the framed case, however, this move is disallowed, as are all other movies that involve the problematic vertex twist. 

In order to show that the conjectural Khovanov functor is indeed well-defined in the framed setup, one needs to verify that the functor respects a generating set for the movie moves that relate movie descriptions of foams that are isotopic in $\Stanwebo$. While good progress has been made in this direction (see in particular \cite{ETW}), a full proof would still require a significant amount of additional work. For the following section we will thus assume that the following conjecture holds.

\begin{conjecture}\label{conj:functoriality}The Blanchet--Khovanov construction produces a functor from $\Stanwebo$ to the homotopy category of $\Sfoam$.
\end{conjecture}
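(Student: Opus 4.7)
The strategy is to mimic Blanchet's proof of functoriality for classical links \cite{Blan}, carefully extended to the tangled web setting. First I would extend the assignment on crossings via the local complexes in \eqref{eq:thickcrossing-2} to all generic tangled web diagrams in $\Su$ by gluing the crossing complexes into the background crossingless web and forming the tensor product in the prescribed order; this step is purely formal and localises everything to disks around the crossings. The resulting chain complex of webs and foams will be the proposed value of the functor on an object of $\Stanwebo$.

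Next I would verify that the chain homotopy class of this complex depends only on the framed isotopy class of the tangled web in $\Su\times [0,1]$. By a Reidemeister theorem for framed tangled webs on surfaces, isotopic diagrams are related by a finite sequence of framed R2, R3 and framing-preserving R1 moves, planar isotopies of the background web, and fork slide moves (an $\AWeb$-version of Lemma~\ref{lem:mm}). Invariance under R2 and R3 is handled by the same explicit chain homotopies as in Blanchet's setting, the framed R1 produces a $q$-degree shift by a dotted sphere, and fork slide invariance is exactly Lemma~\ref{lem:forksliding2}. Each of these invariances comes equipped with a canonical chain map (up to homotopy), which will serve as the value of the functor on the corresponding elementary movie generator.

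For morphisms I would use movie presentations: every framed foam in $\Stanwebo$ is described by a movie whose elementary transitions are Morse moves (births, deaths, saddles), Reidemeister moves, and fork slides. One must then show that two movies representing isotopic framed foams yield homotopic chain maps. The generators for such relations are the movie moves; these split into the classical CRS list for the link-like portion, which is handled locally in a disk and thus reduces directly to \cite{Blan}, together with new moves involving interactions of fork slides with Morse moves, with crossings, and with each other. The latter are again supported in balls in $\Su\times [0,1]\times [0,1]$, so their verification reduces to computations in the foam category; a substantial subset has already been carried out for $\glnn{N}$ in \cite{ETW} and specialises to the $\glnn{2}$ case here.

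The main obstacle is Carter's twist zipper (the ``vertex twist'' shown in the excerpt): in the unframed setting it would force the chain map associated with rotating a web vertex by $2\pi$ to coincide with the identity, and no such coherent map exists in $\Sfoam$. This is precisely why $\Stanwebo$ is defined using framings; the twist zipper is forbidden because a framed vertex cannot pass its framing disk through itself. The delicate part of the argument is therefore to compile a complete list of framed movie moves following \cite{CS,Carter_foams} and to check that every move on that list \emph{other} than the forbidden vertex twist admits an explicit verification via the foam relations of Section~\ref{sec:foams} and the zip/unzip chain maps. Once such a list is fixed, the individual verifications are mechanical, but assembling it and confirming its completeness is the substantial remaining work, which is why we leave the statement as a conjecture.
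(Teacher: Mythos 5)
The statement you are asked to prove is stated in the paper as a \emph{conjecture}, and the paper offers no proof of it: immediately before Conjecture~\ref{conj:functoriality} the authors write that establishing the functor would require verifying a generating set of movie moves for framed foams in $\Su\times[0,1]\times[0,1]$, that partial progress exists (notably \cite{ETW}), and that ``a full proof would still require a significant amount of additional work.'' Your proposal reproduces essentially this same outline: the cube-of-resolutions construction on objects, invariance under Reidemeister and fork slide moves (Lemma~\ref{lem:forksliding2}), the identification of Carter's twist zipper as the obstruction in the unframed setting, and the observation that framings rule it out. All of this matches the paper's own discussion, and your honest admission at the end — that compiling and verifying a complete list of framed movie moves is the substantial remaining work — is precisely the gap the authors themselves identify.

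So to be clear about the status: what you have written is a plausible strategy, not a proof, and the paper contains no proof either. The genuine missing content is twofold. First, one needs a theorem asserting that a specific finite list of movie moves generates all relations between movie presentations of isotopic \emph{framed} foams with vertices in a thickened surface; the references \cite{CS,Carter_foams} treat related but not identical settings, and adapting them to framed foams with the cyclic-ordering data of $\Stanwebo$ is itself nontrivial. Second, each move on that list must be checked against the chain maps of Section~\ref{sec:BKh}, and while many reduce to disk-local computations covered by \cite{Blan} or \cite{ETW}, the moves mixing fork slides with Morse moves and with each other have not all been verified in print. Until both steps are carried out, the statement remains a conjecture, and any result depending on it (as the paper carefully flags with asterisks) is conditional.
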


\begin{notation*} We will mark definitions and results that depend on Conjecture~\ref{conj:functoriality} by an asterisk as done here.
\end{notation*}

\section{The superposition product} \label{sec:superposition}
The purpose of this section is to explore potential consequences of Conjecture~\ref{conj:functoriality}. The main application we have in mind is the construction of a bifunctor $\Sfoam\times \Sfoam \to \HC(\Sfoam)$, which is a first step towards a categorification of the skein algebra multiplication on $\SWebq$. Another consequence, which only requires a weaker version of Conjecture~\ref{conj:functoriality}, concerns auto-equivalences of the foam category $\Sfoam$ induced by superposing with $2$-labeled multi-curves. This also allows for another algebraic version of the surface link homology functor $\Su\Kh$, which is closer in spirit to the Asaeda--Przytycki--Sikora invariants and admits spectral sequences associated to surface embeddings, see Sections~\ref{sec:alginv1} and \ref{sec:ss}. 

We start by considering the superposition operation on $\Stanweb$ (and thus also its subcategory $\Slink$) that assigns to two tangled webs in $\Su\times [0,1]$ and $\Su\times [1,2]$ their union through the map induced by division by $2$ on $[0,2]\rightarrow [0,1]$. More generally, this produces a bifunctor $\star\colon\Stanweb\times \Stanweb\to \Stanweb$.

\begin{lemma}\label{lem:assoc} The superposition operation $\star$ extends to a monoidal structure on $\Stanweb$. 
\end{lemma}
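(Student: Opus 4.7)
The plan is to show that the bifunctor $\star$, with unit given by the empty web in $\Su\times[0,1]$, satisfies the associativity and unit axioms up to canonical natural isomorphisms, and that these isomorphisms obey the pentagon and triangle coherence conditions. Concretely, the object $(A\star B)\star C$ is obtained by placing $A$, $B$, $C$ into $\Su\times[0,1/4]$, $\Su\times[1/4,1/2]$, $\Su\times[1/2,1]$ respectively, while $A\star(B\star C)$ places them in $\Su\times[0,1/2]$, $\Su\times[1/2,3/4]$, $\Su\times[3/4,1]$. Both are images of the same data under reparametrizations of $[0,1]$, so they differ by an orientation-preserving self-homeomorphism $\phi$ of $[0,1]$ acting on the $I$-factor of $\Su\times I$.

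First, I would construct the associator $\alpha_{A,B,C}\colon (A\star B)\star C\to A\star(B\star C)$ as the trace cobordism in $\Su\times I\times I$ of such a piecewise-linear isotopy fixing the endpoints, similarly defining the unitors $\lambda_A, \rho_A$ via the affine reparametrizations $[1/2,1]\to [0,1]$ and $[0,1/2]\to [0,1]$. Since this trace cobordism is itself an isotopy of framed embedded webs, it provides an honest isomorphism in $\Stanweb$. Naturality in each argument is immediate because the reparametrization commutes with any foam cobordism placed in the corresponding sub-thickening.

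Second, I would verify the pentagon and triangle axioms. The two sides of each coherence diagram are trace cobordisms of two isotopies of $[0,1]$ between the same pair of reparametrizations. Because $\mathrm{Diff}^+([0,1])$ is contractible (indeed convex), any two such isotopies are themselves connected by a 2-parameter family, which produces an isotopy (rel boundary) between the associated foams in $\Su\times I\times I$. Since morphisms in $\Stanweb$ are taken up to such isotopies, the two sides of the pentagon and triangle diagrams are equal, not merely equivalent.

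The main obstacle, though modest, is bookkeeping: one must check that the trace cobordisms respect the framing data on webs and the cyclic orderings at vertices. This follows because reparametrization of the $I$-factor acts trivially on the $\Su$-directions, hence preserves the orientable thickening defining the framing and the cyclic ordering at each vertex. With this verified, $(\Stanweb,\star,\emptyset,\alpha,\lambda,\rho)$ is a monoidal category, and restricting to subcategories (e.g.\ $\Slink$) yields the analogous monoidal structures used later.
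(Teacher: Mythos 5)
Your proof is correct and follows essentially the same route as the paper: the associators and unitors are the vertical (reparametrization) isotopy foams, naturality follows because these commute with foams placed in the sub-thickenings, and the coherence axioms hold on the nose since morphisms are taken up to isotopy rel boundary. Your extra details (contractibility of the space of reparametrizations, compatibility with framings and cyclic orderings) only flesh out steps the paper leaves as "easily checked."
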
 
\begin{proof} For tangled webs $L$, $M$, $N$, the products $L\star (M\star N)$ and $(L\star M)\star N$ are isomorphic via a \textit{vertical} isotopy foam, which plays the role of an associator $a_{L,M,N}$.  Moreover, if $C_L\colon L\to L^\prime$, $C_M\colon M\to M^\prime$ and $C_N\colon N\to N^\prime$ are foams, then we have:
\[ C_L\star (C_M\star C_N)= a_{L^\prime,M^\prime,N^\prime}^{-1} \circ (C_L\star C_M)\star C_N\circ a_{L,M,N}.  \] This shows that the associators are natural in the three arguments. The monoidal unit is given by the empty web and the left and right unitors are again given by suitable vertical isotopy foams, which are also natural. The coherence conditions between the associators and unitors are easily checked.
\end{proof}

After choosing an essential inverse to the inclusion $\Stanwebo\hookrightarrow \Stanweb$, the tensor product $\star$ restricts to $\Stanwebo$. Note that such a choice is necessary since the superposition of generic web embeddings no longer needs to be generic (though generically it is and we will sometimes implicitly make this assumption). 

When composed with the Khovanov functor, the superposition product gives a bi-functor $\Stanwebo\times \Stanwebo\to \HC(\Sfoam)$.  Note that when the associators and unitors are vertical isotopies, e.g. in the generic case, their images under the Khovanov functor are identity morphisms. In the following we study to which extent the superposition operation $\star$ can be made to intertwine with the Khovanov functor. A key ingredient is the following lemma. 

\begin{lemma*}
\label{lem:factor4d} There exists a functor $\iota$ that makes the following diagram commutative:
\begin{equation}
\label{eq:sqdiag}
\begin{tikzpicture}
\draw (0,0) node {$\Sfoam$};
\draw (0,1) node {$\Sfoam_{free}$};
\draw (4,0) node {$\HC(\Sfoam)$};
\draw (4,1) node {$\Q\Stanweb$};
\draw [->] (1,0)--(3,0);
\draw [->] (0,.75)--(0,.25);
\draw [->] (1,1)--(2.9,1);
\draw [->] (4,.75)--(4,.25);
\draw (4.5,.5) node {$\Su\Kh$};
\draw (2,1.2) node {$\iota$};
\end{tikzpicture}
\end{equation}
The bottom horizontal arrow denotes the natural map from $\Sfoam$ into its homotopy category, $\Sfoam_{free}$ denotes a free version of $\Sfoam$ described in the proof, the left vertical arrow is the corresponding quotient functor, $\Q\Stanweb$ is the $\Q$-enrichment of $\Stanweb$ and $\Su\Kh$ denotes the $\Q$-linearized Khovanov functor (Conjecture~\ref{conj:functoriality}).
\end{lemma*}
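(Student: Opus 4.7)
The plan is to take $\Sfoam_{free}$ to be the category with the same objects as $\Sfoam$, namely (formal direct sums of $q$-shifts of) webs embedded in $\Su$, but with morphisms given by $\Q$-linear combinations of $\glnn{2}$ foams properly embedded in $\Su\times[0,1]$, considered modulo isotopy relative to the boundary only, i.e.\ without imposing the local Blanchet relations \eqref{sl2closedfoam}--\eqref{sl2DotSliding} or the two-dot relation \eqref{twodotsheet}. The left vertical functor $\Sfoam_{free}\to\Sfoam$ is then the tautological quotient, and $\Sfoam\hookrightarrow \HC(\Sfoam)$ embeds objects as chain complexes concentrated in homological degree zero.

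The functor $\iota\colon\Sfoam_{free}\to \Q\Stanweb$ will be essentially the identity on underlying topological data. On objects, $\iota$ sends a web $W$ in $\Su$ to itself viewed as a (crossingless, framed) object of $\Stanweb$, using the blackboard framing inherited from the embedding in $\Su$. On a generating morphism given by an embedded foam $F\colon W_1\to W_2$, $\iota(F)$ is the same foam viewed as a morphism of $\Stanweb$, with its blackboard framing (no vertex twists), and this assignment is extended $\Q$-linearly to formal combinations. This is manifestly functorial with respect to vertical stacking, since composition in both $\Sfoam_{free}$ and $\Stanweb$ is given by glueing foams along their common boundary web.

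To check commutativity of \eqref{eq:sqdiag}, one has to verify two things. First, on objects: for any web $W$, $\Su\Kh(W)$ is the chain complex consisting of $W$ itself concentrated in homological degree zero, which by construction of the embedding $\Sfoam\hookrightarrow\HC(\Sfoam)$ agrees with the image of $W\in\Sfoam$. This is immediate from the inductive definition of the Khovanov complex on tangled webs, since a crossingless tangled web has only the identity resolution. Second, on morphisms: one must show that for a foam $F\colon W_1\to W_2$ in $\Sfoam_{free}$, the chain map $\Su\Kh(\iota(F))\colon\Su\Kh(W_1)\to\Su\Kh(W_2)$ (well-defined up to homotopy by Conjecture~\ref{conj:functoriality}) coincides with the class in $\HC(\Sfoam)$ of the image of $F$ in $\Sfoam$. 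The strategy is to pick a generic movie presentation of $F$ as a concatenation of elementary foam pieces (births and deaths of $1$- and $2$-labeled disks, saddles, zips, unzips, and dot additions) supported over small disks in $\Su$, and then observe that the Blanchet--Khovanov construction on the corresponding elementary pieces of a movie in $\Stanweb$ produces, by definition, the same elementary foams in $\Sfoam$.

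The main obstacle is the last step: independence of the result from the choice of movie decomposition. This reduces, via a movie-move argument analogous to Lemma~\ref{lem:mm} but for foams with boundary on framed webs (see \cite{CS,Carter_foams}), to checking that the Khovanov functor sends the framed foam movie moves to equalities in $\HC(\Sfoam)$. Under Conjecture~\ref{conj:functoriality} these equalities hold on the nose (not just up to a sign, as in the un-framed setting which gave rise to Carter's vertex twist confusion), so $\iota$ is well-defined and the square commutes. Note that the two-dot relation \eqref{twodotsheet} and Blanchet's local foam relations are never needed in checking commutativity, which is why it is natural to take $\Sfoam_{free}$ without those relations imposed; these relations sit precisely in the kernel of the right-hand composition, so that $\iota$ does not descend to a functor from $\Sfoam$ to $\Q\Stanweb$, but only from $\Sfoam_{free}$.
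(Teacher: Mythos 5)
Your construction of $\iota$ and the overall shape of the argument (tautological embedding of webs and foams, with the thickening direction of $\Su\times[0,1]$ reinterpreted as the time direction of $\Q\Stanweb$; objects go to complexes concentrated in degree zero; morphisms are recovered via movie presentations; Conjecture~\ref{conj:functoriality} supplies well-definedness; the local relations of $\Sfoam$ are checked to hold in the target so that the composite factors through the quotient) is essentially the paper's proof. However, there is one concrete gap: your $\iota$ is not defined on dotted foams. Morphisms in $\Stanweb$ are \emph{undotted} framed foams --- dots are nowhere in the data of that category, and indeed you cannot list ``dot additions'' among the elementary movie frames of $\Stanweb$. So for a foam $F$ carrying dots, ``$\iota(F)$ is the same foam viewed as a morphism of $\Stanweb$'' does not typecheck. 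The paper resolves this by building the fix into the definition of $\Sfoam_{free}$: there, dots are replaced by small handle attachments multiplied by the scalar $1/2$, which is legitimate in the target by relation \eqref{sl2handle} and is one of the two reasons (alongside linear combinations) that the $\Q$-enrichment of $\Stanweb$ is needed. Since the lemma explicitly defers the definition of $\Sfoam_{free}$ to the proof, this translation of dots is part of the required content and should be stated.

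A smaller stylistic difference: you take $\Sfoam_{free}$ modulo isotopy, whereas the paper disallows isotopies in $\Sfoam_{free}$ as well, so that the category is genuinely free and \emph{all} checks (isotopy invariance included) are pushed to the final step of verifying that the relations hold in $\HC(\Sfoam)$. Your variant works too, since an isotopy of $F$ in $\Su\times[0,1]$ rel boundary becomes, under the slice identification, a framing-preserving isotopy in $\Su\times[0,1]\times[0,1]$, but you should say this explicitly if you quotient by isotopy before applying $\iota$.
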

\begin{proof} The category $\Sfoam_{free}$ is a version of the foam category in which no local relations are imposed, isotopies of foams are disallowed and dots are translated into small handle attachments multiplied by the scalar $1/2$. We define $\iota$ as the functor from $\Sfoam_{free}$ to the $\Q$-enriched tangled web category $\Q\Stanweb$ which embeds webs in $\Su\times \{1/2\}$ and toric foams in $\Su \times \{1/2\}\times [0,1]$. Note that $\iota$ takes the \textit{thickening direction} of $\Sfoam_{free}$ to the \textit{time direction} in $\Q\Stanweb$. From the construction of $\Su\Kh$, it follows that $\Su\Kh\circ \iota$ sends webs and foams in $\Sfoam_{free}$ to themselves in $\HC(\Sfoam)$ when considered as complexes concentrated in homological degree zero and chain maps thereof. It is then clear that $\Su\Kh \circ \iota$ actually factors through the quotient $\Sfoam$ of $\Sfoam_{free}$ since all isotopy and local relations of $\Sfoam$ also hold in the target category.
\end{proof}

\begin{proposition*}\label{prop:bifunctor} The superposition product $\star$ on $\Stanweb$ induces a well-defined biadditive, bilinear bifunctor $*$ from $\Sfoam\times \Sfoam$ to $\HC(\Sfoam)$.
\end{proposition*}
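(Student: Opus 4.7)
The plan is to define the bifunctor by setting $W_1 * W_2 := \Su\Kh(\iota(W_1) \star \iota(W_2))$ on objects and $F_1 * F_2 := \Su\Kh(\iota(F_1) \star \iota(F_2))$ on morphisms, where $\iota\colon \Sfoam_{free} \to \Stanweb$ is the functor of Lemma~\ref{lem:factor4d} (composed with an essential inverse of the inclusion $\Stanwebo\hookrightarrow \Stanweb$) and $\star$ is the superposition bifunctor from Lemma~\ref{lem:assoc}. Under Conjecture~\ref{conj:functoriality}, $\Su\Kh$ is a functor out of $\Stanweb$ taking values in $\HC(\Sfoam)$, so the composition $\Su\Kh \circ \star \circ (\iota\times\iota)$ is a bifunctor from $\Sfoam_{free}\times \Sfoam_{free}$ to $\HC(\Sfoam)$. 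Biadditivity and $\Q$-bilinearity follow immediately from the corresponding properties of $\star$ on the $\Q$-enrichment of $\Stanweb$ together with the additivity and $\Q$-linearity of $\Su\Kh$.

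The main point is to show that this bifunctor descends from $\Sfoam_{free}\times\Sfoam_{free}$ to $\Sfoam\times \Sfoam$. Two foams $F_1,F_1'$ represent the same morphism in $\Sfoam$ exactly when they differ by a finite sequence of ambient isotopies and applications of the local foam relations from Definition~\ref{def:Tfoam}, each supported in a $3$-ball inside $\Su\times [0,1]\times [0,1]$. After a small perturbation of $F_2$ (which does not change its class in $\Sfoam$), we can arrange that the images of all such $3$-balls under the rescaling of the superposition remain disjoint from the image of $\iota(F_2)$. The superposed foams $\iota(F_1)\star \iota(F_2)$ and $\iota(F_1')\star\iota(F_2)$ therefore differ only inside these $3$-balls, by the very same local relations and isotopies. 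Viewing the superposition as a foam in the image of $\iota$ applied to a combined free foam, Lemma~\ref{lem:factor4d} then guarantees that $\Su\Kh$ sends both superposed foams to the same morphism of $\HC(\Sfoam)$. A symmetric argument handles variation in the second slot, and combining the two yields the required factorisation through $\Sfoam\times \Sfoam$.

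The interchange law and functoriality in each slot reduce directly to the corresponding properties of $\star$ on $\Stanweb$ together with the functoriality of $\Su\Kh$, so no further verification is needed there. Conceptually, the hard part is that the entire construction hinges on Conjecture~\ref{conj:functoriality}; once $\Su\Kh$ is known to extend functorially to $\Stanweb$, the remaining ingredients are essentially formal, relying only on the locality of the foam relations and a standard transversality argument ensuring that a local modification performed on one factor of a superposition does not interact with the other factor after rescaling.
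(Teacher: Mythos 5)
Your overall strategy coincides with the paper's: define $*:=\Su\Kh(\iota(-)\star\iota(-))$ on $\Sfoam_{free}^{\times 2}$, note biadditivity and bilinearity, and show the assignment descends through isotopies and local foam relations in each slot. Two steps in your descent argument do not work as stated, though. First, the perturbation claim: for a general foam $F_2$ in the second slot, its projection onto the two-dimensional surface $\Su$ is generically two-dimensional and may cover all of $\Su$, so no small perturbation will make the projections of the relation-supporting $3$-balls disjoint from it — a dimension count gives a one-dimensional intersection, not disjointness. The paper avoids this by fixing the second argument to be an identity foam $\id_W$, so that the relevant projection is the web $W$ itself, a $1$-complex in $\Su$, and the $3$-ball can be isotoped to project into its complement; the case of a general $F_2$ then follows from the interchange law $F_1*F_2=(F_1*\id)\circ(\id*F_2)$, which already holds at the level of $\Sfoam_{free}$.

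Second, Lemma~\ref{lem:factor4d} does not apply to the superposed foam: $\iota(F_1)\star\iota(F_2)$ is not in the image of $\iota$, since the two factors sit at different levels of the thickening direction and the projection has genuine crossings. What is actually needed is the locality of the Blanchet--Khovanov construction: once the relation is applied in a region whose projection is crossing-free, the corresponding components of the induced chain map are the local foam itself glued into identities, so the relation can be applied directly in the target $\Sfoam$, yielding equal morphisms in $\HC(\Sfoam)$. (For pure isotopies your argument is fine — no disjointness is needed, as Conjecture~\ref{conj:functoriality} identifies the two superposed foams in $\Q\Stanweb$ already.) Both gaps are repairable, and the repaired argument is exactly the paper's proof.
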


\begin{proof} 
In an adaption of the proof of Lemma~\ref{lem:factor4d}, we consider the following diagram:
\begin{center}
\begin{tikzpicture}
\draw (0,0) node {$\Sfoam^{\times 2}$};
\draw (0,1) node {$\Sfoam_{free}^{\times 2}$}; 
\draw (4,1) node {$\Q\Stanweb^{\times 2}$};
\draw (8,1) node {$\Q\Stanweb$};
\draw (8,0) node {$\HC(\Sfoam)$};

\draw [->, dashed] (1,0)--(7,0);
\draw [->] (1,.75)--(7,.25);
\draw [->] (0,.75)--(0,.25);
\draw [->] (1,1)--(2.7,1);
\draw [->] (5.3,1)--(6.9,1);
\draw [->] (8,.75)--(8,.25);
\draw (8.3,.5) node {$\Su\Kh$};
\draw (2,1.2) node {$\iota^{\times 2}$};
\draw (6,1.2) node {$\star$};
\draw (6,0.6) node {$*$};
\end{tikzpicture}
\end{center}
We consider the composite bifunctor  $-*-:=\Su\Kh (\iota(-)\star\iota(-))\colon \Sfoam_{free}^{\times 2} \to \HC(\Sfoam)$. We would like to show that this functor factors through $\Sfoam^{\times 2}$, i.e. that it respects isotopy of foams and local foam relations in each argument. By symmetry, we only consider the first argument and fix a web $W$ and its identity foam $\id_W$ in the second argument. Let $F$ and $F^\prime$ denote two foams in $\Sfoam_{free}$ which are identified via an isotopy in $\Sfoam$, then $\iota(F)\star\iota(\id_W)$ and $\iota(F^\prime)\star\iota(\id_W)$ are also identified in $\Q\Stanweb$ and thus have equal image under $\Su\Kh$ by Conjecture~\ref{conj:functoriality}. Next, let $F$ and $F^\prime$ denote two (linear combinations of) foams in $\Sfoam_{free}$ which are identified via a local foam relation in $\Sfoam$. By invariance under isotopy, we may assume that the local foam relation is applied in a region whose projection onto $\Su$ is disjoint from the web $W$. Since the same foam relation holds on morphisms in $\HC(\Sfoam)$, the two foams produce equal images under the composite bifunctor $*$. We thus conclude that $*$ factors through $\Sfoam^{\times 2}$.
\end{proof}

\begin{remark}
\label{rem:spec}
As discussed in Section~\ref{sec:skeinalgcat}, it is doubtful whether the superposition bifunctor $*$ directly extends to a tensor product on $\HC(\Sfoam)$ that categorifies the skein algebra multiplication. A better candidate category for this would be $\HC(\Kar(\Sfoam_0))$ or the equivalent dg category $\Kar(\Sfoam_0)_{\mathrm{dg}}$ of minimal complexes, which actually have zero differentials for $\Su\neq \T$. These are known to be suitable choices for $\Su=\R^2,\A$ and in Section~\ref{sec:toric} we give supporting evidence in the case $\Su=\T$ for the suitability of a slightly modified target category.
\end{remark}

We now revisit Example~\ref{exa:1001} in the categorified setup to illustrate the superposition bifunctor.

\begin{example}
\label{exa:1001-2} The superposition of the toric $(0,1)$ and $(1,0)$ curves evaluates under the Khovanov functor to the following chain complex in $\Tfoam$.
\begin{equation*}
\xy
(0,25)*{
\begin{tikzpicture}[anchorbase]
\draw[very thick, directed=.75] (0,.8) to (2.4,.8);
\torus{2.4}{1.6}
\end{tikzpicture}
\;*\;
\begin{tikzpicture}[anchorbase]
\draw[very thick, directed=.75] (1.2,0) to (1.2,1.6);
\torus{2.4}{1.6}
\end{tikzpicture}
\;=\;
\begin{tikzpicture}[anchorbase]
\draw[very thick, directed=.75] (1.2,0) to (1.2,1.6);
\draw[white, line width=.15cm] (2.4,.8) to (0,.8);
\draw[very thick, directed=.75] (0,.8) to (2.4,.8);
\torus{2.4}{1.6}
\end{tikzpicture}
};
(90,0)*{
 q t^{-1} \;\;
\begin{tikzpicture}[anchorbase]
\draw[very thick, directed=.55] (1.2,0) to [out=90,in=270] (0.9,0.6);
\draw[double] (0.9,0.6) to (1.5,1);
\draw[very thick, directed=.55] (1.5,1) to [out=0,in=180] (2.4,.8);
\draw[very thick, directed=.55] (0,.8) to [out=0,in=180] (.9,0.6);
\draw[very thick, directed=.55] (1.5,1) to [out=90,in=270] (1.2,1.6);
\torus{2.4}{1.6}
\end{tikzpicture}
\;\Bigg)
};
(60,0)*{
\begin{tikzpicture}[anchorbase]
\draw[->] (0,0) to (1,0);
\end{tikzpicture}
};
(29,0)*{\cong\,
\Bigg(
\begin{tikzpicture}[anchorbase]
\draw[very thick, directed=.55] (1.2,0) to [out=90,in=180] (2.4,.8);
\draw[very thick, directed=.55] (0,.8) to [out=0,in=270] (1.2,1.6);
\torus{2.4}{1.6}
\end{tikzpicture}
};
(60,15)*{
\begin{tikzpicture}[fill opacity=.2,anchorbase,xscale=.7, yscale=0.7]
\torusback{3}{2}{1.5}
\fill [fill=yellow] (1.5,2.25) to [out=270,in=200] (1.9,1.8) to [out=20,in=270] (2.5,2.75) to (1.5,2.25) ; 
\draw [fill=red] (2.5,2.75) to [out=0,in=180] (3.5,2.5) to (3.5,1)to [out=180,in=70](1.5,0) to (1.5,1.5) to [out=70,in=250] (1.5,2.25) to [out=270,in=200] (1.9,1.8) to [out=20,in=270] (2.5,2.75);
\draw [fill=red] (2.5,2.75) to [out=70,in=250](2.5,3.5) to (2.5,2) to [out=250,in=0] (.5,1) to  (.5,2.5) to [out=0,in=180] (1.5,2.25) to [out=270,in=200] (1.9,1.8) to [out=20,in=270] (2.5,2.75) ; 
\draw [very thick, red, directed=.55] (1.5,2.25) to [out=270,in=200] (1.9,1.8) to [out=20,in=270] (2.5,2.75);
\draw[very thick, directed=.5] (.5,1) to [out=0,in=250](2.5,2);
\draw[very thick, directed=.5] (1.5,0) to [out=70,in=180](3.5,1);
\draw[very thick, directed=.5] (.5,2.5) to [out=0,in=180] (1.5,2.25);
\draw[very thick, directed=.5] (1.5,1.5) to [out=70,in=250] (1.5,2.25);
\draw[double] (1.5,2.25) to (2.5,2.75);
\draw[very thick, directed=.55] (2.5,2.75) to [out=0,in=180](3.5,2.5);
\draw[very thick, directed=.55] (2.5,2.75) to [out=70,in=250](2.5,3.5);
\torusfront{3}{2}{1.5}
\end{tikzpicture}
};
\endxy
\end{equation*}
The second term in this complex is a web whose underlying curve is the $(1,-1)$ curve. More precisely, note that after an isotopy we see:
\begin{equation}
\label{eq:2mult} q t^{-1} \;\;
\begin{tikzpicture}[anchorbase]
\draw[very thick, directed=.55] (1.2,0) to [out=90,in=270] (0.9,0.6);
\draw[double] (0.9,0.6) to (1.5,1);
\draw[very thick, directed=.55] (1.5,1) to [out=0,in=180] (2.4,.8);
\draw[very thick, directed=.55] (0,.8) to [out=0,in=180] (.9,0.6);
\draw[very thick, directed=.55] (1.5,1) to [out=90,in=270] (1.2,1.6);
\torus{2.4}{1.6}
\end{tikzpicture}
\;\;
\cong 
\;q t^{-1} \;\;
\begin{tikzpicture}[anchorbase]
\draw[very thick, directed=.55] (1,0.6) to [out=90,in=270] (.4,1);
\draw[double] (.4,1) to [out=60,in=270] (.7,1.6);
\draw[double] (.7,0) to [out=90,in=240] (1,.6);
\draw[very thick, directed=.55] (1.2,1.6) to [out=270,in=180] (2.4,.8);
\draw[very thick, directed=.55] (0,.8) to [out=0,in=180] (.4,1);
\draw[very thick, directed=.55] (1,.6) to [out=0,in=90] (1.2,0);
\torus{2.4}{1.6}
\end{tikzpicture}
\;\;=\;\;
 \begin{tikzpicture}[anchorbase]
\draw[very thick, directed=.55] (1.2,1.6) to [out=270,in=180] (2.4,.8);
\draw[very thick, directed=.55] (0,.8) to [out=0,in=90] (1.2,0);
\torus{2.4}{1.6}
\end{tikzpicture}
\; *\;
\begin{tikzpicture}[anchorbase]
\draw[double, directed=.55] (1,0) to (1,1.6);
\torus{2.4}{1.6}
\end{tikzpicture}
\end{equation}
So the Khovanov functor sends the product $(1,0)\star (0,1)$ to a chain complex built out of $(1,1)$ and $(1,-1)*\w{0,1}$. Moreover, in the degree zero truncation $\Tfoam_0$, the differential is set to zero and we obtain a decomposition $(1,0)\star (0,1)\cong (1,1) \oplus (1,-1)*\w{0,1}$.
\end{example} 

We call superposition with $2$-labeled curves $\stwo$-operations. In the next section, we will see that $\stwo$-operations give auto-equivalences of the foam category $\Sfoam$ that intertwine with the Khovanov functor, assuming Conjecture~\ref{conj:functoriality}.

\subsection{Superposition with 2-labeled webs}
\label{sec:stwo}
Recall that we denote by $\w{x}$ for $x\in H_1(\Su)$ a choice of $2$-labeled multicurve with $[\w{x}]=2x \in H_1(\Su)$. For the following, we consider a homologically graded version of the foam category, which we formally define as the dg category $\Sfoam_{\mathrm{dg}}:=\bigoplus_{k\in \Z} t^k \Sfoam$ with trivial differential. Our goal is to prove the following result, assuming Conjecture~\ref{conj:functoriality}.

\begin{theorem*}\label{thm:autoequ}  The $\stwo$-operation $-*\w{x}$ is a well-defined auto-equivalence on $\Sfoam_{\mathrm{dg}}$ of $H_1(\Su)$-degree $2x$, which is of $q$-degree $c\cdot x$ and homological degree $- c \cdot x$ on the block of $\Sfoam_{\mathrm{dg}}$ indexed by $c\in H_1(\Su)$.
\end{theorem*}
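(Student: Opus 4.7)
The plan is to verify three things: (1) the functor $-*\w{x}$ preserves $\Sfoam_{\mathrm{dg}}$ rather than producing genuine chain complexes; (2) the claimed degree shifts; and (3) the existence of an essential inverse $-*\w{-x}$.

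For (1), fix a web $W$ and a representative of $\w{x}$ transverse to $W$. Every crossing in the superposition $W\star\w{x}$ involves a 2-labeled strand of $\w{x}$, and by the Khovanov resolution rules \eqref{eq:thickcrossing-2} each such crossing is assigned a single-web chain complex with trivial differential. The tensor product of these is therefore a single web up to $q$- and $t$-shifts, and so lies in $\Sfoam_{\mathrm{dg}}$. Combined with Proposition~\ref{prop:bifunctor}, this gives a well-defined endofunctor of $\Sfoam_{\mathrm{dg}}$.

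For (2), choose $\w{x}$ in minimal position with respect to $W$, so that all crossings can be taken positive. Let $n_1$ and $n_2$ be the numbers of crossings of $\w{x}$ with 1-labeled and 2-labeled edges of $W$ respectively. By \eqref{eq:thickcrossing-2} the total shift contributed by resolving the crossings is $q^{n_1+2n_2}t^{-(n_1+2n_2)}$. By bilinearity of the signed intersection form on $H_1(\Su)$ and the fact that each 2-labeled edge of $W$ contributes twice to $[W]$, one has $n_1+2n_2 = [W]\cdot x = c\cdot x$. Together with the obvious $H_1(\Su)$-shift $[\w{x}]=2x$, this yields all three stated degree shifts.

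For (3), the candidate quasi-inverse is $-*\w{-x}$. Associativity of the monoidal structure on $\Stanweb$ (Lemma~\ref{lem:assoc}) and the construction in Proposition~\ref{prop:bifunctor} provide a natural identification $(W*\w{x})*\w{-x}=W*(\w{x}*\w{-x})$ in $\HC(\Sfoam)$, since the associator in $\Stanweb$ is a vertical isotopy and hence goes to the identity. It then suffices to exhibit a natural isomorphism $\w{x}*\w{-x}\cong\emptyset$ in $\Sfoam$. The $2\times 2$ crossings in this superposition resolve via \eqref{eq:crossing} to a 2-labeled multi-curve of homology class $0$; using $x\cdot x=0$ on an oriented surface, positive and negative crossings come in equal numbers so the $q$- and $t$-shifts cancel, placing the result in bidegree $(0,0)$. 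All components of the resulting multi-curve are inessential and can be removed via the 2-labeled circle isomorphisms in \eqref{eq:circles}, yielding an isomorphism to $\emptyset$.

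The main obstacle is coherence: to promote these natural isomorphisms to a genuine equivalence of dg categories, one must check that the associator for $\star$, the isomorphism $\w{x}*\w{-x}\cong\emptyset$, and the empty-web unitor assemble into the triangle identities for the adjoint equivalence $(-*\w{x},-*\w{-x})$. This reduces to a diagrammatic check in $\Sfoam$, transported from $\Stanweb$ via Conjecture~\ref{conj:functoriality}, and is a bookkeeping exercise once sign conventions for the $1\times 2$ and $2\times 2$ resolutions are pinned down.
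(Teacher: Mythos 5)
Your parts (1) and (2) match the paper's own reduction: since every crossing in $W\star\w{x}$ involves a $2$-labeled strand, each resolution in \eqref{eq:thickcrossing-2} is a single web, so $W*\w{x}$ lies in $\Sfoam_{\mathrm{dg}}$, and the accumulated shift $q^{c\cdot x}t^{-c\cdot x}$ is the signed crossing count you describe (only note that minimal position does not force all crossings to be positive; the signed intersection number handles mixed signs anyway). Part (3) is where you diverge and where the proposal is thinnest. Two concrete points. First, $\w{-x}$ is by definition a \emph{parallel} copy of $\w{x}$ with reversed orientation, so $\w{x}\star\w{-x}$ has no crossings at all; its components are \emph{essential} anti-parallel $2$-labeled curves, and the isomorphism to $\emptyset$ comes from the anti-parallel $2$-labeled saddle isomorphism in \eqref{eq:squares} of Lemma~\ref{lem:webisos} (bending the pair into an inessential circle), not from removing already-inessential components via \eqref{eq:circles}. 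Second, the step you defer as ``a bookkeeping exercise'' is the entire content of the paper's proof, and you have misidentified what needs checking: for an equivalence one does not need the triangle identities of an adjoint equivalence, one needs \emph{naturality} of the isomorphism $W\cong (W*\w{x})*\w{-x}$ with respect to arbitrary foams $F\colon W_1\to W_2$. The paper exhibits the components explicitly as ``roof gutter foams'' $\mu_W$ (the birth of the anti-parallel pair $\w{x}\sqcup\w{-x}$ sitting as a half-pipe over $\id_W$) and proves $\mu_{W_2}\circ F=(F*\w{x}*\w{-x})\circ\mu_{W_1}$ by writing both sides as $\Su\Kh(\iota(F)\star\mu_\emptyset)$ for isotopic positions of $\mu_\emptyset$, which Conjecture~\ref{conj:functoriality} converts into equality of the induced maps.

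Your framework can in principle recover this by a different route: whiskering the object isomorphism $G\colon\w{x}\sqcup\w{-x}\to\emptyset$ with the superposition bifunctor gives components $\id_W*G$, and naturality is exactly the interchange law $F*G=(\id*G)\circ(F*\id)=(F*\id)\circ(\id*G)$ encoded in the bifunctoriality of Proposition~\ref{prop:bifunctor}, which is unambiguous here because all complexes involved are concentrated in a single homological degree, so homotopic chain maps are equal. But you still owe the identification, \emph{on morphisms}, of the iterated functor $((-)*\w{x})*\w{-x}$ with $-*(\w{x}\sqcup\w{-x})$, i.e., that the Khovanov chain map of a triple superposition can be computed by resolving crossings in stages; this is the two-step argument spelled out in the proof of Lemma~\ref{lem:essstar}, and your appeal to associativity of $\star$ in $\Stanweb$ does not by itself supply it. Make those two points explicit and your argument closes; as written there is a gap exactly where the paper's proof does its work.
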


Here $*$ denotes the bifunctor from $\Sfoam^{\times 2}$ to $\HC(\Sfoam)$ from Proposition~\ref{prop:bifunctor}. When contracted with the $2$-labeled multicurve $\w{x}$ this at first sight gives a functor from $\Sfoam$ to $\HC(\Sfoam)$. However, if $W$ is a web with $[W]=c\in H_1(\Su)$, then $W*\w{x}$ is a chain complex concentrated in homological degree $-c \cdot x$, which we may again consider as web, although homologically shifted. This means $-*\w{x}$ naturally maps to the target $\Sfoam_{\mathrm{dg}}$ and it then trivially extends to an endofunctor of $\Sfoam_{\mathrm{dg}}$.

Before giving a proof of Theorem~\ref{thm:autoequ} modulo Conjecture~\ref{conj:functoriality}, we illustrate how $\stwo$-operations behave on morphisms by computing another example of superposition on the torus.

\begin{example}\label{exa:2101-2}  The superposition $(2,1)*(0,1)$ has the following expansion as a chain complex in $\Tfoam$.
\begin{equation*}
\xy
(-10,0)*{
\begin{tikzpicture}[anchorbase]
\draw[very thick, directed=.9] (1.2,0) to (1.2,1.6);
\draw[white, line width=.15cm] (0,0) to (2.4,.8);
\draw[white, line width=.15cm] (0,.8)  to (2.4,1.6);
\draw[very thick, directed=.75] (0,0) to (2.4,.8);
\draw[very thick, directed=.75] (0,.8)  to (2.4,1.6);
\torus{2.4}{1.6}
\end{tikzpicture}
};
(10,0)*{\cong
};
(110,0)*{
 q^2 t^{-2} \;\;
\begin{tikzpicture}[anchorbase]
\draw[very thick, directed=.55] (0,0) [out=30,in=180] to (1.2,.3);
\draw[very thick, directed=.55] (0,.8)to [out=30,in=180] (.9,1);
\draw[very thick] (1.2,1.3)  to [out=90,in=270] (1.2,1.6);
\draw[very thick] (1.2,0) [out=90,in=270] to (1.2,.3);
\draw[very thick, directed=.55] (1.5,.6) [out=0,in=210] to (2.4,.8);
\draw[double] (1.2,.3) to (1.5,.6) ;
\draw[double]  (.9,1) to (1.2,1.3) ;
\draw[very thick, directed=.55] (1.5,.6) to (.9,1);
\draw[very thick, directed=.55] (1.2,1.3) [out=0,in=210] to (2.4,1.6);
\torus{2.4}{1.6}
\end{tikzpicture}
};
(70,15)*{
 q t^{-1} \;\;
\begin{tikzpicture}[anchorbase]
\draw[very thick, directed=.55] (0,0) [out=30,in=180] to (1.2,.3);
\draw[very thick, directed=.55] (0,.8) [out=30,in=270] to (1.2,1.6);
\draw[very thick] (1.2,0) [out=90,in=270] to (1.2,.3);
\draw[very thick, directed=.55] (1.5,.6) [out=0,in=210] to (2.4,.8);
\draw[double] (1.2,.3) to (1.5,.6) ;
\draw[very thick, directed=.55] (1.5,.6) to [out=90,in=210](2.4,1.6);
\torus{2.4}{1.6}
\end{tikzpicture}
};
(70,-15)*{
 q t^{-1} \;\;
\begin{tikzpicture}[anchorbase]
\draw[very thick, directed=.55] (0,0) [out=30,in=270] to (.9,1);
\draw[very thick, directed=.55] (0,.8)to [out=30,in=180] (.9,1);
\draw[very thick] (1.2,1.3)  to [out=90,in=270] (1.2,1.6);
\draw[very thick, directed=.55] (1.2,0) [out=90,in=210] to (2.4,.8);
\draw[very thick, directed=.55] (1.2,1.3) [out=0,in=210] to (2.4,1.6);
\draw[double]  (.9,1) to (1.2,1.3) ;
\torus{2.4}{1.6}
\end{tikzpicture}
};
(30,0)*{
\begin{tikzpicture}[anchorbase]
\draw[very thick, directed=.55] (0,0) [out=30,in=210] to (2.4,1.6);
\draw[very thick, directed=.55] (0,.8) [out=30,in=270] to (1.2,1.6);
\draw[very thick, directed=.55] (1.2,0) [out=90,in=210] to (2.4,.8);
\torus{2.4}{1.6}
\end{tikzpicture}
};
(50,9)*{
\begin{tikzpicture}[anchorbase]
\draw[->] (0,0) to (1,.5);
\end{tikzpicture}
};
(50,-9)*{
\begin{tikzpicture}[anchorbase]
\draw[->] (0,0) to (1,-.5);
\end{tikzpicture}
};
(95,-9)*{
\begin{tikzpicture}[anchorbase]
\draw[->] (0,0) to (1,.5);
\end{tikzpicture}
};
(95,9)*{
\begin{tikzpicture}[anchorbase]
\draw[->] (0,0) to (1,-.5);
\end{tikzpicture}
};
\endxy
\end{equation*} Here the differentials are given by the usual signed zip foams. After collapsing the digons in the two objects in homological degree $-1$, we get the following isomorphic complex.
\begin{equation*}
\xy
(30,0)*{
\begin{tikzpicture}[anchorbase]
\draw[very thick, directed=.55] (0,0.53) to (1.6,1.6);
\draw[very thick, directed=.55] (0,1.06) to (.8,1.6);
\draw[very thick, directed=.55] (.8,0) to (2.4,1.06);
\draw[very thick, directed=.55] (1.6,0) to (2.4,0.53);
 \fill[red,opacity=.2] (0,0.53) -- (1.6,1.6) -- (2.4,1.6) -- (2.4,1.06) -- (.8,0) -- (0,0);
    \fill[red,opacity=.2] (0,1.06) -- (.8,1.6) -- (0,1.6);
    \fill[red,opacity=.2] (1.6,0) -- (2.4,0.53) -- (2.4,0);
      \fill[blue,opacity=.2] (0,0.53) -- (1.6,1.6) -- (.8,1.6) -- (0,1.06);
    \fill[blue,opacity=.2] (.8,0) -- (2.4,1.06) -- (2.4,0.53) -- (1.6,0);
    \node[red] at (1,.15) {$\bullet$};
    \node[blue] at (.5,.85) {$\bullet$};
\torus{2.4}{1.6}
\end{tikzpicture}
};
(90,15)*{
 t^{-1} \;\;
\begin{tikzpicture}[anchorbase]
\draw[double, directed=.55] (0,0.8) to (1.2,1.6);
\draw[double, directed=.55] (1.2,0) to (2.4,.8);
\torus{2.4}{1.6}
\end{tikzpicture}
\bigoplus
q^2 t^{-1} \;\;
\begin{tikzpicture}[anchorbase]
\draw[double, directed=.55] (0,0.8) to (1.2,1.6);
\draw[double, directed=.55] (1.2,0) to (2.4,.8);
\torus{2.4}{1.6}
\end{tikzpicture}
};
(90,-15)*{
 t^{-1} \;\;
\begin{tikzpicture}[anchorbase]
\draw[double, directed=.55] (0,0.8) to (1.2,1.6);
\draw[double, directed=.55] (1.2,0) to (2.4,.8);
\torus{2.4}{1.6}
\end{tikzpicture}
\bigoplus
q^2 t^{-1} \;\;
\begin{tikzpicture}[anchorbase]
\draw[double, directed=.55] (0,0.8) to (1.2,1.6);
\draw[double, directed=.55] (1.2,0) to (2.4,.8);
\torus{2.4}{1.6}
\end{tikzpicture}
};
(150,0)*{
 q^2 t^{-2} \;\;
\begin{tikzpicture}[anchorbase]
\draw[very thick, directed=.55] (0,1) to (.9,1);
\draw[very thick, directed=.55] (0,.6) to (.9,.6);
\draw[very thick, directed=.55] (1.5,1) to (2.4,1);
\draw[very thick, directed=.55] (1.5,.6) to (2.4,.6);
\draw[double] (.9,.6) to (1.5,1) ;
\draw[double]  (.9,1) to[out=45,in=270] (1.2,1.6) ;
\draw[double]  (1.2,0) to[out=90,in=225] (1.5,.6) ;
\draw[very thick, directed=.55] (1.5,1) to (.9,1);
\draw[very thick, directed=.55] (1.5,.6) to (.9,.6);
\fill[red,opacity=.2] (0,1) -- (2.4,1) -- (2.4,1.6) -- (0,1.6);
    \fill[red,opacity=.2] (0,.6) -- (2.4,.6) -- (2.4,0) -- (0,0);
    \fill[blue,opacity=.2] (0,.6) -- (2.4,.6) -- (2.4,1) -- (0,1);
    \node[blue] at (1.1,1) {$\bullet$};
    \node[red] at (2.2,1) {$\bullet$};
\torus{2.4}{1.6}
\end{tikzpicture}
};
(53,9)*{
\begin{tikzpicture}[anchorbase]
  \draw[red, ->] (0,0) to (1,.5);
\end{tikzpicture}
};
(53,-9)*{
\begin{tikzpicture}[anchorbase]
\draw[blue,->] (0,0) to (1,-.5);
  \end{tikzpicture}
  };
(133,-9)*{
\begin{tikzpicture}[anchorbase]
\draw[blue,->] (0,0) to (1,.5);
\end{tikzpicture}
};
(133,9)*{
\begin{tikzpicture}[anchorbase]
  \draw[red,->] (0,0) to (1,-.5);
\end{tikzpicture}
};
\endxy
\end{equation*}

The differentials on the left-hand side merge the parallel $1$-labeled strands, either directly (blue), or going around the torus (red). The degree shift by $2$ is accomplished by placing dots in the indicated locations. The differentials on the right-hand side are given by analogous (dotted) splitter foams, acted upon by $\w{0,1}$. In the degree zero truncation, the entire chain complex splits into the visible $q$-degree $0$ and $2$ parts and we will see in Section~\ref{sec:examples} that these parts are resolutions of the categorifications of the basis elements $(2,2)_T$ and $(2,0)_T*\w{0,1}$ of $\TWebq$.
\end{example}

Theorem~\ref{thm:autoequ} is implied by the following lemma. For this we let $\w{-x}$ denote a parallel copy of $\w{x}$ with the opposite orientation.

\begin{lemma*} The endofunctors $-*\w{x}$ and $-*\w{-x}$ are inverse auto-equivalences of $\Sfoam_{\mathrm{dg}}$. 
\end{lemma*}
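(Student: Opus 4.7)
The plan is to reduce the statement to the computation $\w{x}*\w{-x} \cong \emptyset$ in $\Sfoam$ and then promote this to a natural isomorphism of endofunctors via bifunctoriality of the superposition product. The first observation is that $-*\w{x}$ actually lands in $\Sfoam_{\mathrm{dg}}$, not merely in $\HC(\Sfoam)$: superposing any web $W$ with the purely $2$-labeled multicurve $\w{x}$ produces only $2$-$1$ and $2$-$2$ crossings, and by \eqref{eq:thickcrossing-2} each of these resolves to a single shifted identity-like web with no differential. Thus Proposition~\ref{prop:bifunctor} specializes to give genuine functors $-*\w{\pm x}\colon \Sfoam \to \Sfoam_{\mathrm{dg}}$.

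Next, I would choose $\w{-x}$ to be a parallel push-off of $\w{x}$ with reversed orientation, so that (after trivial isotopy) $\w{x}*\w{-x}$ presents as a collection of pairs of antiparallel $2$-labeled strands. Applying the third square isomorphism of Lemma~\ref{lem:webisos} (the categorified form of \eqref{eqn:squares}) at each pair replaces these strands by $2$-labeled cup/cap pairs, creating a collection of inessential $2$-labeled circles, which are then removed via the $2$-labeled circle isomorphism of Lemma~\ref{lem:webisos}. One could track the accumulated $q$- and $t$-shifts directly, but they must cancel: the decategorification of the resulting isomorphism is $\w{x}*\w{-x} = \w{0} = \emptyset$ in $\SWebq$ by Lemma~\ref{lem:2labsub}, which forces all shifts to vanish. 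Hence $\w{x}*\w{-x} \cong \emptyset$ in $\Sfoam$ with trivial total shift.

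Finally, I would bootstrap this identification into a natural isomorphism of endofunctors. Lemma~\ref{lem:assoc} provides a monoidal associator $(W*\w{x})*\w{-x} \cong W*(\w{x}*\w{-x})$ in $\Stanweb$, natural in $W$; passing through Proposition~\ref{prop:bifunctor} and composing with the superposition of $\id_W$ and the just-established web isomorphism $\w{x}*\w{-x} \cong \emptyset$ yields a natural isomorphism to $W*\emptyset \cong W$, and the reverse composition is handled symmetrically. For the grading claim, writing $c = c_1 + 2 c_2$ for the decomposition of $[W]$ into the $H_1$-contributions from $1$-labeled edges and doubled-up $2$-labeled edges, the signed intersection number of $W$ with $\w{x}$ splits as $c_1 \cdot x$ crossings of $1$-$2$ type (each contributing $qt^{-1}$) and $c_2 \cdot x$ crossings of $2$-$2$ type (each contributing $q^2 t^{-2}$) by \eqref{eq:thickcrossing-2}, summing to the advertised shift $q^{c\cdot x} t^{-c\cdot x}$. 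The main subtle point to verify is that the various local isomorphisms assemble coherently into a \emph{natural} transformation at the level of arbitrary foams, and not merely an equivalence on objects; this is precisely what requires the full strength of Proposition~\ref{prop:bifunctor} and explains why the lemma is necessarily conditional on Conjecture~\ref{conj:functoriality}.
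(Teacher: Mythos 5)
Your argument reaches the right conclusion but by a different route than the paper, and one step is thinner than you acknowledge. The paper does not factor the equivalence through the object-level computation $\w{x}*\w{-x}\cong\emptyset$ plus associators; instead it \emph{directly} constructs the natural isomorphism $\mu_W\colon W\to W*\w{x}*\w{-x}$ as an explicit invertible ``roof gutter'' foam, built from local models at each intersection point of $W$ with $\w{x}$ (essentially the birth of the antiparallel pair $\w{\pm x}$, capped off near each crossing), and then verifies naturality by observing that both sides of the naturality square equal $\Su\Kh(\iota(F)\star\mu_\emptyset)$ up to an isotopy of $\mu_\emptyset$ --- which is where Conjecture~\ref{conj:functoriality} enters. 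Your $\w{x}*\w{-x}\cong\emptyset$ is precisely the paper's $\mu_\emptyset$ (inverted), so the two proofs share their geometric core; what the direct construction buys is that the target of $\mu_W$ is already the \emph{iterated} superposition $W*\w{x}*\w{-x}$, so no associativity identification is needed.

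That identification is the gap in your version. Lemma~\ref{lem:assoc} supplies associators only in $\Stanweb$, i.e.\ before applying $\Su\Kh$; the composite endofunctor $(-*\w{x})*\w{-x}$ is defined by applying the bifunctor of Proposition~\ref{prop:bifunctor} \emph{twice}, which involves re-embedding the output of $\Su\Kh$ via $\iota$ and superposing again. Relating this to a single application $-*(\w{x}\star\w{-x})$ is not formal bifunctoriality; it requires the locality/partial-resolution argument that the paper spells out in the proof of Lemma~\ref{lem:essstar} (and which again rests on Conjecture~\ref{conj:functoriality}). You should either invoke that argument explicitly or adopt the paper's direct construction. Separately, your claim that the $q$- and $t$-shifts in $\w{x}*\w{-x}\cong\emptyset$ ``must cancel by decategorification'' is not quite a proof --- decategorification cannot detect an even homological shift --- but the honest argument is immediate: an antiparallel push-off has zero algebraic intersection with $\w{x}$, so the $q^{\pm}t^{\mp}$ contributions from positive and negative crossings in \eqref{eq:thickcrossing-2} cancel exactly. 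Your grading bookkeeping $c=c_1+2c_2$ for Theorem~\ref{thm:autoequ} is correct.
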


More precisely, the identity functor on $\Sfoam$ is isomorphic to the composition of the endofunctors $-*\w{x}$ and $-*\w{-x}$ via the natural transformation $\mu$ which associates to a web $W$ an invertible foam $\mu_W$ from $W$ to $W*\w{x}*\w{-x}$, which is modeled in a neighborhood of the intersection of $\id_W$ with $\id_{\w{x}}$ by the following foams
                  
\begin{equation*} 
\begin{tikzpicture} [scale=.6,fill opacity=0.2,anchorbase]
	\draw[very thick, directed=.55] (2,1) to (-2,1);
	\draw (-2,1) -- (-2,4);
	\draw (2,1) -- (2,4);
		\draw (3.53,4.65) to (-1.47,2.15);
	\path [fill=red] (-2,4) to (-1,4) to [out=270,in=180](0,3)to [out=0,in=270] (1,4) to (2,4) to (2,1) to (-2,1) to (-2,4);
	\path [fill=red] (2, 4.5) to  (0,4.5) to (-1,4) to [out=270,in=180] (0,3) to [out=0,in=210] (.65,3.2) to (1.65,3.7) to[out=30,in=270] (2,4.5) ;
	\path [fill=yellow] (-1,4) to (-3,3) to  [out=270,in=180] (-2,2) to [out=0,in=210] (-1.35,2.2) to (.65,3.2) to[out=30,in=270] (1,4) to (-.6,3.2) to[out=135,in=270] (-1,4);
	\path [fill=yellow]  (-1,3) to [out=270,in=30] (-1.35,2.2) to (.65,3.2) to[out=30,in=270] (1,4) ;
\path [fill=yellow] (2,5.5) to (0,4.5) to  [out=270,in=180] (1,3.5) to [out=0,in=210] (1.65,3.7) to (3.65,4.7) to[out=30,in=270] (4,5.5) to (2.4,4.7) to[out=135,in=270] (2,5.5);
	\path [fill=yellow]  (2,4.5) to [out=270,in=45] (1.65,3.7) to (3.65,4.7) to[out=45,in=270] (4,5.5) ;	
	\draw [very thick, red, directed=.9] (1,4) to [out=270,in=0] (0,3) to [out=180,in=270] (-1,4);
	\draw [very thick, red, rdirected=.9] (2,4.5) to [out=270,in=0] (1,3.5) to [out=180,in=270] (0,4.5);
		\draw[very thick, directed=.55] (2,4) to (1,4);
		\draw[very thick,rdirected=.55] (0,4.5) to (2,4.5);
\draw[very thick] (0,4.5) to (-1,4);
\draw[very thick] (2,4.5) to (1,4);
	\draw[very thick, directed=.65] (-1,4) to (-2,4);
		\draw[double, directed=.5] (1,4) to (-1,3);
		\draw[double, rdirected=.70] (-1,4) to (-3,3);
		\draw (-1,3) to [out=270,in=0] (-2,2) to [out=180,in=270] (-3,3);
		\draw (4,5.5) to [out=270,in=0] (3,4.5) to [out=180,in=270] (2,5.5);
		\draw[double, directed=.55] (0,4.5) to (2,5.5);
		\draw[double, rdirected=.55] (2,4.5) to (4,5.5);
\end{tikzpicture}
\quad,\quad
\begin{tikzpicture} [scale=.6,fill opacity=0.2,anchorbase]
	\draw[very thick, directed=.55] (2,1) to (-2,1);
	\draw (-2,1) -- (-2,4);
	\draw (2,1) -- (2,4);
	\path [fill=yellow] (.85,4.85) to [out=270,in=208](3.45,4.6) to[out=45,in=270] (4,5.5) to (3,5) to [out=208,in=208] (1,5) ;
	\path [fill=yellow] (.85,4.85) to [out=270,in=208](3.45,4.6) to[out=208,in=0] (3,4.5) to[out=180,in=270] (2,5.5) to (1,5) to  [out=208,in=180] (2.5,4.8) ;
	\path [fill=yellow]  (-1,3) to [out=270,in=0] (-2,2) to (-2,1) to (2,1) to (2,4) to (1.5,4) to [out=180,in=30] (-1,3);
\path [fill=yellow]  (-2,4) to (-1.5,4) to [out=0,in=30] (-3,3) to [out=270,in=180] (-2,2) to [out=0,in=270] (-1,3) to[out=135,in=270] (-1.3,3.95) to (-2,3.5);
\path [fill=yellow]  (-1,3) to[out=135,in=270]  (-1.3,3.95) to (-2,3.5) to (-2,2) to [out=0,in=208] (-1.55,2.1) 
 to(-.05,2.7)  to [out=180, in=315](-1,3);
 \path [fill=yellow] (-1.55,2.1) 
 to(-.05,2.7)  to [out=180, in=315](-1,3) to[out=270,in=45](-1.55,2.1) ;
		\draw[double, directed=.55] (2,4) to (1.5,4) to [out=180,in=30] (-1,3);
		\draw[double, directed=.55] (-3,3) to [out=30,in=0] (-1.5,4) to (-2,4);
		\draw[double, directed=.55] (4,5.5) to (3,5) to  [out=208,in=208] (1,5) to (2,5.5);
		\draw (-1,3) to [out=270,in=0] (-2,2) to [out=180,in=270] (-3,3);
		\draw (.85,4.85) to [out=270,in=208](3.45,4.6);
		\draw (4,5.5) to [out=270,in=0] (3,4.5) to [out=180,in=270] (2,5.5);
\end{tikzpicture}
\end{equation*} 
as well as their reflections if the sign of intersection is opposite. Away from $\id_{\w{x}}$, the foam $\mu_W$ is simply given by the disjoint union of $\id_W$ and the continuation of the yellow $\mathrm{half circle}\times [0,1]$ facets. Motivated by their appearance we call the $\mu_W$ \emph{roof gutter foams}. They are invertible with inverses given by (signed) reflection in a horizontal plane. 
   
   \begin{proof}
To see that $\mu$ is a natural transformation, we need to check that for any foam $F\colon W_1 \to W_2$ we have
\begin{equation}
\label{eq:nattrans}\mu_{W_2} \circ F = (F*\w{x}*\w{-x})\circ \mu_{W_1},\end{equation} 
in other words, that roof gutters can be pushed through any foam. This again follows from Conjecture~\ref{conj:functoriality} since both sides of \eqref{eq:nattrans} can be written as $\Su\Kh(\iota(F)\star \mu_\emptyset)$ and related by isotoping $\mu_\emptyset$.
\end{proof}

As a consequence we get that $\mu$ induces isomorphisms $\Sfoam(W_1,W_2)\cong \Sfoam(W_1*\w{x},W_2*\w{x})$. 

\begin{remark} 
Note that $\w{x}*\w{y}\cong q^{2x\cdot y}t^{-2x\cdot y}\w{x+y}$. Similarly, if $F$ is a foam in $\Sfoam$, then we have that $(F*\w{x})*\w{y}$ is equivalent to $F*\w{x+y}$ in the sense that they coincide up to grading shifts on their source and target objects and conjugation by a natural isomorphism.
\end{remark}

\subsection{Recovering the Asaeda--Prztycki--Sikora link homologies}
\label{sec:alginv1}
In this section we sketch the construction of another type of algebraic categorical link invariant from the invariants obtained in Theorem~\ref{thm:functoriality}. These invariants are analogous to the Asaeda--Przytycki--Sikora link homologies and agree with them when defined over $\Z/2\Z$. Their construction uses Blanchet's trivalent TQFT and Conjecture~\ref{conj:functoriality}. Recall that Blanchet's TQFT is a functor that associates a $\Z$-graded $\Z$-module to any abstract Blanchet web and a homogeneous homomorphism to any abstract Blanchet foam between webs. We will extend this to field coefficients ($\Q$ or $\Z/2\Z$) and again write $\Hom_{\twoFoam}(\emptyset ,-)$ for the resulting functor.

The basic idea is to first apply Blanchet's TQFT to the Khovanov chain complexes on $\Su$, but regarded as chain complexes of abstract webs and foams between them, and then to refine the resulting invariant by a surface-specific grading. 

There are two main problems with this basic idea. First, not all foams in $\Sfoam$ qualify as Blanchet foams, and second, there can be webs in $\Sfoam$ that bound no abstract Blanchet foams at all. 

In order to remedy the first problem, we can project to the orientable part $\Sfoam^{\textrm{or}}$, whose morphisms qualify as Blanchet foams by Proposition~\ref{prop:Blanchet}. 
 
As regards the second problem, note that for any web $W$ on $\Su$, we can find a $2$-labeled web $\wedge^W$ such that the superposed web $W\ast \wedge^W$ admits an invertible foam $\Gamma_{W}$ from a purely $1$-labeled web. $\Gamma_W$ can be chosen such that its underlying $1$-labeled surface is an identity cobordism, so it is a Blanchet foam. The source web of $\Gamma_W$ is abstractly just a union of circles, thus it abstractly bounds, and by composition so does the web $W\ast \wedge^{W}$. In other words, any web can be made into a Blanchet web up to $\stwo$-operations.

Now, let us decompose the category $\Sfoam^{\textrm{or}}$ into blocks. The homology class $[W]$ induced by a web $W$ as in Definition~\ref{def:homologyclass} is invariant in each block, however, in general the decomposition into blocks will be finer than the decomposition along $H_1(\Su)$. For example, the two webs in Example~\ref{exa:1001} have the same first homology class, but lie in different blocks: indeed, only unorientable foams could map between them. We now choose one web $W_b$ per block, and make a choice of a $2$-labeled web $\wedge^{W_b}$, such that $W\ast\wedge^{W_b}$ is a Blanchet web.

Then it follows that the $\stwo$-operation $-\ast\wedge^{W_b}$ turns all webs in the block $b$ into Blanchet webs. Indeed, for $V\in b$ there exists an orientable foam $F$ from $W_b$ to $V$ (given as the composition of non-zero morphisms in $\Sfoam^{\textrm{or}}$) and $F\ast\wedge^{W_b}$ pre-composed with $\Gamma_{W_b}$ produces a Blanchet foam that maps between a disjoint union of $1$-labeled circles and $V\ast \wedge^{W_b}$. This implies that $V\ast \wedge^{W_b}$ bounds a Blanchet foam and thus is a Blanchet web. 

Let $\cal{L}_{\Su}$ denote the free $\Z$-module spanned by unoriented, essential simple closed curves on $\Su$ up to isotopy.

\begin{definition**} 
The \emph{twisted Blanchet TQFT} for $\Su$ is the functor $\Su\Bl\colon \Sfoam^{\textrm{or}} \to \Vect^{\Z}$ from the foam category $\Sfoam^{\textrm{or}}$ to the category of $\Z$-graded vector spaces and grading preserving linear maps, which is defined as follows. For the block $b$ of $\Sfoam^{\textrm{or}}$, the functor is defined as $\Su\Bl(-):=\Hom_{\twoFoam}(\emptyset ,-\ast \wedge^{W_b})$. Evaluated on a web $V\in b$, this produces a vector space $\Su\Bl(V)$ that is spanned by abstract foams $G$ with boundary $V\ast \wedge^{W_b}$, whose $1$-labeled parts can be assumed to consist of dotted and undotted disks, as a result of neck-cutting relations. The $\Z$-grading of such a basis element is given by twice the number of dots, minus the number of connected components of $c(V)=c(V\ast \wedge^{W_b})$. 

The vector spaces $\Su\Bl(V)$ furthermore admit an additional $\cal{L}_{\Su}$-grading, which is defined on a spanning foam $G$ as follows. If a disk in $c(G)$ bounds a curve that is inessential in $\Su$, then it does not contribute. Otherwise, let $c$ denote the essential boundary component. Then the disk contributes $c$ to the $\cal{L}_{\Su}$-grading if the disk is undotted and $-c$ if the disk is dotted.
\end{definition**}

\begin{proposition**}\label{prop:gradpres} The images of morphisms in $\Sfoam_0$ under the functor $\Su\Bl$ preserve the $\cal{L}_{\Su}$-grading and the complementary $q$-grading. Thus, we get an induced functor $\Su\Bl\colon \Sfoam_0 \to \Vect^{\cal{L}_{\Su}\times \Z}$ to the category of $\cal{L}_{\Su}\times \Z$-graded vector spaces and grading-preserving linear maps between them.
\end{proposition**}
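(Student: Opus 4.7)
The plan is to exploit the severe topological restrictions that follow from Corollary~\ref{cor:nonneggrading} and Lemma~\ref{lem:Eulerchar} to show that any morphism in $\Sfoam_0$ acts on the basis foams of $\Su\Bl$ by only transporting disk boundaries along topologically innocuous annuli. The key geometric input is that a properly embedded annulus in $\Su\times[0,1]$ always has isotopic boundary circles on $\Su$, so they represent the same class in $\cal{L}_{\Su}$, and this forces the $\cal{L}_{\Su}$-grading to be preserved. A short bookkeeping argument will then take care of the complementary $\Z$-grading.

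First I would observe that by Corollary~\ref{cor:oriented}, morphisms in $\Sfoam_0$ are represented by orientable foams, so we may work inside $\Sfoamor$. Combining Corollary~\ref{cor:nonneggrading} and Lemma~\ref{lem:Eulerchar}, the underlying $1$-labeled surface of any degree-zero orientable foam $F\colon V\to V'$ can be taken to consist only of incompressible annuli and closed tori, with no dots, after using the sphere-freeing argument in Lemma~\ref{lem:closedeval} to eliminate sphere components. Given a basis element $G\in\Su\Bl(V)$ in disk normal form, I would put the composition $(F\ast\id_{\wedge^{W_b}})\circ G$ into normal form by classifying the annular components of $c(F)$ into three types: (i) connectors between $V$ and $V'$, which extend a disk of $G$ to a disk with isotopic boundary on $V'$, preserving its dot decoration; (ii) $V$-side turnbacks, which merge two disks of $G$ into a sphere whose TQFT evaluation is nonzero only when exactly one of the two disks is dotted; and (iii) $V'$-side turnbacks, which are neck-cut via \eqref{sl2neckcutting} into a symmetric sum of pairs of disks on $V'$ with complementary $(0,1)+(1,0)$ dot configurations. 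Closed tori in $c(F)$ contribute scalars after further TQFT reduction and do not affect either grading.

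Since the boundary circles of each annular component are isotopic on $\Su$ and so represent the same element of $\cal{L}_{\Su}$, each case produces zero net shift in the $\cal{L}_{\Su}$-grading: connectors transport $\pm[c]$ to $\pm[c]$, while both types of turnback contribute $(+[c])+(-[c])=0$ either before or after the neck-cutting. For the $\Z$-grading, writing $a,b,b'$ for the number of annuli of each respective type, one has $\#c(V')-\#c(V)=2(b'-b)$ while the total dot count changes by $d'-d=b'-b$ (one dot is consumed per nonvanishing $V$-turnback, and one is gained per neck-cut $V'$-turnback), so the shift in $2d-\#c$ cancels exactly. The hard part will be controlling the interaction between the $2$-labeled facets of $F$ and the identity foam on $\wedge^{W_b}$ when reducing $(F\ast\id_{\wedge^{W_b}})\circ G$ to normal form: the $2$-labeled structure is entangled with the $1$-labeled annuli via seams and may contribute nontrivial signs through applications of \eqref{sl2neckcutting_enh_2lab}, \eqref{sl2DotSliding}, and \eqref{sl2NH_enh}. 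I expect that an inductive argument, reducing the $2$-labeled part to scalars in a manner analogous to the sphere- and blister-freeing procedures in the proofs of Lemma~\ref{lem:closedeval} and Proposition~\ref{prop:linindep}, will show that the $2$-labeled structure decouples from the $1$-labeled disk analysis above and contributes only to the overall scalar, which yields grading preservation.
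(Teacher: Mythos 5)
Your argument is correct and follows essentially the same route as the paper's proof: reduce $c(F)$ to undotted incompressible annuli and tori, then check the two gradings case by case on vertical annuli, bottom turnbacks (nonzero only on one-dotted-disk pairs, which sit in degree zero) and top turnbacks (neck-cut into the symmetric sum of dot placements). The only point where you are more cautious than necessary is the interaction with the $2$-labeled facets: since both the $\cal{L}_{\Su}$-grading and the complementary $q$-grading are defined purely in terms of the underlying $1$-labeled surface and its dots, the paper simply notes that one may focus on $c(F)$ from the start, so no separate inductive decoupling argument is needed.
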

\begin{proof} Let $F$ be a foam in $\Sfoam_0$. Since the refinement of the gradings depends only on the topology of the underlying surface and the presence of dots, we focus on $c(F)$. We have seen in the proof of Proposition \ref{prop:linindep} that we may assume that this dotted surface consists of undotted incompressible annuli and tori as well as other components involving only null-homologous circles. Under the twisted Blanchet TQFT, these other components make no contributions to the $\cal{L}_{\Su}$-grading and they preserve the complementary $q$-grading because they do so in $\Sfoam_0$. Similarly, undotted tori and undotted vertical annuli, i.e. those with one boundary component at the top and one at the bottom of $\Su\times [0,1]$, trivially preserve both gradings. Thus, it remains to deal with annuli that either have both boundary components on the bottom or on the top of $\Su\times [0,1]$. In the first case, the twisted Blanchet TQFT produces a map which can be non-zero only on basis elements that consist of two disks, exactly one of which carries a dot (since only spheres with precisely one dot have a non-zero evaluation). Such basis elements are of degree zero, as is the empty basis element. In other words, the morphism assigned to the annulus with bottom boundary preserves all gradings. The morphism assigned to the annulus with top boundary is computed by abstract neck-cutting: it sends zero disks to the sum of two configurations of two disks, which differ in the location of a single dot. Again, both the domain and the target object carry the same degrees.   
\end{proof}

\begin{definition**} \label{def:SKh}
  We define the link homology functor $\Su\APS\colon \Slinko \to \Vect^{\cal{L}_{\Su}\times \Z\times \Z}$ as the composition of the Khovanov functor $\Slinko\to \HC(\Sfoam)$, the projection to $\HC(\Sfoam_0)$, the twisted Blanchet TQFT $\Su\Bl$ and the functor of taking the homology of an $\cal{L}_{\Su}\times\Z$-graded chain complex. 
\end{definition**}

The functor $\Su\APS$ is closely related to the one originally defined by Asaeda--Przytycki--Sikora \cite{APS}.
Because of the lack of functoriality that the APS construction inherits from Khovanov's construction, we can only compare our process with theirs over $\Z/2\Z$. The main ingredient for constructing their link homologies is a functor from Bar-Natan's cobordism category $\SCob$ to the category of $\cal{L}_{\Su}\times \Z$-graded vector spaces. Over $\Z/2\Z$ this can be pre-composed with the forgetful functor $\Sfoam\to \SCob$ defined on webs and foams by erasing all $2$-labeled edges and facets and by forgetting orientations. The following lemma follows by directly comparing this composition with the twisted Blanchet TQFT on elementary cobordisms.

\begin{lemma} \label{lem:APSComp}
  The composition of degree zero projection and the twisted Blanchet TQFT $\Su\Bl$, as used in Definition~\ref{def:SKh}, agrees over $\Z/2\Z$ with the APS TQFT after forgetting all $2$-labeled information and orientations in webs and foams. 
\end{lemma}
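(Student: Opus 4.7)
The plan is to establish the equality of the two functors by comparison on a generating set of $\SCob$. Both functors in question are $\Z/2\Z$-linear and additive, and any object of $\SCob$ is a disjoint union of simple closed curves on $\Su$, while every morphism can be factored (non-uniquely) as a composition of the elementary cobordisms: identity cylinders, births, deaths, merges, splits, and dotted cylinders. So it suffices to verify agreement on each of these building blocks, together with agreement on objects.

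First I would compare the vector spaces assigned to a single simple closed curve $c$ on $\Su$, distinguishing the essential and inessential cases. On the APS side this gives a two-dimensional space whose basis elements carry explicit $\cal{L}_{\Su}\times\Z$-degrees determined by dots and (for essential $c$) by orientations. On the twisted Blanchet side, the curve $c$ lies in some block $b$ and I would choose $\wedge^{W_b}$ to be empty in this case (since a $1$-labeled multi-curve is already a Blanchet web). Then $\Su\Bl(c) = \Hom_{\twoFoam}(\emptyset, c)$, which, by neck-cutting on the abstract Blanchet foam filling $c$ with a disk, reduces to a two-dimensional space spanned by the dotted and undotted disk fillings of $c$. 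Matching these generators against the APS generators is then a direct bookkeeping exercise using the grading conventions of the twisted Blanchet TQFT in Definition \ref{def:SKh}; essentiality and dottedness of the disk give the correct $\cal{L}_{\Su}$- and $q$-contributions in both theories. For disjoint unions, tensoring the vector spaces is imposed on both sides by monoidality.

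Next I would turn to morphisms. The identity cylinder is sent to the identity by both functors. A birth of a circle bounding a disk $D$ in $\Su$ inserts an undotted disk in the foam, which in the twisted Blanchet TQFT corresponds to unit insertion in the Frobenius algebra on the circle, just as in APS. A death corresponds to capping off: abstract sphere evaluation gives $1$ on a dotted sphere and $0$ on an undotted one, reproducing the APS counit. Merges and splits are computed by applying generalized neck-cutting (Lemma \ref{lem:neckcut2}) to the associated pair of pants, which in both TQFTs produces the standard Frobenius multiplication and comultiplication on the vector space assigned to a circle. Finally, a dotted cylinder acts by multiplication by the dot, and since our foam dot relation \eqref{twodotsheet} matches the APS dot relation over $\Z/2\Z$, the two actions coincide. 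Throughout, working over $\Z/2\Z$ kills the sign ambiguities that would otherwise distinguish Blanchet's conventions from those of Bar-Natan, and it trivialises the $\wedge^{W_b}$ twist insofar as no $2$-labeled structure survives the forgetful functor.

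The main obstacle is ensuring that the degree-zero projection in $\Sfoam_0$ is compatible with the APS grading refinement. For this I would invoke Proposition \ref{prop:gradpres}: morphisms in $\Sfoam_0$ are exactly those preserving both the $\cal{L}_{\Su}$-grading and the complementary $q$-grading under $\Su\Bl$, which is precisely the grading preserved by the APS TQFT. Combined with the above generator-by-generator check, this yields the desired equality of functors over $\Z/2\Z$.
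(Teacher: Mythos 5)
Your proposal is correct and takes essentially the same route as the paper, which disposes of this lemma by asserting that it "follows by directly comparing this composition with the twisted Blanchet TQFT on elementary cobordisms"; you have simply written out that elementary-cobordism comparison (births, deaths, merges, splits, dotted cylinders, plus the grading check via Proposition~\ref{prop:gradpres}) in full.
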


Since both approaches follow essentially the same the cube-of-resolutions strategy to resolve link diagrams, we conclude with the following comparison result.

\begin{corollary} For each link $L$ in $\Su\times [0,1]$, the invariant $\Su\APS(L)$ agrees with the APS invariant of $L$ when defined over $\Z/2\Z$, up to overall grading shifts.
\end{corollary}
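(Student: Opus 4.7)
The plan is to deduce the corollary from Lemma~\ref{lem:APSComp} by verifying that the cubes of resolutions used in both constructions are identified under the forgetful functor $\Sfoam \to \SCob$ (when working over $\Z/2\Z$), up to uniform grading shifts that depend only on the number of positive and negative crossings in a diagram of $L$. First I would fix a generic projection of $L$ onto $\Su$ with $n_+$ positive and $n_-$ negative crossings, and expand both invariants as cubes of resolutions indexed by the set of smoothings at each crossing. In the Blanchet--Khovanov construction (see the formulas defining $\Su\Kh$ in Section~\ref{sec:BKh}), each crossing contributes a two-term complex, one term of which is the oriented resolution and the other a web containing a $2$-labeled edge. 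Under the $\Z/2\Z$ forgetful functor $\Sfoam \to \SCob$ that erases $2$-labeled edges/facets and forgets orientations, the two terms of each local crossing complex become exactly the two Kauffman smoothings used in APS, with the $2$-labeled term collapsing to the smoothing in which the two incoming arcs are joined by an edge. The zip/unzip foams become the APS saddle cobordisms.

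Next I would invoke Lemma~\ref{lem:APSComp} term-wise on the cube: the composition of the degree-zero truncation and the twisted Blanchet TQFT $\Su\Bl$ agrees, over $\Z/2\Z$, with the APS TQFT precomposed with the forgetful functor. Since both constructions build their chain complex by formally tensoring the local crossing complexes into the projected diagram, and both differentials are induced by the same elementary saddles (matching up under the forgetful functor by the previous paragraph), the two chain complexes in $\Vect^{\cal{L}_{\Su}\times \Z \times \Z}$ (respectively in the APS target of bigraded $\cal{L}_\Su$-graded vector spaces) become isomorphic after a global shift in the $q$-grading and homological grading that absorbs the normalisation by $n_+$ and $n_-$. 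Taking homology then yields $\Su\APS(L)$ and the APS invariant of $L$ up to that overall grading shift.

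The main obstacle I anticipate is bookkeeping of the grading conventions: the APS construction assigns $q$- and homological shifts to each vertex of the cube using Kauffman's $A$-smoothing/$B$-smoothing sign conventions and a global shift depending on $n_+-n_-$, whereas the Blanchet--Khovanov construction uses the shifts indicated in the display of $\Kh$ on crossings in Section~\ref{sec:BKh}, together with $q$-degree contributions coming from the $2$-labeled edges in resolutions. I would compare these carefully one crossing at a time to check that the discrepancies on each vertex of the cube arise from a single global shift, confirming the phrase "up to overall grading shifts". The $\cal{L}_\Su$-grading requires no shift, since both constructions detect essential simple closed curves in the underlying $1$-labeled surfaces of foams/cobordisms in exactly the same way, as guaranteed by Proposition~\ref{prop:gradpres} together with the definition of the APS $\cal{L}_\Su$-grading.
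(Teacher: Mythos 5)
Your proposal is correct and follows essentially the same route as the paper, which deduces the corollary directly from Lemma~\ref{lem:APSComp} together with the observation that both constructions use the same cube-of-resolutions strategy; you have simply filled in the term-wise comparison and the grading bookkeeping that the paper leaves implicit.
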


\subsection{Spectral sequences from surface embeddings}
\label{sec:ss}
In this section we prove that each embedding of surfaces $\phi\colon \Su \to \Su^\prime$ induces spectral sequences $\Su\APS(L)\rightsquigarrow \Su^\prime\APS(\phi(L))$ for links $L$ in $\Su\times [0,1]$, i.e. Theorem~\ref{thm:ss}. This generalizes the well-known spectral sequences between annular Khovanov homology and the usual Khovanov homology. In this section we work with $\Z/2\Z$-coefficients, but all results hold over $\Q$ and indeed $\Z$ modulo appropriate versions of Conjecture~\ref{conj:functoriality}.

\begin{definition} Let $W$ be a web in $\Sfoam$ and $G$ an $\cal{L}_{\Su}\times \Z$-homogeneous element of $\Su\Bl(W)$ with $\cal{L}_{\Su}$-degree $\sum_{i} m_i c_i$ for $m_i\in\Z$ and distinct isotopy classes of essential simple closed curves $c_i$. Then we define the \emph{$\Su$-weight} of $G$ as $\sum_i m_i\in \Z$.
\end{definition}

If $\Su$ is an annulus, the $\Su$-weight agrees with the $\slnn{2}$-weight in annular Khovanov homology.

\begin{lemma}\label{lem:weightfilt} Let $F$ be a foam between webs $W_1$ and $W_2$ in $\Sfoam^{\textrm{or}}$. Then $F$ induces a linear map between $\Su\Bl(W_1)$ and $\Su\Bl(W_2)$ that does not increase the $\Su$-weight. Moreover, if $F$ is homogeneous of essential $q$-degree $d$, then the linear map lowers the $\Su$-weight by $d$. 
\end{lemma}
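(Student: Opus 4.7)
The plan is to prove the stronger second claim on homogeneous foams and deduce the first statement from the non-negativity of the essential $q$-grading established in Corollary~\ref{cor:nonneggrading}. Given any $F\in\Sfoam^{\textrm{or}}$, we decompose it into a sum of homogeneous components of non-negative essential $q$-degrees $d_1,\ldots,d_k$; once each component is shown to lower the $\Su$-weight by exactly its essential $q$-degree, none of them can increase weight, which gives the first claim. Our main task is therefore to verify the homogeneous case.

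To that end, we will use Morse theory on the height function on $\Su\times[0,1]$ to present $F$ as a composition of elementary pieces: births and deaths of $1$-labeled circles, saddle cobordisms between $1$-labeled circles, placements of dots on $1$-labeled facets, and zip/unzip foams meeting $2$-labeled facets at seams. Both the $\Su$-weight and the essential $q$-degree behave additively under composition, so it suffices to check the matching on each elementary piece. Throughout, we identify $\Su\Bl$ with the $\glnn{2}$ Frobenius-algebra TQFT applied after neck-cutting to dotted and undotted disks: each boundary circle supports a two-dimensional state space spanned by $v_+$ (undotted disk, $\cal{L}_\Su$-degree $+c$ if the circle is essential of isotopy class $c$, and $0$ if inessential) and $v_-$ (dotted disk, $\cal{L}_\Su$-degree $-c$ or $0$).

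The critical topological input is that essential simple closed curves in $\Su$ remain essential in the thickening $\Su\times[0,1]$, since the inclusion induces an isomorphism on $\pi_1$. Hence births and deaths of $1$-labeled circles involve only inessential curves; they do not alter the $\cal{L}_\Su$-grading, and after delooping via Corollary~\ref{cor:neckcut} they correspond to essential $q$-degree zero morphisms in $\Sfoamred$, preserving the weight as required. For a saddle between essential circles, the induced map is the multiplication $m$ or comultiplication $\Delta$ in the local Frobenius algebra, and the standard formulae $m(v_+\otimes v_+)=v_+$, $m(v_+\otimes v_-)=m(v_-\otimes v_+)=v_-$, $m(v_-\otimes v_-)=0$, $\Delta(v_+)=v_+\otimes v_-+v_-\otimes v_+$, $\Delta(v_-)=v_-\otimes v_-$ each lower the $\Su$-weight by exactly one on non-zero outputs, matching the essential $q$-degree $+1$ of a pair of pants; saddles involving some inessential boundary circle reduce to this analysis on the essential components. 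A dot on an essential facet acts as multiplication by $X$, sending $v_+$ to $v_-$ on the affected disk and dropping the $\Su$-weight by $2$, matching its essential $q$-degree $+2$; a dot on an inessential facet delooping-reduces to a $q$-degree-$2$ endomorphism of $\emptyset$, which vanishes in $\Sfoamred$, so the induced map on $\Su\Bl$ is zero and the claim is vacuous. Zip and unzip foams do not alter the $1$-labeled disks in the neck-cut form and are of essential $q$-degree zero, so they preserve the weight.

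The main obstacle will be carrying out these local analyses in the presence of the auxiliary $2$-labeled web $\wedge^{W_b}$ appearing in $\Su\Bl(V)=\Hom_{\twoFoam}(\emptyset, V\ast\wedge^{W_b})$. We address this by observing that $\stwo$-operations with $\wedge^{W_b}$ induce auto-equivalences of $\Sfoam$ up to grading shifts that depend only on the $H_1(\Su)$-class and the complementary $q$-degree and are independent of $\Su$-weight (Theorem~\ref{thm:autoequ}). Consequently $\wedge^{W_b}$ can be isotoped away from each elementary piece of $F$ without affecting the weight computation, reducing the verification to the local analyses above.
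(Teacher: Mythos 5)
Your overall strategy is the same as the paper's: decompose $F$ into elementary pieces, check on each piece that the drop in $\Su$-weight equals the essential $q$-degree (dotted essential cylinder: drop $2$; essential pair of pants: drop $1$, via exactly the Frobenius-algebra formulas you write), and use non-negativity of the essential $q$-grading to deduce the filtration statement. The handling of the auxiliary web $\wedge^{W_b}$ is also fine, if slightly more elaborate than needed: the $\cal{L}_{\Su}$-grading only sees the $1$-labeled part of the basis foams, so only $c(F)$ matters.

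There is, however, a concrete gap in your reduction to the clean elementary cases. A naive Morse decomposition of $c(F)$ produces saddles whose boundary circles mix essential and inessential curves in ways not covered by your dictionary, and the phrase ``saddles involving some inessential boundary circle reduce to this analysis on the essential components'' does not dispose of them. The problematic case is a saddle merging two parallel essential circles into an \emph{inessential} circle (or the reverse split); such saddles genuinely occur, e.g.\ twice in any boundary-parallel incompressible torus component and in every turnback annulus. For these, the pair of pants is not of essential $q$-degree $1$: after delooping the inessential circle via Corollary~\ref{cor:neckcut} it decomposes into an undotted turnback annulus (essential degree $0$, weight-preserving) and a dotted one (essential degree $2$, weight drop $2$), and one must carry out this delooping and re-verify the weight bookkeeping on each component. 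The paper avoids the issue entirely by first compressing $c(F)$ to a disjoint union of incompressible components using the generalized neck-cutting of Lemma~\ref{lem:neckcut2} (and cancelling critical points whose circles bound disks on the surface), after which every circle in every intermediate slice may be assumed essential and only your three clean cases survive. You should either insert that incompressibility reduction at the start, or explicitly work out the delooped mixed saddles; as written, the argument does not yet cover foams as basic as the undotted essential torus.
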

\begin{proof} The $\cal{L}_{\Su}$-grading and weight depend only on the $1$-labeled surface $S=c(F)$ and we may assume that $S$ is a disjoint union of incompressible connected components $S_i$ and, without loss of generality, none of the $S_i$ is a (dotted) disk or a (dotted) closed surface. (Neither contribute to the weight, and the latter are sent to zero by the representable functor if they have positive essential $q$-degree.)

Suppose that $F$, and more concretely the component $S_1$, is of positive essential $q$-degree. This means $S_1=S_1^\prime \circ X$ where $X$ is either:
\begin{enumerate}
\item an identity cobordism with a dot on the cylinder over an essential simple closed curve $c$
\item a saddle cobordism merging two distinct essential simple closed curves $c_1,c_2$ into another essential simple closed curve $c$
\item a saddle splitting an essential simple closed curve $c$ into two essential simple closed curves $c_1,c_2$.
\end{enumerate}
In these cases, $X$ acts as the identity on everything except on cup generators of the following degrees:
\begin{enumerate}
\item $c\mapsto -c$
\item $c_1+c_2\mapsto c$, $c_1-c_2 \mapsto -c$ and $c_2-c_1 \mapsto -c$
\item $-c\mapsto -c_1-c_2$  $c\mapsto c_1-c_2, c_2-c_1$
\end{enumerate}
In each case, the $\Su$-weight decreases by the essential $q$-degree of $X$. We have already seen in Proposition~\ref{prop:gradpres} that morphisms of essential $q$-degree zero preserve the $\cal{L}_{\Su}$-grading and thus the $\Su$-weight. Together, these statements imply the lemma.
\end{proof}

\begin{lemma}\label{lem:comsq} For a suitable choice of correcting multi-curves $\gamma$, we have a commutative diagram
\[
\xymatrix{
\Sfoam^{\textrm{or}} \ar@{->>}[r] \ar[d]_{\Su\Bl} & \Sfoam_0 \ar[d]^{\Su\Bl} \\
  \cal{F}_{\Su}\Vect^{\Z} \ar@{->>}[r]^{\textrm{ass. gr.}}& \Vect^{\cal{L}_{\Su}\times \Z}
}
\]
where $\cal{F}_{\Su}\Vect^{\Z}$ is the category of $\Z$-graded vector spaces and grading-preserving linear maps, whose objects are additionally $\cal{L}_{\Su}$-graded, and the linear maps are filtered with respect to the $\Su$-weight as in Lemma~\ref{lem:weightfilt}. The bottom horizontal arrow is the functor of taking the associated graded with respect to the filtration.
\end{lemma}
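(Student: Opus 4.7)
The plan is to verify the commutativity by examining the two compositions separately on a generating set of morphisms of $\Sfoam^{\textrm{or}}$ and then matching them. The first task is to exhibit the left vertical functor $\Su\Bl \colon \Sfoam^{\textrm{or}} \to \cal{F}_{\Su}\Vect^{\Z}$. On objects, we use the twisted Blanchet TQFT: the vector space $\Su\Bl(W)$ is equipped with its $\Z$-grading (twice dot number minus the component count of $c(V)$) and with its $\cal{L}_{\Su}$-refinement, as defined in Section~\ref{sec:alginv1}. The filtration $F_{\leq w}\Su\Bl(W)$ by elements of $\Su$-weight at most $w$ is extracted from the $\cal{L}_{\Su}$-grading by summing coefficients. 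A morphism $F$ of $\Sfoam^{\textrm{or}}$ is sent to a $\Z$-grading preserving linear map by construction of the twisted Blanchet TQFT, and by Lemma~\ref{lem:weightfilt} it respects the $\Su$-weight filtration; more precisely, if $F$ is homogeneous of essential $q$-degree $d$, then $\Su\Bl(F)$ lowers $\Su$-weight by exactly $d$. This makes $\Su\Bl$ a well-defined functor into $\cal{F}_{\Su}\Vect^{\Z}$.

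Next I would treat commutativity on objects. A web $W\in\Sfoam^{\textrm{or}}$ is sent on both routes to the same underlying vector space $\Hom_{\twoFoam}(\emptyset, W*\wedge^{W_b})$; going through $\cal{F}_{\Su}\Vect^{\Z}$ and then taking associated graded with respect to the weight filtration recovers the weight decomposition, which coincides with the $\cal{L}_{\Su}\times\Z$-grading by Proposition~\ref{prop:gradpres}, since the associated graded pieces split further along $\cal{L}_{\Su}$ by the same dot-versus-undotted-disk bookkeeping. This matches the right-hand route, which directly produces this $\cal{L}_{\Su}\times\Z$-graded space in $\Vect^{\cal{L}_{\Su}\times \Z}$.

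For commutativity on morphisms, it suffices to take a homogeneous foam $F\colon W_1 \to W_2$ of essential $q$-degree $d$. If $d>0$, then $F$ is killed by the projection $\Sfoam^{\textrm{or}}\twoheadrightarrow \Sfoam_0$, so the right-hand route gives the zero map; meanwhile, by Lemma~\ref{lem:weightfilt}, $\Su\Bl(F)$ strictly lowers the $\Su$-weight by $d$, hence its associated graded with respect to the $\Su$-weight filtration is zero, matching the right-hand route. If $d=0$, then $F$ survives to a morphism in $\Sfoam_0$; Lemma~\ref{lem:weightfilt} ensures $\Su\Bl(F)$ preserves the $\Su$-weight, and Proposition~\ref{prop:gradpres} ensures it in fact preserves the full $\cal{L}_{\Su}$-grading. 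Hence the associated graded of $\Su\Bl(F)$ with respect to the weight filtration is $\Su\Bl(F)$ itself, regarded as a morphism of $\cal{L}_{\Su}\times\Z$-graded vector spaces, which is exactly what the right-hand route produces.

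The last ingredient is the phrase \emph{suitable choice of correcting multi-curves}. The issue here is the main (if mild) obstacle: the $\cal{L}_{\Su}$-grading and hence the $\Su$-weight on $\Su\Bl(W)=\Hom_{\twoFoam}(\emptyset, W*\wedge^{W_b})$ depend on the choice of $\wedge^{W_b}$, because the $2$-labeled curves $\wedge^{W_b}$ can alter which boundary circles of an evaluating disk are essential. To ensure that the two routes line up, I would impose that each $\wedge^{W_b}$ is chosen transversely to all essential simple closed curves in representative webs of the block $b$ and with intersection number minimizing its homology class, mirroring Definition~\ref{def:JWfoam}; this guarantees that the essential/inessential split on underlying curves is not disturbed by $*\wedge^{W_b}$ and that weights computed on both routes agree. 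Granted such a choice, the verifications above go through, proving the lemma.
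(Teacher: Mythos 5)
Your proposal is correct and follows the same route as the paper's (much terser) proof: the left vertical functor exists by Lemma~\ref{lem:weightfilt}, and commutativity holds because passing to the associated graded kills exactly the morphisms of positive essential $q$-degree (which strictly lower the $\Su$-weight) while acting as the identity on essential $q$-degree zero morphisms, which preserve the full $\cal{L}_{\Su}$-grading by Proposition~\ref{prop:gradpres}. Your extra care about the choice of correcting multi-curves is a reasonable elaboration of a point the paper leaves implicit, but it does not change the argument.
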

\begin{proof}
The existence of the representable functor on the left vertical arrow follows from Lemma~\ref{lem:weightfilt}. Moreover, we have also seen that taking the associated graded with respect to the filtration kills the images of all morphisms of positive essential $q$-degree and acts as the identity on all morphisms of essential $q$-degree zero. 
\end{proof}

For the following, let $\phi\colon \Su \to \Su^\prime$ denote an embedding of surfaces.

\begin{lemma}\label{lem:essemb} If a foam $F$ in $\Sfoam$ is homogeneous of essential $q$-degree zero, the same is true for $\phi(F)$ in $\Su^\prime\twoFoam$. 
\end{lemma}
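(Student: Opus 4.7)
The approach is to use the equivalence $\Sfoam \simeq \Sfoamred$ of Definition~\ref{defi:truncation} to reduce to the case where $F$ lies in $\Sfoamred$, so that its essential $q$-degree coincides with its ordinary $q$-degree $2d - \chi(c(F)) = 0$. Since $\phi$ preserves both the dot count and the abstract topology of $c(F)$, we immediately get $\deg(\phi(F)) = \deg(F) = 0$ as ordinary $q$-degrees in the foam category for $\Su^\prime$.

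The subtlety is that $\phi(F)$ need not lie in the essential subcategory for $\Su^\prime$: circles in $\phi(W_1)$ or $\phi(W_2)$ that are essential in $\Su$ may become inessential in $\Su^\prime$, and the essential $q$-degree of $\phi(F)$ must be read off after transporting to the essential subcategory for $\Su^\prime$. To that end, I would apply the delooping isomorphism of Corollary~\ref{cor:neckcut} to every such inessential-in-$\Su^\prime$ circle appearing in $\phi(W_i)$, producing decompositions $\phi(W_i) \cong \bigoplus_k q^{c^{(i)}_k} V_i$ with $V_i$ essential in $\Su^\prime$, and represent $\phi(F)$ by the matrix $\alpha_2 \circ \phi(F) \circ \alpha_1^{-1}$ of foams between $q$-shifted essential webs.

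The main step is then a degree-bookkeeping argument. The components of the delooping isomorphisms $\alpha_1, \alpha_2$ are caps and cups (dotted and undotted) whose ordinary $q$-degrees $\pm 1$ are calibrated to cancel the $q^{\pm 1}$-shifts on the summands, so each $\alpha$ is a graded-degree-zero isomorphism. Composing with $\phi(F)$, which has graded degree $0$, each matrix entry $\alpha_2^{(l)} \circ \phi(F) \circ \alpha_1^{-1,(k)} \colon q^{c^{(1)}_k} V_1 \to q^{c^{(2)}_l} V_2$ is again of graded degree $0$, showing that $\phi(F)$ is homogeneous of essential $q$-degree $0$ in the foam category for $\Su^\prime$.

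I expect the main point requiring care to be verifying that the cupping and capping along delooped circles does not produce matrix entries of negative ordinary $q$-degree, which would contradict the non-negativity of the essential $q$-grading established in Corollary~\ref{cor:nonneggrading}. This is controlled by the observation, implicit in the proof of Proposition~\ref{prop:gradpres}, that each component of $c(F)$ for an essential-$q$-degree-zero foam is either a dotted sphere or an incompressible surface of Euler characteristic zero without dots; consequently, the topological modifications introduced by the delooping either evaluate to zero on the nose (producing no obstruction) or yield dotted spheres contributing degree zero, so homogeneity in essential $q$-degree $0$ is preserved.
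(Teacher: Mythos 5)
Your proposal reaches the right conclusion, but the logical weight is misplaced: the delooping-and-bookkeeping argument of your second and third paragraphs does not by itself establish anything about the \emph{essential} $q$-degree, and the entire content of the lemma sits in what you defer to the final paragraph. The ordinary $q$-grading splits as the sum of the essential grading and a complementary grading, so knowing that a matrix entry $\alpha_2^{(l)}\circ\phi(F)\circ\alpha_1^{-1,(k)}\colon q^{e_k}U_1\to q^{e_l}U_2$ has total graded degree zero says nothing about its essential degree, which is $e_l-e_k$ and generically nonzero. Entries with $e_l-e_k<0$ vanish automatically by Corollary~\ref{cor:nonneggrading} applied to $\Su^\prime$, so there is no contradiction to fear there; the entries you must actually rule out are those of \emph{positive} essential degree compensated by negative complementary degree, and total-degree preservation under $\phi$ cannot see the difference. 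Hence the clause ``showing that $\phi(F)$ is homogeneous of essential $q$-degree $0$'' at the end of your third paragraph is not justified at that point; as written, the same bookkeeping would ``prove'' the statement for any foam homogeneous of total degree zero, which is false.

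What does kill the positive-essential-degree entries is precisely the topological normal form you invoke at the end, and that observation \emph{is} the paper's entire proof: an essential-$q$-degree-zero foam may be assumed to have underlying $1$-labeled surface consisting of undotted incompressible annuli and tori together with possibly dotted disks, and the embedding $\phi$ creates no new non-disk incompressible components carrying dots or having negative Euler characteristic. Components that become compressible in $\Su^\prime$ are neck-cut into dotted disks and spheres contributing only scalars or zero, so every surviving matrix entry is again assembled from undotted $\chi=0$ incompressible pieces and therefore has essential degree zero. If you promote your last paragraph to the main argument (being careful to say ``dotted disks'' rather than ``dotted spheres'' for the boundary components) and drop the inert degree bookkeeping, your proof coincides with the one in the paper.
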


\begin{proof} By the essential $q$-degree zero assumption, we may assume that the underlying surface of $F$ consists of undotted incompressible annuli and tori as well as potentially dotted disks. The embedding $\phi$ will generate no new non-disk incompressible surfaces with dots or negative Euler characteristic, so $\phi(F)$ is of essential $q$-degree zero in $\Su^\prime\twoFoam$. 
\end{proof}

\begin{theorem} Given an embedding $\phi\colon \Su \to \Su^\prime$ and a link $L$ in $\Slinko$, there exists a spectral sequence\[\Su\APS(L)\rightsquigarrow \Su^\prime\APS(\phi(L)).\] 
\end{theorem}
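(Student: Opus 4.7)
The plan is to build a filtration on the chain complex computing $\Su^\prime\APS(\phi(L))$ whose associated graded computes $\Su\APS(L)$, and then invoke the standard spectral sequence of a filtered chain complex. First, by functoriality of the surface Khovanov construction under orientation preserving embeddings (the last bullet of Proposition~\ref{prop:prop}), the map $\phi$ pushes the cube-of-resolutions complex $\Su\Kh(L)\in\HC(\Sfoam)$ term-by-term to a complex isomorphic to $\Su^\prime\Kh(\phi(L))\in\HC(\Su^\prime\twoFoam)$. Composing with the twisted Blanchet TQFT $\Su^\prime\Bl$ and taking homology therefore computes $\Su^\prime\APS(\phi(L))$.

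Next, I would equip each vector space $\Su^\prime\Bl(\phi(W))$ in this complex with the \emph{$\Su$-weight filtration}: after using neck-cutting to reduce each spanning foam to one with only (possibly dotted) $1$-labeled disks, I count the signed disks whose boundary curves are \emph{$\Su$-essential} in $\phi(\Su)\subset\Su^\prime$, ignoring the essentiality status in $\Su^\prime$. To make this well-defined, I would choose the correcting $2$-labeled webs $\wedge^{W_b^\prime}$ entering the construction of $\Su^\prime\Bl$ compatibly with the $\wedge^{W_b}$ already chosen for $\Sfoam$, so that on the image of $\phi$ the two TQFTs admit matching generators. The Khovanov differentials are composites of elementary identity, saddle, and zip/unzip foams that are intrinsically supported in $\phi(\Su)$; each is homogeneous under the essential $q$-degree on $\Sfoam$, and by Lemma~\ref{lem:essemb} its image under $\phi$ retains the same essential $q$-degree in $\Su^\prime$. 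Lemma~\ref{lem:weightfilt} applied in $\Su^\prime$ then shows that these differentials lower the $\Su$-weight by exactly their essential $q$-degree, so the Khovanov chain complex is a genuine filtered chain complex with respect to $\Su$-weight.

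Finally, Lemma~\ref{lem:comsq} identifies the associated graded of $\Su^\prime\Bl\circ\phi$ on $\Sfoam^{\mathrm{or}}$ with $\Su\Bl$ composed with the essential-degree-zero truncation $\Sfoam^{\mathrm{or}}\twoheadrightarrow\Sfoam_0$. Thus the $E_0$-page of the filtration is $(\Su\Bl\circ\pi_0\circ\Su\Kh)(L)$ as a chain complex in $\Vect^{\cal{L}_\Su\times\Z}$, and taking homology yields $E_1\cong\Su\APS(L)$. Since the $\Su$-weight on each chain group is bounded (by the finite number of $1$-labeled disks in a spanning foam), the filtration is finite in each bidegree and the spectral sequence converges to the homology of the total complex, which is $\Su^\prime\APS(\phi(L))$. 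The main obstacle is the technical setup of the $\Su$-weight filtration on $\Su^\prime\Bl\circ\phi$: one must make coherent block-by-block choices of correcting $2$-labeled webs $\wedge^{W_b}$ and $\wedge^{W_b^\prime}$ so that generators of $\Su^\prime\Bl(\phi(W))$ carry an unambiguous $\Su$-weight even when $\Su$-essential curves become $\Su^\prime$-inessential, and verify that this weight is filtered (not merely graded-on-the-associated-graded) by the zip, unzip, and saddle pieces of the Khovanov differential, via a local analysis analogous to the annular case of Roberts and Grigsby--Licata--Wehrli.
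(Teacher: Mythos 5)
Your proposal is correct and follows essentially the same route as the paper: compatible choices of correcting $2$-labeled multi-curves for $\Su$ and $\Su^\prime$, the $\Su$-weight filtration on the pushed-forward complex (non-increase of weight via Lemma~\ref{lem:weightfilt}, preservation of essential $q$-degree zero under embedding via Lemma~\ref{lem:essemb}), and identification of the associated graded with $\Su\Bl$ on the degree-zero truncation via Lemma~\ref{lem:comsq}. The paper packages this as one large commutative diagram, but the content — including your observation that the $\cal{L}_{\Su}$-grading collapses to an $\cal{L}_{\Su^\prime}$-grading on the target — is the same.
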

\begin{proof}
We will show that $\Su^\prime\APS(\phi(L))$ is the homology of a filtered chain complex, whose associated graded has homology $\Su\APS(L)$, with the $\cal{L}_{\Su}$-grading collapsed to a $\cal{L}_{\Su^\prime}$-grading via the embedding $\phi$. This implies the existence of the spectral sequence.

Recall that the definitions of $\Su\Bl$ and thus $\APS$ depend on the choice of correcting $2$-labeled multi-curves, which are used to turn all webs on the relevant surface into Blanchet webs. Here, we first choose a collection of correcting multi-curves $\gamma$ on $\Su$ and then complete it to a collection $\gamma^\prime$ for $\Su^\prime$. Now we consider the diagram in Figure~\ref{fig:ss}.

\begin{figure}[ht]
\[
\xymatrix@C=1.3em{
& & \Slinko \ar[d]^{\Su\Kh} \ar@/_2pc/[dddll]_{\Su\APS} \ar[r]^\phi& \Su^\prime\cat{Link}^\circ \ar[d]^{\Su^\prime\Kh} \ar@/^2pc/[dddrr]^{\Su^\prime\APS} & &  
\\
& & \HC(\Sfoam) \ar@{->>}[d]\ar@{->>}[dl]\ar[r]^\phi& \HC(\Su^\prime\twoFoam) \ar@{->>}[dr] \ar@{->>}[d]& &
\\
& \HC(\Sfoam_0) \ar[d]^{\Su\Bl}  &\HC(\Sfoam^{\mathrm{or}})\ar@{->>}[l]\ar[d]^{\Su\Bl}\ar[r]^\phi & \HC(\Su^\prime\twoFoam^{\mathrm{or}})\ar[d]^{\Su^\prime\Bl}\ar@{->>}[r] &  \HC(\Su^\prime\twoFoam_0) \ar[d]^{\Su^\prime\Bl}&
\\
\Vect^{\cal{L}_{\Su}\times \Z^2} \ar[d]^{\mathrm{forget}} & \HC(\Vect^{\cal{L}_{\Su}\times \Z}) \ar[l]_{H_*} \ar[d]^{\mathrm{forget}}  &\HC(\cal{F}_{\Su}\Vect^{\Z}) \ar[l]_{\textrm{ass. gr.}} \ar[d]^{\textrm{ass. gr.}} \ar[r]^{\phi} &\HC(\cal{F}_{\Su^\prime}\Vect^{\Z})\ar[r]^{\textrm{ass. gr.}} &\HC(\Vect^{\cal{L}_{\Su^\prime}\times \Z}) \ar[r]^{H_*}& \Vect^{\cal{L}_{\Su^\prime}\times \Z^2}
\\
\Vect^{\cal{L}_{\Su^\prime}\times \Z^2} \ar@{~>}@/_3pc/[rrrrru]_{\textrm{spectral sequence}} &\HC(\Vect^{\cal{L}_{\Su^\prime}\times \Z})  \ar[l]_{H_*} & \HC(\cal{F}_{\Su}\Vect^{\cal{L}_{\Su^\prime}\times \Z}) \ar[ul]_{\textrm{ass. gr.}} \ar[rru]_{\mathrm{forget}}& &  
}
\]
\caption{The origin of embedding spectral sequences.}
\label{fig:ss}
\end{figure}
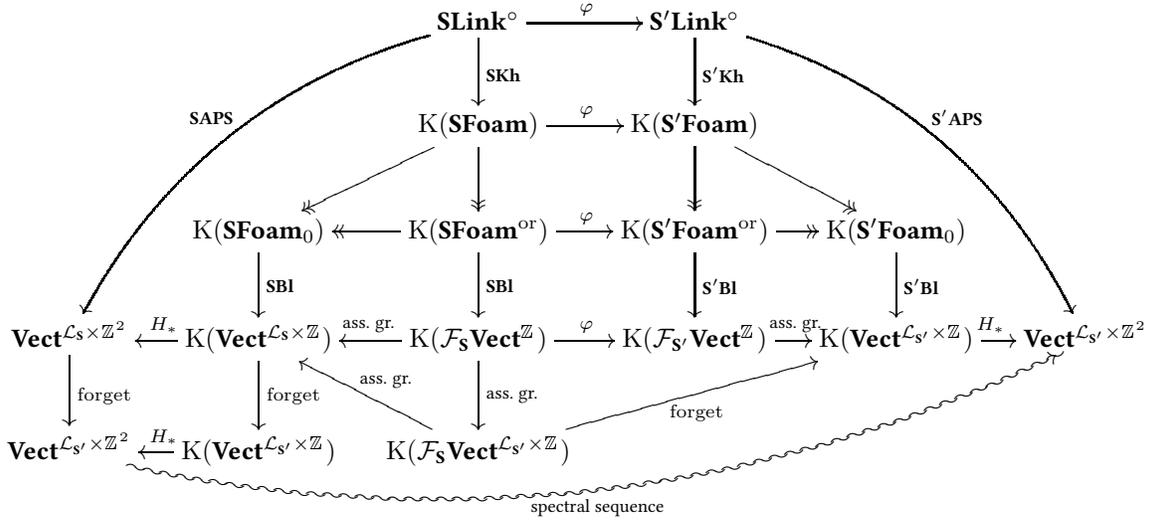
The outside pentagons commute by definition. The upper two central squares commute by the naturality of $\Su\Kh$ in $\Su$ and the fact that orientable foams stay orientable under surface embeddings. The two triangles in the second row commute since they express an iterated quotient. The first and the third square in the third row commute by Lemma~\ref{lem:comsq}. Commutativity of the middle square follows from our choice of correcting curves $\gamma$ and $\gamma^\prime$ and the fact that the $\cal{L}_{\Su}$-degree of a morphism determines the $\cal{L}_{\Su^\prime}$-degree of its image under the surface embedding. In the last row we first see a square of homology and forgetful functors which trivially commutes. The next triangle is more interesting. To make sense of it, we use the following observation.

\textbf{Claim:} Any morphism in $\textrm{im}(\Su\Bl) \subset\cal{F}_{\Su}\Vect^{\Z}$ that preserves the $\Su$-weight also preserves the $\Su^\prime$-weight. 

We fix such a morphism and by linearity we may and do assume that it appears in the image of single foam $F$ in $\Sfoam$ which is homogeneous with respect to essential $q$-degree. By Lemma~\ref{lem:weightfilt}, the essential $q$-degree of $F$ is zero. Then $\phi(F)$ is also of essential $q$-degree zero in $\Su^\prime\twoFoam$ by Lemma~\ref{lem:essemb}, and its image under $\Su^\prime\Bl$ preserves the $\Su^\prime$-weight by Lemma~\ref{lem:weightfilt}.

 Now returning to the interpretation of the triangle, the horizontal arrow kills all maps which do not preserve the $\Su$-weight, which in particular includes all maps that lower the $\Su^\prime$-weight. Alternatively, the vertical arrow first kills only the maps that lower the $\Su^\prime$-weight. The surviving maps in the target $\HC(\cal{F}_{\Su}\Vect^{\cal{L}_{\Su^\prime}\times \Z})$ preserve the $\Su^\prime$-grading on objects---thus the superscript $\cal{L}_{\Su^\prime}\times \Z$---but there can be maps left which decrease the $\Su$-weight grading---indicated by the filtration symbol $\cal{F}_{\Su}$. The diagonal arrow then takes the associated graded with respect to this filtration, which makes the triangle commutative. Alternatively, the $\Su$-weight filtration on $\HC(\cal{F}_{\Su}\Vect^{\cal{L}_{\Su^\prime}\times \Z})$ can be entirely forgotten, which is what the right-and-upward arrow does. The triangle-shaped square above it trivially commutes. This establishes the commutativity of the entire diagram. 

Now, given a link $L$ in $\Slinko$, following the vertical functors downward produces a filtered chain complex in $\HC(\cal{F}_{\Su}\Vect^{\cal{L}_{\Su^\prime}\times \Z})$, whose total homology is isomorphic to $\Su^\prime\APS(L)$ and whose associated graded has homology $\Su\APS(L)$, with the $\cal{L}_{\Su}$-grading collapsed to a $\cal{L}_{\Su^\prime}$-grading via the embedding $\phi$. 
\end{proof}

\section{Toric link homology}
\label{sec:toric}
In this section, we finish the discussion of the foam categories $\Sfoam$ by dealing with the remaining case of the torus $\Su=\T$. In particular, we introduce replacements for the Jones--Wenzl basis foams, which allow the completion of the proof of the isomorphism $\TWebq\cong K_0(\Tfoam)$ and the definition of an algebraic toric Khovanov homology $\T\Kh^\prime$ as in Section~\ref{sec:alginv2}.

\subsection{Affine web categories and extremal weight projectors} \label{sec:projectors}
The purpose of this section is to recall definitions and results from \cite{QW,QW2} on affine web categories and extremal weight projectors for $\glnn{2}$. In the following, we will use these concepts to study a quotient of the toric foam category $\Tfoam$.

\begin{definition} The affine $\glnn{2}$ web category $\AWebq$ is the category with 
\begin{itemize} 
\item objects, finite sequences of elements of the set $\{1,2,1^*,2^*\}$, including the empty sequence,
\item morphisms, $\Z[q^{\pm 1}]$-linear combinations of $\glnn{2}$ webs properly embedded in the annulus $\A$, viewed as mapping from the sequence on the inner boundary circle to the sequence of the outer boundary circle. These webs are considered up to isotopy relative to the boundary and modulo the $\glnn{2}$ web relations \eqref{eqn:circles}--\eqref{eqn:squares}.
\end{itemize}
Here $1$ and $2$ encode radially outward pointing boundary points of associated label, and $1^*$ and $2^*$ encode inward pointing boundary points. Composition is given by stacking annuli with matching boundary data.
\end{definition}
The category $\AWebq$ is monoidal with the tensor product acting by concatenation on objects and by the skein algebra product on morphisms, see Figure~\ref{fig:affinewebs}. The morphisms furthermore admit a $\Z$-grading by winding number, which is computed by the algebraic intersection number of web edges, weighted by label, with the dashed segment shown in Figure~\ref{fig:affinewebs}. In the following we will mostly consider the $q=1$ specialization $\AWeb$, which is actually symmetric monoidal. 

\begin{figure}[ht]
\[ 
\begin{tikzpicture}[anchorbase, scale=.3]
\draw (0,0) circle (1);
\draw (0,0) circle (5);
\draw [very thick,directed=.45] (-.6,.8) to (-.72,.96) to [out=120,in=90] (-2.5,-.5) to [out=270,in=180] (0,-2.5) to [out=0,in=270] (3,0.5) to [out=90, in=0] (1.75,2.5) to [out=180,in=45]  (1,2);
\draw [double,<-] (.6,.8) to [out=60,in=270] (1,1.75) to (1,2);
\draw [double] (-3,4) to (-2.4,3.2);
\draw [very thick,directed=.55] (-2.4,3.2) to [out=0,in=240] (0,4);
\draw [very thick,directed=.55] (-2.4,3.2) to (-2.4,3) to [out=270,in=120] (1,2);
\draw [double,->](0,4)to (0,5);
\draw [very thick,directed=.55] (3,4) to (2.7,3.6) to [out=240,in=300] (0,4);
\node at (.1,.3) {\tiny$1\; 2^*$};
\node at (0,5.5) {\tiny$2$};
\node at (-3.3,4.4) {\tiny$2^*$};
\node at (3.33,4.44) {\tiny$1^*$};
\draw [dashed] (0,-1) to (0,-5);
\end{tikzpicture}
\quad,\quad
W_1\otimes W_2:=
 \begin{tikzpicture}[anchorbase, scale=.3]
\draw[thick] (0,0) circle (2.5);
\fill[black,opacity=.2] (0,0) circle (2.5);
\draw[thick,fill=white] (0,0) circle (1.5);
\draw[thick] (0,0) circle (4.5);
\fill[black,opacity=.2] (4.5,0) arc (0:360:4.5) -- (3.5,0) arc (360:0:3.5);
\draw [thick] (0,0) circle (3.5);
\draw (0,0) circle (1);
\draw (0,0) circle (5);
\draw[dotted] (-2.29,2.29) to [out=225,in=90] (-3.25,0) to [out=270,in=135] (-2.29,-2.29);
\draw[dotted] (-3.4,3.4) to [out=225,in=90] (-4.75,0) to [out=270,in=135] (-3.4,-3.4);
\draw[dotted] (-1.93,1.93) to [out=225,in=90] (-2.75,0) to [out=270,in=135] (-1.93,-1.93);
\draw[dotted] (-0.88,0.88) to [out=225,in=90] (-1.25,0) to [out=270,in=135] (-0.88,-0.88);
\draw [white,line width=.15cm] (-.6,.8) to (-1.8,2.4);
\draw [white,line width=.15cm] (-.6,-.8) to (-1.8,-2.4);
\draw [very thick] (-.6,.8) to (-2.1,2.8);
\draw [very thick] (-.6,-.8) to (-2.1,-2.8);
\draw [very thick] (-2.7,3.6) to (-3,4);
\draw [very thick] (-2.7,-3.6) to (-3,-4);
\draw[dotted] (2.29,2.29) to [out=315,in=90] (3.25,0) to [out=270,in=45] (2.29,-2.29);
\draw[dotted] (3.4,3.4) to [out=315,in=90] (4.75,0) to [out=270,in=45] (3.4,-3.4);
\draw[dotted] (1.93,1.93) to [out=315,in=90] (2.75,0) to [out=270,in=45] (1.93,-1.93);
\draw[dotted] (0.88,0.88) to [out=315,in=90] (1.25,0) to [out=270,in=45] (0.88,-0.88);
\draw [very thick] (1.5,2) to (1.98,2.64);
\draw [very thick] (2.79,3.72) to (3,4);
\draw [very thick] (1.5,-2) to (1.98,-2.64);
\draw [very thick] (2.79,-3.72) to (3,-4);
\draw [very thick] (.6,-.8) to (.9,-1.2);
\draw [very thick] (.6,.8) to (.9,1.2);
\node at (0,-1) {$*$};
\node at (0,-5) {$*$};
\draw [dashed] (0,-1) to (0,-5);
\node at (0,1.95) {\tiny$W_2$};
\node at (0,3.95) {\tiny$W_1$};
\end{tikzpicture}
\]
\caption{
An example of an affine web and the tensor product on affine webs.
}
\label{fig:affinewebs}
\end{figure}
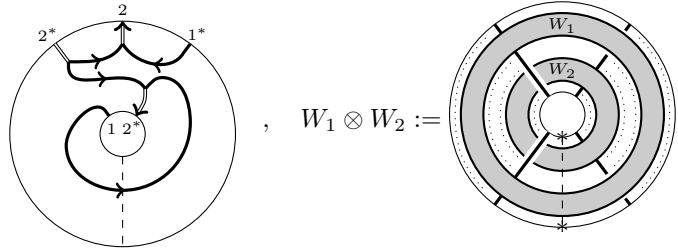
Particularly useful are the auto-equivalence $\lambda(-):=-\otimes \id_2$ given by superposing with a $2$-labeled strand and its quasi-inverse $\lambda^*(-):=-\otimes \id_{2^*}$.

\begin{definition} The category $\essbAWeb$ is defined as the quotient of $\AWeb$ by the tensor ideal generated by the endomorphisms of $\emptyset$ given by essential $1$-labeled circles.
\end{definition}

\begin{definition}\label{def:extrwp} The extremal weight projectors $T_m$ are idempotent endomorphisms of the object $1^{\otimes m}$ in $\essbAWeb$, which are defined recursively starting with $T_1=\id_1$ and
\[T_2= \begin{tikzpicture}[anchorbase, scale=.3]
\draw (0,0) circle (1);
\draw (0,0) circle (3);
\draw [very thick,->] (.8,.6) to (2.4,1.8);
\draw [very thick,->] (-.8,.6) to (-2.4,1.8);
\node at (0,-1) {$*$};
\node at (0,-3) {$*$};
\draw [dashed] (0,-1) to (0,-3);
\end{tikzpicture}
-\frac{1}{2} \;
 \begin{tikzpicture}[anchorbase, scale=.3]
\draw (0,0) circle (1);
\draw (0,0) circle (3);
\draw [very thick] (.8,.6) to [out=45, in=315] (0,1.5);
\draw [very thick] (-.8,.6) to [out=135,in=225] (0,1.5);
\draw [double] (0,1.5) to (0,2.25);
\draw [very thick,->] (0,2.25) to [out=45,in=225](2.16,1.62) to(2.4,1.8);
\draw [very thick,->] (0,2.25) to [out=135,in=315](-2.16,1.62) to(-2.4,1.8);
\node at (0,-1) {$*$};
\node at (0,-3) {$*$};
\draw [dashed] (0,-1) to (0,-3);
\end{tikzpicture}
-\frac{1}{2} \;
 \begin{tikzpicture}[anchorbase, scale=.3]
\draw (0,0) circle (1);
\draw (0,0) circle (3);
\draw [very thick] (.8,.6) to [out=45, in=90] (1.5,0) to [out=270,in=0] (0,-1.5);
\draw [very thick] (-.8,.6) to [out=135,, in=90] (-1.5,0) to [out=270,in=180] (0,-1.5);
\draw[double] (.2,-1.5) to (.2,-2.25);
\draw [very thick,->] (0,-2.25) to [out=0,in=270] (2.25,0) to [out=90,in=225](2.16,1.62) to(2.4,1.8);
\draw [very thick,->] (0,-2.25) to [out=180,in=270] (-2.25,0) to [out=90,in=315](-2.16,1.62) to(-2.4,1.8);
\node at (0,-1) {$*$};
\node at (0,-3) {$*$};
\draw [dashed] (0,-1) to (0,-3);
\end{tikzpicture},\]
and $T_{m+1}:= (\id_{1^{\otimes m-1}} \otimes T_2)(T_m \otimes \id_1)$ for $m\geq 2$.
\end{definition}

\begin{remark} Additionally setting the $2$-labeled essential circles in $\essbAWeb$ equal to $-\id_\emptyset$ produces the affine web category $\essAWeb$, which gives a diagrammatic presentation of the representation category of the Cartan subalgebra of $\glnn{2}$, see \cite[Theorem 1]{QW2}. Under this presentation, the idempotent $T_m$ encodes the endomorphism of $V^{\otimes m}$ given by projection onto the extremal weight spaces in $\Sym^m(V)$.
\end{remark}

When considering the Karoubi envelope of $\essbAWeb$, whose morphism spaces have an addition $\Z$-grading by winding number, we introduce additional winding grading shifts of objects and then consider only morphisms whose winding number is given by the difference of the winding grading of the target and the source object. For more details, see the discussion after \cite[Definition 42]{QW2}. It turns out that for most objects $W$ in $\Kar(\essbAWeb)$, all winding grading shifts $\sh^k W$ are isomorphic to each other, but this is not the case for $\lambda^k(\emptyset)$.

\begin{proposition}\label{prop:webdecomp} The category $\Kar(\essbAWeb)$ is semisimple with a skeleton generated by the objects $\lambda^k(T_{m})$ for $m>1$, $\lambda^k(\emptyset)$ and $\sh\lambda^k(\emptyset)$, where $k \in \Z$. In particular, these objects have $1$-dimensional endomorphism algebras and there are no other non-trivial morphisms between distinct objects from this collection.
\end{proposition}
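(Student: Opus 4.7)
The plan is to bootstrap from the analogous result established in \cite{QW2} for the quotient category $\essAWeb$, in which $2$-labeled essential circles are identified with $-\id_\emptyset$. In the present setting of $\essbAWeb$, these circles are retained as distinct formal objects, and this accounts for the appearance of the extra family $\lambda^k(\emptyset)$ and $\sh\lambda^k(\emptyset)$ indexed by $k \in \Z$; otherwise the structural arguments run in close parallel.

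First I would show that every object of $\Kar(\essbAWeb)$ decomposes into a direct sum of winding shifts of the proposed generators. Using the digon, square and circle web relations applied in disks (see Lemma~\ref{lem:webisos}), any object can be brought into the form $\lambda^k(1^{\otimes m})$ for some $m \geq 0$ and some winding shift. For $m \geq 2$, the extremal weight projector $T_m$ from Definition~\ref{def:extrwp} determines an idempotent whose complement factors through $1^{\otimes(m-2)}$, which is the affine analogue of the $\glnn{2}$ Jones--Wenzl recursion carried out in \cite{QW2}. Inducting on $m$ reduces the decomposition problem to the cases $m=0,1$. The case $m=0$ is exactly $\lambda^k(\emptyset)$ (with both possible winding parities accounted for by $\sh$). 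The case $m=1$ requires an additional isomorphism $\lambda^k(1) \cong \lambda^k(\emptyset) \oplus \sh\lambda^k(\emptyset)$ (up to a suitable winding convention), which categorifies the weight space decomposition $V \cong \Q_{(1,0)} \oplus \Q_{(0,1)}$ of the vector representation of the Cartan subalgebra of $\glnn{2}$; explicit orthogonal idempotents realizing this split can be written using short $2$-labeled arcs absorbing the $1$-labeled strand.

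Second I would verify that each proposed generator has a one-dimensional endomorphism algebra in the appropriate winding-graded sense. For $\lambda^k(T_m)$ with $m > 1$, a winding-zero endomorphism is, after stripping off the superposed $2$-labeled strands, a web endomorphism of $1^{\otimes m}$ pre- and post-composed with $T_m$. The annular turnback-absorption property of $T_m$ from \cite{QW2} kills any contribution containing a local turnback, so only webs that globally wrap around the annulus can contribute non-trivially; but any such wrap, after using the web relations to push it into standard form, produces at least one essential $1$-labeled circle, which is zero in $\essbAWeb$. The only survivor is a scalar multiple of $T_m$. For $\lambda^k(\emptyset)$ and $\sh\lambda^k(\emptyset)$, winding-zero endomorphisms are linear combinations of purely $2$-labeled annular webs, which reduce to scalars via the $2$-labeled circle and digon relations.

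Third, for orthogonality of distinct generators, I would observe that the number $m$ of radial $1$-labeled boundary points is preserved by any non-zero web morphism up to turnbacks, so $T_m$-absorption and the vanishing of essential $1$-labeled circles distinguishes $\lambda^k(T_m)$ from $\lambda^k(T_n)$ for $m \neq n$, and from both empty-type objects. Morphisms $\lambda^k(T_m) \to \lambda^l(T_m)$ with $k \neq l$ would need to transport a net winding of $2$-labeled strands past the $T_m$ idempotents, and an analysis analogous to the endomorphism case shows this again forces an essential $1$-labeled circle and hence zero. The empties $\lambda^k(\emptyset)$ and $\sh\lambda^k(\emptyset)$ are separated by the winding-grading. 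The main obstacle is making the slogan ``any essential wrap creates an essential $1$-labeled component'' fully precise: this is the step that relies most heavily on the normal-form results of \cite{QW2}, and in particular on the interaction between winding shifts, the projectors $T_m$, and the two web relations that allow $2$-labeled edges to be pulled across $1$-labeled strands.
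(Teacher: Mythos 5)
The paper gives no argument for this proposition beyond the citation to \cite[Section 4.2]{QW2}, and your overall architecture (peel off $T_m$ via the recursion so that the complement of $T_m$ in $1^{\otimes m}$ factors through $\lambda(1^{\otimes m-2})$, then compute winding-degree-zero morphism spaces between the resulting objects) is the expected one. However, your case $m=1$ contains a genuine error. There is no isomorphism $\lambda^k(1)\cong\lambda^k(\emptyset)\oplus\sh\lambda^k(\emptyset)$; in fact $\Hom\bigl((1),\lambda^l(\emptyset)\bigr)=0$ for every $l$, because webs conserve weighted flow (each $1$-labelled boundary point contributes $\pm1$, each $2$-labelled one $\pm2$, and trivalent vertices satisfy $1+1=2$), so no web in the annulus can have total weighted degree $1$ on one boundary circle and an even total on the other; equivalently, the doubled multi-curve would be a compact $1$-manifold with an odd number of arc endpoints. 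The weight-space decomposition $V\cong\Q_{(1,0)}\oplus\Q_{(0,1)}$ you invoke is not realized by winding-homogeneous idempotents, and its summands are not powers of the determinant representation, so they do not correspond to $\lambda^k(\emptyset)$ or $\sh\lambda^k(\emptyset)$. The correct resolution is that $T_1=\id_1$ is itself simple, with one-dimensional winding-degree-zero endomorphism algebra, and the family $\lambda^k(T_1)$ belongs in the skeleton; this is forced by consistency with Proposition~\ref{prop:Tss}, where the simples $(m,n)^F_T$ with $\gcd(m,n)=1$ are built from $T_1$. (The bound ``$m>1$'' in the statement should accordingly be read as ``$m\geq 1$''.)

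A secondary imprecision: you argue that any endomorphism of $T_m$ that wraps around the annulus produces an essential $1$-labelled circle and hence vanishes. This is not the mechanism: the wrap $\wrap$ survives in $\essbAWeb$ and generates $\End(T_m)\cong\C[\wrap^{\pm1}]$ when the winding grading is ignored (compare Lemma~\ref{lem:KarTmor}); it is excluded only because it has non-zero winding degree. Likewise, essential $2$-labelled circles are not killed in $\essbAWeb$ and do not reduce to scalars via circle relations; they give the winding-degree-$2$ generator of $\End(\emptyset)$. Neither of these points is fatal to the strategy, but the $m=1$ step as written is false and must be replaced by adding $T_1$ to the list of simples rather than decomposing it.
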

\begin{proof}
See \cite[Section 4.2]{QW2}.
\end{proof}

\begin{proposition} Tensor products of extremal weight projectors with $m,n\geq 1$ decompose as follows in $\Kar(\essbAWeb)$:
\[\lambda^a(T_m)\otimes \lambda^b(T_n) \cong \lambda^{a+b}(T_{m+n}) \oplus \lambda^{a+b+\min(m,n)}(T_{|m-n|})\]
where $T_0:=\emptyset \oplus \sh \emptyset$ by definition.
\end{proposition}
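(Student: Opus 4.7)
The plan is to first reduce to the core case and then exhibit the two summands explicitly, relying crucially on the semisimplicity of $\Kar(\essbAWeb)$ from Proposition~\ref{prop:webdecomp}. By the monoidality of $\lambda$, it suffices to handle $a=b=0$, and by symmetry of $\otimes$ we may assume $m\geq n$. The case $n=0$ reduces directly to $T_m\otimes(\emptyset\oplus\sh\emptyset)\cong T_m\oplus\sh T_m$, which matches the right-hand side once we use the observation after Proposition~\ref{prop:webdecomp} that $\sh T_m\cong T_m$ for $m\geq 1$. So the key case is $m\geq n\geq 1$, where we aim for
\[
T_m\otimes T_n \;\cong\; T_{m+n} \,\oplus\, \lambda^n T_{m-n}.
\]

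To exhibit $T_{m+n}$ as a summand, I would prove the absorption identity
\[
(T_m\otimes T_n)\,T_{m+n}\;=\;T_{m+n}\;=\;T_{m+n}\,(T_m\otimes T_n)
\]
by induction on $m+n$ using the recursive definition $T_{k+1}=(\id^{\otimes k-1}\otimes T_2)(T_k\otimes\id)$ together with the turnback-annihilation property of the extremal weight projectors established in \cite{QW2}. This immediately implies that $T_{m+n}$ is a direct summand of $T_m\otimes T_n$ in $\Kar(\essbAWeb)$. To exhibit the second summand, I would construct a morphism $T_m\otimes T_n\to\lambda^n\otimes T_{m-n}$ that merges the rightmost $n$ strands of $T_m$ pairwise with the $n$ strands of $T_n$ via merge vertices, leaving $m-n$ unmerged strands dressed by a $T_{m-n}$ idempotent and $n$ double edges which, read around the annulus, constitute the factor $\lambda^n$; the partner morphism splits these $2$-labeled edges back. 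The composition in one direction yields the idempotent $\id_{\lambda^n T_{m-n}}$ (by the usual digon and square relations~\eqref{eqn:digons}--\eqref{eqn:squares}), exhibiting $\lambda^n T_{m-n}$ as a summand as well.

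Orthogonality of the two idempotents is then immediate: the $\lambda^n T_{m-n}$-idempotent factors through a web with turnbacks among the top $m+n$ strands, which $T_{m+n}$ annihilates. The main obstacle is completeness, i.e.\ verifying that $T_m\otimes T_n$ has \emph{no other} simple summands. I plan to handle this by the following dimension count, made possible by semisimplicity: Proposition~\ref{prop:webdecomp} tells us that the simples are all pairwise non-isomorphic with one-dimensional endomorphism algebras and no morphisms between distinct ones, so the multiplicity of each simple $S$ in $T_m\otimes T_n$ equals $\dim\Hom(S,T_m\otimes T_n)$. A direct diagrammatic calculation in $\essbAWeb$, using the recursive definition of $T_k$ and the relations~\eqref{eqn:circles}--\eqref{eqn:squares} to reduce any affine web mapping into or out of $T_k\otimes T_l$ to a canonical form, shows that $\Hom(T_{m+n},T_m\otimes T_n)$ and $\Hom(\lambda^nT_{m-n},T_m\otimes T_n)$ are each one-dimensional while all other $\Hom(S,T_m\otimes T_n)$ vanish; the special case $m=n$ produces the extra copy $\sh\lambda^m\emptyset$ built into the definition $T_0=\emptyset\oplus\sh\emptyset$.

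The delicate point in the argument will be the completeness step: ruling out unexpected simple summands amounts to controlling the morphism spaces $\Hom(S, T_m\otimes T_n)$ for all simples $S$ in the list of Proposition~\ref{prop:webdecomp}. I expect this to be the hardest part and the one requiring the most careful use of the affine web relations; the rest of the argument is essentially bookkeeping once the absorption identity and the explicit cap--cup morphism are in place.
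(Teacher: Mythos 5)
Your construction of the two summands is essentially sound: absorption of $T_{m+n}$ into $T_m\otimes T_n$ follows from the defining recursion, idempotency, and the annihilation of (affine) turnbacks from \cite{QW2}; the cap--cup morphisms realize $\lambda^n(T_{m-n})$; and orthogonality holds because the second idempotent factors through split vertices that create turnbacks killed by $T_{m+n}$. The genuine gap is the completeness step, and it is worse than just being left as a sketch: it is circular. You convert multiplicities into dimensions of $\Hom(S,T_m\otimes T_n)$ by invoking Proposition~\ref{prop:webdecomp}, i.e.\ semisimplicity of $\Kar(\essbAWeb)$ together with the assertion that the $\lambda^k(T_j)$, $\lambda^k(\emptyset)$ and $\sh\lambda^k(\emptyset)$ form a \emph{complete} list of simples. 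But the paper cites that statement from \cite[Section 4.2]{QW2}, whereas the present proposition is cited from \cite[Section 4.1]{QW2}: in the source, semisimplicity and the completeness of the list of simples are \emph{consequences} of the tensor product decompositions, obtained by iteratively decomposing $\id_{1^{\otimes N}}=T_1^{\otimes N}$ using exactly the formula you are trying to prove. Without that, nothing rules out an indecomposable summand of $T_m\otimes T_n$ lying outside the list, and such a summand would be invisible to your $\Hom(S,-)$ count. On top of this, the key computation (``a direct diagrammatic calculation shows that all other $\Hom(S,T_m\otimes T_n)$ vanish'') is asserted rather than carried out and is essentially a restatement of the problem.

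The non-circular route --- and the one the paper defers to in \cite[Section 4.1]{QW2} --- is to first prove the rank-one case $T_m\otimes T_1\cong T_{m+1}\oplus\lambda(T_{m-1})$ by direct computation: the recursion $T_{m+1}=(\id_{1^{\otimes m-1}}\otimes T_2)(T_m\otimes\id_1)$ already exhibits $T_{m+1}$ inside $T_m\otimes T_1$, and one identifies the complementary idempotent $(T_m\otimes T_1)-T_{m+1}$ explicitly with a conjugate of $\lambda(T_{m-1})$ using the digon and square relations together with the vanishing of essential $1$-labeled circles in $\essbAWeb$ (this is also where the two copies $\lambda^m(\emptyset)\oplus\sh\lambda^m(\emptyset)$ appear when $m=1$). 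The general case then follows by induction on $n$: tensor the known decomposition of $T_m\otimes T_n$ with $T_1$, expand $T_n\otimes T_1$ and $T_{m\pm n}\otimes T_1$ by the rank-one case, and cancel common summands using the Krull--Schmidt property --- exactly mirroring the decategorified induction in Appendix~\ref{sec:FGfla}. If you prefer to keep your two-idempotent construction, you must replace the dimension count by a direct proof that $(T_m\otimes T_n)-e_1-e_2=0$, which again reduces to this kind of explicit diagrammatic computation.
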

\begin{proof}
See \cite[Section 4.1]{QW2}.
\end{proof}

Finally, we will need the following relation from \cite[Lemma 23]{QW2}:

\begin{equation}
\label{eqn:doublewrap}
 \begin{tikzpicture}[anchorbase, scale=.4]
\draw [white,line width=.15cm] (.9,.4) to [out=30,in=0](0,1.5) to [out=180,in=90] (-1.75,0) to [out=270,in=180](0,-1.75) to [out=0,in=270] (2,0) to [out=90,in=0] (0, 2.25) to [out=180,in=90]  (-2.5,0) to [out=270,in=180]  (0,-2.5) to [out=0,in=270] (2.5,0) to [out=90,in=210](2.8,1.2);
\draw [very thick,->] (.9,.4) to [out=30,in=0](0,1.5) to [out=180,in=90] (-1.75,0) to [out=270,in=180](0,-1.75) to [out=0,in=270] (2,0) to [out=90,in=0] (0, 2.25) to [out=180,in=90]  (-2.5,0) to [out=270,in=180]  (0,-2.5) to [out=0,in=270] (2.5,0) to [out=90,in=210](2.8,1.2);
\node at (0,-1) {$*$};
\node at (0,-3) {$*$};
\draw [dashed] (0,-1) to (0,-3);
\draw (0,0) circle (1);
\draw (0,0) circle (3);
\end{tikzpicture}
\quad =\quad - \;
\begin{tikzpicture}[anchorbase, scale=.4]
\draw (0,0) circle (1);
\draw (0,0) circle (3);
\draw [double ,directed=.55] (0,0) circle (2);
\draw [white,line width=.15cm] (.9,.4) to (2.8,1.2);
\draw [very thick,->] (.9,.4) to (2.8,1.2);
\node at (0,-1) {$*$};
\node at (0,-3) {$*$};
\draw [dashed] (0,-1) to (0,-3);
\end{tikzpicture} 
\end{equation}

\subsection{Slope subcategories}
In this section, we will study the category $\Tfoam$, one slope $m/n$ at a time. The main result is that the degree zero morphism spaces in these pieces are controlled by the affine web category $\AWeb$. For this, we will assume a weak form of Conjecture~\ref{conj:functoriality}, namely that the $\stwo$-operations from Section~\ref{sec:stwo} give auto-equivalences of the toric foam category. We start with the following basic observation which is analogous to Lemma~\ref{lem:WebtoAfoam}.

\begin{lemma}\label{lem:AWebtoTfoam} For any oriented simple closed curve $c$ on the torus, there exists a functor $\AWeb\to \Tfoam$ which sends affine webs $W$ to the rotation foams $W\times \Ss^1$, where rotation is performed along $c$.
\end{lemma}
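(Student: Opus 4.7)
The plan is to mirror the proof of Lemma~\ref{lem:WebtoAfoam}, replacing the annular thickening of a horizontal strip by the toric thickening of an annulus. First, I would fix an identification $\T \times [0,1] \cong \A \times \Ss^1$ in which the $\Ss^1$ factor is oriented parallel to the given simple closed curve $c$, so that $\T \times \{0\}$ and $\T \times \{1\}$ correspond to the inner and outer boundary circles of $\A$ crossed with $\Ss^1$. Under this identification, the assignment $W \mapsto W \times \Ss^1$ sends an affine web $W \colon \mathbf{i} \to \mathbf{j}$ in $\A$ to the rotation foam $W \times \Ss^1 \subset \A \times \Ss^1 = \T \times [0,1]$, whose bottom and top boundaries are the webs on $\T$ given by labeled parallel copies of $c$ specified by $\mathbf{i}$ and $\mathbf{j}$, with orientations on seams and facets propagated from the edge orientations of $W$ in the obvious equivariant way. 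Composition in $\AWeb$ translates under rotation into composition in $\Tfoam$ by stacking thickened tori along a common $\T$-layer.

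It then remains to check that the construction descends through the defining relations of $\AWeb$, i.e., each of the local $\glnn{2}$ web relations \eqref{eqn:circles}--\eqref{eqn:squares} at $q = 1$, to a foam identity in $\Tfoam$. Each such web relation is supported in a disk $D \subset \A$, and its rotation is supported in the solid torus $D \times \Ss^1 \subset \T \times [0,1]$. The verification inside this region is the exact same direct computation as in the proof of Lemma~\ref{lem:WebtoAfoam}: for instance, the rotation of a $1$-labeled circle bounding a disk becomes a compressible torus which evaluates to $2 \cdot \emptyset$ via a single neck-cutting \eqref{sl2neckcutting} combined with \eqref{sl2closedfoam}; the digon relations reduce to an application of digon closure together with \eqref{sl2thetafoam_enh}; and the square relations become instances of \eqref{sl2Fig5Blanchet_1}--\eqref{sl2Fig5Blanchet_2}. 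In each case the local foam moves of Definition~\ref{def:Tfoam} take place in 3-balls inside $D \times \Ss^1$ and therefore transfer verbatim from the annular setting to $\T \times [0,1]$.

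The main obstacle is purely organizational, namely tracking the orientations on seams and facets produced by rotation and ensuring that the neck-cuts and digon closures used to reduce each rotated relation can be placed consistently inside a single 3-ball, rather than needing to be performed $\Ss^1$-equivariantly. However, each web relation in question rewrites one rotation-symmetric foam into another, so a single non-equivariant application of the relevant foam identity suffices to witness equality in $\Tfoam$; the $\Ss^1$-symmetry is recovered automatically from the fact that $c$ carries a fixed orientation and the target category imposes the same local foam relations everywhere.
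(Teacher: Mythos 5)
Your proposal is correct and follows the same route as the paper, which states this lemma without proof as being ``analogous to Lemma~\ref{lem:WebtoAfoam}'', whose own proof is just the one-line assertion that a direct computation verifies the rotated web relations from the local foam relations. Your write-up simply makes explicit the identification $\T\times[0,1]\cong\A\times\Ss^1$ and the fact that each relation, being supported in a disk of $\A$, rotates into a solid torus where the check is local and identical to the annular case.
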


\begin{definition}\label{def:slopesubcat} Let $m/n$ be a slope. We define: 
\begin{itemize}
\item  The slope subcategory $\Tfoam_{m/n}$, the full subcategory of $\Tfoam$ with objects given by the webs of slope $m/n$ (including all inessential webs). 
\item  The parallel slope category $\Tfoam^p_{m/n}$, the full subcategory of $\Tfoam_{m/n}$ with objects given by collections of 1- and $2$-labeled parallel copies of the slope $m/n$ with arbitrary orientations. 
\end{itemize}
\end{definition}

The next result follows directly from Lemma~\ref{lem:equivobjectsS}.
\begin{corollary} \label{cor:red2slopes2}
Every web in $\Tfoam_{m/n}$ is isomorphic to an object of $\Tfoam^p_{m/n}$ acted upon by the auto-equivalence of superposing by a suitable $2$-labeled multi-curve and a compensating shift in homological degree. 
\end{corollary}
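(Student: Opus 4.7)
The plan is to apply Lemma~\ref{lem:equivobjectsS} and then separate the standard-basis decomposition according to slope, using the $\stwo$-operation framework of Section~\ref{sec:stwo}. First, given a web $W \in \Tfoam_{m/n}$, Lemma~\ref{lem:equivobjectsS} produces an isomorphism in $\Tfoam$ between $W$ and a direct sum of $q$-grading shifts of a single standard basis element $L \ast \w{x}$, with $L \in \cal{L}(\T)$ an oriented integer lamination and $x \in H_1(\T)$. Since the slope of a web is invariant under the $\glnn{2}$ web relations, and since $c(W)=c(L)$ (the $2$-labeled multi-curve $\w{x}$ contributing nothing to the underlying $1$-labeled curve), the lamination $L$ consists of oriented parallel copies of the slope-$m/n$ curve and thus is already an object of $\Tfoam^p_{m/n}$.

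Next, I would split the class $x\in H_1(\T)\cong \Z^2$ as $x=x_p+x_t$, where $x_p$ lies in the slope-$m/n$ sublattice $\Z\cdot (m,n)$ and $x_t$ is represented by a $2$-labeled multi-curve with no slope-$m/n$ component. Using Corollary~\ref{cor:2labsub}, we may choose representatives so that $\w{x}\cong q^{2(\cdots)}\w{x_p}\sqcup \w{x_t}$, where the first factor can be absorbed into $L$ by setting $L^\prime := L\sqcup \w{x_p}\in \Tfoam^p_{m/n}$. At this point we have an isomorphism, in $\Tfoam$ and up to $q$-shifts, $W \cong L^\prime \sqcup \w{x_t}$, with the right-hand side being the disjoint union of an object of $\Tfoam^p_{m/n}$ with a $2$-labeled multi-curve of transverse slopes.

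Finally, by Theorem~\ref{thm:autoequ}, the $\stwo$-operation $-\ast \w{x_t}$ is an auto-equivalence of $\Tfoam_{\mathrm{dg}}$ of $H_1(\T)$-degree $2x_t$, which shifts the homological degree of any object in the block of class $[L^\prime]\in H_1(\T)$ by $-[L^\prime]\cdot x_t$ and the $q$-degree by $[L^\prime]\cdot x_t$. Evaluating on the object $L^\prime\in \Tfoam^p_{m/n}$ gives precisely the disjoint union $L^\prime \sqcup \w{x_t}$ (placed in that homological degree), so after inserting the compensating homological shift $t^{[L^\prime]\cdot x_t}$, this reproduces $W$ up to overall $q$-shift. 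This is the claimed presentation of $W$ as an object of $\Tfoam^p_{m/n}$ acted on by the $\stwo$-auto-equivalence together with a homological compensation.

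No serious obstacle is anticipated: the argument is a direct repackaging of Lemma~\ref{lem:equivobjectsS} plus the slope decomposition, and the only nontrivial input is the already-established auto-equivalence property from Theorem~\ref{thm:autoequ}. The only mild subtlety is keeping track of the precise $q$- and $t$-grading shifts introduced by the $\stwo$-operation and by the rearrangement of $\w{x}$ via Corollary~\ref{cor:2labsub}, but these are all explicit and contribute only overall shifts.
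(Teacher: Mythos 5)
Your argument is correct and follows essentially the same route as the paper, whose proof simply cites Lemma~\ref{lem:equivobjectsS}: the standard basis element $L*\w{x}$ already exhibits the web as an object $L$ of $\Tfoam^p_{m/n}$ acted on by the auto-equivalence $-*\w{x}$, with the compensating homological shift supplied by Theorem~\ref{thm:autoequ}. Your additional step of splitting $x=x_p+x_t$ and absorbing $\w{x_p}$ into the parallel object is harmless but unnecessary, since the statement only requires some suitable $2$-labeled multi-curve.
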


If superposition with $2$-labeled multi-curves gives auto-equivalences of $\Tfoam_{\mathrm{dg}}$ as in Section~\ref{sec:stwo}, then the $\Hom$-spaces of $\Tfoam_{m/n}$ are controlled by the $\Hom$-spaces of $\Tfoam^p_{m/n}$.

\begin{corollary**} Let $W_1$ and $W_2$ be webs in $\Tfoam_{m/n}$ with $[W_1]=[W_2]$. Then there exists a $2$-labeled multi-curve $Z$ and $k\in \Z$, such that $t^k W_1*Z\cong W_1^p$ and $t^k W_2*Z\cong W_2^p$, where $W_1^p$ and $W_2^p$ are objects in $\Tfoam^p_{m/n}$. This implies $\Tfoam(W_1,W_2)\cong \Tfoam(W_1^p,W_2^p)$. 
\end{corollary**}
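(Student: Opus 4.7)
The plan is to promote the individual isomorphisms from Corollary~\ref{cor:red2slopes2} for $W_1$ and $W_2$ to a single common $2$-labeled multi-curve $Z$ and shift $k$, using the hypothesis $[W_1]=[W_2]$ as the essential compatibility input, and then to read off the $\Hom$-space identification as a consequence of the auto-equivalence property of $-*Z$ on $\Tfoam_{\mathrm{dg}}$.

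First I would apply Corollary~\ref{cor:red2slopes2} separately to $W_1$ and $W_2$. Inspecting its proof via Lemma~\ref{lem:equivobjectsS}, one obtains isomorphisms $t^{k_i}W_i*Z_i\cong W_i^p\in\Tfoam^p_{m/n}$ with $Z_i$ of the form $\w{-y_i}$, where $2y_i\in H_1(\T)$ is the class of the $2$-labeled part of $W_i$ in its standard-basis decomposition. Writing $[W_i]=a_i(m,n)+2y_i$ with $a_i\in\Z$ the signed count of parallel $(m,n)$-copies in the underlying $1$-labeled lamination of $W_i$, the hypothesis $[W_1]=[W_2]$ gives $(a_1-a_2)(m,n)=2(y_2-y_1)$; primitivity of $(m,n)$ then forces $a_1-a_2$ to be even, so that $y_1-y_2\in\Z\cdot(m,n)$.

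Next I would pick $z\in H_1(\T)$ with $y_1+z\in\Z\cdot(m,n)$, and set $Z:=\w{z}$. By the previous step the same $z$ automatically satisfies $y_2+z\in\Z\cdot(m,n)$. The isomorphism class of $W_i*Z$ in $\Tfoam_{\mathrm{dg}}$ depends only on the underlying integer lamination of $W_i$ (parallel $(m,n)$-copies) together with the total $2$-labeled homology $2(y_i+z)\in 2\Z\cdot(m,n)$; by Lemma~\ref{lem:equivobjectsS} applied on the torus, combined with Corollary~\ref{cor:2labsub}, this object is isomorphic in $\Tfoam_{\mathrm{dg}}$ to a $t$-shift of some $W_i^p\in\Tfoam^p_{m/n}$ in which every $2$-labeled component is parallel to $(m,n)$. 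By the shift formula of Theorem~\ref{thm:autoequ}${}^*$, the required homological shift on the block indexed by $[W_i]$ depends only on the pairing $[W_i]\cdot z$, which is the same for $i=1,2$ because $[W_1]=[W_2]$. Hence a single integer $k$ works simultaneously for both arguments.

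Finally, the $\Hom$-space identification follows immediately from the fact that $-*Z$ is an auto-equivalence of $\Tfoam_{\mathrm{dg}}$ (Theorem~\ref{thm:autoequ}${}^*$), which yields
\[
\Tfoam(W_1,W_2)\;\cong\;\Tfoam_{\mathrm{dg}}(t^kW_1*Z,\,t^kW_2*Z)\;\cong\;\Tfoam(W_1^p,W_2^p),
\]
using at the outer identifications that $W_i$ and $W_i^p$ both sit in the degree-zero slice ($t^0$) of $\Tfoam_{\mathrm{dg}}$. The main technical obstacle will be the second step: identifying $W_i*Z$ with a parallel-slope object inside $\Tfoam_{\mathrm{dg}}$ requires freely interchanging $2$-labeled multi-curves of equal homology class via the crossing resolutions of~\eqref{eq:thickcrossing}, and carefully tracking the induced shifts in both the $q$- and $t$-directions to confirm that the same $k$ genuinely covers both $W_1$ and $W_2$.
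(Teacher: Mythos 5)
Your proposal is correct and follows exactly the route the paper intends: the corollary is stated without proof as an immediate consequence of Corollary~\ref{cor:red2slopes2} (via Lemma~\ref{lem:equivobjectsS}) together with the auto-equivalence and degree-shift properties of $-*\w{x}$ from Theorem~\ref{thm:autoequ}. The only genuine content is the bookkeeping you supply — that $[W_1]=[W_2]$ forces $y_1-y_2\in\Z\cdot(m,n)$ so a single correcting curve $Z$ works, and that the homological shift $-[W_i]\cdot z$ agrees for $i=1,2$ — and you handle both points correctly.
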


It thus remains to understand the morphism spaces in the parallel slope subcategories. By definition, their objects are $\Ss^1$-equivariant along the slope direction. Our next goal is to show that we may also assume that this is the case for the morphisms. 

To this end, we want to write arbitrary foams in a parallel slope subcategory as compositions of ``wrap-around foams'' and foams that are supported in an annular neighborhood $\A$ of the slope, i.e. foams that live in the subcategory $\AFoam \hookrightarrow \Tfoam_{m/n}$.

\begin{lemma}
\label{lem:wraparound}
Let $W_1$ and $W_2$ be webs in $\Tfoam$ supported in an annular neighborhood of the slope $m/n$. Then every morphism $F\in \Tfoam(W_1,W_2)$ factors into a composition of morphisms in $\AFoam\subset \Tfoam_{m/n}$ and $\Ss^1$-equivariant wrap-around foams in the image of $\AWeb\to \Tfoam_{m/n}$.
\end{lemma}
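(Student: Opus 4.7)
The plan is a two-step decomposition of $F$: first a time-direction slicing into elementary cobordism pieces, then a rearrangement that brings each piece into the slope's annular neighborhood at the cost of conjugation by wrap-around rotation foams. First I would fix coordinates $\T = \Ss^1_c \times \Ss^1_t$ aligned with the slope direction so that the annular neighborhood of the slope takes the form $A = \Ss^1_c \times I_t$ for a proper arc $I_t \subsetneq \Ss^1_t$, and identify $\T \times [0,1] = \Ss^1_c \times \A_t$ with $\A_t := \Ss^1_t \times [0,1]$ the transverse annulus parametrizing rotation foams via Lemma~\ref{lem:AWebtoTfoam}. By hypothesis $W_1, W_2 \subset A$.

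After a generic isotopy rel boundary, I would present $F$ as a movie of slope-$m/n$ webs on $\T$ with finitely many elementary events (handle attachments, Reidemeister moves, dots, seam modifications) at times $t_1 < \cdots < t_N$, each supported in a small ball $B_j \subset \T \times [0,1]$. This gives a decomposition $F = E_N \circ C_{N-1} \circ \cdots \circ E_1 \circ C_0$ in $\Tfoam_{m/n}$, with $C_j$ an identity cylinder on the intermediate web $V_j$ and each $E_j$ the elementary event at time $t_j$. By Lemma~\ref{lem:equivobjectsS} (which feeds Corollary~\ref{cor:red2slopes2}), each $V_j$ is isomorphic in $\Tfoam_{m/n}$ to a parallel-slope web $V_j^p$ supported in $A$ via an isomorphism $\rho_j$ obtained by composing a superposition with a $2$-labeled multi-curve and a rotation along the slope; both ingredients are realized by wrap-around foams in the image of $\AWeb \to \Tfoam_{m/n}$. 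Splicing $\rho_j^{-1}\circ \rho_j$ into the composition between each $C_j$ and $E_j$ rewrites $F$ as an alternating composition of wrap-around rotation foams $\rho_j,\rho_j^{-1}$ with modified events $\tilde E_j := \rho_j \circ E_j \circ \rho_{j-1}^{-1}$ between webs in $A$.

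It remains to show that each $\tilde E_j$ lies in $\AFoam$. Since both its source and target webs are supported in $A$ and the underlying move happens in a small ball, I would use the $\mathrm{Diff}^+(\T)$-action on $\Tfoam$ to isotope the support of $\tilde E_j$ into $A \times [0,1]$, absorbing the compensating diffeomorphism into the neighboring wrap-around rotation pieces. The hard part will be ensuring that this absorbed diffeomorphism preserves the $\Ss^1_c$-equivariance of the rotation factors on either side; this reduces to a standard fibered isotopy argument for the trivial $\Ss^1_c$-bundle $\T \to \Ss^1_t$, which shows that any small disk in $\T$ can be carried into $A$ by an isotopy whose restriction off a neighborhood of the disk is $\Ss^1_c$-invariant. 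Once this fibered isotopy step is established, the reassembled composition presents $F$ as the required alternating composition of $\AFoam$-morphisms and $\Ss^1_c$-equivariant wrap-around foams in the image of $\AWeb\to \Tfoam_{m/n}$.
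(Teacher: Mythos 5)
Your decomposition runs in the wrong direction, and this is not a cosmetic issue. The wrap-around factors in the statement live in the \emph{spatial} direction transverse to the annular neighborhood $A$ of the slope: they record how $F$ crosses the complementary annulus $\T\setminus A$. A movie presentation slices $F$ in the thickening ($[0,1]$) direction and does not isolate this behaviour at all. The paper's proof instead intersects $F$ with the vertical annulus $c_{\mathrm{anti}}\times[0,1]$ over the \emph{antipodal} curve of the slope (the core of $\T\setminus A$); since $W_1,W_2\subset A$, this intersection is a closed affine web, which by the annular case of Lemma~\ref{lem:equivobjectsS} can be normalized, via foam relations applied to $F$, to a union of essential circles. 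Each such circle contributes a wrap-around rotation foam, and the remainder of $F$ can then be isotoped into $A\times[0,1]$, i.e.\ into $\AFoam$. Your proposal never touches the antipodal annulus, which is the one place where the statement's content is concentrated.

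Several of your intermediate steps also fail as written. First, the intermediate webs $V_j$ of a generic movie of $F$ need not have slope $m/n$ (the lemma allows arbitrary $F\in\Tfoam(W_1,W_2)$, and saddles can produce curves of other slopes or inessential curves anywhere on $\T$), so Corollary~\ref{cor:red2slopes2} does not apply to them; and webs in different blocks admit no isomorphisms whatsoever, so the $\rho_j$ you want generally do not exist. Second, even where Corollary~\ref{cor:red2slopes2} applies, the isomorphism it supplies is built from a superposition auto-equivalence $-*\w{x}$ together with a homological shift; this depends on Conjecture~\ref{conj:functoriality} (the lemma is stated unconditionally) and is in no sense a wrap-around foam in the image of $\AWeb\to\Tfoam_{m/n}$ — those are rotation foams $W\times\Ss^1$ between $\Ss^1$-equivariant webs, such as the green $2$-labeled circle in Figure~\ref{fig:wrap}. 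Third, the concluding ``fibered isotopy'' step is exactly where the difficulty sits and is not established: carrying the support of $\tilde E_j$ into $A\times[0,1]$ must drag along every strand of $V_{j-1}$ and $V_j$ lying between the support ball and $A$, so it is not an isotopy relative to the rest of the foam, and the resulting discrepancy is not absorbed by $\Ss^1$-equivariant rotation pieces. The missing idea is the transverse slicing along the antipodal curve.
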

An paradigmatic example of such a factorization in the case of the slope $1/0$ is shown in Figure~\ref{fig:wrap}.
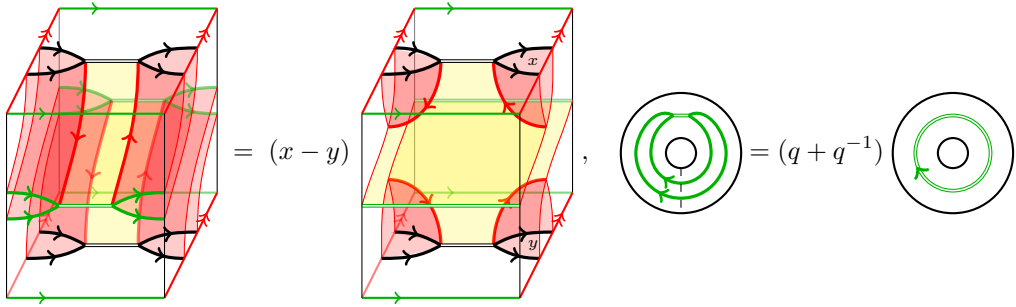
\begin{figure}[ht]
\[\begin{tikzpicture}[fill opacity=.2,anchorbase,xscale=.7, yscale=0.7]
\torusback{3}{2}{3.5}
\fill [fill=yellow] (2,3.75) to [out=255,in=90] (1.5,1) to (2.5,1) to [out=90,in=255](3,3.75) to (2,3.75);
\fill [fill=yellow] (2.5,4.5) to [out=270,in=75] (2,1.75)  to (1,1.75) to [out=75,in=270](1.5,4.5) to (2.5,4.5);
\fill [fill=red] (0,2) to [out=0,in=150] (1,1.75) to [out=75,in=270](1.5,4.5) to [out=210,in=0] (0.375,4.25) to [out=270,in=75] (0,2);
\fill [fill=red] (0,1.5) to [out=0,in=210] (1,1.75) to [out=75,in=270](1.5,4.5) to [out=150,in=0] (0.625,4.75) to [out=270,in=75] (0,1.5);
\fill [fill=red] (3,1.5) to [out=180,in=330] (2,1.75) to [out=75,in=270](2.5,4.5) to [out=30,in=180] (3.625,4.75) to [out=270,in=75] (3,1.5);
\fill [fill=red] (3,2) to [out=180,in=30] (2,1.75) to [out=75,in=270](2.5,4.5) to [out=330,in=180] (3.375,4.25) to [out=270,in=75] (3,2);
\fill [fill=red] (1,3.5) to [out=0,in=210] (2,3.75) to [out=255,in=90] (1.5,1) to [out=150,in=0] (.625,1.25) to [out=90,in=255] (1,3.5);
\fill [fill=red] (1,4) to [out=0,in=150] (2,3.75) to [out=255,in=90] (1.5,1) to [out=210,in=0] (.375,0.75) to [out=90,in=255] (1,4);
\fill [fill=red] (4,4) to [out=180,in=30] (3,3.75) to [out=255,in=90] (2.5,1) to [out=330,in=180] (3.375,0.75) to [out=90,in=255] (4,4);
\fill [fill=red] (4,3.5) to [out=180,in=330] (3,3.75) to [out=255,in=90] (2.5,1) to [out=30,in=180] (3.625,1.25) to [out=90,in=255] (4,3.5);
\coordinate (a) at (0,1.25);
\draw[green,very thick, directed=.5, opacity=.5] (1,2.25)+(a) to [out=0,in=210] ($(2,2.5)+(a)$);
\draw[green,very thick, directed=.5, opacity=.5] (1,2.75)+(a) to [out=0,in=150] ($(2,2.5)+(a)$);
\draw[green,double, opacity=.5] (2,2.5)+(a) to ($(3,2.5)+(a)$);
\draw[green,very thick, directed=.6, opacity=.5] (3,2.5)+(a) to [out=30,in=180] ($(4,2.75)+(a)$);
\draw[green,very thick, directed=.6, opacity=.5] (3,2.5)+(a) to [out=330,in=180] ($(4,2.25)+(a)$);
\draw [red] (0.375,4.25) to [out=270,in=75] (0,2);
\draw [red] (0.625,4.75) to [out=270,in=75] (0,1.5);
\draw [red] (3.625,4.75) to [out=270,in=75] (3,1.5);
\draw [red] (3.375,4.25) to [out=270,in=75] (3,2);
\draw [red] (.625,1.25) to [out=90,in=255] (1,3.5);
\draw [red] (.375,0.75) to [out=90,in=255] (1,4);
\draw [red] (3.375,0.75) to [out=90,in=255] (4,4);
\draw [red] (3.625,1.25) to [out=90,in=255] (4,3.5);
\draw [very thick, red, directed=.55,opacity=.5] (2,3.75) to [out=255,in=90] (1.5,1);
\draw [very thick, red, rdirected=.55,opacity=.5] (3,3.75) to [out=255,in=90] (2.5,1);
\draw [very thick, red, rdirected=.55] (2.5,4.5) to [out=270,in=75] (2,1.75);
\draw [very thick, red, directed=.55] (1.5,4.5) to [out=270,in=75] (1,1.75);
\coordinate (a) at (0.125,-2.75);
\draw[very thick, directed=.5] (.25,3.5)+(a) to [out=0,in=210] ($(1.375,3.75)+(a)$);
\draw[very thick, directed=.5] (.5,4)+(a) to [out=0,in=150] ($(1.375,3.75)+(a)$);
\draw[double] (1.375,3.75)+(a) to ($(2.375,3.75)+(a)$);
\draw[very thick, directed=.6] (2.375,3.75)+(a) to [out=30,in=180] ($(3.5,4)+(a)$);
\draw[very thick, directed=.6] (2.375,3.75)+(a) to [out=330,in=180] ($(3.25,3.5)+(a)$);
\coordinate (a) at (0.125,0.75);
\draw[very thick, directed=.5] (.25,3.5)+(a) to [out=0,in=210] ($(1.375,3.75)+(a)$);
\draw[very thick, directed=.5] (.5,4)+(a) to [out=0,in=150] ($(1.375,3.75)+(a)$);
\draw[double] (1.375,3.75)+(a) to ($(2.375,3.75)+(a)$);
\draw[very thick, directed=.6] (2.375,3.75)+(a) to [out=30,in=180] ($(3.5,4)+(a)$);
\draw[very thick, directed=.6] (2.375,3.75)+(a) to [out=330,in=180] ($(3.25,3.5)+(a)$);
\coordinate (a) at (-1,-.75);
\draw[green,very thick, directed=.5] (1,2.25)+(a) to [out=0,in=210] ($(2,2.5)+(a)$);
\draw[green,very thick, directed=.5] (1,2.75)+(a) to [out=0,in=150] ($(2,2.5)+(a)$);
\draw[green,double] (2,2.5)+(a) to ($(3,2.5)+(a)$);
\draw[green,very thick, directed=.6] (3,2.5)+(a) to [out=30,in=180] ($(4,2.75)+(a)$);
\draw[green,very thick, directed=.6] (3,2.5)+(a) to [out=330,in=180] ($(4,2.25)+(a)$);
\torusfront{3}{2}{3.5}
\end{tikzpicture}
\;=\;
%
(x-y)\;\begin{tikzpicture}[fill opacity=.2,anchorbase,xscale=.7, yscale=0.7]
\torusback{3}{2}{3.5}
\draw [very thick, red, directed=.55] (.5,2.25) to [out=0,in=90] (1.5,1);
\draw [very thick, red, rdirected=.55] (3.5,2.25) to [out=180,in=90] (2.5,1);
\fill [fill=yellow] (1,3.75) to (4,3.75) to [out=255,in=75] (3.5,2.25) to [out=180,in=90] (2.5,1) to (1.5,1) to [out=90,in=0] (.5,2.25) to [out=75,in=255] (1,3.75);
\fill [fill=yellow] (0,1.75) to (3,1.75) to [out=75,in=255] (3.5,3.25) to [out=180,in=270] (2.5,4.5) to (1.5,4.5) to [out=270,in=0] (.5,3.25) to [out=255,in=75] (0,1.75);
\fill [fill=red] (3.5,3.25) to [out=180,in=270] (2.5,4.5) to [out=30,in=180]  (3.625,4.75) to [out=270,in=75] (3.5,3.25);
\fill [fill=red] (3.5,3.25) to [out=180,in=270] (2.5,4.5) to [out=330,in=180]  (3.375,4.25) to [out=270,in=105] (3.5,3.25);
\fill [fill=red] (0.5,3.25) to [out=0,in=270] (1.5,4.5) to [out=150,in=180]  (0.625,4.75) to [out=270,in=75] (.5,3.25);
\fill [fill=red] (0.5,3.25) to [out=0,in=270] (1.5,4.5) to [out=210,in=180]  (0.375,4.25) to [out=270,in=105] (.5,3.25);
\fill [fill=red] (0.5,2.25) to [out=0,in=90] (1.5,1) to [out=150,in=180]  (.625,1.25) to [out=90,in=285] (.5,2.25);
\fill [fill=red] (0.5,2.25) to [out=0,in=90] (1.5,1) to [out=210,in=180]  (.375,0.75) to [out=90,in=255] (.5,2.25);
\fill [fill=red] (3.5,2.25) to [out=180,in=90] (2.5,1) to [out=30,in=180]  (3.625,1.25) to [out=90,in=285] (3.5,2.25);
\fill [fill=red] (3.5,2.25) to [out=180,in=90] (2.5,1) to [out=330,in=180]  (3.375,0.75) to [out=90,in=255] (3.5,2.25);
\coordinate (a) at (0,1.25);
\draw[green,double, opacity=.5] (1,2.5)+(a) to ($(4,2.5)+(a)$);
\draw [red] (0.375,4.25) to [out=270,in=105] (.5,3.25);
\draw [red] (0.625,4.75) to [out=270,in=75] (.5,3.25);
\draw [red] (3.625,4.75) to [out=270,in=75] (3.5,3.25);
\draw [red] (3.375,4.25) to [out=270,in=105] (3.5,3.25);
\draw [red] (3.5,3.25) to [out=255,in=75] (3,1.75);
\draw [red] (0.5,3.25) to [out=255,in=75] (0,1.75);
\draw [red] (.625,1.25) to [out=90,in=285] (.5,2.25);
\draw [red] (.375,0.75) to [out=90,in=255] (.5,2.25);
\draw [red] (3.375,0.75) to [out=90,in=255] (3.5,2.25);
\draw [red] (3.625,1.25) to [out=90,in=285] (3.5,2.25);
\draw [red] (4,3.75) to [out=255,in=75] (3.5,2.25);
\draw [red] (1,3.75) to [out=255,in=75] (0.5,2.25);
\draw [very thick, red, rdirected=.55] (2.5,4.5) to [out=270,in=180] (3.5,3.25);
\draw [very thick, red, directed=.55] (1.5,4.5) to [out=270,in=0] (.5,3.25);
\coordinate (a) at (0.125,-2.75);
\draw[very thick, directed=.5] (.25,3.5)+(a) to [out=0,in=210] ($(1.375,3.75)+(a)$);
\draw[very thick, directed=.5] (.5,4)+(a) to [out=0,in=150] ($(1.375,3.75)+(a)$);
\draw[double] (1.375,3.75)+(a) to ($(2.375,3.75)+(a)$);
\draw[very thick, directed=.6] (2.375,3.75)+(a) to [out=30,in=180] ($(3.5,4)+(a)$);
\draw[very thick, directed=.6] (2.375,3.75)+(a) to [out=330,in=180] ($(3.25,3.5)+(a)$);
\coordinate (a) at (0.125,0.75);
\draw[very thick, directed=.5] (.25,3.5)+(a) to [out=0,in=210] ($(1.375,3.75)+(a)$);
\draw[very thick, directed=.5] (.5,4)+(a) to [out=0,in=150] ($(1.375,3.75)+(a)$);
\draw[double] (1.375,3.75)+(a) to ($(2.375,3.75)+(a)$);
\draw[very thick, directed=.6] (2.375,3.75)+(a) to [out=30,in=180] ($(3.5,4)+(a)$);
\draw[very thick, directed=.6] (2.375,3.75)+(a) to [out=330,in=180] ($(3.25,3.5)+(a)$);
\coordinate (a) at (-1,-.75);
\draw[green,double] (1,2.5)+(a) to ($(4,2.5)+(a)$);
\torusfront{3}{2}{3.5}
\node[black,opacity=1] at (3.25,1) {\tiny $y$};
\node[black,opacity=1] at (3.25,4.5) {\tiny $x$};
\end{tikzpicture}
\;,\quad
\begin{tikzpicture}[fill opacity=.2,anchorbase,scale=.4]
\draw[thick] (0,0) circle (.5);
\draw[thick] (0,0) circle (2);
\draw[green,double] (-.25,1.25) to (.25,1.25);
\draw[green,very thick,directed=.59] (.25,1.25) to [out=30,in=90] (1.5,0) to [out=270,in=0] (0,-1.5) to [out=180,in=270] (-1.5,0) to [out=90,in=150] (-.25,1.25);
\draw[green,very thick,directed=.60] (.25,1.25) to [out=330,in=90] (1,0)to [out=270,in=0] (0,-1) to [out=180,in=270] (-1,0) to [out=90,in=210] (-.25,1.25);
\draw[dashed] (0,-.5) to (0,-2);
\end{tikzpicture}
=
(q+q^{-1})\;
\begin{tikzpicture}[fill opacity=.2,anchorbase,scale=.4]
\draw[thick] (0,0) circle (.5);
\draw[thick] (0,0) circle (2);
\draw[green,double,rdirected=.60] (0,0) circle (1.25);
\end{tikzpicture}
\]
\caption{A foam between toric webs supported in the neighborhood of the slope $1/0$ is expanded as a composition of annular and wrap-around foams. 
}\label{fig:wrap}
\end{figure}
\begin{proof}
Consider the affine web given by generically intersecting $F$ with the identity foam on the antipodal curve of the slope. This is drawn in green in Figure~\ref{fig:wrap}. By Lemma~\ref{lem:equivobjectsS} in the special case of the annulus, we can apply foam relations to $F$ to make this intersection $\Ss^1$-equivariant. Every circle in this web corresponds to a wrap-around foam and the rest of $F$ can be isotoped into the cylinder over the neighborhood of the slope. This decomposes $F$ as desired.
\end{proof}

\begin{lemma} \label{lem:equivariantmorph} Every morphism in $\Tfoam^p_{m/n}$ 
can be expressed as a linear combination of foams that are $\Ss^1$-equivariant along the slope direction, possibly with dots.
\end{lemma}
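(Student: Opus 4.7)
The plan is to combine Lemma~\ref{lem:wraparound} with a careful analysis of morphisms in $\AFoam$ between parallel-strand webs. Given an arbitrary morphism $F$ in $\Tfoam^p_{m/n}(W_1, W_2)$, Lemma~\ref{lem:wraparound} factors $F$ as an alternating composition of wrap-around foams in the image of the functor $\AWeb \to \Tfoam_{m/n}$ (from Lemma~\ref{lem:AWebtoTfoam}) and annular foams in $\AFoam \subset \Tfoam_{m/n}$. The wrap-around foams are rotation foams and are therefore $\Ss^1$-equivariant along the slope direction by construction, so it suffices to show that annular foams between parallel-strand webs admit a representation as a linear combination of $\Ss^1$-equivariant foams with dots.

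For an annular foam $G$ in $\AFoam(U_1, U_2)$ between such webs, I would first apply Lemma~\ref{lem:neckcut2} iteratively to compress the underlying $1$-labeled surface $c(G)$ along compression disks chosen transverse to the slope direction. The resulting foams have $c(G)$ decomposed into incompressible components together with dotted disks. Incompressible properly embedded surfaces in $\A \times [0,1]$ whose boundary consists of slope-parallel circles are isotopic to vertical $\Ss^1$-equivariant annuli by the topological classification in the proof of Lemma~\ref{lem:Eulerchar}, while dotted disks can be isotoped and the dots moved into standard equivariant positions using the dot-sliding relation~\eqref{sl2DotSliding} at the cost of signs that are absorbed into the resulting linear combination. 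Inessential disks that appear along the way can be removed via Corollary~\ref{cor:neckcut}.

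The $2$-labeled seams and facets of $G$ require parallel treatment. The plan is to adapt the reduction strategy from the proof of Lemma~\ref{lem:neckcut}: remove internal $2$-labeled spheres via~\eqref{2labeledsphere}, collapse $2$-labeled edges terminating on $1$-labeled seams via the digon relations~\eqref{eg:digons}, and redirect adjacent pairs of $2$-labeled edges using the square relations~\eqref{eq:squares}. Applying these moves locally and repeatedly one can assume that the $2$-labeled facets of $G$ consist of vertical $\Ss^1$-equivariant bands, closed $2$-labeled surfaces that evaluate to scalars by Lemma~\ref{lem:closedeval}, and possibly dotted $\Ss^1$-equivariant disks, all of which are manifestly equivariant or assemble with the $1$-labeled part into equivariant dotted foams.

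The main obstacle will be ensuring that these simplification steps can be carried out compatibly with the $\Ss^1$-symmetry, since the individual web relations are local and a priori break the symmetry. This is resolved by performing the neck-cutting and $2$-labeled simplifications along disks and arcs chosen inside $\Ss^1$-orbits, so that after each step the source and target of the resulting foam are still parallel-strand webs, and any non-equivariant residual interaction between facets has been absorbed into dots and scalar coefficients produced by the foam relations~\eqref{sl2closedfoam}--\eqref{sl2NH_enh}. Iterating until no non-equivariant structure remains then expresses $G$, and therefore $F$, as a linear combination of $\Ss^1$-equivariant foams with dots.
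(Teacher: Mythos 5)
Your first step---factoring an arbitrary morphism of $\Tfoam^p_{m/n}$ into wrap-around rotation foams and annular foams via Lemma~\ref{lem:wraparound}---is exactly the paper's opening move. The gap is in the second step. You treat the $1$-labeled surface $c(G)$ and the $2$-labeled facets as if they could be standardized independently: first compress $c(G)$ into incompressible annuli and dotted disks, then ``in parallel'' simplify the $2$-labeled part by digon and square moves. But the two are coupled along the seams, and the seams are where the difficulty lives. For orientation reasons, a turnback annulus in $c(G)$ joining two circles of $W_1$ that carry the same orientation \emph{must} cross seams an odd number of times, so the $2$-labeled facets cannot simply be cleared away from the incompressible part; one has to show that this seam structure can itself be arranged as the rotation of a merge or split vertex. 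Your proposed resolution---choosing the compression disks and simplification arcs ``inside $\Ss^1$-orbits''---is not available in general: a compression disk transverse to the foam meets the $2$-labeled facets, and clearing that interaction (the content of Lemma~\ref{lem:neckcut2}) already modifies the foam in ways that are not manifestly equivariant. Asserting that the residual non-equivariant interaction ``has been absorbed into dots and scalar coefficients'' is the statement to be proved, not an argument for it.

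The paper closes this gap with an induction on the total label of the boundary circles, after first using braiding and duality to normalize orientations and the positions of $2$-labeled circles. The base cases $n=0,1$ are handled by slicing and neck-cutting; for $n\geq 2$ one either splits off a component treatable by the $n=1$ case, or locates two boundary circles lying in the same component of the foam, builds a \emph{standard} (hence $\Ss^1$-equivariant) turnback via the path-and-neck-cutting construction of Lemma~\ref{lem:turnbacks}, and strips off an equivariant merge or split foam, reducing $n$. Some mechanism of this kind---producing equivariant elementary pieces one at a time rather than equivariantizing the whole foam at once---is what your write-up is missing.
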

\begin{proof}
By Lemma~\ref{lem:wraparound}, foams in the parallel slope categories can be factored into wrap-arounds (clearly $\Ss^1$-equivariant) and foams supported in an annular neighborhood $\A$ of the slope between $\Ss^1$-equivariant webs. We shall now show that these annular foams can be made $\Ss^1$-equivariant as well.

  Consider $F\in \Afoam(W_1,W_2)$ with $W_1$ and $W_2$ $\Ss^1$-equivariant, that is, parallel copies of essential $1$- and $2$-labeled circles in the annulus. Note that proving that $F$ can be written as a linear combination of $\Ss^1$-equivariant foams is equivalent to proving that any pre- or post-composition of $F$ with the $\Ss^1$-equivariant foam over an invertible linear combination of webs can be written as a linear combination of $\Ss^1$-equivariant foams. In particular, this allows to braid boundary circles.\\
  Furthermore, the problem of decomposing $F$ is also preserved by the use of duality maps given by \textit{bending} a boundary circle lying on one side to the other side (which reverses its orientation). Using braiding and duality, we may thus assume that all circles come with the same orientation in $W_1$ and $W_2$, that $2$-labeled circles appear only in $W_1$ or in $W_2$, and they do so on the left, relative to the orientation of the other circles. Note that the sums of the labels of the circles in $W_1$ and $W_2$ are equal. The proof proceeds by induction on this sum $n$. 
  
  If $n=0$, then $F$ is closed and can be evaluated as follows: A generic slice is a closed web, along which $F$ can be neck-cut by first using Lemma~\ref{lem:neckcut2} and then $2$-labeled neck-cutting. The result is a closed foam in a ball, which evaluates by Lemma~\ref{lem:closedeval}. 
  
  If $n=1$, then $W_1$ and $W_2$ are both given by a single circle, and a generic section of $F$ is a web with one boundary point at the bottom and one at the top. Such webs are isomorphic to multiples of the identity web, and thus the foam can be cut accordingly by use of Lemma \ref{lem:webisos}. Then one can apply a neck-cutting relation on a push-out of the square formed by the top and bottom segments circles and two copies of the vertical segment given by the section. The result is a sum of unions of a possibly dotted identity foam and closed foams that can be evaluated to scalars.
  
  Next, we consider the case of $n \geq 2$, and we may assume that it is $W_1$ that contains no $2$-labeled circles. If the connected component $F_1$ of $F$ that contains the leftmost circle in $W_1$ does not contain any other circles in $W_1$, then $F_1$ contains only the leftmost circle in $W_2$ (which is thus $1$-labeled), and we can apply the argument for the previous case to make this component of $F$ $S_1$-equivariant. Moreover, $F\setminus F_1$ can be made $\Ss^1$-equivariant by the induction hypothesis.
  
 Now, consider the case where the two leftmost circles in $W_1$ belong to the same connected component of $F$. This implies that there exist two paths starting at points in the circles, which meet on a $2$-labeled facet in $F$. This means we can create an $\Ss^1$-equivariant and possibly dotted standard turnback at the bottom of $F_1$ as in the proof of Lemma~\ref{lem:turnbacks}. Next we distinguish two cases.
 
If $W_2$ contains a $2$-labeled circle at the left, then two applications of the $2$-labeled neck-cutting relation will connect the merging turnback directly with the $2$-labeled circle in $W_2$, and this $S_1$-equivariant merge foam can be stripped off $F$, thereby reducing the sum of boundary labels. The remaining foam can be made $\Ss^1$-equivariant by the induction hypothesis.

If $W_2$ does not have a $2$-labeled circle, then a similar argument will find a split foam terminating in the two leftmost $1$-labeled circles in $W_2$. In this case, two $2$-labeled neck-cutting relations will connect the merge and the split foam directly, thereby creating a component that can be stripped off $F$. The remaining foam can again be made $\Ss^1$-equivariant by the induction hypothesis.
\end{proof}

In other words, the morphisms in the parallel slope subcategories are given by rotation foams generated by affine webs, possibly decorated with dots. Indeed, the (essential) $q$-degree of such a morphism is twice its number of dots.

\begin{corollary}
For every slope $m/n$, the rotation functor $\AWeb\rightarrow \Tfoam^p_{m/n}$ from Lemma~\ref{lem:AWebtoTfoam} is surjective on objects and full onto the essential $q$-degree zero part of $\Tfoam^p_{m/n}$. 
\end{corollary}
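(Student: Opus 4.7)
The plan is to deduce both statements from the structural results already established, the main one being Lemma~\ref{lem:equivariantmorph}.

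For surjectivity on objects, I would first observe that the objects of $\Tfoam^p_{m/n}$ are by definition disjoint collections of parallel $1$- and $2$-labeled copies of the $(m,n)$-curve with arbitrary orientations. Picking a base point on the slope determines a cyclic cross-section, an interval transverse to the slope whose two endpoints are identified, and intersecting a parallel-slope object with this cross-section produces a finite sequence of points labeled by $\{1,2,1^*,2^*\}$ according to the label and orientation of the corresponding parallel copy. This sequence is exactly an object of $\AWeb$ (viewed between the two boundary circles of the cross-section annulus), and applying the rotation functor $\AWeb\to\Tfoam^p_{m/n}$ to it recovers the original parallel-slope web. This gives a preimage for every object, proving surjectivity.

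For fullness onto the essential $q$-degree zero part, let $F\in\Tfoam^p_{m/n}(W_1,W_2)$ be a morphism of essential $q$-degree zero. By Lemma~\ref{lem:equivariantmorph}, $F$ can be rewritten as a linear combination of $\Ss^1$-equivariant foams with respect to the slope direction, where each summand is possibly decorated with dots. The essential $q$-degree of a rotation foam equals twice its number of dots (since its underlying surface is a disjoint union of tori and annuli, each contributing zero to $-\chi$). Hence the dot-free $\Ss^1$-equivariant summands lie in essential degree zero, while summands carrying at least one dot have strictly positive essential degree. Projecting the relation for $F$ onto its essential $q$-degree zero component (using that this grading is well-defined on $\Tfoam^p_{m/n}$ and respected by the linear combination), I would conclude that $F$ is equal, in the zero-degree part, to a sum of $\Ss^1$-equivariant, dotless rotation foams. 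Each such foam is, by construction, of the form $W\times \Ss^1$ for some affine web $W\in\AWeb$, and is therefore in the image of the rotation functor.

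The only real subtlety is making sure that the two presentations of the same morphism — the equivariant one coming from Lemma~\ref{lem:equivariantmorph} and the original one — admit a common essential-degree-zero truncation that continues to equal $F$. This is immediate since the quotient to essential degree zero is a functor, and the expression provided by Lemma~\ref{lem:equivariantmorph} represents $F$ on the nose in $\Tfoam^p_{m/n}$ before any degree truncation; thus truncation commutes with the rewriting, and the result is a preimage of $F$ under the rotation functor in $\AWeb$. This completes the proof that the rotation functor is full onto the degree zero part.
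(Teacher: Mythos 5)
Your proof is correct and follows the same route as the paper: surjectivity on objects is read off directly from the definition of $\Tfoam^p_{m/n}$, and fullness onto the essential $q$-degree zero part is deduced from Lemma~\ref{lem:equivariantmorph} by noting that the dotted equivariant summands have positive degree and therefore drop out of the degree-zero component. The extra care you take with the compatibility of the degree-zero truncation is a harmless elaboration of what the paper leaves implicit.
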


\begin{proof} Surjectivity on objects is clear from the definition and the claimed fullness was proved in Lemma~\ref{lem:equivariantmorph}.
\end{proof}
 
\begin{proposition} \label{prop:isodegzeroTfoamAweb}
The rotation functor $\AWeb\rightarrow \Tfoam^p_{m/n}$ is faithful, making the category $\AWeb$ isomorphic to the essential $q$-degree zero part of $\Tfoam^p_{m/n}$.
\end{proposition}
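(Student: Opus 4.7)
The plan is to construct a left-inverse ``horizontal section'' functor $\sigma \colon (\Tfoam^p_{m/n})_0 \to \AWeb$ to the rotation functor. Together with the surjectivity on objects and fullness already established in the preceding corollary, the existence of such a left inverse gives both faithfulness and the claimed isomorphism of categories.

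Given a morphism $F$ in $(\Tfoam^p_{m/n})_0(W_1 \times \Ss^1, W_2 \times \Ss^1)$, Lemma~\ref{lem:equivariantmorph} allows me to write $F$ as a linear combination of $\Ss^1$-equivariant rotation foams $V_i \times \Ss^1$ generated by affine webs $V_i \in \AWeb(W_1, W_2)$; in essential $q$-degree zero there are no dot contributions, so each summand is a pure rotation foam. I then define $\sigma(F) := \sum V_i$. The crux is well-definedness: whenever $\sum V_i \times \Ss^1 = 0$ in $\Tfoam^p_{m/n}$, one must also have $\sum V_i = 0$ in $\AWeb$.

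The key technical point is that the foam relations of Definition~\ref{def:Tfoam}, when applied to $\Ss^1$-equivariant foams, correspond exactly to the $\glnn{2}$ web relations \eqref{eqn:circles}--\eqref{eqn:squares} specialized at $q=1$. Indeed, rotation sends a $1$-labeled circle to a torus which, via neck-cutting \eqref{sl2neckcutting}, evaluates to $2 = (q+q^{-1})|_{q=1}$, matching the circle evaluation in $\AWeb$ at $q=1$; and the digon and square relations arise analogously from their rotational foam counterparts. This is entirely parallel to the statement of Lemma~\ref{lem:WebtoAfoam} for the annular setting, and the detailed verifications in \cite{QR2,QRS} for similar rotation functors adapt verbatim to the toric case.

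The main obstacle is that a general foam relation applied at a single angle $\theta$ breaks $\Ss^1$-equivariance, so some symmetrization is required to reduce an arbitrary relation among rotation foams to the rotationally symmetric (and thus liftable) case. I plan to handle this by averaging relations over the $\Ss^1$-action along the slope, and using the linear independence of foams with incompressible pairwise non-isotopic underlying surfaces provided by Proposition~\ref{prop:linindep} to rule out spurious cancellations in the process. The conclusion is that the only linear dependences among $\Ss^1$-equivariant rotation foams descend from $q=1$ web relations in $\AWeb$, which confirms well-definedness of $\sigma$, establishes faithfulness of the rotation functor, and yields the isomorphism $\AWeb \cong (\Tfoam^p_{m/n})_0$.
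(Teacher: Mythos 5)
Your overall strategy—build a section $\sigma$ of the rotation functor—is circular at the decisive step. Since fullness and surjectivity on objects are already known, the well-definedness of $\sigma$ (i.e.\ that $\sum_i V_i\times \Ss^1=0$ forces $\sum_i V_i=0$) is \emph{exactly} the faithfulness you are trying to prove, so nothing has been gained by the reformulation. The burden then falls entirely on your claim that any linear dependence among rotation foams can be ``averaged over the $\Ss^1$-action'' back to an equivariant one. This is not a defined operation: morphism spaces of $\Tfoam$ are $\Q$-linear quotients of isotopy classes of embedded foams, and there is no integration over the circle action on relations, nor any mechanism in your sketch for replacing a chain of local foam relations passing through non-equivariant foams by an equivariant chain. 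Lemma~\ref{lem:WebtoAfoam} and its toric analogue only give you the easy direction (web relations at $q=1$ hold among rotation foams), not the converse.

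The argument that actually closes the gap goes the other way: exhibit an explicit spanning set of each morphism space of $\AWeb$—one affine web per pair (isotopy class of the underlying $1$-labeled curve $c(W)$ without inessential components, winding degree), any two webs with the same such data being equal up to sign—and show that its image under $-\times\Ss^1$ is linearly independent in $\Tfoam^p_{m/n}$. A spanning set mapping to a linearly independent set gives injectivity at once. For the linear independence you correctly reach for Proposition~\ref{prop:linindep}, but that proposition only separates foams whose incompressible underlying $1$-labeled surfaces are non-isotopic; it cannot distinguish rotation foams of webs with isotopic $c(W)$ but different winding degree, whose $1$-labeled surfaces coincide. That case must be handled separately, by observing that such foams represent distinct relative second homology classes (their $2$-labeled parts wind differently). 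Your proposal omits the winding degree entirely, so even granting your reduction to equivariant relations, the independence argument is incomplete.
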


\begin{proof} We prove that the functor takes certain spanning sets for the morphism spaces of $\AWeb$ to linearly independent sets of morphisms in $\Tfoam^p_{m/n}$.

Consider a particular morphism space in $\AWeb$ and its spanning set which is given by affine webs $W$ with underlying $1$-labeled curve $c(W)$ without inessential closed components. It is not hard to see that if two such webs have isotopic underlying curves $c(W)$ and equal winding degree, then they are equal up to a sign. So we keep only one web $W$ per pair of isotopy class of $c(W)$ and winding degree of $W$ in the spanning set. We already know that the foams given as the images of these webs under the functor $-\times \Ss^1$ span the relevant degree zero morphism space in $\Tfoam^p_{m/n}$. 

From Proposition~\ref{prop:linindep} we see that these foams are non-zero and that there can be no non-trivial $\Q$-linear relation between such foams if they have non-isotopic underlying surfaces. Additionally, there can be no non-trivial $\Q$-linear relation between foams that come from webs of distinct winding degree, since they will represent distinct relative second homology classes.
\end{proof}

\begin{remark} Under the isomorphism of the essential $q$-degree zero subcategory of $\Tfoam_{m/n}^p$ with $\AWeb$, the superposition operations with $2$-labeled copies of the slope $m/n$ correspond to the auto-equivalences $\lambda$ and $\lambda^*$ from Section~\ref{sec:projectors}.
\end{remark}

The interplay between different slopes is fairly restricted, as shown by the following lemma. 
\begin{lemma} \label{lem:betweenslopes} Let $W_1$ and $W_2$ be essential webs in $\Tfoam$ of different slopes, and $F$ be an orientable foam between $W_1$ and $W_2$. Then $F$ factors through a web without $1$-labeled edges.
\end{lemma}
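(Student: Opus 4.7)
The plan is to reduce $F$ to a composition whose intermediate web consists only of $2$-labeled edges, by first simplifying the underlying $1$-labeled surface $c(F)$ and then exploiting the slope mismatch to find an empty intermediate slice. First, I would apply Lemma~\ref{lem:neckcut2} iteratively, compressing $c(F)$ along any compressing disk in $\T \times [0,1]$ until the result is incompressible. Compression preserves orientability (cutting an oriented surface along a disk and capping yields an oriented surface) and strictly reduces the complexity of $c(F)$, so this terminates. The additional dots introduced carry along with the argument and do not obstruct the factorization, since dots may be placed on either side of the intermediate slice found below.

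Next, I would invoke the classification of incompressible orientable surfaces properly embedded in $\T \times [0,1]$. Since $\pi_1(\T \times [0,1]) \cong \Z^2$ is abelian, any $\pi_1$-injective surface has abelian fundamental group. Combined with Lemma~\ref{lem:Eulerchar}, which forces spheres to bound balls and rules out disks with essential boundary, this restricts the components of $c(F)$ (once incompressible) to: ball-bounding spheres, horizontal tori, vertical annuli of the form $\gamma \times [0,1]$ for an essential simple closed curve $\gamma \subset \T$, and boundary-parallel annuli with both boundary circles on the same boundary torus. The key observation is that a vertical annulus $\gamma \times [0,1]$ would contribute boundary circles to both $c(W_1) \subset \T \times \{0\}$ and $c(W_2) \subset \T \times \{1\}$, each parallel to $\gamma$ and hence of the same slope. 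This contradicts the hypothesis that $W_1$ and $W_2$ have different slopes, so $c(F)$ contains no vertical annulus.

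It follows that every component of $c(F)$ is either closed or has all its boundary on a single side $c(W_i)$. I can therefore isotope the components so that those meeting $c(W_1)$ lie in $\T \times [0, t_0)$, those meeting $c(W_2)$ lie in $\T \times (t_0, 1]$, and closed components are pushed into either half, for some intermediate level $t_0 \in (0,1)$. At the slice $\T \times \{t_0\}$, the foam $F$ restricts to a web $V$ containing no $1$-labeled edges, yielding the desired factorization $F = F_2 \circ F_1$ with $F_1 \colon W_1 \to V$ and $F_2 \colon V \to W_2$. The main obstacle is the topological classification step: convincing oneself that the full spectrum of incompressible orientable surfaces in $\T \times [0,1]$ is exhausted by annuli and tori, ruling out any higher-genus surface with boundary that might bridge the slope-incompatible sides without being a vertical annulus.
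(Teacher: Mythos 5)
Your proof is correct, but it takes a genuinely different route from the paper's. The paper does not pass to an incompressible representative and classify it; instead it runs a Morse-theoretic sweep on $c(F)$: at each index-$1$ critical point of the height function it either cancels the point against an index-$0$ point or exhibits an explicit compression disk (Figure~\ref{fig:compdisk}) along which to neck-cut, so that the boundary circles of $c(W_1)$ are capped off in pairs until the bottom is disconnected from the top. The slope hypothesis enters there through homology: $[c(W_1)]=[c(W_2)]$ in $H_1(\T)$ while being multiples of different primitive classes forces both to vanish, so after Corollary~\ref{cor:neckcut} each $c(W_i)$ is an even number of oppositely oriented parallels and there are enough saddles to perform the capping. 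In your version the same hypothesis is used instead to exclude spanning annuli among the incompressible pieces. Your route buys conceptual economy, and the classification you flag as the ``main obstacle'' is in fact no obstacle: an incompressible surface in $\T\times[0,1]$ is $\pi_1$-injective into $\Z^2$, hence has abelian fundamental group and must be a disk, sphere, annulus or torus; there are no essential disks or higher-genus pieces to worry about. (Note that disks with essential boundary are excluded by this $\pi_1$-injectivity, not by Lemma~\ref{lem:Eulerchar}.) The paper's route avoids the normal-form isotopy pushing boundary-parallel pieces into collars; if you keep your version, you should remark that this is an ambient isotopy carrying the $2$-labeled facets along, and that it can be performed component by component because disjoint boundary-parallel pieces based at opposite ends cannot be nested. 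Finally, in both arguments the precise conclusion is that $F$ equals a linear combination of foams each factoring through a purely $2$-labeled web, since Lemma~\ref{lem:neckcut2} replaces $F$ by a sum of dotted foams; the paper states this explicitly and you should too.
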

\begin{proof}
Consider an orientable foam $F\in \Tfoam(W_1,W_2)$ and denote by $c(F)$ its underlying orientable surface, which bounds the multi-curves $c(W_1)$ and $c(W_2)$ obtained from the webs $W_1$ and $W_2$. By assumption $c(W_1)$ and $c(W_2)$ represent multiples of different primitive homology classes, however, they are homologous via $S$ and thus individually null-homologous. By Corollary~\ref{cor:neckcut} we may assume that they consist of even numbers of parallels of the slope. Our goal is to find a sequence of neck-cutting relations that we can apply to $c(F)$ in order to disconnect the multi-curves $c(W_1)$ and $c(W_2)$. A corresponding sequence of foam relations will then show that $F$ can be written as a linear combination of foams that factor through a web without $1$-labeled edges.

At the expense of performing a small isotopy, suppose that the standard height function on $c(F)\subset \T\times I$ is a separated Morse function. Then we consider the sequence of index $1$ critical points in order of height and their unstable manifolds, i.e. the gradient flow lines flowing downward out of such a critical point. For each such critical point, starting with the lowest, there are three possibilities: 
\begin{enumerate}
\item One flow line hits an index $0$ critical point, in which case we can cancel the two critical points by an isotopy (this works in $c(F)$ as well as in $F$). 
\item Both flow lines hit the same component of $c(W_1)$. Since $c(F)$ is orientable, this corresponds to splitting off a non-essential circle. Indeed, a compression disk can be constructed as shown in Figure~\ref{fig:compdisk}. Neck-cutting along this eliminates the critical point. 
\item The two flow lines hit different components of $c(W_1)$, which are necessarily adjacent in the sense that they bound an annulus in $\T\setminus c(W_1)$. From this, we obtain a compression disk (as shown on the right in Figure~\ref{fig:compdisk}), neck-cutting along which results in an annulus capping off the two components of $c(W_1)$. 
\end{enumerate} 

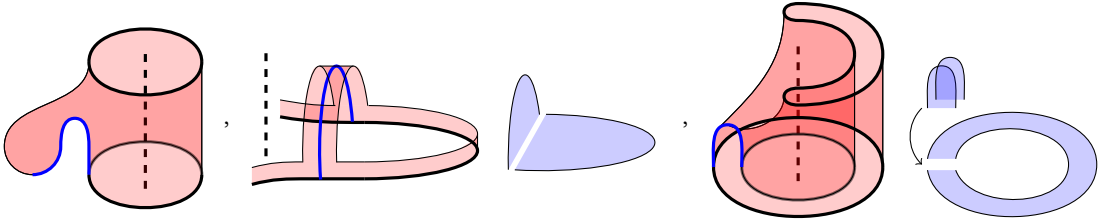
\begin{figure}[ht]
\begin{tikzpicture}[fill opacity=.2,anchorbase,scale=.75]
\draw [fill=red] (0,0) to[out=90,in=90] (2,0) to (2,2) to [out=90,in=90] (0,2) 
to[out=270,in=90] (-1.5,0.5) to [out=270,in=180] (-1,0)to [out=0,in=270](-.5,0.5) to [out=90,in=180] (-.25,1) to [out=0,in=90] (0,.5) to (0,0);
\draw[very thick,dashed] (1,-.25) to (1,2.25);
\draw [fill=red] (0,0) to[out=270,in=270] (2,0) to (2,2) to [out=270,in=270] (0,2) 
to[out=270,in=90] (-1.5,0.5) to [out=270,in=180] (-1,0)to [out=0,in=270](-.5,0.5) to [out=90,in=180] (-.25,1) to [out=0,in=90] (0,.5) to (0,0);
\draw[very thick] (0,0) to[out=270,in=270] (2,0);
\draw[very thick] (0,2) to[out=270,in=270] (2,2);
\draw[very thick] (0,2) to[out=90,in=90] (2,2);
\draw[very thick, opacity=.5] (2,0) to [out=90,in=90] (0,0);
\draw[very thick, blue] (-1,0) to [out=0,in=270](-.5,0.5) to [out=90,in=180] (-.25,1) to [out=0,in=90] (0,.5) to (0,0);
\end{tikzpicture}
\; , \;
\begin{tikzpicture}[fill opacity=.2,anchorbase,scale=.75]
\fill[red](-1.5,1.1) to [out=350,in=0] (-.6,1)to (0,1) to[out=0,in=170] (1,.92) to [out=350,in=140] (1.75,.725) to [out=320,in=90] (2,0.5) to (2,0.8) to [out=90,in=330]  (1.75,1.03) to [out=150,in=335] (1.3,1.18) to [out=155,in=350] (1,1.22) to [out=170,in=0] (0,1.3)to (-.6,1.3) to [out=180,in=350](-1.5,1.4); 
\fill[red] (-1.1,.3) to [out=90,in=260] (-1.05,1.25) to [out=90,in=250] (-1.01,1.6) to [out=65,in=180]  (-.8,2) to (-.2,2) to [out=180,in=65] (-.41,1.6) to [out=255,in=85] (-.45,1.25) to [out=260,in=90] (-.5,.3) to (-1.1,.3)  ;
\fill[red]  (-0.59,1.6) to [out=115,in=0]  (-.8,2) to (-.2,2) to [out=0,in=115] (.01,1.6) to [out=295,in=100] (.05,1.3) to (-.55,1.3) ;
\fill[red] (-2,-.1) to [out=15,in=180] (-1,0) to (0,0) to[out=0,in=190] (1,.08) to [out=10,in=220] (1.75,.275) to [out=40,in=270] (2,0.5) to (2,0.8) to [out=270,in=40]  (1.75,.575)  to [out=220,in=25] (1.3,.42) to [out=205,in=10] (1,.38) to [out=190,in=0] (0,0.3)to(-1.1,0.3) to [out=180,in=15](-2,.2) to (-2,-.1); 
\draw[very thick,dashed] (-1.75,.4) to (-1.75,2.25);
\draw[very thick] (-2,-.1) to [out=15,in=180] (-1,0) to (0,0);
\draw[very thick] (-1.5,1.1) to [out=350,in=180] (0,1);
\draw[very thick] (0,.5) [partial ellipse=270:450:2cm and .5cm];
\draw[very thick,blue] (-.5,.5) [partial ellipse=20:200: .3cm and 1.5cm];
\draw(-.5,.3) to (0,.3) ;
\draw(2,.5) to (2,.8) ;
\draw(-.8,2) to (-.2,2) ;
\draw (0,.8) [partial ellipse=270:448:2cm and .5cm];
\draw (-.2,.5) [partial ellipse=32:188: .3cm and 1.5cm];
\draw (-.8,.5) [partial ellipse=32:188: .3cm and 1.5cm];
\draw (-2,.2) to [out=15,in=180] (-1.1,.3);
\draw (-1.5,1.4) to [out=350,in=180] (-.52,1.3);
\end{tikzpicture}
\;\;
\begin{tikzpicture}[fill opacity=.2,anchorbase,scale=.75]
\draw[fill=blue] (0,1) [partial ellipse=256:448:2cm and .5cm];
\draw[fill=blue] (-.3,.7) [partial ellipse=32:187: .3cm and 1.5cm];
\end{tikzpicture}
\; , \;
\begin{tikzpicture}[fill opacity=.2,anchorbase,scale=.75]
\draw[very thick,dashed] (1,-.25) to (1,2.25);
\draw [fill=red] (-.5,0) to[out=270,in=270] (2.5,0) to (2.5,2) to [out=270,in=0](1,1.1) to [out=180,in=270] (.75,1.25) to [out=270,in=330] (-.25,.75)to [out=180,in=90]  (-.5,0);
\draw [fill=red] (0,0) to[out=270,in=270] (2,0) to (2,2) to [out=270,in=0](1,1.4) to [out=180,in=90] (.75,1.25) to [out=270,in=330] (-.25,.75)to [out=0,in=90]  (0,0);
\draw [fill=red] (-.5,0) to[out=90,in=90] (2.5,0) to (2.5,2) to [out=90,in=0](1,2.9) to [out=180,in=90] (0.75,2.75) to [out=270,in=45]  (-.35,.75) to [out=20,in=180](-.25,.75)to [out=180,in=90]  (-.5,0);
\draw [fill=red] (0,0) to[out=90,in=90] (2,0) to (2,2) to [out=90,in=0](1,2.6) to [out=180,in=270] (0.75,2.75) to [out=270,in=45] (-.35,.75) to [out=20,in=180](-.25,.75) to [out=0,in=90]  (0,0);
\draw[very thick] (2,2) to [out=90,in=0] (1,2.6) to [out=180,in=270] (0.75,2.75) to [out=90,in=180] (1,2.9) to [out=0,in=90] (2.5,2) to [out=270,in=0](1,1.1) to [out=180,in=270] (.75,1.25) to [out=90,in=180](1,1.4) to[out=0,in=270] (2,2);
\draw[very thick, opacity=.5] (0,0) to[out=270,in=270] (2,0);
\draw[very thick, opacity=.5] (2,0) to [out=90,in=90] (0,0);
\draw[very thick] (-.5,0) to[out=270,in=270] (2.5,0);
\draw[very thick] (-.5,0) to [out=90,in=90] (2.5,0);
\draw[very thick, blue] (-.5,0) to [out=90,in=180](-.25,.75)to [out=0,in=90]  (0,0);
\end{tikzpicture}\;\;
\begin{tikzpicture}[fill opacity=.2,anchorbase,scale=.75]
\fill [blue] (-.5,-.1) to[out=280,in=270] (2.5,0) to [out=90,in=80] (-.5,.1) to (0,.1)to [out=80,in=90]  (2,0) to[out=270,in=290] (0,-.1) to (-.5,-.1);
\draw (0,.1)to [out=80,in=90]  (2,0) to[out=270,in=290] (0,-.1);
\draw(-.5,-.1) to [out=280,in=270] (2.5,0) to [out=90,in=80] (-.5,.1);
\fill [blue] (-.5,1) to [out=90,in=180](-.25,1.75)to [out=0,in=90]  (0,1) to (-.5,1);
\draw (-.5,1) to [out=90,in=180](-.25,1.75)to [out=0,in=90]  (0,1);
\fill [blue] (-.35,1.15) to [out=90,in=180](-.1,1.9)to [out=0,in=90]  (0.15,1.15) to (-.35,1.15);
\draw (-.35,1.15) to [out=90,in=180](-.1,1.9)to [out=0,in=90]  (0.15,1.15);
\draw[white] (0,2.9)to(0.5,2.9);
\draw[->] (-.6,1)to[out=225,in=135](-.6,0);
\end{tikzpicture}
\caption{Types of index $1$ critical points, the last two with the desired compression disks.}
\label{fig:compdisk}
\end{figure}

In either case, we call the result again $c(F)$. This procedure can be iterated until $c(W_1)$ and $c(W_2)$ are disconnected along $c(F)$ because sufficiently many index $1$ critical points exist by the homological assumptions.
Note that we have used that for any compression disk $D$ for $c(W_1)$, it is possible to apply a neck-cutting relation to the foam $F$ with the same result after forgetting $2$-labeled facets. This was shown in Lemma~\ref{lem:neckcut2}.  \end{proof}

Unorientable foams can map between essential webs of different slopes via unorientable saddles, but by Lemma~\ref{lem:Eulerchar} such a saddle has a positive essential $q$-degree. Thus, Lemma~\ref{lem:betweenslopes} implies that all slope-changing foams in $\Tfoam_0$ factor through a web without $1$-labeled edges.

\subsection{A quotient of the toric foam category} In this section, we will study a particular quotient of the toric foam category. To streamline the exposition, we will assume that Conjecture~\ref{conj:functoriality} holds. However, most results here can be described and proven as in Section~\ref{sec:JWFoam} without relying on $\stwo$-operations or Conjecture~\ref{conj:functoriality}.

Recall from Proposition~\ref{prop:bifunctor} that superposition induces a bifunctor $*$ from $\Tfoam\times \Tfoam$ to $\HC(\Tfoam)$. For the following, we denote by $T_{\pm 1}$ the endomorphism of the empty web in $\Tfoam$ given by the boundary parallel essential torus, with the standard or the opposite orientation. Note that the contraction $T_{\pm 1}* -$ of the superposition bifunctor with these essential tori gives degree-preserving endofunctors of $\Tfoam$, which thus restrict to endofunctors of $\Tfoam_0$. 

\begin{definition} The foam category $\essTfoam$ is the quotient $\Tfoam_0/Q$, where $Q$ is defined to be the ideal of $\Tfoam_0$ generated by the morphisms of the form $T_{\pm 1}* F$ for foams $F$ in $\Tfoam_0$. 
\end{definition}

Above we have seen that $\Tfoam_0$ can be decomposed into full subcategories $\Tfoam_{m/n,0}$ corresponding to slopes on $\T$. After applying auto-equivalences given by $\stwo$-operations and decomposing webs with inessential $1$-labeled components, we arrive at the subcategories $\Tfoam_{m/n,0}^p$, which can be described by the affine web category $\AWeb$. In these subcategories, $Q$ precisely corresponds to the ideal generated by $1$-labeled essential circles in $\AWeb$. 

\begin{proposition} The graded additive $\Q$-linear category $\essTfoam$ satisfies the following properties 
\begin{enumerate}
\item $\essTfoam$ decomposes into blocks indexed by $H_1(\T)$,
\item $\stwo$-operations provide equivalences between these blocks,
\item the parallel slope subcategories of $\essTfoam$ are isomorphic to $\essbAWeb$,
\item all unorientable foams are zero in $\essTfoam$,
\item  slope changing foams factor through purely $2$-labeled webs.
\end{enumerate}
\end{proposition}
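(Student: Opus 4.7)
My plan is to treat the five claims in sequence, reducing each to facts already established earlier. Claims (1) and (4) are essentially inherited from the corresponding structure on $\Tfoam$ and $\Tfoam_0$. For (1), the general $\Sfoam$ is already $H_1(\Su)$-graded with all morphisms preserving the first homology class. Since the underlying $1$-labeled surface of $T_{\pm 1}$ is a boundary-parallel torus $c$, its doubled push-off $\overline{c}$ represents $0\in H_1(\T\times I)$, so every generator $T_{\pm 1}*F$ of $Q$ preserves the first homology class of the source web. Therefore $Q$ is an $H_1(\T)$-graded ideal and the decomposition descends. For (4), Corollary~\ref{cor:oriented} already shows that unorientable foams vanish in $\Sfoam_0$, hence a fortiori in the quotient $\essTfoam$.

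For (2), I would invoke Theorem~\ref{thm:autoequ}, which gives $\stwo$-auto-equivalences of $\Tfoam_{\mathrm{dg}}$ shifting $H_1(\T)$-degree by $2x$. These descend to $\essTfoam$ provided $Q$ is preserved. For a generator $T_{\pm 1}*F\in Q$, associativity of the superposition bifunctor (Lemma~\ref{lem:assoc}) together with the naturality of the roof gutter isomorphisms $\mu$ yields an isomorphism $(T_{\pm 1}*F)*\w{x}\cong T_{\pm 1}*(F*\w{x})$, and the right-hand side is again of the generating form, up to the homological grading shift required by Theorem~\ref{thm:autoequ}. Thus $-*\w{x}$ restricts to an equivalence between the blocks indexed by $c$ and $c+2x$.

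For (3), the key input is Proposition~\ref{prop:isodegzeroTfoamAweb}, which identifies the essential $q$-degree zero part of $\Tfoam^p_{m/n}$ with $\AWeb$ via the rotation functor of Lemma~\ref{lem:AWebtoTfoam}. I would then verify that the intersection $Q\cap \Tfoam^p_{m/n,0}$ corresponds, under this isomorphism, exactly to the tensor ideal of $\AWeb$ generated by essential $1$-labeled circles: on the $\Tfoam$-side, $T_{\pm 1}$ is (up to orientation) the rotation foam obtained by revolving a single $1$-labeled point along the slope circle, so rotating a web containing an essential $1$-labeled circle in $\AWeb$ produces a foam isomorphic to $T_{\pm 1}$ superposed with a rotation foam. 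Conversely, Lemma~\ref{lem:wraparound} and Lemma~\ref{lem:equivariantmorph} guarantee that up to foam relations any generator $T_{\pm 1}*F$ of $Q$ living in the slope subcategory factors through an $\Ss^1$-equivariant foam containing such an essential $1$-labeled rotation circle. This identifies the quotient with $\essbAWeb$.

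Finally, (5) follows by combining Lemma~\ref{lem:betweenslopes} with (4): any morphism in $\essTfoam$ between essential webs of distinct slopes is a linear combination of orientable foams (since unorientable ones vanish), and each such orientable foam factors through a purely $2$-labeled web. The main obstacle I anticipate is (3), specifically the careful bookkeeping needed to match the generators of the ideal $Q$ (defined globally via superposition with the boundary-parallel torus on $\T$) with the ideal generated locally by essential $1$-labeled circles inside each $\AWeb$ slope subcategory, and in particular ensuring that passing through the rotation functor does not introduce additional relations via $2$-labeled interactions — this requires combining the fact that $\stwo$-operations commute up to natural isomorphism with $T_{\pm 1}*-$ with the reduction strategy in Lemma~\ref{lem:equivariantmorph}.
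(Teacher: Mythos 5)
Your proof is correct and follows essentially the same route as the paper: (1) and (2) are inherited by the quotient (with your check that $Q$ is homogeneous and preserved by $-*\w{x}$ filling in what the paper calls "immediate"), (3) rests on Proposition~\ref{prop:isodegzeroTfoamAweb} together with the identification of $Q$ with the ideal of essential $1$-labeled circles, (4) is Corollary~\ref{cor:oriented}, and (5) is Lemma~\ref{lem:betweenslopes} combined with (4). The only quibble is a phrasing slip in (3): $T_{\pm 1}$ is the rotation foam of the essential $1$-labeled \emph{circle} in the annulus (revolving a point along the slope would give a curve, not a torus), but the substance of the identification is right.
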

\begin{proof} The first two properties are immediate since the direct sum decompositions and auto-equivalences are inherited by the quotient. The third property was discussed above and the fourth is inherited from $\Tfoam_0$, see Corollary~\ref{cor:oriented}. Finally, in the proof of Lemma~\ref{lem:betweenslopes} we have observed that slope-changing foams factor through a purely $2$-labeled web if they do not contain an unorientable saddle. The claim (5) then follows from (4). 
\end{proof}

\begin{lemma*}
\label{lem:essstar}
Superposition induces a bifunctor $*\colon \essTfoam\times \essTfoam \to \HC(\essTfoam)$.
\end{lemma*}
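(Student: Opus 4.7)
The plan is to descend the superposition bifunctor $* \colon \Tfoam \times \Tfoam \to \HC(\Tfoam)$ from Proposition~\ref{prop:bifunctor} to the quotients in two successive steps: first to a bifunctor $\Tfoam_0 \times \Tfoam_0 \to \HC(\Tfoam_0)$, and then further to $\essTfoam \times \essTfoam \to \HC(\essTfoam)$. The first reduction is essentially automatic. Since the Khovanov differentials are built from signed zips and unzips, each of $q$-degree zero, the superposition of two $q$-degree zero foams produces a chain map in $\HC(\Tfoam)$ whose components are individually $q$-degree zero foams, hence lie in the subcategory $\Tfoam_0$. This yields the restricted bifunctor, together with its $\Q$-bilinearity.

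For the second reduction, by bilinearity it suffices to verify that if $F$ lies in the two-sided ideal $Q$ and $F'$ is any foam in $\Tfoam_0$, then both $F*F'$ and $F'*F$ vanish in $\HC(\essTfoam)$. Using the bifunctoriality identity $F*F' = (F*\id) \circ (\id*F')$, this reduces to showing that $F*\id_W$ vanishes for every web $W$; the corresponding statement for the opposite-order superposition follows from the fact that the closed torus $T_{\pm 1}$ may be freely isotoped in the thickening direction, so $T_{\pm 1}*G \cong G*T_{\pm 1}$ as foams. Since $Q$ is generated as a two-sided ideal by elements of the form $T_{\pm 1}*G$ with $G\in \Tfoam_0$, a further application of bifunctoriality to the composition $H_1 \circ (T_{\pm 1}*G) \circ H_2$ reduces the task to verifying that $(T_{\pm 1}*G)*\id_W$ is zero in $\HC(\essTfoam)$ for any web $W$.

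The key step is to invoke associativity of superposition as provided by Lemma~\ref{lem:assoc} and the monoidal structure on $\Stanweb$, which yields a natural isomorphism $(T_{\pm 1}*G)*\id_W \cong T_{\pm 1}*(G*\id_W)$ in $\HC(\Tfoam_0)$ induced by a vertical isotopy. The right-hand side is a chain map each of whose component foams has the form $T_{\pm 1}*H$ with $H \in \Tfoam_0$, and therefore belongs to $Q$ and becomes zero in $\essTfoam$. Hence the chain map vanishes in $\HC(\essTfoam)$, and compatibility of $*$ with the ideal $Q$ follows. The main technical obstacle I anticipate is in upgrading this associativity from the level of individual foams to the level of chain maps in the homotopy category; this requires that the associator natural transformations in $\Stanweb$ intertwine with the Khovanov functor in a sufficiently rigid manner, which is precisely the aspect of the argument that relies on Conjecture~\ref{conj:functoriality}, as signaled by the asterisk on the lemma.
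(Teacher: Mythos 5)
Your proposal is correct and follows essentially the same route as the paper: reduce to the generators $T_{\pm 1}*G$ of the ideal $Q$, re-associate the superposition to $T_{\pm 1}*(G*\id_W)$, and observe that the resulting chain-map components are again of the form $T_{\pm 1}*\id_X$ and hence vanish in $\essTfoam$. The one step you flag as the main obstacle---upgrading associativity from individual foams to chain maps in the homotopy category---is exactly what the paper supplies, via a locality argument: the chain map $\T\Kh(T_{\pm}\star\id_{W_1}\star\id_{W_2})$ is computed in two stages by first resolving only the crossings not involving $W_2$, and the intermediate movie coincides with the one defining $(T_{\pm}*\id_{W_1})*\id_{W_2}$, so the two bracketings give homotopic chain maps.
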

\begin{proof} It suffices to prove that if $W_1$ and $W_2$ are toric webs and $F=T_{\pm}*\id_{W_1}$, then $F*\id_{W_2}\colon \T\Kh(W_1\star W_2)\to \T\Kh(W_1\star W_2)$ is a chain map whose components are of the form $T_{\pm}*\id_X$, where $X$ denotes a web appearing in the complex $\T\Kh(W_1\star W_2)$.

To this end, we claim that the two chain maps given by $F*\id_{W_2}=\T\Kh(\iota(\T\Kh(T_{\pm}\star\id_{W_1}))\star\id_{W_2})$ and $\T\Kh(T_{\pm} \star \id_{W_1} \star \id_{W_2})$ are homotopic. In the latter, we can omit parentheses because the superposition product on $\Q\Ttanweb$ has associators given by isotopies that are generically vertical and are thus sent to identities by the Khovanov functor.

For the proof of the claim, recall that the computation of the map induced by a foam in $\Ttanweb$ relies on a presentation of the foam as a movie of tangled webs in $\T\times [0,1]$. Each basic movie, which is supported in a small disk, induces a basic chain map between the corresponding Khovanov complexes, whose components are given by foams which differ from identity foams only over the same small disk. By virtue of this locality, we may compute the chain map $\T\Kh(T_{\pm} \star \id_{W_1} \star \id_{W_2})$ in two steps. In the first step we only resolve crossings that involve $W_1$ and the webs given by slices through the foam $T$, but none of the crossings with the web $W_2$. The result of this partial computation is a movie of tangled webs, which represents the foam $\iota(\T\Kh(T_{\pm}\star \id_{W_1}))$ superposed over the identity foam on $W_2$. This intermediate result of the first computation step agrees precisely with the movie of tangled webs from which $\T\Kh(\iota(\T\Kh(T_{\pm}\star\id_{W_1}))\star \id_{W_2})$ is computed. This implies that the two chain maps are homotopic.

Finally, we re-associate once more and compute the chain map $\T\Kh(T_{\pm} \star \id_{W_1} \star \id_{W_2})$ by first resolving all crossings between $W_1$ and $W_2$ before computing the action of the torus $T_{\pm}$ on all webs $X$ that appear in $\T\Kh(W_1\star W_2)$. By definition, this action is given by foams $T_{\pm}*\id_X$, which concludes the proof.
\end{proof}

Since the parallel slope subcategories of $\essTfoam$ are isomorphic to $\essbAWeb$, they contain idempotent endomorphisms corresponding to the extremal weight projectors from Section~\ref{sec:projectors}. These idempotents represent objects in $\Kar(\essTfoam)$ which categorify the basis elements $(m,n)_T\in \T\basis$ of the $\glnn{2}$ skein algebra of the torus, as described in Section~\ref{sec:skeinalgtorus}. 

For the following, consider $m,n\in \Z$, not both equal to zero, and $d=\gcd(m,n)$. We choose (once and for all) a $1$-labeled, oriented multi-curve $(m,n)$ of homology class $m [\lambda]+ n [\mu]$ on $\T$, which consists of $d$ parallel copies of a simple closed curve. Note that this is consistent with Section~\ref{sec:skeinalgtorus}, but now we consider $(m,n)$ as an object of $\essTfoam$ and hence do not allow isotopies.

\begin{definition}
We denote by $(m,n)^F_T$ the object in the Karoubi envelope of $\essTfoam$, given by the web $(m,n)$ together with the idempotent rotation foam $T_d\times \Ss^1$ generated by the extremal weight projector $T_d$.
\end{definition}

\begin{lemma}\label{lem:KarTmor} The morphism spaces in the Karoubi envelope of $\essTfoam$ between objects of the form $(m,n)^F_T*\w{r,s}$ and $\w{r,s}$ for $(m,n), (r,s)\in \Z^2$ with $(m,n)\neq (0,0)$ satisfy the following properties:
\begin{itemize}
\item The endomorphism ring of $(m,n)^F_T*\w{r,s}$ is isomorphic to $\C[(\wrap*\w{r,s})^{\pm 1}]$, with the generator induced by the $1$-labeled wrap endomorphism $\wrap$ of $(m,n)$. 
\item The endomorphism ring of $\w{r,s}$ for $(r,s)\neq (0,0)$ is isomorphic to $\C[\wrap^{\pm 1}_2]$, with the generator $\wrap_2$ given by the $2$-labeled wrap.
\item The endomorphism ring of the empty web $\w{0,0}=\emptyset$ is isomorphic to $\C[c_2^{\pm 1}]$, with the generator $c_2$ given by the boundary-parallel $2$-labeled torus with the standard orientation.
\item There are no non-trivial morphisms between distinct objects of the form $(m,n)^F_T*\w{r,s}$ or $\w{r,s}$ with $m>0$ or $n>m=0$.
\item Otherwise, the only non-trivial morphisms between such objects are realized by $(m,n)^F_T*\w{-m,-n} \cong (-m,-n)^F_T$ and, more generally, $(m,n)^F_T*\w{r-m,s-n} \cong (-m,-n)_T*\w{r,s}$.
\end{itemize}
\end{lemma}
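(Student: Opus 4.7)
The plan is to reduce the computation of $\Hom$-spaces in $\Kar(\essTfoam)$ to $\Hom$-spaces in $\Kar(\essbAWeb)$, using the slope decomposition together with the identifications from Proposition~\ref{prop:isodegzeroTfoamAweb} and Proposition~\ref{prop:webdecomp}. First, using that $\stwo$-operations with $\w{r,s}$ give auto-equivalences of $\essTfoam$ by (the proof of) Theorem~\ref{thm:autoequ}, it suffices to check the claim in the untwisted case where the twist is trivial and then conjugate. Moreover, Lemma~\ref{lem:betweenslopes} reduces $\Hom$-spaces between objects of different slopes to morphisms factoring through purely $2$-labeled webs: I would argue these all vanish because the extremal weight projector $T_d$ used to build $(m,n)^F_T$ annihilates any web containing a $2$-labeled turnback, by an argument parallel to the one in the proof of Proposition~\ref{prop:idempotent}. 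This kills all $\Hom$-spaces between distinct objects except the ones listed, and reduces endomorphism computations to single slope subcategories.

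Next, I would use Proposition~\ref{prop:isodegzeroTfoamAweb}, which identifies the essential degree zero subcategory of a parallel slope subcategory $\Tfoam^p_{m/n}$ with $\essbAWeb$ (or rather, its degree zero part), with $\stwo$-operations matching the auto-equivalences $\lambda^{\pm}$. Under this identification and rotation along the slope, the idempotent $T_d$ defining $(m,n)^F_T$ corresponds to the extremal weight projector in Definition~\ref{def:extrwp}. Proposition~\ref{prop:webdecomp} then says that $T_d$ (and the objects $\lambda^k(\emptyset)$, $\sh\lambda^k(\emptyset)$ corresponding to purely $2$-labeled multicurves) are pairwise non-isomorphic simple objects in $\Kar(\essbAWeb)$ with $1$-dimensional endomorphism algebras.

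The subtle point will be lifting the winding grading: in $\essbAWeb$ the different winding shifts $\sh^k T_d$ are distinct simple objects, while in $\essTfoam$ these shifts collapse onto a single object and the isomorphisms $T_d\xrightarrow{\sim}\sh^k T_d$ (which exist in $\Kar(\essbAWeb)$ as part of the classification above) become honest endomorphisms, realized geometrically by powers of the wrap-around foam $\wrap$. Thus $\End_{\essTfoam}((m,n)^F_T)=\bigoplus_{k\in\Z}\Hom_{\Kar(\essbAWeb)}(T_d,\sh^k T_d)\cong \C[\wrap^{\pm 1}]$; the conjugation by $\w{r,s}$ transports $\wrap$ to $\wrap*\w{r,s}$. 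The same argument handles $\End_{\essTfoam}(\w{r,s})\cong \C[\wrap_2^{\pm 1}]$ for $(r,s)\neq(0,0)$, with $\wrap_2$ the $2$-labeled wrap.

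The endomorphism ring of $\emptyset$ must be computed by hand since there is no rotation functor available: any element is a closed foam in $\T\times[0,1]$; applying Lemma~\ref{lem:neckcut2} to the underlying $1$-labeled surface and Lemma~\ref{lem:closedeval} to the resulting ball-contained pieces reduces it to an integer combination of powers of the essential $2$-labeled torus $c_2$ (since essential $1$-labeled tori are killed in $\essTfoam$ by construction), yielding $\C[c_2^{\pm 1}]$. Finally, the isomorphism $(m,n)^F_T*\w{-m,-n}\cong (-m,-n)^F_T$ (and its twisted form) can be read off from the decategorified reorientation identity \eqref{eq:reorient} together with the fact that, in $\Kar(\essbAWeb)$, applying $\lambda^{-d}$ to $T_d$ produces an object isomorphic to the extremal weight projector with reversed orientation. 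The main obstacles will be (a) giving a careful proof that the wrap foam $\wrap$ realizes a specific winding-shift isomorphism in $\essbAWeb$, which requires tracking the rotation functor's effect on winding gradings, and (b) making sure the turnback-killing argument used to rule out slope-changing morphisms is valid in the superposed setting.
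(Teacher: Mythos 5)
Your proposal follows essentially the same route as the paper's proof: transport pairs of objects into a parallel slope subcategory using the $\stwo$-auto-equivalences, identify its essential degree zero part with $\essbAWeb$ via the rotation functor (Proposition~\ref{prop:isodegzeroTfoamAweb}), and read off endomorphism rings and vanishing of cross-morphisms from the classification of simples in $\Kar(\essbAWeb)$ (Proposition~\ref{prop:webdecomp}), with Lemma~\ref{lem:betweenslopes} handling objects of different slopes. The one place you genuinely deviate is in killing the slope-changing morphisms: you propose that the extremal weight projector $T_d$ annihilates anything passing through a $2$-labeled turnback, by analogy with Proposition~\ref{prop:idempotent}. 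As stated this is vacuous when $\gcd(m,n)=1$, since then $T_d=T_1=\id$ and annihilates nothing. The paper's argument avoids this: by Lemma~\ref{lem:betweenslopes} a slope-changing morphism factors through a purely $2$-labeled web, hence splits into two slope-preserving morphisms, each connecting a non-$2$-labeled simple to an object of the form $\lambda^k(\emptyset)$ or $\sh\lambda^k(\emptyset)$, and each therefore vanishes by Proposition~\ref{prop:webdecomp}; this covers all $d\geq 1$ uniformly with no turnback analysis. Your extra attention to the winding grading (wrap foams realizing the winding-shift isomorphisms, whence the Laurent polynomial endomorphism rings) and your direct neck-cutting computation of $\End(\emptyset)$ are consistent with what the paper does, the former being exactly the point the paper addresses in the discussion immediately after the lemma.
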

\begin{proof} First of all, there is a homological obstruction for having a morphism between such objects since any foam involving $(m,n)^F_T*\w{r,s}$ needs to preserve the class $[(m,n)_T*\w{r,s}]\in H_1(\T)$. Two objects in the same slope and the same homology class can be simultaneously transported into a parallel slope subcategory, where the claims follow from the corresponding results in $\essbAWeb$, see Proposition*~\ref{prop:webdecomp}. Finally, a morphism between two such objects in different slopes factors through a purely $2$-labeled web, and thus into two slope-preserving morphisms. Since the objects are in different slopes, both of them are not $2$-labeled, and so the two slope-preserving morphisms are again zero by Proposition~\ref{prop:webdecomp}. \end{proof}

Next, we choose a basepoint $p\in \T$ disjoint from all multi-curves $(m,n)$. Then the spaces of morphisms between toric webs $W_1$ and $W_2$, which are disjoint from $p$, admit an additional $\Z$-grading, which can be computed as the algebraic intersection number of foams in $\T\times [0,1]$ with the oriented arc $p\times [0,1]$. This corresponds to the winding grading on affine webs in any slope. We can now add formal winding shifts of objects and restrict to morphisms that respect such shifts, c.f. the discussion after Proposition~\ref{prop:webdecomp}. In the following, we use the notation $\Kar(\essTfoam)$ for this winding-graded version.

In Lemma~\ref{lem:KarTmor} we have disregarded the winding grading, and have thus obtained non-trivial endomorphism rings. If we instead work in $\Kar(\essTfoam)$, we get shifted objects $\sh^a (m,n)^F_T*\w{r,s}$ and $\sh^b \w{r,s}$, which have $1$-dimensional endomorphism rings. As $a\in \Z$ varies, all objects $\sh^a (m,n)^F_T*\w{r,s}$ are isomorphic to each other, via isomorphisms that are unique up to scalars. Similarly, objects of the form $\sh^a \w{r,s}$ are equivalent if and only if their shifts have the same parity. 

\begin{proposition}\label{prop:Tss} $\Kar(\essTfoam)$ is semisimple, with non-isomorphic simple objects $t^{ms-nr}(m,n)^F_T*\w{r,s}$ and $\w{r,s}$ as well as $\sh \w{r,s}$ for $m,n,r,s\in \Z$, $m>0$ or $n>m=0$, and their $q$-grading shifts. 
\end{proposition}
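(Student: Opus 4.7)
The plan is to decompose an arbitrary object of $\Kar(\essTfoam)$ into a direct sum of objects from the claimed list, and then verify that these are pairwise non-isomorphic simples. The key input is that all the hard work has already been done: the structure of morphism spaces between the proposed simples is controlled by Lemma~\ref{lem:KarTmor}, and the decomposition in each parallel slope subcategory is controlled by the corresponding result Proposition~\ref{prop:webdecomp} for $\Kar(\essbAWeb)$.

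First, given an arbitrary web $W$ in $\essTfoam$, Lemma~\ref{lem:equivobjectsS} lets us assume $W$ is a standard basis web, and Corollary~\ref{cor:red2slopes2} then expresses $W$ as an $\stwo$-operation applied to an object of some parallel slope subcategory $\essTfoam_{m/n}^p$ (or, if $W$ is purely $2$-labeled, as $\w{r,s}$ itself, possibly with inessential $1$-labeled circles removed via Corollary~\ref{cor:neckcut}). Since $\stwo$-operations are auto-equivalences of $\essTfoam$ (and extend to $\Kar(\essTfoam)$), it suffices to decompose objects in $\Kar(\essTfoam_{m/n}^p)$. Via the isomorphism of the (essential degree zero) parallel slope subcategory with the quotient $\essbAWeb$ (Proposition~\ref{prop:isodegzeroTfoamAweb} and the definition of $\essTfoam$), this decomposition reduces precisely to Proposition~\ref{prop:webdecomp}: every object decomposes into $q$- and winding-shifted copies of $\lambda^k(T_d)$ for $d \geq 1$, or $\lambda^k(\emptyset)$, or $\sh\lambda^k(\emptyset)$. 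Translating back through the slope isomorphism, the images of $\lambda^k(T_d)$ are precisely $(m,n)^F_T * \w{r,s}$ for the appropriate $(r,s) \in \Z^2$ (the value of $k$ recording the $\stwo$-twist along the slope), with the convention that the homological shift $t^{ms-nr}$ absorbs the $q$-shift arising from the half-integer sign twist in the superposition. The images of $\lambda^k(\emptyset)$ and $\sh\lambda^k(\emptyset)$ are precisely $\w{r,s}$ and $\sh\w{r,s}$. This establishes that every object is isomorphic to a direct sum of grading-shifted copies of objects from the proposed list.

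Next, I will argue that these objects are in fact simple and pairwise non-isomorphic in the winding-graded Karoubi envelope. Lemma~\ref{lem:KarTmor} computes all relevant morphism spaces in the winding-ungraded setting: each endomorphism ring is a Laurent polynomial ring in a single wrap generator $\wrap$, $\wrap_2$, or $c_2$, and there are no non-trivial morphisms between distinct objects in the list (modulo the duality $(m,n)^F_T *\w{-m,-n} \cong (-m,-n)^F_T$, which is why the parameter range $m>0$ or $n>m=0$ is enforced). Passing to the winding-graded version, each wrap generator lives in a nonzero winding degree, so the winding-graded endomorphism ring of each simple is one-dimensional and concentrated in winding degree zero, and similarly there are no morphisms of matching winding degree between distinct simples from the list. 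This establishes that the proposed objects (together with their $q$-shifts) are pairwise non-isomorphic simples, and yields semisimplicity.

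The main technical point I expect to require care is the bookkeeping of the three gradings: the $q$-degree, the winding degree (controlled by the basepoint), and the homological degree $t$ introduced by the $\stwo$-operations (Theorem*~\ref{thm:autoequ}). In particular, one must check that the homological shift $t^{ms-nr}$ in the statement is consistent with the way $\stwo$-operations shift homological degree, and that the equivalences $(m,n)^F_T*\w{r-m,s-n} \cong (-m,-n)_T*\w{r,s}$ from Lemma~\ref{lem:KarTmor} do not produce additional identifications once the winding shift is taken into account. The remaining slope-changing morphisms factor through purely $2$-labeled webs by property (5) above, hence through the $\w{r,s}$ simples, so they impose no new constraints beyond what is already recorded.
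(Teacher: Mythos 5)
Your proposal is correct and follows essentially the same route as the paper: reduce to parallels of a slope via $\stwo$-operations, transport through the isomorphism with $\essbAWeb$ to invoke Proposition~\ref{prop:webdecomp}, and use the morphism computations (Lemma~\ref{lem:KarTmor} in the winding-graded setting) to identify the simples. The only step the paper spells out that you leave implicit is that an arbitrary idempotent in $\Kar(\essTfoam)$ becomes an idempotent matrix of morphisms between the listed simples, which can be diagonalized so that all idempotents split into simples.
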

\begin{proof} We have seen that every web in $\essTfoam$ is isomorphic to a direct sum of parallels of a slope, acted upon by $\stwo$-operations. Via Proposition~\ref{prop:isodegzeroTfoamAweb}, Proposition~\ref{prop:webdecomp} implies that every such object can be further decomposed into a direct sum of simple objects as listed above. Any idempotent in $\essTfoam$ thus gives rise to an idempotent matrix of morphisms between simples. Such matrices can be diagonalized, so all idempotents split into simples. 
\end{proof}

Now we can complete the proof of Theorem~\ref{thm:decat} in the case of the torus. For this, we wanted to see that the map $\gamma \colon \TWebq \to K_0(\Tfoam)$ sends the standard basis $\T\basisstd$ to a linearly independent set. This follows by considering the composition of $\gamma$ with the natural maps $K_0(\Tfoam)\to K_0(\essTfoam) \to K_0(\Kar(\essTfoam))$, which sends the basis $\T\basis$ to the set of classes of the non-isomorphic simple objects from Proposition~\ref{prop:Tss}, which is thus linearly independent.

\subsection{The superposition products of simples}
\label{sec:examples}
In this section, we sketch how the superposition bifunctors from Proposition~\ref{prop:bifunctor} and Lemma~\ref{lem:essstar} extend to bifunctors on the semisimple categories $\Kar(\Sfoam_0)_{\mathrm{dg}}$ and $\Kar(\essTfoam)_{\mathrm{dg}}$ respectively. We then compute the examples of the superposition of simples in the case of the torus, which suggest that the superposition bifunctor satisfies a categorified analogue of the Frohman--Gelca formula for the multiplication in the toric skein algebra. Throughout we will assume that Conjecture~\ref{conj:functoriality} holds.

In order to define the bifunctor $*$ on $\Kar(\Sfoam_0)_{\mathrm{dg}}$ or $\Kar(\essTfoam)_{\mathrm{dg}}$ it suffices to define it on simple objects, since all non-trivial morphisms are scalar multiples of identity morphisms. Let $F_1$ and $F_2$ be idempotent endomorphisms of webs $W_1$ and $W_2$ respectively, which represent such simple objects. Then $W_1*W_2$ is a chain complex which deformation retracts via maximal Gaussian elimination onto an essentially unique minimal complex, which is again given by a direct sum of simple objects. Next, by the monoidality assumption on $*$, the chain endomorphism $(F_1*\id_{W_2})\circ(\id_{W_1}*F_2)$ of $W_1*W_2$ (defined via Proposition~\ref{prop:bifunctor}) is idempotent up to homotopy. The induced endomorphism on the minimal complex is honestly idempotent, and we define $(W_1,F_1)*(W_2,F_2)$ to be its image.

\begin{example} \label{exa:1001-cat} In $\Kar(\essTfoam)_{\mathrm{dg}}$ we have
\[(1,0)^F_T*(0,1)^F_T = (1,1)^F_T\oplus (1,-1)^F_T*\w{0,1} \]
\end{example}
This is because $(a,b)_T=(a,b)$ for $\gcd(a,b)=1$, and the chain complex for $(1,0)*(0,1)$ in Example~\ref{exa:1001} splits thanks to the degree zero truncation in $\essTfoam$.

\begin{example} \label{exa:2101-cat} In $\Kar(\essTfoam)_{\mathrm{dg}}$ we have
\[(2,1)^F_T*(0,1)^F_T\cong (2,2)^F_T\oplus(2,0)^F_T*\w{0,1}.\]
\end{example}

Indeed, because of the degree zero truncation, the chain complex for $(2,1)*(0,1)$ splits into two halves, as already suggested in the illustration in Example~\ref{exa:2101-2}. Consider the first half, which is a complex over the parallel slope category for $1/1$. After transporting to $\essbAWeb$, we see the following complex:
\[
 \begin{tikzpicture}[anchorbase]
 \node at (0,0) {$
 \begin{tikzpicture}[anchorbase, scale=.2]
\draw (0,0) circle (1);
\draw (0,0) circle (3);
\draw [very thick,->] (.8,.6) to (2.4,1.8);
\draw [very thick,->] (-.8,.6) to (-2.4,1.8);
\node at (0,-1) {$*$};
\node at (0,-3) {$*$};
\draw [dashed] (0,-1) to (0,-3);
\end{tikzpicture}
$};
 \node at (4,.8) {$t^{-1}\,
 \begin{tikzpicture}[anchorbase, scale=.2]
\draw (0,0) circle (1);
\draw (0,0) circle (3);
\draw [double,->] (0,1) to (0,3);
\node at (0,-1) {$*$};
\node at (0,-3) {$*$};
\draw [dashed] (0,-1) to (0,-3);
\end{tikzpicture}
$};
 \node at (4.2,0) {$\oplus$};
 \node at (4,-.9) {$t^{-1}\,
 \begin{tikzpicture}[anchorbase, scale=.2]
\draw (0,0) circle (1);
\draw (0,0) circle (3);
\draw [double,->] (0,1) to (0,3);
\node at (0,-1) {$*$};
\node at (0,-3) {$*$};
\draw [dashed] (0,-1) to (0,-3);
\end{tikzpicture}
$};
\draw [->] (1,.2) to (3,.4);
\draw [->] (1,-.2) to (3,-.4);
\node at (2,.9) {$
 \begin{tikzpicture}[anchorbase, scale=.2]
\draw (0,0) circle (1);
\draw (0,0) circle (3);
\draw [very thick] (.8,.6) to [out=45, in=315] (0,2);
\draw [very thick] (-.8,.6) to [out=135,in=225] (0,2);
\draw [double,->] (0,2) to (0,3);
\node at (0,-1) {$*$};
\node at (0,-3) {$*$};
\draw [dashed] (0,-1) to (0,-3);
\end{tikzpicture}
$};
\node at (2,-1.08) {$ 
 \begin{tikzpicture}[anchorbase, scale=.2]
\draw (0,0) circle (1);
\draw (0,0) circle (3);
\draw [very thick] (.8,.6) to [out=45, in=90] (1.5,0) to [out=270,in=45] (0,-2);
\draw [very thick] (-.8,.6) to [out=135,, in=90] (-1.5,0) to [out=270,in=135] (0,-2);
\draw[double,->] (0,-2) to [out=270,in=270] (2.25,0) to [out=90,in=270](0,3);
\node at (0,-1) {$*$};
\node at (0,-3) {$*$};
\draw [dashed] (0,-1) to (0,-3);
\end{tikzpicture}
$};
\end{tikzpicture}
\]
The identity web in homological degree zero splits into orthogonal idempotents as follows:
\[\begin{tikzpicture}[anchorbase, scale=.2]
\draw (0,0) circle (1);
\draw (0,0) circle (3);
\draw [very thick,->] (.8,.6) to (2.4,1.8);
\draw [very thick,->] (-.8,.6) to (-2.4,1.8);
\node at (0,-1) {$*$};
\node at (0,-3) {$*$};
\draw [dashed] (0,-1) to (0,-3);
\end{tikzpicture} = T_2 
+\frac{1}{2} \;
 \begin{tikzpicture}[anchorbase, scale=.2]
\draw (0,0) circle (1);
\draw (0,0) circle (3);
\draw [very thick] (.8,.6) to [out=45, in=315] (0,1.5);
\draw [very thick] (-.8,.6) to [out=135,in=225] (0,1.5);
\draw [double] (0,1.5) to (0,2.25);
\draw [very thick,->] (0,2.25) to [out=45,in=225](2.16,1.62) to(2.4,1.8);
\draw [very thick,->] (0,2.25) to [out=135,in=315](-2.16,1.62) to(-2.4,1.8);
\node at (0,-1) {$*$};
\node at (0,-3) {$*$};
\draw [dashed] (0,-1) to (0,-3);
\end{tikzpicture}
+\frac{1}{2} \;
 \begin{tikzpicture}[anchorbase, scale=.2]
\draw (0,0) circle (1);
\draw (0,0) circle (3);
\draw [very thick] (.8,.6) to [out=45, in=90] (1.5,0) to [out=270,in=0] (0,-1.5);
\draw [very thick] (-.8,.6) to [out=135,, in=90] (-1.5,0) to [out=270,in=180] (0,-1.5);
\draw[double] (.2,-1.5) to (.2,-2.25);
\draw [very thick,->] (0,-2.25) to [out=0,in=270] (2.25,0) to [out=90,in=225](2.16,1.62) to(2.4,1.8);
\draw [very thick,->] (0,-2.25) to [out=180,in=270] (-2.25,0) to [out=90,in=315](-2.16,1.62) to(-2.4,1.8);
\node at (0,-1) {$*$};
\node at (0,-3) {$*$};
\draw [dashed] (0,-1) to (0,-3);
\end{tikzpicture}
\]
The two differentials restrict to zero on $T_2$ and to isomorphisms on the other two objects respectively. After Gaussian elimination, only $T_2$ remains, which corresponds to $(2,2)^F_T$ in $\Kar(\essTfoam)$. The second half of the chain complex analogously retracts onto $(2,0)^F_T*\w{0,1}$.

\begin{example}\label{exa:slope-cat} In $\Kar(\essTfoam)_{\mathrm{dg}}$ we have
\begin{equation}\label{eqn:catFG}
(m,n)^F_T*(r,s)^F_T\cong (m+r,n+s)^F_T \oplus (m-r,n-s)^F_T*\w{r,s}
\end{equation} whenever these simples lie in the same slope, i.e. if $\frac{m}{n}=\frac{r}{s}$. This follows from the isomorphisms between $\essbAWeb$ and the degree zero parallel slope subcategories of $\essTfoam$, see Proposition~\ref{prop:isodegzeroTfoamAweb}.
\end{example}

\begin{remark} If $*$ extends to a monoidal structure on $\Kar(\essTfoam)_{\mathrm{dg}}$, then the preceding examples are sufficient to prove the categorified Frohman--Gelca formula \eqref{eqn:catFG} in full generality, see Remark~\ref{rem:catproof}.
\end{remark}

The following computation verifies a non-trivial case of the categorified Frohman--Gelca formula \eqref{eqn:catFG}, and thus provides additional evidence for Conjecture~\ref{conj_catFG}.

\begin{example} In $\Kar(\essTfoam)_{\mathrm{dg}}$ we have
\[(2,0)^F_T*(0,1)^F_T \cong (2,1)^F_T \oplus (2,-1)^F_T*\w{0,1}\]
\end{example}

We will prove this by computing the image of the idempotent $T_2* \id$ on the cube of resolutions of $(2,0)*(0,1)$:\footnote{Here we suppress the choice of an ordering for the crossings, as it is not relevant in the computation.}
\begin{equation*}
\xy
(-10,0)*{
\begin{tikzpicture}[anchorbase]
\draw[very thick, directed=.9] (1.2,0) to (1.2,1.6);
\draw[white, line width=.15cm] (0,.5) to (2.4,.5);
\draw[white, line width=.15cm] (0,1.1)  to (2.4,1.1);
\draw[very thick, directed=.75] (0,0.5) to (2.4,.5);
\draw[very thick, directed=.75] (0,1.1)  to (2.4,1.1);
\torus{2.4}{1.6}
\end{tikzpicture}
};
(10,0)*{\cong
};
(47,0)*{(A)};
(74,4)*{(B)};
(74,-4)*{(C)};
(88,0)*{(D)};
(110,0)*{
 q^2 t^{-2} \;\;
\begin{tikzpicture}[anchorbase]
\draw[very thick, directed=.55] (0,0.5) [out=0,in=180] to (1.05,.35);
\draw[very thick, directed=.55] (0,1.1)to [out=0,in=180] (1.05,.95);
\draw[very thick] (1.35,1.25)  to [out=90,in=270] (1.2,1.6);
\draw[very thick] (1.2,0) [out=90,in=270] to (1.05,.35);
\draw[very thick, directed=.55] (1.35,.65) [out=0,in=180] to (2.4,.5);
\draw[double] (1.05,.95) to (1.35,1.25) ;
\draw[double]  (1.05,.35) to (1.35,.65) ;
\draw[very thick, directed=.55] (1.35,.65) to (1.05,.95);
\draw[very thick, directed=.55] (1.35,1.25) [out=0,in=180] to (2.4,1.1);
\torus{2.4}{1.6}
\end{tikzpicture}
};
(70,15)*{
 q t^{-1} \;\;
\begin{tikzpicture}[anchorbase]
\draw[very thick, directed=.55] (0,0.5) [out=0,in=180] to (1.05,.35);
\draw[very thick, directed=.55] (0,1.1)to [out=0,in=270] (1.2,1.6);
\draw[very thick] (1.2,0) [out=90,in=270] to (1.05,.35);
\draw[very thick, directed=.55] (1.35,.65) [out=0,in=180] to (2.4,.5);
\draw[double]  (1.05,.35) to (1.35,.65);
\draw[very thick, directed=.55] (1.35,.65) to [out=90,in=180] (2.4,1.1);
\torus{2.4}{1.6}
\end{tikzpicture}
};
(70,-15)*{
 q t^{-1} \;\;
\begin{tikzpicture}[anchorbase]
\draw[very thick, directed=.55] (0,0.5) [out=0,in=270] to (1.05,.95);
\draw[very thick, directed=.55] (0,1.1)to [out=0,in=180] (1.05,.95);
\draw[very thick] (1.35,1.25)  to [out=90,in=270] (1.2,1.6);
\draw[very thick,directed=.55] (1.2,0) [out=90,in=180] to (2.4,.5);
\draw[double] (1.05,.95) to (1.35,1.25) ;
\draw[very thick, directed=.55] (1.35,1.25) [out=0,in=180] to (2.4,1.1);
\torus{2.4}{1.6}
\end{tikzpicture}
};
(30,0)*{
\begin{tikzpicture}[anchorbase]
\draw[very thick, directed=.55] (0,0.5) [out=0,in=180] to (2.4,1.1);
\draw[very thick, directed=.55] (0,1.1) [out=0,in=270] to (1.2,1.6);
\draw[very thick, directed=.55] (1.2,0) [out=90,in=180] to (2.4,.5);
\torus{2.4}{1.6}
\end{tikzpicture}
};
(50,9)*{
\begin{tikzpicture}[anchorbase]
\draw[->] (0,0) to (1,.5);
\end{tikzpicture}
};
(50,-9)*{
\begin{tikzpicture}[anchorbase]
\draw[->] (0,0) to (1,-.5);
\end{tikzpicture}
};
(95,-9)*{
\begin{tikzpicture}[anchorbase]
\draw[->] (0,0) to (1,.5);
\end{tikzpicture}
};
(95,9)*{
\begin{tikzpicture}[anchorbase]
\draw[->] (0,0) to (1,-.5);
\end{tikzpicture}
};
\endxy
\end{equation*}
This chain complex already contains the desired summands $(2,1)^F_T\cong (A)$ and $(2,-1)^F_T*\w{0,1}\cong (D)$. We thus need to prove that $T_2*\id$ acts as the identity on these chain groups, and by zero on $(B)\oplus (C)$.

Recall from Definition~\ref{def:extrwp} that the idempotent decomposes as 
\[T_2*\id=\id*\id + \frac{-u_1}{2}*\id + \frac{-u_2}{2}*\id\]
and we will compute the individual actions of these summands on the chain groups. The summand $\id*\id$ acts as the identity on all chain groups, and we claim that the other two summands act as follows: 

\begin{center}
\begin{tabular}{c | c  c c c}
$-u_1/2 * \id$ & $(A)$ & $(B)$ & $(C)$ & $(D)$\\
\hline 
 $(A)$ & $0$ &  &  &  \\
 $(B)$ & & $-\id/2$ & $\phi$ & \\
 $(C)$ & & $\psi$ & $-\id/2$ & \\ 
 $(D)$ & & & & $0$
\end{tabular}
\begin{tabular}{c | c  c c c}
$-u_2/2 * \id$ & $(A)$ & $(B)$ & $(C)$ & $(D)$\\
\hline 
 $(A)$ & $0$ & & & \\
 $(B)$ & & $-\id/2$ & $-\phi$ & \\
 $(C)$ & & $-\psi$ & $-\id/2$ & \\ 
 $(D)$ & & & & $0$
\end{tabular}
\end{center}

Here $\phi$ and $\psi$ are certain foams that are described in the proof of the claim. Note that the claim implies that the categorified Frohman--Gelca formula holds for $(2,0)^F_T*(0,1)^F_T$.

In order to prove the claim, we first explain why $-u_1/2*\id$ and $-u_2/2*\id$ act by zero on the chain groups $(A)$ and $(D)$. For this, note that these morphisms factor through the following superposition:
\begin{equation*}
(E):=\xy
(-10,0)*{
\begin{tikzpicture}[anchorbase]
\draw[very thick, directed=.9] (1.2,0) to (1.2,1.6);
\draw[white, line width=.15cm] (0,.8) to (2.4,.8);
\draw[double, directed=.75] (0,.8) to (2.4,.8);
\torus{2.4}{1.6}
\end{tikzpicture}
};
(10,0)*{=
};
(30,0)*{
 q t^{-1} \;\;
\begin{tikzpicture}[anchorbase]
\draw[very thick, directed=.55] (1,.8) to [out=90,in=270] (1.2,1.6);
\draw[very thick] (1,.8) to (1.4,.8);
\draw[very thick, directed=.55] (1.2,0) to [out=90,in=270] (1.4,.8);
\draw[double,directed=.55] (1.4,.8) to [out=0,in=180] (2.4,.8);
\draw[double,directed=.55] (0,.8) to [out=0,in=180] (1,.8);
\torus{2.4}{1.6}
\end{tikzpicture}
};
\endxy
\end{equation*}
which is supported in homological degree $-1$, but the chain groups $(A)$ and $(D)$ are of homological degree $0$ and $-2$.

Next we compute the maps between the chain groups $(B)$ and $(C)$ induced by $-u_1/2*\id$ and $-u_2/2*\id$. First we consider the chain maps represented by the following movie, which can be read right or left:

\begin{equation}
\label{eqn:u1} \begin{tikzpicture}[anchorbase]
\draw[very thick, directed=.9] (1.2,0) to (1.2,1.6);
\draw[white, line width=.15cm] (0,.5) to (2.4,.5);
\draw[white, line width=.15cm] (0,1.1)  to (2.4,1.1);
\draw[very thick, directed=.75] (0,0.5) to (2.4,.5);
\draw[very thick, directed=.75] (0,1.1)  to (2.4,1.1);
\torus{2.4}{1.6}
\end{tikzpicture} 
\leftrightarrow
\begin{tikzpicture}[anchorbase]
\draw[very thick, directed=.9] (1.2,0) to (1.2,1.6);
\draw[white, line width=.15cm] (0,.5) to (2.4,.5);
\draw[white, line width=.15cm] (0,1.1)  to (2.4,1.1);
\draw[very thick, directed=.75] (0,0.5) to [out=0,in=225](.4,.8) (.8,.8) to [out=315,in=180] (1.2,.5) to  (2.4,.5);
\draw[very thick, directed=.75] (0,1.1) to [out=0,in=135] (.4,.8) (.8,.8) to [out=45,in=180] (1.2,1.1) to  (2.4,1.1);
\draw[double,directed=.55] (.4,.8) to (.8,.8);
\torus{2.4}{1.6}
\end{tikzpicture}
\leftrightarrow
\begin{tikzpicture}[anchorbase]
\draw[very thick, directed=.9] (1.2,0) to (1.2,1.6);
\draw[white, line width=.15cm] (0,.8) to (2.4,.8);
\draw[very thick, directed=.75] (0,0.5) to [out=0,in=225](.4,.8) (2,.8) to [out=315,in=180] (2.4,.5);
\draw[very thick, directed=.75] (0,1.1) to [out=0,in=135] (.4,.8) (2,.8) to [out=45,in=180] (2.4,1.1);
\draw[double,directed=.75] (.4,.8) to (2,.8);
\torus{2.4}{1.6}
\end{tikzpicture}
\leftrightarrow
\begin{tikzpicture}[anchorbase]
\draw[very thick, directed=.9] (1.2,0) to (1.2,1.6);
\draw[white, line width=.15cm] (0,.8) to (2.4,.8);
\draw[double, directed=.75] (0,.8) to (2.4,.8);
\torus{2.4}{1.6}
\end{tikzpicture}
 \end{equation}
 Reading from left to right we first see a zip foam, then a fork slide, see Section~\ref{sec:forkslide}, and finally a digon collapse.
 
In terms of annular webs, we see $ \begin{tikzpicture}[anchorbase, scale=.2]
\draw (0,0) circle (1);
\draw (0,0) circle (3);
\draw [very thick] (.8,.6) to [out=45, in=315] (0,2);
\draw [very thick] (-.8,.6) to [out=135,in=225] (0,2);
\draw [double,->] (0,2) to (0,3);
\node at (0,-1) {$*$};
\node at (0,-3) {$*$};
\draw [dashed] (0,-1) to (0,-3);
\end{tikzpicture}* \id$
 and 
 $ \begin{tikzpicture}[anchorbase, scale=.2]
\draw (0,0) circle (1);
\draw (0,0) circle (3);
\draw [very thick,->] (0,1.5) to [out=45, in=225] (1.8,2.4);
\draw [very thick,->] (0,1.5) to [out=135,in=315] (-1.8,2.4);
\draw [double] (0,1) to (0,1.5);
\node at (0,-1) {$*$};
\node at (0,-3) {$*$};
\draw [dashed] (0,-1) to (0,-3);
\end{tikzpicture}* \id$ in the rightward and leftward compositions respectively. Going all the way to the right and then back to the left, we obtain the chain map induced by $u_1 * \id$. \\

We can now compute the action of the chain maps from \eqref{eqn:u1} on the object $(B)$.

\begin{equation}
\label{eqn:u1B}
(B)=\begin{tikzpicture}[anchorbase]
\draw[very thick, directed=.55] (0,0.5) [out=0,in=180] to (1.05,.35);
\draw[very thick, directed=.55] (0,1.1)to [out=0,in=270] (1.2,1.6);
\draw[very thick] (1.2,0) [out=90,in=270] to (1.05,.35);
\draw[very thick, directed=.55] (1.35,.65) [out=0,in=180] to (2.4,.5);
\draw[double]  (1.05,.35) to (1.35,.65);
\draw[very thick, directed=.55] (1.35,.65) to [out=90,in=180] (2.4,1.1);
\torus{2.4}{1.6}
\end{tikzpicture}
\leftrightarrow
\begin{tikzpicture}[anchorbase]
\draw[very thick, directed=.75] (0,0.5) to [out=0,in=225](.4,.8) (.8,.8) to [out=315,in=180] (1.05,.35);
\draw[very thick, directed=.75] (0,1.1) to [out=0,in=135] (.4,.8) (.8,.8) to [out=45,in=270] (1.2,1.6);
\draw[double,directed=.55] (.4,.8) to (.8,.8);
\draw[very thick] (1.2,0) [out=90,in=270] to (1.05,.35);
\draw[very thick, directed=.55] (1.35,.65) [out=0,in=180] to (2.4,.5);
\draw[double]  (1.05,.35) to (1.35,.65);
\draw[very thick, directed=.55] (1.35,.65) to [out=90,in=180] (2.4,1.1);
\torus{2.4}{1.6}
\end{tikzpicture} 
\leftrightarrow
\begin{tikzpicture}[anchorbase]
\draw[very thick, directed=.55] (1,.8) to [out=90,in=270] (1.2,1.6);
\draw[very thick] (1,.8) to (1.4,.8);
\draw[very thick, directed=.55] (1.2,0) to [out=90,in=270] (1.4,.8);
\draw[double,directed=.55] (1.4,.8) to [out=0,in=180] (2,.8);
\draw[double,directed=.55] (0.4,.8) to [out=0,in=180] (1,.8);
\draw[very thick, directed=.75] (0,0.5) to [out=0,in=225](.4,.8) (2,.8) to [out=315,in=180] (2.4,.5);
\draw[very thick, directed=.75] (0,1.1) to [out=0,in=135] (.4,.8) (2,.8) to [out=45,in=180] (2.4,1.1);
\torus{2.4}{1.6}
\end{tikzpicture} 
\leftrightarrow
\begin{tikzpicture}[anchorbase]
\draw[very thick, directed=.55] (1,.8) to [out=90,in=270] (1.2,1.6);
\draw[very thick] (1,.8) to (1.4,.8);
\draw[very thick, directed=.55] (1.2,0) to [out=90,in=270] (1.4,.8);
\draw[double,directed=.55] (1.4,.8) to [out=0,in=180] (2.4,.8);
\draw[double,directed=.55] (0,.8) to [out=0,in=180] (1,.8);
\torus{2.4}{1.6}
\end{tikzpicture}
=(E)
\end{equation}
Reading left-to-right, we first see a zip foam, then the morphism induced by the fork slide, which is just a local isotopy, and finally a digon closure. It is easy to see that the composite is an isotopy that slides the split-vertex rightward through the red gluing edge of the torus
. In the opposite direction, we see the inverse isotopy. This implies that the map $\frac{-1}{2}u_1*\id$ acts by the scalar $-1/2$ on the resolution $(B)$. Moreover, since, $u_2$ is obtained by conjugating $u_1$ by a half-rotation of the torus, $-u_2/2*\id$ also acts by the scalar $-1/2$ on $(B)$.\\

Next, we look at the action of the chain maps from \eqref{eqn:u1} on the object $(C)$.
\begin{equation}
\label{eqn:u1C}
(C)=
\begin{tikzpicture}[anchorbase]
\draw[very thick, directed=.55] (0,0.5) [out=0,in=270] to (1.05,.95);
\draw[very thick, directed=.55] (0,1.1)to [out=0,in=180] (1.05,.95);
\draw[very thick] (1.35,1.25)  to [out=90,in=270] (1.2,1.6);
\draw[very thick,directed=.55] (1.2,0) [out=90,in=180] to (2.4,.5);
\draw[double] (1.05,.95) to (1.35,1.25) ;
\draw[very thick, directed=.55] (1.35,1.25) [out=0,in=180] to (2.4,1.1);
\torus{2.4}{1.6}
\end{tikzpicture}
\leftrightarrow
\begin{tikzpicture}[anchorbase]
\draw[very thick, directed=.75] (0,0.5) to [out=0,in=225](.3,.8) (.6,.8) to [out=315,in=270] (1.05,.95);
\draw[very thick, directed=.75] (0,1.1) to [out=0,in=135] (.3,.8) (.6,.8) to [out=45,in=180] (1.05,.95);
\draw[double] (.3,.8) to (.6,.8);
\draw[very thick] (1.35,1.25)  to [out=90,in=270] (1.2,1.6);
\draw[very thick,directed=.55] (1.2,0) [out=90,in=180] to (2.4,.5);
\draw[double] (1.05,.95) to (1.35,1.25) ;
\draw[very thick, directed=.55] (1.35,1.25) [out=0,in=180] to (2.4,1.1);
\torus{2.4}{1.6}
\end{tikzpicture}
\leftrightarrow
\begin{tikzpicture}[anchorbase]
\draw[very thick, directed=.55] (1,.8) to [out=90,in=270] (1.2,1.6);
\draw[very thick] (1,.8) to (1.4,.8);
\draw[very thick, directed=.55] (1.2,0) to [out=90,in=270] (1.4,.8);
\draw[double,directed=.55] (1.4,.8) to [out=0,in=180] (2,.8);
\draw[double,directed=.55] (0.4,.8) to [out=0,in=180] (1,.8);
\draw[very thick, directed=.75] (0,0.5) to [out=0,in=225](.4,.8) (2,.8) to [out=315,in=180] (2.4,.5);
\draw[very thick, directed=.75] (0,1.1) to [out=0,in=135] (.4,.8) (2,.8) to [out=45,in=180] (2.4,1.1);
\torus{2.4}{1.6}
\end{tikzpicture} 
\leftrightarrow
\begin{tikzpicture}[anchorbase]
\draw[very thick, directed=.55] (1,.8) to [out=90,in=270] (1.2,1.6);
\draw[very thick] (1,.8) to (1.4,.8);
\draw[very thick, directed=.55] (1.2,0) to [out=90,in=270] (1.4,.8);
\draw[double,directed=.55] (1.4,.8) to [out=0,in=180] (2.4,.8);
\draw[double,directed=.55] (0,.8) to [out=0,in=180] (1,.8);
\torus{2.4}{1.6}
\end{tikzpicture}
=(E)
\end{equation}
Reading left-to-right, we first see a zip, then an interesting negated foam appearing in the fork slide \eqref{eqn:forkslidefoam}, followed by the digon collapse. However, the fork slide foam is itself built from a digon opening and a zip morphisms, which cancel with the other two present foams. The total map is the negative of an isotopy, which slides the merge vertex left through the red gluing edge of the torus. Reading right-to-left, we obtain the inverse map. As before, this shows that $-u_1/2*\id$ and $-u_2/2*\id$ each act by the scalar $-1/2$ on the resolution $(C)$.

Next, we have to compute the components of $u_1$ mapping $(B)\to(C)$ and $(C)\to(B)$. For the first one, we compose the left-to-right map from \eqref{eqn:u1B} with the right-to-left map from \eqref{eqn:u1C}. The result is the negative of the isotopy that moves the entire double edge in $(B)$ rightwards through the red gluing edge of the torus. 
 Similarly, the component $(C)\to (B)$ is given by the negative of the leftward isotopy.

Finally, the components of $u_2*\id$ are obtained by conjugating the action of $u_1*\id$ by a half-rotation of the torus. Note however, that this interchanges $(B)$ and $(C)$. The $u_2*\id$ component $(B)\to (C)$ is the negative of the isotopy which moves the double edge in $(B)$ first down and through the green gluing edge of the torus into position $(C)$, then leftward to position $(B)$ and then again upward (but not passing the gluing edge) to position $(C)$. 
Similarly, the $u_2*\id$ component $(C)\to (B)$ is given by the negative of the isotopy sliding the double edge first down (not passing gluing edges), then rightward, and finally up through the green gluing edge.
\begin{center}
\begin{tabular}{c | c | c}
~ & $(B)\to (C)$ & $(C) \to (B)$\\
\hline
 $u_1$ & right & left\\
\hline
 $u_2$& down and left & up and right 
\end{tabular}
\end{center}
In order to see that the off-diagonal contributions of $u_1*\id$ and $u_2*\id$ cancel, we argue that the isotopy from $(B)$ to itself, which moves the double edge down once and left twice, is the negative of the identity; and similarly for the isotopy from $(C)$ to itself, which moves the double edge up once and right twice.

\begin{gather*}
\begin{tikzpicture}[anchorbase]
\draw[very thick, purple, directed=.55] (0,0.5) [out=0,in=180] to (1.05,.35);
\draw[very thick, directed=.55] (0,1.1)to [out=0,in=270] (1.2,1.6);
\draw[very thick] (1.2,0) [out=90,in=270] to (1.05,.35);
\draw[very thick,purple, directed=.55] (1.35,.65) [out=0,in=180] to (2.4,.5);
\draw[double]  (1.05,.35) to (1.35,.65);
\draw[very thick, directed=.55] (1.35,.65) to [out=90,in=180] (2.4,1.1);
\torus{2.4}{1.6}
\end{tikzpicture}
\to
\begin{tikzpicture}[anchorbase]
\draw[very thick,purple, directed=.55] (0,0.5) [out=0,in=90] to (.6,0) (.6,1.6) to [out=270,in=180] (1.05,.95);
\draw[very thick, directed=.55] (0,1.1)to [out=0,in=270] (1.05,.95) ;
\draw[very thick,purple, directed=.65] (1.35,1.25) to [out=0,in=270](1.8,1.6)(1.8,0) [out=90,in=180] to (2.4,.5);
\draw[double] (1.05,.95) to (1.35,1.25) ;
\draw[very thick, directed=.55] (1.35,1.25) to [out=90,in=270](1.2,1.6) (1.2,0) to [out=90,in=180] (2.4,1.1);
\torus{2.4}{1.6}
\end{tikzpicture}
\to
\begin{tikzpicture}[anchorbase]
\draw[very thick, directed=.55] (0,0.5) [out=0,in=270] to (1.05,.95);
\draw[very thick,purple, directed=.55] (0,1.1)to [out=0,in=180] (1.05,.95);
\draw[very thick] (1.35,1.25)  to [out=90,in=270] (1.2,1.6);
\draw[very thick,directed=.55] (1.2,0) [out=90,in=180] to (2.4,.5);
\draw[double] (1.05,.95) to (1.35,1.25) ;
\draw[very thick,purple, directed=.55] (1.35,1.25) [out=0,in=180] to (2.4,1.1);
\torus{2.4}{1.6}
\end{tikzpicture}
\to
\begin{tikzpicture}[anchorbase]
\draw[very thick, directed=.55] (1,.8) to [out=90,in=270] (1.2,1.6);
\draw[very thick,purple] (1,.8) to (1.4,.8);
\draw[very thick, directed=.55] (1.2,0) to [out=90,in=270] (1.4,.8);
\draw[double,directed=.55] (1.4,.8) to [out=0,in=180] (2.4,.8);
\draw[double,directed=.55] (0,.8) to [out=0,in=180] (1,.8);
\torus{2.4}{1.6}
\end{tikzpicture}\\
\to
\begin{tikzpicture}[anchorbase]
\draw[very thick,purple, directed=.55] (0,0.5) [out=0,in=180] to (1.05,.35);
\draw[very thick, directed=.55] (0,1.1)to [out=0,in=270] (1.2,1.6);
\draw[very thick] (1.2,0) [out=90,in=270] to (1.05,.35);
\draw[very thick,purple, directed=.55] (1.35,.65) [out=0,in=180] to (2.4,.5);
\draw[double]  (1.05,.35) to (1.35,.65);
\draw[very thick, directed=.55] (1.35,.65) to [out=90,in=180] (2.4,1.1);
\torus{2.4}{1.6}
\end{tikzpicture}
\to
\begin{tikzpicture}[anchorbase]
\draw[very thick, directed=.55] (0,0.5) [out=0,in=270] to (1.05,.95);
\draw[very thick,purple, directed=.55] (0,1.1)to [out=0,in=180] (1.05,.95);
\draw[very thick] (1.35,1.25)  to [out=90,in=270] (1.2,1.6);
\draw[very thick,directed=.55] (1.2,0) [out=90,in=180] to (2.4,.5);
\draw[double] (1.05,.95) to (1.35,1.25) ;
\draw[very thick,purple, directed=.55] (1.35,1.25) [out=0,in=180] to (2.4,1.1);
\torus{2.4}{1.6}
\end{tikzpicture}
\to
\begin{tikzpicture}[anchorbase]
\draw[very thick, directed=.55] (1,.8) to [out=90,in=270] (1.2,1.6);
\draw[very thick,purple] (1,.8) to (1.4,.8);
\draw[very thick, directed=.55] (1.2,0) to [out=90,in=270] (1.4,.8);
\draw[double,directed=.55] (1.4,.8) to [out=0,in=180] (2.4,.8);
\draw[double,directed=.55] (0,.8) to [out=0,in=180] (1,.8);
\torus{2.4}{1.6}
\end{tikzpicture}
\to
\begin{tikzpicture}[anchorbase]
\draw[very thick,purple, directed=.55] (0,0.5) [out=0,in=180] to (1.05,.35);
\draw[very thick, directed=.55] (0,1.1)to [out=0,in=270] (1.2,1.6);
\draw[very thick] (1.2,0) [out=90,in=270] to (1.05,.35);
\draw[very thick,purple, directed=.55] (1.35,.65) [out=0,in=180] to (2.4,.5);
\draw[double]  (1.05,.35) to (1.35,.65);
\draw[very thick, directed=.55] (1.35,.65) to [out=90,in=180] (2.4,1.1);
\torus{2.4}{1.6}
\end{tikzpicture}
\end{gather*}
Here we have color-coded the $1$-labeled edges for increased clarity. Additionally, we can interpret this foam as a superposition as follows:
\begin{gather*}
\begin{tikzpicture}[anchorbase]
\draw[very thick,directed=.85] (1.2,0) [out=90,in=0] to (0,.65) (2.4,.65) to [out=180,in=0] (1.4,.65) to [out=180,in=180] (1.4,1.1) to [out=0,in=180] (2.4,1.1) (0,1.1) to [out=0,in=270] (1.2,1.6);
\draw[white, line width=.15cm]  (0.4,.5) to (2,.5);
\draw[double, directed=.55]  (0,.5) to (2.4,.5);
\torus{2.4}{1.6}
\end{tikzpicture}
\xrightarrow{\text{int}}
\begin{tikzpicture}[anchorbase]
\draw[very thick,directed=.30, directed=.65, directed=.85] (.95,0) to [out=90,in=0] (0,.65) (2.4,.65) to [out=180,in=90] (1.45,0) (1.45,1.6) to [out=270,in=0] (1.325,1.4) to [out=180,in=270] (1.2,1.6) (1.2,0) to [out=90,in=180]  (2.4,1.1) (0,1.1) to [out=0,in=270] (.95,1.6);
\draw[white, line width=.15cm]  (.8,1.6) to [out=270,in=90] (.8,1.4) to [out=270,in=270] (1.6,1.4);
\draw[double, directed=.55]  (0,.5) to [out=0,in=90] (.8,0) (.8,1.6) to [out=270,in=90] (.8,1.4) to [out=270,in=270] (1.6,1.4) to [out=90,in=270]  (1.6,1.6) (1.6,0) to [out=90,in=180] (2.4,.5);
\torus{2.4}{1.6}
\end{tikzpicture}
\xrightarrow{\partial}
\begin{tikzpicture}[anchorbase]
\draw[very thick, directed=.55] (0,0.5) to (.8,.5) to [out=0,in=0] (.8,1.1) to (0,1.1) (2.4,1.1) to [out=180,in=270] (1.2,1.6);
\draw[very thick,directed=.55] (1.2,0) [out=90,in=180] to (2.4,.5);
\draw[white, line width=.15cm]  (0.4,.95) to (2,.95);
\draw[double, directed=.55]  (0,0.95) to (2.4,0.95);
\torus{2.4}{1.6}
\end{tikzpicture}
\xrightarrow{\text{int}}
\begin{tikzpicture}[anchorbase]
\draw[very thick, directed=.9] (1.2,0) to (1.2,1.6);
\draw[white, line width=.15cm] (0,.8) to (2.4,.8);
\draw[double, directed=.75] (0,.8) to (2.4,.8);
\torus{2.4}{1.6}
\end{tikzpicture}
\\
\xrightarrow{\text{int}}
\begin{tikzpicture}[anchorbase]
\draw[very thick,directed=.85] (1.2,0) [out=90,in=0] to (0,.65) (2.4,.65) to [out=180,in=0] (1.4,.65) to [out=180,in=180] (1.4,1.1) to [out=0,in=180] (2.4,1.1) (0,1.1) to [out=0,in=270] (1.2,1.6);
\draw[white, line width=.15cm]  (0.4,.5) to (2,.5);
\draw[double, directed=.55]  (0,.5) to (2.4,.5);
\torus{2.4}{1.6}
\end{tikzpicture}
\xrightarrow{\partial}
\begin{tikzpicture}[anchorbase]
\draw[very thick, directed=.55] (0,0.5) to (.8,.5) to [out=0,in=0] (.8,1.1) to (0,1.1) (2.4,1.1) to [out=180,in=270] (1.2,1.6);
\draw[very thick,directed=.55] (1.2,0) [out=90,in=180] to (2.4,.5);
\draw[white, line width=.15cm]  (0.4,.95) to (2,.95);
\draw[double, directed=.55]  (0,0.95) to (2.4,0.95);
\torus{2.4}{1.6}
\end{tikzpicture}
\xrightarrow{\text{int}}
\begin{tikzpicture}[anchorbase]
\draw[very thick, directed=.9] (1.2,0) to (1.2,1.6);
\draw[white, line width=.15cm] (0,.8) to (2.4,.8);
\draw[double, directed=.75] (0,.8) to (2.4,.8);
\torus{2.4}{1.6}
\end{tikzpicture}
\xrightarrow{\text{int}}
\begin{tikzpicture}[anchorbase]
\draw[very thick,directed=.85] (1.2,0) [out=90,in=0] to (0,.65) (2.4,.65) to [out=180,in=0] (1.4,.65) to [out=180,in=180] (1.4,1.1) to [out=0,in=180] (2.4,1.1) (0,1.1) to [out=0,in=270] (1.2,1.6);
\draw[white, line width=.15cm]  (0.4,.5) to (2,.5);
\draw[double, directed=.55]  (0,.5) to (2.4,.5);
\torus{2.4}{1.6}
\end{tikzpicture}
\end{gather*}
Here we see the superposition of a rightward wrap on $\w{1,0}$ and two leftward wraps of $(1,0)$. Both of these wrap morphisms can be written as the superposition of identity morphisms with $2$-labeled essential tori, which cancel each other, since they have opposite orientations. However, in expanding the double wrap on $(1,0)$, we obtain an extra factor of $-1$, see \eqref{eqn:doublewrap}.

This shows that the ``down once and left twice'' isotopy foam agrees with the negated identity foam. This implies that $-u_1/2*\id$ and $-u_2/2*\id$ give cancelling contributions to the $(B)\to (C)$ component of the chain map. The case of the $(C)\to (B)$ component is completely analogous. This proves the claim.

\settocdepth{section}
\section{Appendix A: Proof of the Frohman--Gelca formula}
\label{sec:FGfla}
The purpose of this section is to prove the $\glnn{2}$ version of the Frohman--Gelca formula in $\TWebq$:
\[(m,n)_T*(r,s)_T = (m+r, n+s)_T + (m-r,n-s)_T*\w{r,s}\]

The proof is analogous to the one for the $\slnn{2}$-case given in \cite{FG}.

\begin{definition} The geometric intersection number of $(m,n)_T$ and $(r,s)_T$ is defined to be $|m s-n r|$.
\end{definition}

The proof of the Frohman-Gelca formula proceeds by induction on the geometric intersection number of the two basis elements to be multiplied. The base of the induction is given by the cases of intersection number zero and one. The induction step is split into two cases, depending on whether $(m,n)_T$ and $(r,s)_T$ are both just slopes (when $(m,n)$ and $(r,s)$ are relatively prime) or not.

\subsection{Induction base: intersection number zero and one}
In this case of intersection number zero, $(m,n)_T$ and $(r,s)_T$ live in the same slope, i.e. are of the form $(m,n)_T=(ka,kb)_T$ and $(r,s)_T=(la,lb)_T$ for $\gcd(a,b)=1$ and $k,l\in \Z$

First we assume that they are coherently oriented, i.e. $k$ and $l$ have the same sign, which may assume to be positive.

\begin{lemma} The Frohman-Gelca formula holds for intersection number zero and parallel orientations.
\end{lemma}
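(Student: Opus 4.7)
The plan is to reduce the claim to a polynomial identity about generalized Chebyshev polynomials in a commutative subalgebra. Without loss of generality, assume $k \geq l \geq 1$. The first step is to verify that the two elements lie in the commutative subalgebra $A \subset \TWebq$ generated by $x := (a,b)$ and $y := \w{a,b}$. Indeed, $[x]$ and $[y]$ are both parallel to $(a,b) \in H_1(\T)$, so all relevant intersection numbers vanish, and by Corollary~\ref{cor:2labsub} together with the fact that coherently oriented parallel curves admit disjoint representatives in $\T$, all $q$-twists in products within $A$ disappear. In particular, $y^{*l} = \w{la,lb}$ without correction and $x * x = (2a,2b)$.

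The next step is to identify $(da, db)_T = P_d(x,y)$ for all $d \geq 0$, where $P_d$ are the polynomials defined by
\begin{equation*}
P_0 = 2, \quad P_1 = x, \quad P_{d+1} = P_d \cdot x - P_{d-1} \cdot y.
\end{equation*}
This follows by induction on $d$ from Definition~\ref{def:cheb}: the identification $P_2 = x^2 - 2y = (2a,2b) - 2\w{a,b} = (2a,2b)_T$ handles the base case, and for $d \geq 2$ the recursion defining $((d{+}1)a, (d{+}1)b)_T$ is precisely $P_d \cdot x - P_{d-1} \cdot y$.

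The heart of the argument is the polynomial identity
\begin{equation*}
P_k \cdot P_l = P_{k+l} + y^l \cdot P_{k-l}, \qquad k \geq l \geq 0,
\end{equation*}
in $\Z[x,y]$. I would prove this by the classical Chebyshev substitution: after extending scalars to allow $y^{1/2}$, the recursion is solved in closed form by $P_d(x,y) = 2 y^{d/2} \cos(d\theta)$ when $x = 2 y^{1/2} \cos\theta$, as one verifies directly. Then the product-to-sum formula gives
\begin{equation*}
P_k P_l = 4 y^{(k+l)/2} \cos(k\theta)\cos(l\theta) = 2 y^{(k+l)/2}\bigl[\cos((k+l)\theta) + \cos((k-l)\theta)\bigr] = P_{k+l} + y^l P_{k-l}.
\end{equation*}
Alternatively, a short induction on $l$ using only the defining recursion yields the same identity inside $\Z[x,y]$ directly, avoiding the scalar extension.

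Finally, specializing the identity to $x = (a,b)$ and $y = \w{a,b}$ in $\TWebq$ and using $P_d = (da,db)_T$ together with $y^{*l} = \w{la,lb}$ and the commutativity of $A$ to reorder factors yields
\begin{equation*}
(ka,kb)_T * (la,lb)_T = ((k+l)a,(k+l)b)_T + ((k-l)a,(k-l)b)_T * \w{la,lb},
\end{equation*}
which is precisely the Frohman--Gelca formula in the parallel, zero-intersection case; the boundary case $k = l$ is consistent since $(0,0)_T = 2 = P_0$. The main point that requires genuine care is the identification $x*x = (2a,2b)$ (and more generally the absence of $q$-twists in $A$); everything else is an algebraic manipulation in the commutative polynomial ring $\Z[x,y]$.
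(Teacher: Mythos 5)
Your proof is correct and follows essentially the same route as the paper: both identify the coherently oriented parallel elements as living in a commutative polynomial subalgebra on $(a,b)$ and $\w{a,b}$, identify $(da,db)_T$ with the $d$-th Chebyshev-type polynomial, and reduce to the product-to-sum identity (the paper realizes $P_d$ as the power sum $x_1^d+x_2^d$ in $\Z[x_1,x_2]^{\mathcal{S}_2}$, which is just your trigonometric substitution in algebraic form). The only point to make explicit in your ``WLOG $k\geq l$'' is that swapping the factors requires $((l-k)a,(l-k)b)_T*\w{ka,kb}=((k-l)a,(k-l)b)_T*\w{la,lb}$, which follows from the relation $(m,n)_T*\w{-m,-n}=(-m,-n)_T$.
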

\begin{proof}
For $\gcd(a,b)=1$, the elements $(ka,kb)*\w{la,lb}$ for $k,l\in \Z_{\geq 0}$ generate a $\Z[q^{\pm 1}]$-subalgebra of $\TWebq$ isomorphic to the polynomial ring $\Z[q^{\pm 1}][(a,b), \w{a,b}]$. This can be further identified with the symmetric polynomial ring $\Z[q^{\pm 1}][x_1,x_2]^{\mathcal{S}_2}$ via the isomorphism that sends: 
\[(a,b)\mapsto x_1+x_2, \quad \w{a,b}\mapsto x_1 x_2, \quad (ka,kb)_T\mapsto x_1^k + x_2^k.\] In particular, for $k\geq l$ the equality $(x_1^l + x_2^l)
= (x_1^{k+l} + x_2^{k+l})+ x_1^lx_2^l(x_1^{k-l} + x_2^{k-l})$ implies:
\[
(ka,kb)_T*(la,lb)_T= ((k+l)a,(k+l)b)_T+ ((k-l)a,(k-l)b)_T*\w{la,lb} 
\]
For $k<l$ we use commutativity and the previous expansion:
\[(ka,kb)_T*(la,lb)_T = (la,lb)_T*(ka,kb)_T = ((k+l)a,(k+l)b)_T+ ((l-k)a,(l-k)b)_T*\w{ka,kb} \]
Now note that $((l-k)a,(l-k)b)_T*\w{ka,kb} = ((k-l)a,(k-l)b)_T*\w{la,lb}$
\end{proof}

Now suppose that $(m,n)_T$ and $(r,s)_T$ have intersection number zero, but are oppositely oriented. Then we write $(r,s)_T=(-r,-s)_T*\w{r,s}$. With the same notation as above, we multiply as follows:
\begin{align*}(m,n)_T*(r,s)_T &= (m,n)_T*(-r,-s)_T*\w{r,s} 
\\
&= (m-r,n-s)_T*\w{r,s} + (m+r,n+s)_T*\w{-r,-s}*\w{r,s} 
\\
&= (m+r,n+s)_T + (m-r,n-s)_T*\w{r,s}	
\end{align*}

 If the intersection number is one, this implies $\gcd(m,n)=\gcd(r,s)=1$ and the product formula holds by a trivial computation similar to Example~\ref{exa:1001}. 

This settles the base of the induction.

\subsection{Induction step, first case: multiplication of slopes}
\label{sec:case1}
Assume that $\gcd(m,n)=\gcd(r,s)=1$. In this case we drop the subscript and write $(m,n)$ and $(r,s)$ for the objects, which are just single copies of the slope. We need to show
\[
 (m,n)*(r,s) =(m+r,n+s)_T + (m-r,n-s)_T*\w{r,s} 
\]
but after applying a homeomorphism of the torus, it is sufficient to prove this in the special cases
\begin{equation}\label{eq:caseone}
 (m,n)*(0,1) = (m,n+1)_T + (m,n-1)_T*\w{0,1} 
\end{equation}
where $0\leq n < m$ and we still have $\gcd(m,n)=1$.\footnote{Here we need to allow orientation reversing homeomorphisms, or also consider the cases $m<n\leq 0$, which are analogous.} For intersection number $1$ we have $(m,n)=(1,0)$ and this case has already been settled. For intersection number $2$, there is also only one case $(m,n)=(2,1)$ which was verified in Example~\ref{exa:2101}. For the induction step we assume that the Frohman-Gelca formula already holds for intersection numbers less than $m\geq 3$.

\begin{lemma}For $0 <n<m\geq 3$ with $\gcd(m,n)=1$, there exist positive integers $u$, $w$ and $z$, and $v\geq 0$, such that $u+w=m$, $v+z=n$, $|u z - v w|=1$ and $w<m$ and $u<m-1$.
\end{lemma}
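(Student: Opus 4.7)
The statement is really an arithmetic lemma that amounts to finding a Farey parent decomposition of the reduced fraction $n/m$. Since $\gcd(m,n) = 1$, Bezout's identity guarantees the existence of integers solving $un - vm = \pm 1$, and the key insight is that such a solution, once normalised, automatically gives $u + w = m$ and $v + z = n$ after setting $w = m - u$, $z = n - v$. Indeed, the determinant computation
\[
uz - vw \;=\; u(n-v) - v(m-u) \;=\; un - vm
\]
shows that the cross-determinant condition $|uz - vw| = 1$ is equivalent to the Bezout relation, so the entire lemma reduces to finding a solution $(u,v)$ of $un - vm = \pm 1$ with appropriate size constraints.

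My plan is to split into two cases according to whether $n < m - 1$ or $n = m - 1$. In the first case I would choose the unique $u \in \{1, \ldots, m-1\}$ with $un \equiv 1 \pmod m$ and set $v = (un - 1)/m$, $w = m - u$, $z = n - v$. One then verifies routinely that $v \geq 0$ (since $un \geq 1$), that $v < n$ so $z \geq 1$ (using $u \leq m-1$, which gives $un - 1 < mn$), and that $u \neq m - 1$ (since $u = m - 1$ would force $n \equiv -1 \pmod m$, contradicting $n < m-1$); this last observation is precisely what forces the case distinction. In the second case $n = m - 1$, the explicit choice $u = 1$, $v = 1$, $w = m - 1$, $z = m - 2$ works: one checks directly that $uz - vw = (m-2) - (m-1) = -1$, that all four quantities have the required signs using $m \geq 3$, and that $u = 1 < m - 1$ and $w = m - 1 < m$.

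The only mildly delicate point, and the one I would flag as the main obstacle, is the simultaneous satisfaction of $u < m - 1$ and $w < m$ (i.e.\ $u \geq 1$). The second bound is automatic from the Bezout choice, but the first can fail precisely at $n = m - 1$, where the ``$+1$ version'' of Bezout forces $u = m - 1$. This is why one must have recourse to the ``$-1$ version'' $vm - un = 1$ in the boundary case; once this branching is handled, the rest is routine. The hypothesis $m \geq 3$ is used exactly to ensure $z = m - 2 \geq 1$ in the boundary case.
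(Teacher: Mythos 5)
Your argument is correct and complete. The paper itself does not prove this lemma: it simply cites \cite[Lemma 4.2]{FG} (noting a typo there concerning the sign of $v$ at $n=1$), so your proof is a genuinely self-contained substitute rather than a variant of the paper's argument. The two key reductions you make are both sound: the identity $uz-vw=u(n-v)-v(m-u)=un-vm$ correctly converts the determinant condition into a Bezout relation once one sets $w=m-u$, $z=n-v$, and the choice of $u$ as the representative of $n^{-1}$ in $\{1,\dots,m-1\}$ with $v=(un-1)/m$ does satisfy all the constraints when $n<m-1$: one has $v\geq 0$ from $un\geq 1$, $v\leq n-1$ from $un-1\leq (m-1)n-1<mn$, and $u\neq m-1$ because $u=m-1$ would force $n\equiv -1\pmod m$, hence $n=m-1$. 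Your boundary case $n=m-1$ with $(u,v,w,z)=(1,1,m-1,m-2)$ checks out, and you correctly identify that $m\geq 3$ is needed exactly there to get $z\geq 1$, and that the obstruction forcing the case split is the constraint $u<m-1$ (equivalently $w\geq 2$), not the Bezout relation itself. Note also that your Case 1 at $n=1$ produces $v=0$, which is precisely why the lemma (unlike the statement in \cite{FG}) only asks for $v\geq 0$; your proof handles this correctly.
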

\begin{proof}  This is \cite[Lemma 4.2]{FG}, but mind the typo there: $v$ and $z$ cannot both be positive for $n=1$.
\end{proof} 
Note that the condition on the determinant implies $\gcd(u,v)=\gcd(w,z)=\gcd(u-w,v-z)=1$. In order to verify \eqref{eq:caseone}, we expand the product $(u,v)*(w,z)*(0,1)$ in two ways. Multiplying the first two factors first, we get the following expansion for $(u,v)*(w,z)*(0,1)$:

\begin{align*}
=& (m,n)*(0,1) + (u-w,v-z)*\w{w,z}  *(0,1)
\\
=& (m,n)*(0,1) + q^{2w} (u-w,v-z)*(0,1)*\w{w,z}\\
=& (m,n)*(0,1) + q^{2w} (u-w,v-z+1)_T*\w{w,z} + q^{2w} (u-w,v-z-1)_T*\w{0,1}*\w{w,z}\\
=& (m,n)*(0,1) + q^{2w} (u-w,v-z+1)_T*\w{w,z} + (u-w,v-z-1)_T*\w{w,z+1}
\end{align*}
Here we have used the Frohman-Gelca formula twice, in cases of intersection number $1$ and $|u-w|<m$. We have also commuted $(0,1)$ and $\w{w,z}$ past each other at the expense of a $q$-shift and collapsed $\w{0,1}*\w{w,z}=q^{-2w}\w{w,z+1}$. Alternatively we can first multiply the second two factors in $(u,v)*(w,z)*(0,1)$ to get the expansion:
 
\begin{align*}
=&(u,v)*\left((w,z+1)_T +(w,z-1)_T*\w{0,1}\right )  \\
=&(u+w,v+z+1)_T+(u-w,v-z-1)_T*\w{w,z+1} 
\\~&+(u+w,v+z-1)_T*\w{0,1} + (u-w,v-z+1)_T*\w{w,z-1} *\w{0,1} 
\\=&
(m,n+1)_T+(m,n-1)_T*\w{0,1}+ q^{2w} (u-w,v-z+1)_T*\w{w,z} + (u-w,v-z-1)_T*\w{w,z+1}
\end{align*}

Here we have applied the Frohman-Gelca formula to $(w,z)*(0,1)$, which has intersection number $w<m$ and to $(u,v)*(w,z\pm 1)_T$ which have intersection number  $|u z \pm u -v w| = u\pm 1<m$. Comparing the two expansions confirms that \eqref{eq:caseone} holds.

\subsection{Induction step, second case: multiplication of general basis elements}
\label{sec:case2}
Now we deal with the case of $(m,n)_T$ and $(r,s)_T$, at least one of which is not relatively prime. Assume that $\gcd(m,n)=d\geq 2$ and write $(a,b)=(m/d,n/d)$ for the corresponding slope. The proof of \eqref{eq:FG} proceeds by computing $(a,b)*(m-a,n-b)_T*(r,s)_T$ in two ways. Multiplying the first two factors first, we get the expansion:

\begin{align*}
 =&  (m,n)_T*(r,s)_T + (2a-m,2b-n)_T*\w{m-a,n-b}*(r,s)_T
\\
=&  (m,n)_T*(r,s)_T + q^{2(d-1)(a s - b r)} (2a-m,2b-n)_T*(r,s)_T*\w{m-a,n-b}
\\
=& (m,n)_T*(r,s)_T + (2a-m-r,2b-n-s)_T*\w{m-a+r,n-b+s}  \\
~&+ q^{2(d-1)(a s - b r)}(2a-m+r,2b-n+s)_T*\w{m-a,n-b}
\end{align*}
Here we have used that  $|(2-d)a s - (2-d)b  r|  = |(2-d)(a s- b r)|<|ms-rn|$, so we can  multiply $(2a-m,2b-n)_T*(r,s)_T$ via the Frohman-Gelca formula by the induction hypothesis. For the same reason we can multiply the other two factors first and then expand further to get:
\begin{align*}
=& (a,b)_T*\left( (m-a+r,n-b+s)_T + (m-a-r,n-b-s)_T*\w{r,s} \right)  
\\
=& (m+r, n+s)_T + (m - r, n-s)_T*\w{r,s} + (2a-m-r,2b-n-s)_T*\w{m-a+r,n-b+s}  \\
~&+ q^{2(d-1)(a s - b r)}(2a-m+r,2b-n+s)_T*\w{m-a,n-b}
\end{align*}
Here we have used that the intersection numbers of the products in the first line are $|a((d-1)b+s) - b((d-1)a+r)|=|a s- b r|$, permitting the use of the induction hypothesis.

Comparing the two expansions of $(a,b)*(m-a,n-b)_T*(r,s)_T$ confirms \eqref{eq:FG} in the present case.

\begin{remark} \label{rem:catproof} 
Note that the induction scheme employed in this proof immediately extends to a proof of the categorified Frohman--Gelca formula provided that $*$ actually gives a monoidal structure on $\Kar(\essTfoam)_{\mathrm{dg}}$. 
\end{remark}
The basic ingredients for such a proof are:
\begin{enumerate}
\item The induction base cases of intersection number $0$ and $1$, which were checked in Examples~\ref{exa:1001-cat}, \ref{exa:2101-cat} and \ref{exa:slope-cat}.
\item Associativity isomorphisms for triple tensor products and distributivity isomorphisms for tensor products of direct sums.
\item The Krull-Schmidt property that allows us to conclude the existence of an isomorphism $A\cong B$ whenever $A\oplus C \cong B\oplus C$. 
\end{enumerate}

\section{Appendix B: Reidemeister and fork slide chain maps}
Here we explicitly describe the chain maps associated to Reidemeister II moves and certain fork slide moves. In doing so, we encode signs appearing in cube of resolutions chain complexes via the following formalism without choosing a particular order of crossings. Let $\mathrm{cr}=\{c,c^\prime, \dots\}$ be the set of crossings in the tangle diagram that involve a $1$-labeled edge and $\mathrm{Cl}(\mathrm{cr})$ the Clifford ring with generators $c\in \mathrm{cr}$ and relations $c^2=1$ and $c c^\prime+c ^\prime c=0$ if $c \neq c^\prime$.

To a web, which has been resolved with a double edge at the $1-1$-crossings $c_1,\dots, c_v$, and with the unique web at each $1-2$- or $2-1$-labeled crossing $C_1,\dots, C_w$, we associate the direct summand $\Z\la c_1 \cdots c_v\cdot C_1 \cdots C_w\ra$ of $\mathrm{Cl}(\mathrm{cr})$, which is (non-canonically) isomorphic to $\Z$. A component of the differential coming from the $1-1$-crossing $c$ is now additionally assigned the action of right-multiplication $r_c$ by $c$ in $\mathrm{Cl}(\mathrm{cr})$. In composing such differentials one now also multiplies the corresponding generators in the Clifford ring. Its defining relations now guarantee that squares of differentials in the tensor product multi-complex anti-commute. 

\subsection{Reidemeister II}
\label{sec:Reidemeister}

Consider the chain complex associated to the complicated side of a Reidemeister II move between strands of parallel orientation:

\begin{gather*}
\xy
(15,20)*{
=
};
(6.5,23)*{
c^\prime
};
(-1,22.5)*{
c
};
(0,20)*{
\Kh\left(
\begin{tikzpicture}[anchorbase, scale=.5]
\draw [very thick, ->] (3,0) to [out=180,in=0] (1.5,1) to [out=180,in=1]
(0,0);
 \draw [white,line width=.15cm] (3,1) to [out=180,in=0] (1.5,0) to [out=180,in=0] 
(0,1);
\draw [very thick, ->] (3,1) to [out=180,in=0] (1.5,0) to [out=180,in=0]  (0,1);
\end{tikzpicture}
\right)
};
(35,20)*{
\begin{tikzpicture}[anchorbase, scale=.5]
\draw [very thick] (3,0) to (2.9,0) to [out=180,in=315] (2.4,0.5);
\draw [very thick] (3,1) to (2.9,1) to[out=180,in=45] (2.4,0.5);
\draw [double]  (2.4,0.5) -- (2,0.5);
\draw [very thick, ->] (2,0.5) to [out=135,in=0] (1.5,1) to (0,1);
\draw [very thick, ->] (2,0.5) to [out=225,in=0]  (1.5,0) to (0,0);
\end{tikzpicture}
\otimes \Z\la c^\prime\ra
};
(80,15)*{
\begin{tikzpicture}[anchorbase, scale=.5]
\draw [very thick, ->] (3,0) -- (0,0);
\draw [very thick, ->] (3,1) -- (0,1);
\end{tikzpicture}
\otimes \Z\la 1 \ra
};
(80,25)*{
\begin{tikzpicture}[anchorbase, scale=.5]
\draw [very thick] (3,0) to (2.9,0) to [out=180,in=315] (2.4,0.5);
\draw [very thick] (3,1) to (2.9,1) to [out=180,in=45] (2.4,0.5);
\draw [double]  (2.4,0.5) -- (2,0.5);
\draw [very thick] (2,0.5) to [out=135,in=45] (1,0.5);
\draw [very thick] (2,0.5) to [out=225,in=315] (1,0.5);
\draw [double]  (1,0.5) -- (.6,0.5);
\draw [very thick, ->] (.6,.5) to [out=135,in=0] (.1,1) to (0,1);
\draw [very thick, ->] (.6,.5) to [out=225,in=0](.1,0) to(0,0);
\end{tikzpicture}
\otimes \Z\la c\cdot c^\prime\ra
};
(130,20)*{
\begin{tikzpicture}[anchorbase, scale=.5]
\draw [very thick] (3,0) to (1.5,0)to [out=180,in=315] (1,0.5);
\draw [very thick] (3,1) to (1.5,1) to [out=180,in=45] (1,0.5);
\draw [double]  (1,0.5) -- (.6,0.5);
\draw [very thick, ->] (.6,.5) to [out=135,in=0] (.1,1) to (0,1);
\draw [very thick, ->] (.6,.5) to [out=225,in=0](.1,0) to(0,0);
\end{tikzpicture}
\otimes \Z\la c\ra
};
(55,24)*{
\begin{tikzpicture}[scale=.5]
\draw [->] (0,0) to (3,.5);
\end{tikzpicture}
};
(55,27)*{
\textrm{zip} \otimes r_c
};
(55,16)*{
\begin{tikzpicture}[scale=.5]
\draw [->] (0,0) to (3,-.5);
\end{tikzpicture}
};
(55,13)*{
\textrm{unzip} \otimes r_{c^\prime}
};
(105,24)*{
\begin{tikzpicture}[scale=.5]
\draw [->] (0,0) to (3,-.5);
\end{tikzpicture}
};
(105,27)*{
\textrm{unzip} \otimes r_{c^\prime}
};
(105,16)*{
\begin{tikzpicture}[scale=.5]
\draw [->] (0,0) to (3,.5);
\end{tikzpicture}
};
(105,13)*{
\textrm{zip} \otimes r_c
};
\endxy
\end{gather*}
This complex is chain homotopy equivalent to the complex consisting of a single web consisting of two parallel strands, concentrated in homological and q-degree zero. The homotopy equivalences are given by identity foams connecting the parallel strand webs in both complexes, as well as the following more complicated composite foam and its reflection:

\begin{gather*}
\begin{tikzpicture} [anchorbase,scale=.5,fill opacity=0.2]
	\path [fill=red]  (.75,2.5) to [out=270,in=180] (1.5,1.75) to [out=0,in=270] 	(2.25,2.5) to [out=135,in=45](.75,2.5);
	\path [fill=red]  (.75,2.5) to [out=270,in=180] (1.5,1.75) to [out=0,in=270] 	(2.25,2.5) to [out=225,in=315](.75,2.5);
	\path [fill=red] (4.25,2) to (4.25,-.5) to (-.5,-.5) to (-.5,2) to
		[out=0,in=225] (0,2.5) to [out=270,in=180] (1.5,1) to [out=0,in=270] 
			(3,2.5) to [out=315,in=180] (4.25,2);
	\path [fill=red] (3.75,3) to (3.75,.5) to(-1,.5) to (-1,3) to [out=0,in=135]
		(0,2.5) to [out=270,in=180] (1.5,1) to [out=0,in=270] 
			(3,2.5) to [out=45,in=180] (3.75,3);
	\path[fill=yellow] (2.25,2.5) to [out=270,in=0] (1.5,1.75) to [out=180,in=270] (.75,2.5) to (0,2.5) to [out=270,in=180] (1.5,1) to [out=0,in=270] (3,2.5) to  (2.25,2.5) ;
	\draw [very thick,directed=.55] (4.25,-.5) to  (-.5,-.5);
	\draw [very thick, directed=.55] (3.75,.5) to  (-1,.5);
	\draw [very thick, red, directed=.75] (3,2.5) to [out=270,in=0] (1.5,1);
	\draw [very thick, red] (1.5,1) to [out=180,in=270] (0,2.5);
	\draw [very thick, red, rdirected=.75] (2.25,2.5) to [out=270,in=0] (1.5,1.75);
	\draw [very thick, red] (1.5,1.75) to [out=180,in=270] (.75,2.5);
	\draw  (3.75,3) to (3.75,.5);
	\draw (4.25,2) to (4.25,-.5);
	\draw (-1,3) to (-1,.5);
	\draw  (-.5,2) to (-.5,-.5);
	\draw [double,directed=.55] (3,2.5) to (2.25,2.5);
	\draw [double,directed=.55] (.75,2.5) to (0,2.5);
	\draw [very thick,directed=.55] (0,2.5) to [out=135,in=0] (-1,3);
	\draw [very thick,directed=.75] (0,2.5) to [out=225,in=0] (-.5,2);
	\draw [very thick,directed=.55] (3.75,3) to [out=180,in=45] (3,2.5);
	\draw [very thick,directed=.75] (4.25,2) to [out=180,in=315] (3,2.5);
	\draw [very thick,directed=.55] (2.25,2.5) to [out=135,in=45] (.75,2.5);
	\draw [very thick,directed=.55] (2.25,2.5) to [out=225,in=315] (.75,2.5);
\end{tikzpicture}\;\colon \quad
\begin{tikzpicture}[anchorbase, scale=.5]
\draw [very thick, ->] (3,0) -- (0,0);
\draw [very thick, ->] (3,1) -- (0,1);
\end{tikzpicture}
\leftrightarrow
\begin{tikzpicture}[anchorbase, scale=.5]
\draw [very thick] (3,0) to (2.9,0) to [out=180,in=315] (2.4,0.5);
\draw [very thick] (3,1) to (2.9,1) to [out=180,in=45] (2.4,0.5);
\draw [double]  (2.4,0.5) -- (.6,0.5);
\draw [very thick, ->] (.6,.5) to [out=135,in=0] (.1,1) to (0,1);
\draw [very thick, ->] (.6,.5) to [out=225,in=0](.1,0) to(0,0);
\end{tikzpicture}
\leftrightarrow
\begin{tikzpicture}[anchorbase, scale=.5]
\draw [very thick] (3,0) to (2.9,0) to [out=180,in=315] (2.4,0.5);
\draw [very thick] (3,1) to (2.9,1) to [out=180,in=45] (2.4,0.5);
\draw [double]  (2.4,0.5) -- (2,0.5);
\draw [very thick] (2,0.5) to [out=135,in=45] (1,0.5);
\draw [very thick] (2,0.5) to [out=225,in=315] (1,0.5);
\draw [double]  (1,0.5) -- (.6,0.5);
\draw [very thick, ->] (.6,.5) to [out=135,in=0] (.1,1) to (0,1);
\draw [very thick, ->] (.6,.5) to [out=225,in=0](.1,0) to(0,0);
\end{tikzpicture}
\end{gather*}
The action on the Clifford ring labels is given by $r_{c^\prime \cdot c}$ (rightward) and $r_{c^\prime \cdot c}$ (leftward) respectively.

In the case of the Reidemeister II move with the signs of the two crossings switched, the non-trivial foams in the chain map acquire a minus sign.

\subsection{Fork slides}
\label{sec:forkslide}
We describe the chain homotopy equivalences corresponding to the following fork slide move:
\[\Kh\left(
\begin{tikzpicture}[anchorbase, scale=.4]
\draw [very thick,<-] (0,0) to [out=0,in=180] (4,2);
\draw [white,line width=.15cm](1,1.5) to [out=0,in=180] (4,.5);
\draw [very thick,<-] (0,2) to [out=0,in=135] (1,1.5);
\draw [very thick,<-] (0,1) to [out=0,in=225] (1,1.5);
\draw [double] (1,1.5) to [out=0,in=180] (4,.5);
\end{tikzpicture}
\right)
\quad \cong \quad 
\Kh\left(
\begin{tikzpicture}[anchorbase, scale=.4]
\draw [very thick,<-] (0,0) to [out=0,in=180] (4,2);
\draw [white,line width=.15cm] (0,2) to [out=0,in=135] (3,0.5);
\draw [white,line width=.15cm] (0,1) to [out=0,in=225] (3,0.5);
\draw [very thick,<-] (0,2) to [out=0,in=135] (3,0.5);
\draw [very thick,<-] (0,1) to [out=0,in=225] (3,0.5);
\draw [double] (3,0.5) to [out=0,in=180] (4,.5);
\end{tikzpicture}
\right)
\]

The cube of resolutions chain complexes for both sides are the following:
\begin{gather*}
\xy
(11,20)*{
=
};
(0,25)*{
c^\prime
};
(-5,21)*{
c
};
(-4.5,20)*{
\Kh\left(
\begin{tikzpicture}[anchorbase, scale=.4]
\draw [very thick,<-] (0,0) to [out=0,in=180] (4,2);
\draw [white,line width=.15cm] (0,2) to [out=0,in=135] (3,0.5);
\draw [white,line width=.15cm] (0,1) to [out=0,in=225] (3,0.5);
\draw [very thick,<-] (0,2) to [out=0,in=135] (3,0.5);
\draw [very thick,<-] (0,1) to [out=0,in=225] (3,0.5);
\draw [double] (3,0.5) to [out=0,in=180] (4,.5);
\end{tikzpicture}
\right)
};
(30,20)*{
\begin{tikzpicture}[anchorbase, scale=.4]
\draw [very thick,<-] (0,2) to [out=0,in=135] (2,1.5);
\draw [very thick,<-] (0,1) to [out=0,in=135] (1,.5);
\draw [very thick,<-] (0,0) to [out=0,in=225] (1,.5);
\draw [double] (1,.5) to [out=0,in=180] (1.5,.5);
\draw [very thick] (1.5,.5) to [out=315,in=225] (3,.5);
\draw [very thick] (1.5,.5) to (2,1.5);
\draw [double] (2,1.5) to [out=0,in=180] (2.5,1.5);
\draw [very thick] (2.5,1.5) to [out=45,in=180] (4,2);
\draw [very thick] (2.5,1.5) to (3,.5);
\draw [double] (3,.5) to [out=0,in=180] (4,0.5);
\end{tikzpicture}\otimes \Z\la c\cdot c^\prime\ra
};
(75,30)*{
\begin{tikzpicture}[anchorbase, scale=.4]
\draw [very thick,<-] (0,2) to [out=0,in=135] (2,1.5);
\draw [very thick,<-] (0,1) to [out=0,in=225] (2,1.5);
\draw [very thick,<-] (0,0) to (2,0) to [out=0,in=225] (3,.5);
\draw [double] (2,1.5) to [out=0,in=180] (2.5,1.5);
\draw [very thick] (2.5,1.5) to [out=45,in=180] (4,2);
\draw [very thick] (2.5,1.5) to(3,.5);
\draw [double] (3,.5) to [out=0,in=180] (4,0.5);
\end{tikzpicture}
\otimes \Z\la c^\prime\ra
};
(75,10)*{
\begin{tikzpicture}[anchorbase, scale=.4]
\draw [very thick,<-] (0,2) to (4,2);
\draw [very thick,<-] (0,1) to [out=0,in=135] (1,.5);
\draw [very thick,<-] (0,0) to [out=0,in=225] (1,.5);
\draw [double] (1,.5) to [out=0,in=180] (1.5,.5);
\draw [very thick] (1.5,.5) to [out=315,in=225] (3,.5);
\draw [very thick] (1.5,.5) to [out=45,in=135] (3,.5);
\draw [double] (3,.5) to [out=0,in=180] (4,0.5);
\end{tikzpicture}
\otimes \Z\la c\ra
};
(120,20)*{
\begin{tikzpicture}[anchorbase, scale=.4]
\draw [very thick,<-] (0,2) to (4,2);
\draw [very thick,<-] (0,1) to (2,1) to [out=0,in=135] (3,.5);
\draw [very thick,<-] (0,0) to (2,0) to [out=0,in=225] (3,.5);
\draw [double] (3,.5) to [out=0,in=180] (4,0.5);
\end{tikzpicture}
\otimes \Z\la 1 \ra
};
(50,24)*{
\begin{tikzpicture}[scale=.5]
\draw [->] (0,0) to (3,.5);
\end{tikzpicture}
};
(50,28)*{
\textrm{unzip} \otimes r_c
};
(50,16)*{
\begin{tikzpicture}[scale=.5]
\draw [->] (0,0) to (3,-.5);
\end{tikzpicture}
};
(50,12)*{
\textrm{unzip} \otimes r_{c^\prime}
};
(95,24)*{
\begin{tikzpicture}[scale=.5]
\draw [->] (0,0) to (3,-.5);
\end{tikzpicture}
};
(98,28)*{
\textrm{unzip} \otimes r_{c^\prime}
};
(95,16)*{
\begin{tikzpicture}[scale=.5]
\draw [->] (0,0) to (3,.5);
\end{tikzpicture}
};
(98,12)*{
\textrm{unzip} \otimes r_c
};
(-4.5,-10)*{
\Kh\left(
\begin{tikzpicture}[anchorbase, scale=.4]
\draw [very thick,<-] (0,0) to [out=0,in=180] (4,2);
\draw [white,line width=.15cm](1,1.5) to [out=0,in=180] (4,.5);
\draw [very thick,<-] (0,2) to [out=0,in=135] (1,1.5);
\draw [very thick,<-] (0,1) to [out=0,in=225] (1,1.5);
\draw [double] (1,1.5) to [out=0,in=180] (4,.5);
\end{tikzpicture}
\right)
};
(-1,-5)*{
c^{\prime\prime}
};
(11,-10)*{
=
};
(45,-10)*{
\begin{tikzpicture}[scale=.5]
\draw [->] (0,0) to (3,0);
\end{tikzpicture}
};
(30,-10)*{
0
};
(75,-10)*{
\begin{tikzpicture}[anchorbase, scale=.4]
\draw [very thick,<-] (0,2) to [out=0,in=135] (2,1.5);
\draw [very thick,<-] (0,1) to [out=0,in=225] (2,1.5);
\draw [very thick,<-] (0,0) to (2,0) to [out=0,in=225] (3,.5);
\draw [double] (2,1.5) to [out=0,in=180] (2.5,1.5);
\draw [very thick] (2.5,1.5) to [out=45,in=180] (4,2);
\draw [very thick] (2.5,1.5) to(3,.5);
\draw [double] (3,.5) to [out=0,in=180] (4,0.5);
\end{tikzpicture}
\otimes \Z\la c^{\prime\prime}\ra
};
(103,-10)*{
\begin{tikzpicture}[scale=.5]
\draw [->] (0,0) to (3,0);
\end{tikzpicture}
};
(120,-10)*{
0
};
(0,5)*{
\begin{tikzpicture}[scale=.5]
\draw [->] (.2,0) to (.2,3);
\node at (-.5,1.5) {$f$};
\node at (.5,1.5) {$g$};
\draw [<-] (-.2,0) to (-.2,3);
\end{tikzpicture}
};
\endxy
\end{gather*}
Restricted to the web on the top of the cube of resolutions, the chain homotopy equivalence $f$ acts as $\id \otimes r_{c^\prime c^{\prime\prime}}$ and the corresponding component of $g$ is $\id \otimes r_{c^{\prime\prime}\cdot c^\prime}$.
The remaining non-trivial components of $f$ and $g$ are formed by the signed non-trivial foams obtained as the following composition of elementary foams, read rightward (as illustrated) or leftward respectively (reflection of the illustrated foam). 
\begin{equation}
\label{eqn:forkslidefoam} - \,
\begin{tikzpicture} [anchorbase,scale=.4,fill opacity=0.2]
	\path [fill=red]  (2.25,.5) to [out=90,in=0] (1.5,1.25) to [out=180,in=90] (0.75,.5) to [out=315,in=225] (2.25,.5);
	\path [fill=red]  (2.25,.5) to [out=90,in=0] (1.5,1.25) to [out=180,in=90] (0.75,.5) to [out=45,in=135] (2.25,.5);
	\path [fill=red] (-1.25,4.5) to [out=135,in=0] (-2.75,5) to (-2.75,2) to (2.5,2) to (2.5,5) to [out=180,in=45] (.25,4.5) to [out=270,in=0] (-.5,3.75) to [out=180,in=270] (-1.25,4.5);
		\path [fill=red] (-1.25,4.5) to [out=225,in=0] (-2.25,4) to (-2.25,1) to [out=0,in=135]  (-.75,.5) to [out=90,in=270]  (2.25,3.5) to [out=135,in=315] (.25,4.5) to [out=270,in=0] (-.5,3.75) to [out=180,in=270] (-1.25,4.5);
		\path [fill=red]  (-.75,.5) to [out=90,in=270]  (2.25,3.5) to [out=225,in=0] (1,3) to (-1.75,3) to (-1.75,0) to [out=0,in=225] (-.75,.5);
		\path [fill=yellow]  (-.75,.5) to [out=90,in=270]  (2.25,3.5) to (3,3.5) to (3,.5)to (2.25,.5) to [out=90,in=0] (1.5,1.25) to [out=180,in=90] (0.75,.5) to (-.75,.5) ;
		\path[fill=yellow] (.25,4.5) to [out=270,in=0] (-.5,3.75) to [out=180,in=270] (-1.25,4.5) to (.25,4.5);
\draw [double,directed=.55] (3,.5) to (2.25,.5);
\draw [very thick,directed=.55] (2.25,.5) to [out=135,in=45] (.75,.5);
\draw [very thick,directed=.55] (2.25,.5) to [out=225,in=315] (.75,.5);
\draw [double,directed=.55] (.75,.5) to (-.75,.5);
\draw [very thick,directed=.55] (-.75,.5) to [out=135,in=0] (-2.25,1);
\draw [very thick,directed=.75] (-.75,.5) to [out=225,in=0] (-1.75,0);
\draw [very thick,directed=.55] (2.5,2) to (-2.75,2);
\draw [red, very thick,directed=.65] (-.75,.5) to [out=90,in=270] (2.25,3.5);
\draw [red, very thick,directed=.65] (2.25,.5) to [out=90,in=0] (1.5,1.25) to [out=180,in=90] (0.75,.5);
\draw [red, very thick, directed=.55] (.25,4.5) to [out=270,in=0] (-.5,3.75) to [out=180,in=270] (-1.25,4.5);
\draw [double,directed=.55] (3,3.5) to (2.25,3.5);
\draw [very thick,directed=.75] (2.25,3.5) to [out=225,in=0] (1,3) to (-1.75,3);
\draw [very thick,directed=.55] (-1.25,4.5) to [out=135,in=0] (-2.75,5);
\draw [double,directed=.55] (.25,4.5) to (-1.25,4.5);
\draw [very thick,directed=.75] (-1.25,4.5) to [out=225,in=0] (-2.25,4);
\draw [very thick,directed=.55] (2.5,5) to [out=180,in=45] (.25,4.5); 
\draw [very thick,directed=.55] (2.25,3.5) to [out=135,in=315] (.25,4.5); 
\draw (-1.75,3) to (-1.75,0);
\draw (-2.25,4) to (-2.25,1);
\draw (-2.75,5) to (-2.75,2);
\draw (3,3.5) to (3,.5);
\draw (2.5,5) to (2.5,2);
\end{tikzpicture}
\;\colon \quad
\begin{tikzpicture}[anchorbase, scale=.4]
\draw [very thick,<-] (0,2) to (4,2);
\draw [very thick,<-] (0,1) to [out=0,in=135] (1,.5);
\draw [very thick,<-] (0,0) to [out=0,in=225] (1,.5);
\draw [double] (1,.5) to [out=0,in=180] (1.5,.5);
\draw [very thick] (1.5,.5) to [out=315,in=225] (3,.5);
\draw [very thick] (1.5,.5) to [out=45,in=135] (3,.5);
\draw [double] (3,.5) to [out=0,in=180] (4,0.5);
\end{tikzpicture}
\leftrightarrow
\begin{tikzpicture}[anchorbase, scale=.4]
\draw [very thick,<-] (0,2) to (4,2);
\draw [very thick,<-] (0,1) to [out=0,in=135] (1,.5);
\draw [very thick,<-] (0,0) to [out=0,in=225] (1,.5);
\draw [double] (1,.5) to [out=0,in=180] (4,.5);
\end{tikzpicture}
\leftrightarrow
\begin{tikzpicture}[anchorbase, scale=.4]
\draw [very thick,<-] (0,2) to (4,2);
\draw [very thick,<-] (0,1) to (2,1) to [out=0,in=135] (3,.5);
\draw [very thick,<-] (0,0) to (2,0) to [out=0,in=225] (3,.5);
\draw [double] (3,.5) to [out=0,in=180] (4,0.5);
\end{tikzpicture}
\leftrightarrow
\begin{tikzpicture}[anchorbase, scale=.4]
\draw [very thick,<-] (0,2) to [out=0,in=135] (2,1.5);
\draw [very thick,<-] (0,1) to [out=0,in=225] (2,1.5);
\draw [very thick,<-] (0,0) to (2,0) to [out=0,in=225] (3,.5);
\draw [double] (2,1.5) to [out=0,in=180] (2.5,1.5);
\draw [very thick] (2.5,1.5) to [out=45,in=180] (4,2);
\draw [very thick] (2.5,1.5) to (3,.5);
\draw [double] (3,.5) to [out=0,in=180] (4,0.5);
\end{tikzpicture}
\end{equation}
The action on the Clifford generators is given by $r_{c\cdot c^{\prime\prime}}$ and $r_{c^{\prime\prime}\cdot c}$ respectively.

\bibliographystyle{plain}

\begin{thebibliography}{10}

\bibitem{AF}
M.~Asaeda and C.~Frohman.
\newblock A note on the {B}ar-{N}atan skein module.
\newblock {\em Internat. J. Math.}, 18(10):1225--1243, 2007.
\newblock \href{https://arxiv.org/abs/math/0602262}{arXiv:0602262}.

\bibitem{APS}
M.~M Asaeda, J.~H Przytycki, and A.~S Sikora.
\newblock Categorification of the {K}auffman bracket skein module of
  {I}--bundles over surfaces.
\newblock {\em Algebraic \& Geometric Topology}, 4(2):1177--1210, 2004.
\newblock \href{http://arxiv.org/abs/math/0403527}{arXiv:math/0403527}.

\bibitem{BN2}
D.~Bar-Natan.
\newblock Khovanov's homology for tangles and cobordisms.
\newblock {\em Geom. Topol.}, 9:1443--1499, 2005.
\newblock \href{http://arxiv.org/abs/math/0410495}{arXiv:math/0410495}.

\bibitem{BPW}
A.~{Beliakova}, K.~K. {Putyra}, and S.~M. {Wehrli}.
\newblock {Quantum Link Homology via Trace Functor I}.
\newblock 2016.
\newblock \href{http://arxiv.org/abs/1605.03523}{arXiv:1605.03523}.

\bibitem{Blan}
C.~Blanchet.
\newblock An oriented model for {K}hovanov homology.
\newblock {\em J. Knot Theory Ramifications}, 19(2):291--312, 2010.
\newblock \href{https://arxiv.org/abs/1405.7246}{arXiv:1405.7246}.

\bibitem{Boe}
J.~Boerner.
\newblock A homology theory for framed links in {I}-bundles using embedded
  surfaces.
\newblock {\em Topology Appl.}, 156(2):375--391, 2008.
\newblock \href{https://arxiv.org/abs/0801.3227}{arXiv:0801.3227}.

\bibitem{BW}
F.~Bonahon and H.~Wong.
\newblock Quantum traces for representations of surface groups in {${\rm
  SL}_2(\Bbb C)$}.
\newblock {\em Geom. Topol.}, 15(3):1569--1615, 2011.
\newblock \href{https://arxiv.org/abs/1003.5250v4}{arXiv:1003.5250}.

\bibitem{BW1}
F.~Bonahon and H.~Wong.
\newblock Representations of the {K}auffman skein algebra {I}: invariants and
  miraculous cancellations.
\newblock {\em Inventiones mathematicae}, 204(1):195--243, 2016.
\newblock \href{http://arxiv.org/abs/1206.1638}{arXiv:1206.1638}.

\bibitem{Bul}
D.~Bullock.
\newblock Rings of {${\rm SL}_2({\bf C})$}-characters and the {K}auffman
  bracket skein module.
\newblock {\em Comment. Math. Helv.}, 72(4):521--542, 1997.
\newblock \href{https://arxiv.org/abs/q-alg/9604014}{arXiv:9604014}.

\bibitem{Cap}
C.L. Caprau.
\newblock {$\mathfrak{sl}(2)$} tangle homology with a parameter and singular
  cobordisms.
\newblock {\em Algebr. Geom. Topol.}, 8(2):729--756, 2008.
\newblock \href{https://arxiv.org/abs/0707.3051}{arXiv:0707.3051}.

\bibitem{Carter_foams}
J.~S. Carter.
\newblock Reidemeister/{R}oseman-type moves to embedded foams in 4-dimensional
  space.
\newblock 56:1--30, 2015.
\newblock \href{http://arxiv.org/abs/1210.3608}{arXiv:1210.3608}.

\bibitem{CS}
J.~S. Carter and M.~Saito.
\newblock {\em Knotted Surfaces and Their Diagrams}.
\newblock American Mathematical Society, 1998.

\bibitem{CSp}
J.~Scott Carter and Masahico Saito.
\newblock Reidemeister moves for surface isotopies and their interpretation as
  moves to movies.
\newblock {\em J. Knot Theory Ramifications}, 2(3):251--284, 1993.

\bibitem{CKM}
S.~Cautis, J.~Kamnitzer, and S.~Morrison.
\newblock Webs and quantum skew {H}owe duality.
\newblock {\em Math. Ann.}, 360(1-2):351--390, 2014.
\newblock \href{https://arxiv.org/abs/1210.6437}{arXiv:1210.6437}.

\bibitem{CMW}
D.~Clark, S.~Morrison, and K.~Walker.
\newblock Fixing the functoriality of {K}hovanov homology.
\newblock {\em Geom. Topol.}, 13(3):1499--1582, 2009.
\newblock \href{http://arxiv.org/abs/0701339}{arXiv:0701339}.

\bibitem{ETW}
M.~Ehrig, D.~Tubbenhauer, and P.~Wedrich.
\newblock Functoriality of colored link homologies.
\newblock {\em To appear in Proc. London Math. Soc.}, 2017.
\newblock \href{http://arxiv.org/abs/1703.06691}{arXiv:1703.06691}.

\bibitem{FoG}
V.~Fock and A.~Goncharov.
\newblock Moduli spaces of local systems and higher {T}eichm\"uller theory.
\newblock {\em Publ. Math. Inst. Hautes \'Etudes Sci.}, (103):1--211, 2006.
\newblock \href{https://arxiv.org/abs/math/0311149}{arXiv:0311149}.

\bibitem{FG}
C.~Frohman and R.~Gelca.
\newblock Skein modules and the noncommutative torus.
\newblock {\em Trans. Amer. Math. Soc.}, 352(10):4877--4888, 2000.
\newblock \href{http://arxiv.org/abs/math/9806107}{arXiv:math/9806107}.

\bibitem{GLW}
J.~E. {Grigsby}, A.~{Licata}, and S.~M. {Wehrli}.
\newblock Annular {K}hovanov homology and knotted {S}chur-{W}eyl
  representations.
\newblock {\em Compositio Mathematica}, 154(3):459--502, 2018.
\newblock \href{http://arxiv.org/abs/1505.04386}{arXiv:1505.04386}.

\bibitem{HubbardSaltz}
D.~Hubbard and A.~Saltz.
\newblock An annular refinement of the transverse element in khovanov homology.
\newblock {\em Algebraic \& Geometric Topology}, 16(4):2305--2324, 2016.
\newblock \href{http://arxiv.org/abs/1507.06263}{arXiv:1507.06263}.

\bibitem{CRS}
J.~H.~Rieger J.~S.~Carter and M.~Saito.
\newblock A combinatorial description of knotted surfaces and their isotopies.
\newblock {\em Adv. Math.}, 127:1--51, 1997.

\bibitem{Kai}
U.~Kaiser.
\newblock Frobenius algebras and skein modules of surfaces in 3-manifolds.
\newblock In {\em Algebraic topology---old and new}, volume~85 of {\em Banach
  Center Publ.}, pages 59--81. Polish Acad. Sci. Inst. Math., Warsaw, 2009.
\newblock \href{https://arxiv.org/abs/0802.4068}{arXiv:0802.4068}.

\bibitem{Kh1}
M.~Khovanov.
\newblock A categorification of the {J}ones polynomial.
\newblock {\em Duke Math. J.}, 101(3):359--426, 2000.
\newblock \href{http://arxiv.org/abs/9908171}{arXiv:9908171}.

\bibitem{Kh2}
M.~Khovanov.
\newblock A functor-valued invariant of tangles.
\newblock {\em Algebr. Geom. Topol.}, 2:665--741, 2002.
\newblock \href{https://arxiv.org/abs/math/0103190}{arXiv:0103190}.

\bibitem{Kh5}
M.~Khovanov.
\newblock sl(3) link homology.
\newblock {\em Algebr. Geom. Topol.}, 4:1045--1081, 2004.
\newblock \href{https://arxiv.org/abs/math/0304375}{arXiv:0304375}.

\bibitem{Kh7}
M.~Khovanov.
\newblock Categorifications of the colored {J}ones polynomial.
\newblock {\em J. Knot Theory Ramifications}, 14(1):111--130, 2005.
\newblock \href{https://arxiv.org/abs/math/0302060}{arXiv:0302060}.

\bibitem{Kh8}
M.~Khovanov.
\newblock Link homology and {F}robenius extensions.
\newblock {\em Fund. Math.}, 190:179--190, 2006.

\bibitem{Kh4}
M.~Khovanov.
\newblock Hopfological algebra and categorification at a root of unity: the
  first steps.
\newblock {\em J. Knot Theory Ramifications}, 25(3):1640006, 26, 2016.
\newblock \href{https://arxiv.org/abs/math/0509083}{arXiv:0509083}.

\bibitem{KhR3}
M.~Khovanov and L.~Rozansky.
\newblock Topological {L}andau-{G}inzburg models on the world-sheet foam.
\newblock {\em Adv. Theor. Math. Phys.}, 11(2):233--259, 2007.
\newblock \href{http://arxiv.org/abs/0404189}{arXiv:0404189}.

\bibitem{KhR}
M.~Khovanov and L.~Rozansky.
\newblock Matrix factorizations and link homology.
\newblock {\em Fund. Math.}, 199(1):1--91, 2008.
\newblock \href{https://arxiv.org/abs/math/0401268}{arXiv:0401268}.

\bibitem{KoS}
M.~Kontsevich and Y.~Soibelman.
\newblock Cohomological {H}all algebra, exponential {H}odge structures and
  motivic {D}onaldson-{T}homas invariants.
\newblock {\em Commun. Number Theory Phys.}, 5(2):231--352, 2011.
\newblock \href{https://arxiv.org/abs/1006.2706}{arXiv:1006.2706}.

\bibitem{LQR}
A.~D. {Lauda}, H.~{Queffelec}, and D.~E.~V. {Rose}.
\newblock {Khovanov homology is a skew Howe 2-representation of categorified
  quantum $\mathfrak{sl}(m)$}.
\newblock {\em Algebraic \& Geometric Topology}, 15(5):2517--2608, 2015.
\newblock \href{http://arxiv.org/abs/1212.6076}{arXiv:1212.6076}.

\bibitem{Le}
T.~T.~Q. L{\^e}.
\newblock {On positivity of Kauffman bracket skein algebras of surfaces}.
\newblock {\em To appear in Int. Math. Res. Not.}, March 2016.
\newblock \href{https://arxiv.org/abs/1603.08265}{arXiv:1603.08265}.

\bibitem{Lee}
E.~S. Lee.
\newblock An endomorphism of the {K}hovanov invariant.
\newblock {\em Adv. Math.}, 197(2):554--586, 2005.
\newblock \href{https://arxiv.org/abs/math/0210213}{arXiv:0210213}.

\bibitem{MSV}
M.~Mackaay, M.~Sto{\v{s}}i{\'c}, and P.~Vaz.
\newblock {$\mf{sl}(N)$}-link homology {$(N\geq 4)$} using foams and the
  {K}apustin-{L}i formula.
\newblock {\em Geom. Topol.}, 13(2):1075--1128, 2009.
\newblock \href{http://arxiv.org/abs/0708.2228}{arXiv:0708.2228}.

\bibitem{MW}
S.~Morrison and K.~Walker.
\newblock Blob homology.
\newblock {\em Geom. Topol.}, 16(3):1481--1607, 2012.
\newblock \href{https://arxiv.org/abs/1009.5025}{arXiv:1009.5025}.

\bibitem{MS}
H.~Morton and P.~Samuelson.
\newblock The {HOMFLYPT} skein algebra of the torus and the elliptic {H}all
  algebra.
\newblock {\em Duke Mathematical Journal}, 166(5):801--854, 2017.
\newblock \href{http://arxiv.org/abs/1410.0859}{arXiv:1410.0859}.

\bibitem{Prz1}
J.~H. Przytycki.
\newblock Skein modules of {$3$}-manifolds.
\newblock {\em Bull. Polish Acad. Sci. Math.}, 39(1-2):91--100, 1991.
\newblock \href{https://arxiv.org/abs/math/0611797}{arXiv:math/0611797}.

\bibitem{Prz2}
J.~H. Przytycki.
\newblock Fundamentals of {K}auffman bracket skein modules.
\newblock {\em Kobe J. Math.}, 16(1):45--66, 1999.
\newblock \href{https://arxiv.org/abs/math/9809113}{arXiv:9809113}.

\bibitem{PS2}
J.~H. Przytycki and A.~S. Sikora.
\newblock Skein algebras of surfaces.
\newblock {\em Accepted in Transactions of the American Mathematical Society}.
\newblock \href{http://arxiv.org/abs/1602.07402}{arXiv:1602.07402}.

\bibitem{Queff_PhD}
H.~{Queffelec}.
\newblock {Sur la cat\'egorification des invariants quantiques sln : \'etude
  alg\'ebrique et diagrammatique}.
\newblock {\em PhD thesis from the Universit\'e Paris 7}, December 2013.

\bibitem{QR2}
H.~Queffelec and D.~E.~V. Rose.
\newblock Sutured annular {K}hovanov homology via trace decategorification and
  skew {H}owe duality.
\newblock 2015.
\newblock \href{http://arxiv.org/abs/1506.08188}{arXiv:1506.08188}.

\bibitem{QRS}
H.~{Queffelec}, D.~E.~V. {Rose}, and A.~{Sartori}.
\newblock {Annular Evaluation and Link Homology}.
\newblock 2018.
\newblock \href{https://arxiv.org/abs/1802.04131}{arXiv:1802.04131}.

\bibitem{QS2}
H.~{Queffelec} and A.~{Sartori}.
\newblock {Mixed quantum skew Howe duality and link invariants of type A}.
\newblock 2015.
\newblock \href{https://arxiv.org/abs/1504.01225}{arXiv:1504.01225}.

\bibitem{QW}
H.~{Queffelec} and P.~{Wedrich}.
\newblock Extremal weight projectors.
\newblock {\em To appear in Math. Res. Lett.}, 2017.
\newblock \href{https://arxiv.org/abs/1701.02316}{arXiv:1701.02316}.

\bibitem{QW2}
H.~{Queffelec} and P.~{Wedrich}.
\newblock {Extremal weight projectors II}.
\newblock 2018.
\newblock \href{https://arxiv.org/abs/1803.09883}{arXiv:1803.09883}.

\bibitem{RWa}
L.-H. {Robert} and E.~{Wagner}.
\newblock {A closed formula for the evaluation of $\mathfrak{sl}\_N$-foams}.
\newblock 2017.
\newblock \href{https://arxiv.org/abs/1702.04140}{arXiv:1702.04140}.

\bibitem{Roberts}
L.~P. Roberts.
\newblock On knot {F}loer homology in double branched covers.
\newblock {\em Geometry \& Topology}, 17(1):413--467, 2013.
\newblock \href{http://arxiv.org/abs/0706.0741}{arXiv:0706.0741}.

\bibitem{RoseNote}
D.~E.~V. Rose.
\newblock A note on the {G}rothendieck group of an additive category.
\newblock {\em Vestn. Chelyab. Gos. Univ. Mat. Mekh. Inform.},
  (3(17)):135--139, 2015.
\newblock \href{https://arxiv.org/abs/1109.2040}{arXiv:1109.2040}.

\bibitem{RW}
D.~E.~V. Rose and P.~Wedrich.
\newblock Deformations of colored {$\germ{sl}_N$} link homologies via foams.
\newblock {\em Geom. Topol.}, 20(6):3431--3517, 2016.
\newblock \href{https://arxiv.org/abs/1501.02567}{arXiv:1501.02567}.

\bibitem{SW}
M.~{Stosic} and P.~{Wedrich}.
\newblock {Rational links and DT invariants of quivers}.
\newblock 2017.
\newblock \href{https://arxiv.org/abs/1711.03333}{arXiv:1711.03333}.

\bibitem{Thu}
D.~P. Thurston.
\newblock Positive basis for surface skein algebras.
\newblock {\em Proc. Natl. Acad. Sci. USA}, 111(27):9725--9732, 2014.
\newblock \href{https://arxiv.org/abs/1310.1959}{arXiv:1310.1959}.

\bibitem{TVW}
D.~Tubbenhauer, P.~Vaz, and P.~Wedrich.
\newblock Super {$q$}-{H}owe duality and web categories.
\newblock {\em Algebr. Geom. Topol.}, 17(6):3703--3749, 2017.
\newblock \href{https://arxiv.org/abs/1504.05069}{arXiv:1504.05069}.

\bibitem{Tur}
V.~Turaev.
\newblock Skein quantization of {P}oisson algebras of loops on surfaces.
\newblock {\em Ann. Sci. \'Ecole Norm. Sup. (4)}, 24(6):635--704, 1991.

\bibitem{TT}
V.~Turaev and P.~Turner.
\newblock Unoriented topological quantum field theory and link homology.
\newblock {\em Algebr. Geom. Topol.}, 6:1069--1093, 2006.
\newblock \href{https://arxiv.org/abs/math/0506229}{arXiv:0506229}.

\bibitem{Wed2}
P.~Wedrich.
\newblock {q-holonomic formulas for colored HOMFLY polynomials of
  2\mbox{-}{b}ridge links}.
\newblock 2014.
\newblock \href{https://arxiv.org/abs/1410.3769}{arXiv:1410.3769}.

\bibitem{Wed1}
P.~Wedrich.
\newblock Categorified {$\germ{sl}_N$} invariants of colored rational tangles.
\newblock {\em Algebr. Geom. Topol.}, 16(1):427--482, 2016.
\newblock \href{https://arxiv.org/abs/1404.2736}{arXiv:1404.2736}.

\bibitem{Wen}
H.~Wenzl.
\newblock On sequences of projections.
\newblock {\em C. R. Math. Rep. Acad. Sci. Canada}, 9(1):5--9, 1987.

\bibitem{Witten}
E.~Witten.
\newblock Fivebranes and knots.
\newblock {\em Quantum Topol.}, 3(1):1--137, 2012.
\newblock \href{https://arxiv.org/abs/1101.3216}{arXiv:1101.3216}.

\end{thebibliography}

\end{document}